\newtheorem{theorem}{Theorem}
\newtheorem{proposition}{Proposition}
\newtheorem{lemma}{Lemma}
\theoremstyle{definition}
\newtheorem{problem}{Problem}
\newtheorem{conjecture}{Conjecture}
\theoremstyle{remark}
\newtheorem{remark}{Remark}
\newtheorem{example}{Example}
\newtheorem{corollary}{Corollary}
\numberwithin{equation}{section}
\newcommand\restr[2]{{
		\left.\kern-\nulldelimiterspace 
		#1 
		\vphantom{\big|} 
		\right|_{#2} 
}}
\numberwithin{equation}{section}
\numberwithin{theorem}{section}
\numberwithin{proposition}{section}
\numberwithin{lemma}{section}
\numberwithin{remark}{section}
\numberwithin{exercise}{section}
\numberwithin{corollary}{section}
\numberwithin{problem}{section}
\numberwithin{example}{section}
\numberwithin{conjecture}{section}
\newcounter{desccount}
\begin{document}

\title{Variational description of uniform Lyapunov exponents via adapted metrics on exterior products}
\titlemark{Variational description of uniform Lyapunov exponents}


\emsauthor{1}{
	\givenname{Mikhail}
	\surname{Anikushin}
	\orcid{0000-0002-7781-8551}}{M.M.~Anikushin}

\Emsaffil{1}{
	\pretext{}
	\department{Department of Applied Cybernetics}
	\organisation{Faculty of Mathematics and Mechanics, St Petersburg University}
	\rorid{023znxa73}
	\address{Universitetskiy prospekt 28}
	\zip{198504}
	\city{Peterhof}
	\country{Russia}
	\posttext{}
	\affemail{demolishka@gmail.com}
	}
	

	


\classification[34K20, 37C45]{37L30}
\keywords{uniform Lyapunov exponents, adapted metrics, compound cocycles, dimension estimates, delay equations}

\begin{abstract}
	In this work, we present a comprehensive study of the relationship among uniform Lyapunov exponents, the Liouville trace formula, and adapted metrics for cocycles in Hilbert spaces. First, we prove that uniform Lyapunov exponents can be approximated by constructing adapted metrics on exterior products. Next, we develop a general computational theory in an abstract setting, establish a generalized Liouville trace formula, and pose and discuss the symmetrization problem related to computations. Third, we discuss ergodic properties and upper semicontinuity in the context of subadditive families over a noncompact base. Furthermore, we use adapted metrics and the trace formula to obtain, for the first time, effective dimension estimates for a general class of delay equations. In particular, we illustrate this approach by deriving upper estimates for the Lyapunov dimension of global attractors in the Mackey--Glass equations and the periodically forced Suarez--Schopf delayed oscillator. As the delay value tends to infinity, the estimates appear to be asymptotically sharp.
\end{abstract}

\maketitle
\tableofcontents

\section{Introduction}
This work is devoted to the study of linear cocycles in Hilbert spaces and their uniform Lyapunov exponents, which are concerned with the uniform growth of volumes under the action of cocycle mappings. In applications, the primary interest in these quantities is related to the concept of the (uniform) Lyapunov dimension and cases where the cocycle is given by the linearization of a semiflow (or nonlinear cocycle) over an invariant set. In this setting, the Lyapunov dimension bounds from above the fractal dimension of the invariant set (or its fibers), see \cite{ChepyzhovIlyin2004, KuzReit2020}. Moreover, under appropriate conditions, such bounds prevent the existence of certain high-dimensional invariant structures in the system \cite{LiMuldowney1995LowBounds, Smith1986HD} and allows to embed the invariant subset into a finite-dimensional space \cite{RobinsonBook2011, ZelikAttractors2022}. However, the Lyapunov dimension usually reflects not any purely geometric dimensions of the invariant set, but rather their possible expansions under perturbations and bifurcations of the system, see \cite{TuraevZelik2003} for an illustrative example. Furthermore, as such a quantity is upper semicontinuous with respect to perturbations of the cocycle (see Appendix \ref{SEC: UpperSemiContLyapDim}), it is more convenient for global analysis, providing a tool to control the purely geometric dimensions.

Usually, effective upper estimates for the Lyapunov dimension of cocycles in Hilbert spaces are obtained using the Liouville trace formula, which provides an exact description of the evolution of a particular $m$-dimensional volume. To derive uniform bounds, one has to compute or estimate from above consecutive sums $\beta_{1} + \ldots + \beta_{m}$ of the so-called \textit{trace numbers} $\beta_{1} \geq \beta_{2} \geq \ldots$, which depend on points from the base and represent infinitesimal analogues of singular values of the cocycle mappings. Being maximized or time-averaged over the base, such sums provide an upper bound for the sum of the first $m$ uniform Lyapunov exponents. It is often possible to express trace numbers as eigenvalues of self-adjoint operators and thus justify what we call the \textit{symmetrization procedure}, see Appendix \ref{SEC: SymmetrizationOfOperators}. In the context of standard metrics, this approach may produce physically relevant estimates for global attractors, see \cite{Temam1997, ZelikAttractors2022}, or even allow exact computations of the Lyapunov dimension, see \cite{Doeringetal1987}. However, for certain problems discussed below, it may also provide very rough estimates that are inappropriate.

There are well-known applications of the trace formula in adapted metrics that broaden the scope of its applications. In this direction, Leonov proposed to vary constant metrics within their conformal class using special Lyapunov-like functions, see \cite{LeonovBoi1992}. To date, this approach, known as the Leonov method \cite{Kuznetsov2016}, has led to exact computations of the Lyapunov dimension for global attractors in the Lorenz \cite{LeoKuzKorzhKusakin2016, KuzMokKuzKud2020} and Lorenz-like systems \cite{Leonov2016LorenzLike, LeoAlexKuzSMsystem2015} and invariant sets of the H\'{e}non map \cite{LeonovForHenonLor2001}. In Section \ref{SEC: LiouvilleTraceFormulaAndSymProc}, we develop it in a general form.

In \cite{Smith1986HD}, Smith obtained dimension estimates for ODEs in $\mathbb{R}^{n}$ developing an approach that involves adapted metrics and the Liouville trace formula. Specifically, these metrics are used to establish uniform lower bounds for all singular values of the cocycle mappings. Such bounds are then employed to estimate from above, for $m \in \{1,\ldots,n\}$, the growth of $m$-dimensional volumes via the growth of $n$-dimensional volumes, which can be computed using the classical Liouville formula involving the traces of matrices. For infinite-dimensional systems, this approach can be generalized using inertial manifolds, see \cite[Theorem 12]{Anikushin2022Semigroups}. This highlights some impracticality of the approach, as it requires computing inertial forms (vector fields on inertial manifolds), and also underscores its artificiality in general, since the existence of nontrivial exponential dichotomies is a more complex issue that goes beyond dimension estimates\footnote{In more common terms of the spectral theory, dimension estimates focus on estimating the spectral bound, while exponential dichotomies pertain to separating specific parts of the spectrum.}. However, there is an interesting case in infinite dimensions to which this approach can be applied. It involves delay equations in $\mathbb{R}^{n}$ with small delays, for which inertial manifolds always exist and it is possible to obtain a series expansion of the inertial form in powers of the delay parameter, see \cite{Chicone2004}. In particular, this allows us to transfer estimates from the ODE corresponding to zero delay to the system with small delays. We refer to \cite{Anikushin2020Geom, Zelik2014, ZelikAttractors2022} for discussions on theoretical issues concerned with inertial manifolds and \cite{Anikushin2020FreqDelay, Anikushin2020FreqParab, AnikushinRom2023SS, AnikushinAADyn2021} for applications.

There are known examples where applications of the Liouville trace formula in standard metrics yield inappropriate results, but with the aid of adapted metrics one may still obtain relevant estimates. Such examples include damped hyperbolic-like equations, for which the results of Turaev and Zelik  \cite{ZelikAttractors2022, TuraevZelik2003} and Ghidaglia \cite{Ghidaglia1988Schroedinger, Ghidaglia1988KortewegdeVries} should be mentioned.

Another example is provided by delay equations. However, there is a long history associated with this problem. Starting from the pioneering paper of Mallet-Paret  \cite{MalletParet1976}, most works concerning dimension estimates for such equations relied solely on the compactness properties of the corresponding cocycles, thereby yielding only qualitative conclusions regarding the finiteness of dimensions. This is reflected in classical monographs such as \cite{HaleLunel1993}, as well as in more recent ones \cite{CarvalhoLangaRobinson2012, Chueshov2015}. Even recent studies, such as the work of Hu and Caraballo \cite{HuCaraballo2024}, continue to develop this approach. In \cite{HuCaraballo2024}, the resulting estimates are called \textit{explicit}, but they are not effective because they involve characteristics related to exponential dichotomies of delay operators. Moreover, the use of exponential dichotomies is likely to produce incorrect asymptotic behavior with respect to the system parameters and, certainly, complicate the analysis of the estimates.

It seems that So and Wu \cite{SoWu1991} were the first to apply the Liouville trace formula to (reaction-diffusion) delay equations\footnote{This requires to pose such equations in the setting of a proper Hilbert space (most of works treat the equations in a Banach space of continuous functions).}, but they did not succeed in deriving effective estimates from it. Namely, at the end of the introduction in \cite{SoWu1991} they wrote: ``The verification of the hypotheses in Theorem 3.10 for reaction-diffusion systems with delays is a non-trivial task and will be reported in a future paper.'' Since then, no similar results have been published.

In our work \cite{Anikushin2022Semigroups}, we illustrated the problem by means of a scalar delay equation and demonstrated that applying the trace formula in the standard metric yields uninformative trace numbers, as no decay of volumes can be observed from them. Anticipating the results of the present paper, we note that in Section \ref{SEC: DimensionEstimatesViaLiouvDelayEqs} we provide a satisfactory resolution to this problem for a large class of delay equations in $\mathbb{R}^{n}$ with discrete delays by constructing constant adapted metrics that always allow effective observation of the decay of high-dimensional volumes. In the case of scalar equations with a single delay, as considered in Sections \ref{SEC: Mackey-GlassDimEst} and \ref{SEC: PertSuarezSchopfDimEstimate}, the resulting estimates are asymptotically sharp as the delay tends to infinity. To the best of our knowledge, this is the first time effective dimension estimates have been obtained for chaotic attractors of delay equations.

Nevertheless, let us continue to discuss other approaches.

In \cite{Anikushin2022Semigroups}, a comparison principle was developed for autonomous scalar equations with monotone feedback by combining the Ergodic Variational Principle for subadditive families (see Appendix \ref{APP: EVPNonCompact}) with the results on monotonicity properties of compound cocycles obtained by Mallet-Paret and Nussbaum \cite{MalletParretNussbaum2013}, as well as the Poincar\'{e}--Bendixson theory for such equations due to Mallet-Paret and Sell \cite{MalletParetSell1996}. This principle allows for the comparison of the largest uniform Lyapunov exponent of compound cocycles with the spectrum of certain stationary equations that can be analyzed directly or with the aid of numerical experiments. However, scalar equations, not to mention systems of such equations, with chaotic behavior fall outside the scope of this principle.

In our adjacent paper \cite{Anikushin2023Comp}, we developed an approach for a general class of delay equations in $\mathbb{R}^{n}$ to investigate uniform exponential stability --- or, more generally, the existence of gaps in the Sacker-Sell spectrum --- of compound cocycles generated by delay equations. In this context, the compound cocycle is treated as a perturbation of a stationary problem. Subsequently, so-called frequency inequalities are derived, which guarantee that certain dichotomy properties of the stationary problem are preserved under perturbations from a given class described by a quadratic constraint. These frequency inequalities are obtained by resolving the perturbation problem in terms of a properly posed (indefinite) quadratic infinite-horizon regulator problem, which, in turn, is solved via the Frequency Theorem developed in our earlier work \cite{Anikushin2020FreqDelay} (see also \cite{Anikushin2020FreqParab}). At the geometric level, these frequency inequalities ensure that a certain quadratic Lyapunov functional for a linear problem can also be used to study its nonautonomous perturbations. However, verifying the resulting frequency inequalities can only be done numerically. Applications to the exponential stability of twofold compound cocycles, which are related to the generalized Bendixson criterion for attractors \cite{LiMuldowney1995LowBounds}, are presented in our work \cite{AnikushinRomanov2023FreqConds} (joint with Romanov). It should be noted that this approach may substantially enhance the rigorous analytical results obtained from the comparison principle developed in \cite{Anikushin2022Semigroups} or from the findings in Section \ref{SEC: DimensionEstimatesViaLiouvDelayEqs} of the present paper. However, it requires numerical computations and is subject to limitations arising from the increasing complexity in the parameters $n$ and $m$.

From the above discussion, a natural problem arises that necessitates an in-depth study of the relationship among the trace formula, adapted metrics, and uniform Lyapunov exponents. This paper is dedicated to addressing this problem and is organized as follows.

In Section \ref{SEC: VariationalDescriptionOfLargUnifLyapExp}, we study continuous-time\footnote{For discrete-time cocycles, all related results are more elementary, and we only give some remarks in appropriate places.} linear cocycles\footnote{More precisely, what we call uniformly continuous linear cocycles.} in Banach spaces. For such cocycles, the evolution of length elements in a given metric (see the section for precise definitions) can be described via what we call the \textit{Growth Formula} --- a type of Newton--Leibniz formula that, in particular, implies the Liouville trace formula. This formula involves what we call \textit{infinitesimal growth exponents}. Maximizing these quantities provides an upper bound for the largest (uniform) Lyapunov exponent with respect to this metric. We further consider Lyapunov metrics as candidates for which the maximization procedure yields values arbitrarily close to their largest Lyapunov exponent, see Theorem \ref{TH: LengthsFamilyOfNorms}. Such metrics are not coercive in continuous time. However, for uniformly quasi-compact cocycles over a semiflow on a complete metric space that contracts under the semiflow to a compact attractor, we show that the largest Lyapunov exponent with respect to Lyapunov metrics bounds from above the largest Lyapunov exponent in the standard metric, see Theorem \ref{TH: LyapExpAdaptedMetricQuasiCom}. This provides a variational description of these quantities for noninvertible continuous-time cocycles via possibly noncoercive metrics, see Theorem \ref{TH: VariationalDescriptionLargestLyapunovExponent}.

In Section \ref{SEC: VariationalDescUnifLyapExpCompCoc}, we apply the above result to cocycles in Hilbert spaces. Here, the sum of the first $m$ Lyapunov exponents is given by the largest Lyapunov exponent of the \textit{$m$-fold multiplicative compound}, which is the multiplicative extension of the original cocycle to the $m$-fold exterior power of the Hilbert space. This provides a variational description of uniform Lyapunov exponents for cocycles in Hilbert spaces via adapted metrics defined on exterior powers, see Theorem \ref{TH: VariationalDescriptionUniformLyapunovExponents}. We further introduce the Lyapunov dimension and discuss several related computational results.

In Section \ref{SEC: ComputationOfInfExponents}, we develop a general theory for computing infinitesimal growth exponents of compound cocycles in associated metrics, using what we call \textit{curvature forms} and their \textit{mean representations}. Here, we establish the generalized Liouville trace formula (see Theorem \ref{TH: GeneralizedLiouvilleTraceFormula}) and provide a general upper estimate for singular values via trace numbers of mean representations (see Theorem \ref{TH: AveragedOmegaDEstimateViaTraceNumbers}). Finally, we connect the computation of trace numbers with the additive symmetrization of mean representations, which is further discussed in Appendix \ref{SEC: SymmetrizationOfOperators}.

In Section \ref{SEC: LiouvilleTraceFormulaAndSymProc}, we examine a specific case of the previous result for metrics within the conformal class of a constant metric (see Theorem \ref{TH: LiouvilleSymmetrizationSummary}), thereby deriving a general infinite-dimensional version of the Leonov method.

In Section \ref{SEC: DimensionEstimatesViaLiouvDelayEqs}, we apply this approach to cocycles arising from the linearization of a class of nonautonomous nonlinear delay equations in $\mathbb{R}^{n}$ posed in an appropriate Hilbert space. In Section \ref{SEC: LinearizationDelayEquations}, we discuss the well-posedness and linearization of such equations. Section \ref{SEC: DelayEqsSymmetrization} is devoted to the additive symmetrization of delay operators and provides a clear illustration of the symmetrization problem in infinite dimensions, where the outcome may significantly depend on the structure of the metrics. Next, we present upper estimates for the Lyapunov dimension over global attractors arising in the Mackey--Glass equations (see Section \ref{SEC: Mackey-GlassDimEst}) and in the periodically forced Suarez--Schopf delayed oscillator (see Section \ref{SEC: PertSuarezSchopfDimEstimate}). By numerically computing eigenvalues at appropriate equilibria, we demonstrate that these estimates are asymptotically sharp as the delay tends to infinity. In our joint paper with Romanov \cite{AnikushinRomanov2024EffEst}, we use these metrics to obtain similar results for the Nicholson blowflies model.

In Appendix \ref{APP: EVPNonCompact}, we provide an extension of the ergodic variational principle from \cite{Morris2013} in Theorem \ref{TH: ErgodicVarPrinciple} and discuss its important consequences.

In Appendix \ref{SEC: UpperSemiContLyapDim}, we establish the upper semicontinuity of the uniform growth exponent for subadditive families and deduce from it the corresponding property in the case of uniform volume growth characteristics (in particular, the Lyapunov dimension) for cocycles in Hilbert spaces, see Corollary \ref{COR: UpperSemiContUniformLyapExpAndLyapDim}. Although the result is elementary and folklore for some specialists, we include a discussion indicating that there are much more complicated proofs of its particular cases and even related misbeliefs presented in the literature.

In Appendix \ref{SEC: SymmetrizationOfOperators}, we discuss the trace numbers of operators in Hilbert spaces and their computation via additive symmetrization. We provide a simple answer to the problem of additive symmetrization in terms of the Friedrichs extensions in Theorem \ref{TH: AdditiveSymmetrizationAndFriedrichsExtension} and prove Theorem \ref{TH: ComputationSwedgeSpectralBound}, which relates the trace numbers of a general self-adjoint operator to its spectrum.
\section{Infinitesimal growth exponents and Lyapunov metrics for cocycles in Banach spaces}
\label{SEC: VariationalDescriptionOfLargUnifLyapExp}

Let $\mathbb{T} \in \{ \mathbb{Z}_{+}, \mathbb{Z}, \mathbb{R}, \mathbb{R}_{+} \}$ be a time space\footnote{Here $\mathbb{R}_{+} = [0,+\infty)$ and $\mathbb{Z}_{+} = \mathbb{Z} \cap \mathbb{R}_{+}$.}, and let $\mathcal{Q}$ be a complete metric space. A family of mappings $\vartheta^{t}(\cdot) \colon \mathcal{Q} \to \mathcal{Q}$, where $t \in \mathbb{T}$, is called a \textit{dynamical system} on $\mathcal{Q}$ if it satisfies
\begin{description}[before=\let\makelabel\descriptionlabel]
	\item[\textbf{(DS1)}\refstepcounter{desccount}\label{DESC: DS1}] for all $q \in \mathcal{Q}$ and $t,s \in \mathbb{T}$, we have $\vartheta^{t+s}(q) = \vartheta^{t}( \vartheta^{s}(q))$ and $\vartheta^{0}(q) = q$;
	\item[\textbf{(DS2)}\refstepcounter{desccount}\label{DESC: DS2}] the mapping $\mathbb{T} \times \mathcal{Q} \ni (t,q) \mapsto \vartheta^{t}(q)$ is continuous.
\end{description}
For brevity, we use the notation $(\mathcal{Q},\vartheta)$ or simply $\vartheta$ to denote the dynamical system. In the case $\mathbb{T} = \mathbb{R}_{+}$ (resp. $\mathbb{T} = \mathbb{R}$) we call $\vartheta$ a \textit{semiflow} (resp. a \textit{flow}) on $\mathcal{Q}$.

Put $\mathbb{T}_{+} \coloneq \mathbb{T} \cap [0,+\infty)$. For a given Banach space $\mathbb{E}$, we call a family of mappings $\psi^{t}(q,\cdot) \colon \mathbb{E} \to \mathbb{E}$, where $t \in \mathbb{T}_{+}$ and $q \in \mathcal{Q}$, a \textit{cocycle} in $\mathbb{E}$ over $(\mathcal{Q},\vartheta)$ if 
\begin{description}[before=\let\makelabel\descriptionlabel]
	\item[\textbf{(CO1)}\refstepcounter{desccount}\label{DESC: CO1}] for all $v \in \mathbb{E}$, $q \in \mathcal{Q}$, and $t,s \in \mathbb{T}_{+}$, we have $\psi^{t+s}(q,v) = \psi^{t}(\vartheta^{s}(q),\psi^{s}(q,v))$ and $\psi^{0}(q,v) = v$;
	\item[\textbf{(CO2)}\refstepcounter{desccount}\label{DESC: CO2}] the mapping $\mathbb{T}_{+} \times \mathcal{Q} \times \mathbb{E} \ni (t,q,v) \mapsto \psi^{t}(q,v)$ is continuous.
\end{description}
For brevity, the cocycle will be denoted by $\psi$. If each mapping $\psi^{t}(q,\cdot)$ belongs to the space $\mathcal{L}(\mathbb{E})$ of bounded linear operators in $\mathbb{E}$, we say that the cocycle is \textit{linear}. In this paper, we are interested in linear cocycles arising after linearization of nonlinear cocycles (see Section \ref{SEC: LinearizationDelayEquations}). 

In what follows, linear cocycles will be denoted by $\Xi$. Moreover, if such $\Xi$ additionally satisfies\footnote{Here $\|\cdot\|_{\mathcal{L}(\mathbb{E})}$ denotes the operator norm in $\mathcal{L}(\mathbb{E})$ associated with the norm $\|\cdot\|_{\mathbb{E}}$ in $\mathbb{E}$.}
\begin{description}[before=\let\makelabel\descriptionlabel]
	\item[\textbf{(UC1)}\refstepcounter{desccount}\label{DESC: UC1}] for any $t \in \mathbb{T}_{+}$ the mapping $\mathcal{Q} \ni q \mapsto \Xi^{t}(q,\cdot) \in \mathcal{L}(\mathbb{E})$ is continuous in the operator norm; 
	\item[\textbf{(UC2)}\refstepcounter{desccount}\label{DESC: UC2}] the cocycle mappings are bounded uniformly in finite times:
	\begin{equation}
		\sup_{t \in \mathbb{T}_{+} \cap [0,1]} \sup_{q \in \mathcal{Q}}\| \Xi^{t}(q,\cdot) \|_{\mathcal{L}(\mathbb{E})} < \infty,
	\end{equation}
\end{description}
then $\Xi$ is called a \textit{uniformly continuous linear cocycle}. Clearly, for such cocycles, the condition \nameref{DESC: CO2} is equivalent to the fact that the operator $\Xi^{t}(q,\cdot)$ depends continuously on $(t,q)$ in the strong operator topology.

\begin{remark}
	All the results and definitions from the abstract part of our work (i.e., up to Section \ref{SEC: LiouvilleTraceFormulaAndSymProc}) can be more or less straightforwardly adapted for cocycles on Banach (or Hilbert) bundles. Here we are mainly interested in infinite-dimensional problems, for which such a setting is too redundant.
\end{remark}

Let us firstly establish the following lemma.
\begin{lemma}
	\label{LEM: LinearCocycleNormMappingRightLowerSemCon}
	Let $\Xi$ be a linear cocycle in $\mathbb{E}$ over a semiflow $(\mathcal{Q},\vartheta)$. Then for all $t \geq 0$ and $q \in \mathcal{Q}$, we have
	\begin{equation}
		\label{EQ: LinearCocycleRightLowerSemiCon}
		\liminf_{h \to 0+} \|\Xi^{t+h}(q,\cdot)\|_{\mathcal{L}(\mathbb{E})} \geq \|\Xi^{t}(q,\cdot)\|_{\mathcal{L}(\mathbb{E})}.
	\end{equation}
	In other words, the function
	\begin{equation}
		\label{EQ: LinearCocycleNormMapping}
		[0,+\infty) \ni t \mapsto \|\Xi^{t}(q,\cdot)\|_{\mathcal{L}(\mathbb{E})} \in [0,\infty)
	\end{equation}
	is right lower semicontinuous. In particular, it is Borel measurable.
\end{lemma}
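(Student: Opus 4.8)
The plan is to extract everything from the cocycle identity \nameref{DESC: CO1} and the joint continuity \nameref{DESC: CO2}; there is a short route through a supremum of continuous functions, and a more hands-on one matching the literal form of \eqref{EQ: LinearCocycleRightLowerSemiCon}, and I would record both. For the short route: fix $q \in \mathcal{Q}$ and $v \in \mathbb{E}$, and restrict the continuous map of \nameref{DESC: CO2} to the fibre $\{q\}\times\{v\}$; then $\mathbb{T}_{+} \ni t \mapsto \Xi^{t}(q,v) \in \mathbb{E}$ is continuous, hence so is $t \mapsto \|\Xi^{t}(q,v)\|_{\mathbb{E}}$. Since
\[
	\|\Xi^{t}(q,\cdot)\|_{\mathcal{L}(\mathbb{E})} = \sup_{v \in \mathbb{E},\ \|v\|_{\mathbb{E}} \leq 1} \|\Xi^{t}(q,v)\|_{\mathbb{E}},
\]
the function in \eqref{EQ: LinearCocycleNormMapping} is a pointwise supremum of continuous functions of $t$, and a supremum of an arbitrary family of continuous real-valued functions is lower semicontinuous. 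In particular it is right lower semicontinuous, which is precisely \eqref{EQ: LinearCocycleRightLowerSemiCon}, and a lower semicontinuous function is Borel measurable.

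A more hands-on route, engaging the form of the statement directly, goes as follows. Fix $t \geq 0$, $q \in \mathcal{Q}$ and write the cocycle identity in the essential order $\Xi^{t+h}(q,\cdot) = \Xi^{h}(\vartheta^{t}(q),\cdot) \circ \Xi^{t}(q,\cdot)$ for $h \in \mathbb{T}_{+}$. For a fixed $v \in \mathbb{E}$ put $w \coloneq \Xi^{t}(q,v)$, so that $\Xi^{t+h}(q,v) = \Xi^{h}(\vartheta^{t}(q),w)$; by \nameref{DESC: CO2} the right-hand side is continuous in $h$, and by \nameref{DESC: CO1} it equals $w$ at $h=0$, whence $\|\Xi^{t+h}(q,v)\|_{\mathbb{E}} \to \|\Xi^{t}(q,v)\|_{\mathbb{E}}$ as $h \to 0+$. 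For $\|v\|_{\mathbb{E}} \leq 1$ one has $\|\Xi^{t+h}(q,\cdot)\|_{\mathcal{L}(\mathbb{E})} \geq \|\Xi^{t+h}(q,v)\|_{\mathbb{E}}$, so $\liminf_{h\to 0+}\|\Xi^{t+h}(q,\cdot)\|_{\mathcal{L}(\mathbb{E})} \geq \|\Xi^{t}(q,v)\|_{\mathbb{E}}$, and taking the supremum over such $v$ gives \eqref{EQ: LinearCocycleRightLowerSemiCon}. To get Borel measurability from right lower semicontinuity alone, I would note that for each $a$ the superlevel set $\{ t : \|\Xi^{t}(q,\cdot)\|_{\mathcal{L}(\mathbb{E})} > a \}$ is \emph{right-open} (every point $t$ of it has some $(t,t+\delta)$ contained in it), and an elementary argument shows that such a set differs from its interior by an at most countable set, each point of the difference being the left endpoint of a connected component of the interior; hence the set is Borel.

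I do not expect a genuine obstacle. The two points deserving a little care are keeping the correct order $\Xi^{h}(\vartheta^{t}(q),\cdot)\circ\Xi^{t}(q,\cdot)$ in the cocycle identity — the base-point shift is what makes the hands-on argument naturally one-sided for a general linear cocycle, since no uniform bound on $\|\Xi^{h}(\cdot,\cdot)\|_{\mathcal{L}(\mathbb{E})}$ near $h=0$ is available — and the measurability step; the cleanest passage through the latter is the supremum-of-continuous-functions observation, which moreover upgrades the conclusion to two-sided lower semicontinuity.
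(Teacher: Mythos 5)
Your proposal is correct, and both of your routes work.

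Your \emph{hands-on} route is essentially the paper's own argument, modulo cosmetic choices: the paper also splits $\Xi^{t+h}$ by the cocycle identity and passes to the limit with \nameref{DESC: CO2}, only it shifts the base by $h$, writing $\Xi^{t+h}(q,\xi)=\Xi^{t}(\vartheta^{h}(q),\Xi^{h}(q,\xi))$, whereas you shift by $t$, writing $\Xi^{t+h}(q,v)=\Xi^{h}(\vartheta^{t}(q),\Xi^{t}(q,v))$; and the paper trades your final supremum over the unit ball for a fixed $\varepsilon$-near-maximizer $\xi$. Both variants feed into \nameref{DESC: CO2} in equivalent ways. Your observation that the one-sidedness of the time set is what makes this version intrinsically right-handed (one cannot run the decomposition backwards to get left lower semicontinuity without an inverse or a local uniform bound) correctly diagnoses why the paper's conclusion is stated only as right lower semicontinuity. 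Your measurability argument (right-open superlevel sets equal their interior up to a countable set of left endpoints of components) is exactly the ``standard argument'' the paper invokes, and it is sound; it is worth remarking that the paper's separate treatment of the preimage of $\{0\}$ is in fact redundant once superlevel sets of all $a\geq 0$ are handled, since $\{0\}=[0,\infty)\setminus\bigcup_{n}(1/n,\infty)$.

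Your \emph{short} route is a genuinely different — and cleaner — argument that the paper does not use, and it buys more: since $t\mapsto\|\Xi^{t}(q,v)\|_{\mathbb{E}}$ is continuous for each fixed $v$ by \nameref{DESC: CO2} alone, the operator norm $\|\Xi^{t}(q,\cdot)\|_{\mathcal{L}(\mathbb{E})}=\sup_{\|v\|\leq 1}\|\Xi^{t}(q,v)\|_{\mathbb{E}}$ is a pointwise supremum of continuous functions, hence lower semicontinuous in $t$ in the full (two-sided) sense, and Borel measurability is then immediate because superlevel sets are open. This strengthens the lemma's conclusion without any extra hypothesis and avoids the right-open-set bookkeeping entirely. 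One small remark applicable to both the paper and your hands-on route: the appeal to \nameref{DESC: CO1} is a detour, since \nameref{DESC: CO2} already gives continuity of $t\mapsto\Xi^{t}(q,v)$ directly, which is the only fact either argument actually uses.
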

\begin{proof}
	Let $\varepsilon>0$ be fixed. Then there exists $\xi \in \mathbb{E}$ with $\|\xi\|_{\mathbb{E}}=1$ such that $\|\Xi^{t}(q,\xi)\|_{\mathbb{E}} = \|\Xi^{t}(q,\cdot)\|_{\mathcal{L}(\mathbb{E})} + R_{\varepsilon}$, where $|R_{\varepsilon}|<\varepsilon$. From \nameref{DESC: CO1} we have
	\begin{equation}
		\begin{split}
			\|\Xi^{t+h}(q,\cdot)\|_{\mathcal{L}(\mathbb{E})} \geq \|\Xi^{t+h}(q,\xi)\|_{\mathbb{E}} = \|\Xi^{t}(\vartheta^{h}(q),\Xi^{h}(q,\xi))\|_{\mathbb{E}} =\\= \|\Xi^{t}(q,\xi)\|_{\mathbb{E}} + \widetilde{R}_{\varepsilon}(h) = \|\Xi^{t}(q,\cdot)\|_{\mathcal{L}(\mathbb{E})} + R_{\varepsilon} + \widetilde{R}_{\varepsilon}(h),
		\end{split}
	\end{equation}
	where $\widetilde{R}_{\varepsilon}(h) \to 0$ as $h \to 0+$ due to \nameref{DESC: CO2}. Taking it to the limit as $h \to 0+$ and then as $\varepsilon \to 0+$, we obtain \eqref{EQ: LinearCocycleRightLowerSemiCon}.
	
	Let $t_{0} \geq 0$ belong to the preimage of $(a,\infty)$ with $a \geq 0$ under the mapping \eqref{EQ: LinearCocycleNormMapping}. From \eqref{EQ: LinearCocycleRightLowerSemiCon} there exists $\varepsilon>0$ such that any $t \in [t_{0},\varepsilon)$ also belongs to the preimage. Then a standard argument shows that the preimage is given by a countable union of intervals. Thus, it is a Borel subset of $[0,+\infty)$. Moreover, if $\|\Xi^{t_{0}}(q,\cdot)\|_{\mathcal{L}(\mathbb{E})}=0$, then $\|\Xi^{t}(q,\cdot)\|_{\mathcal{L}(\mathbb{E})}=0$ for any $t \geq t_{0}$. This shows that the preimage of $0$ is Borel measurable. Taken together, this implies that the function \eqref{EQ: LinearCocycleNormMapping} is Borel measurable.
\end{proof}

Below, we will mainly deal with the most interesting case of continuous time $\mathbb{T} = \mathbb{R}_{+}$ and make remarks on the discrete case.

Let $\mathfrak{n}=\{\mathfrak{n}_{q}\}_{q \in \mathcal{Q}}$ be a family of nonzero seminorms in a Banach space $\mathbb{E}$, which, for brevity, will be called a \textit{semi-metric} over $\mathcal{Q}$. If each $\mathfrak{n}_{q}$ is a norm in $\mathbb{E}$, we say that $\mathfrak{n}$ is a \textit{metric} over $\mathcal{Q}$

We say that $\mathfrak{n}$ is \textit{uniformly weaker} than $\|\cdot\|_{\mathbb{E}}$ if for some $M^{+}_{\mathfrak{n}}>0$ we have
\begin{equation}
	\label{EQ: UniformWeaknessFamNorm}
	\sup_{q \in \mathcal{Q}}\mathfrak{n}_{q}(\xi) \leq M^{+}_{\mathfrak{n}} \cdot \| \xi \|_{\mathbb{E}} \qquad \text{for any} \quad \xi \in \mathbb{E}.
\end{equation}
Analogously, we say that $\mathfrak{n}$ is \textit{uniformly coercive} with respect to $\|\cdot\|_{\mathbb{E}}$ if for some $M^{-}_{\mathfrak{n}}>0$ we have
\begin{equation}
	\label{EQ: UniformCoerciveFamily}
	\inf_{q \in \mathcal{Q}}\mathfrak{n}_{q}(\xi) \geq M^{-}_{\mathfrak{n}} \cdot \| \xi \|_{\mathbb{E}} \qquad \text{for any} \quad \xi \in \mathbb{E}.
\end{equation}
If both \eqref{EQ: UniformWeaknessFamNorm} and \eqref{EQ: UniformCoerciveFamily} are satisfied, we say that $\mathfrak{n}$ is \textit{equivalent} to $\|\cdot\|_{\mathbb{E}}$. 

If the mapping $\mathcal{Q} \ni q \mapsto \mathfrak{n}_{q}$ is continuous in the sense of uniform topology on seminorms\footnote{It is induced by the distance $d( \mathfrak{m}_{1}, \mathfrak{m}_{2}) \coloneq \sup_{\|\xi\|_{\mathbb{E}}=1}|\mathfrak{m}_{1}(\xi)-\mathfrak{m}_{2}(\xi)|$, where $\mathfrak{m}_{1}$ and $\mathfrak{m}_{2}$ are seminorms in $\mathbb{E}$.}, we say that $\mathfrak{n}$ is \textit{continuous}.

Let $\Xi$ be a linear cocycle in $\mathbb{E}$ over a semiflow $(\mathcal{Q},\vartheta)$. We say that $\Xi$ \textit{admits infinitesimal growth exponents} with respect to $\mathfrak{n}$ if
\begin{description}[before=\let\makelabel\descriptionlabel]
	\item[\textbf{(GE1)}\label{DESC: GE1}] there exists a family of dense in $\mathbb{E}$ subspaces $\mathcal{N}_{\mathfrak{n}}(q)$, where $q \in \mathcal{Q}$, such that $\Xi^{t}(q,\mathcal{N}_{\mathfrak{n}}(q)) \subset \mathcal{N}_{\mathfrak{n}}(\vartheta^{t}(q))$ for all $t \geq 0$ and $q \in \mathcal{Q}$;
	\item[\textbf{(GE2)}\label{DESC: GE2}] for all $q \in \mathcal{Q}$, $\xi \in \mathcal{N}_{\mathfrak{n}}(q)$, and $T>0$, the mapping
	\begin{equation}
		\label{EQ: TheSeminormMappingOverCocycleTrajectory}
		[0,T] \ni t \mapsto \mathfrak{n}_{\vartheta^{t}(q)}( \Xi^{t}(q,\xi) )
	\end{equation}
    is absolutely continuous, i.e., it belongs to the Sobolev space $W^{1,1}(0,T)$;
    \item[\textbf{(GE3)}\label{DESC: GE3}] for all $q \in \mathcal{Q}$ and $\xi \in \mathcal{N}_{\mathfrak{n}}(q)$, there exists the right derivative
    \begin{equation}
    	\label{EQ: NormRightDerivativeLimitDPlusDef}
    	D^{+}_{\mathfrak{n}}(q;\xi) \coloneq \lim_{t \to 0+} \frac{ \mathfrak{n}_{\vartheta^{t}(q)}(\Xi^{t}(q,\xi)) - \mathfrak{n}_{q}(\xi)}{t}.
    \end{equation}
\end{description}

Under the above properties, for all $q \in \mathcal{Q}$ and $\xi \in \mathcal{N}_{\mathfrak{n}}(q)$ such that $\mathfrak{n}_{q}(\xi) \not= 0$, we introduce the \textit{infinitesimal growth exponent} of $\Xi$ at $\xi$ over $q$ with respect to $\mathfrak{n}$ by
\begin{equation}
	\label{EQ: InfinitesimalGrowthExponentSeminormDef}
	\alpha_{\mathfrak{n}}(q;\xi)\coloneq \frac{D^{+}_{\mathfrak{n}}(q;\xi)}{\mathfrak{n}_{q}(\xi)}.
\end{equation}

We immediately have the following theorem.
\begin{theorem}[Growth Formula]
	Suppose a linear cocycle $\Xi$ over a semiflow $(\mathcal{Q},\vartheta)$ admits infinitesimal growth exponents with respect to a semi-metric $\mathfrak{n}=\{\mathfrak{n}_{q}\}_{q \in \mathcal{Q}}$, i.e., \nameref{DESC: GE1}, \nameref{DESC: GE2} and \nameref{DESC: GE3} are satisfied. Then for all $q \in \mathcal{Q}$ and $\xi \in \mathcal{N}_{\mathfrak{n}}(q)$, we have
	\begin{equation}
		\label{EQ: SeminormsGrowthViaInfExponentsEq}
		\mathfrak{n}_{\vartheta^{t}(q)}( \Xi^{t}(q,\xi)) = \mathfrak{n}_{q}(\xi) \exp\left( \int_{0}^{t}\alpha_{\mathfrak{n}}(\vartheta^{s}(q);\Xi^{s}(q,\xi))ds \right)
	\end{equation}
    satisfied for all $t \in [0,T]$ and $T>0$ such that $\mathfrak{n}_{\vartheta^{t}(q)}(\Xi^{t}(q,\xi)) \not= 0 $ for any $t \in [0,T]$.
\end{theorem}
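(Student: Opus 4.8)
The plan is to reduce the statement to an elementary scalar differential identity for the function
\[
f(t)\coloneq\mathfrak{n}_{\vartheta^{t}(q)}\bigl(\Xi^{t}(q,\xi)\bigr),\qquad t\in[0,T],
\]
on an interval $[0,T]$ along which $f$ does not vanish (so that the infinitesimal growth exponent is defined everywhere along the orbit). First I would invoke \nameref{DESC: GE2}: it says precisely that $f\in W^{1,1}(0,T)$, so, after passing to its absolutely continuous representative, $f$ is continuous on $[0,T]$, differentiable at almost every point, and satisfies $f(t)=f(0)+\int_{0}^{t}f'(s)\,ds$; note that $f(0)=\mathfrak{n}_{q}(\xi)$ by \nameref{DESC: CO1}. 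The task is therefore to evaluate $f'(s)$ for a.e. $s$.

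Next I would bring in the cocycle structure. For fixed $s\in[0,T)$ set $q'\coloneq\vartheta^{s}(q)$ and $\xi'\coloneq\Xi^{s}(q,\xi)$; by \nameref{DESC: GE1} we have $\xi'\in\mathcal{N}_{\mathfrak{n}}(q')$, while \nameref{DESC: CO1} gives, for $h>0$,
\[
f(s+h)=\mathfrak{n}_{\vartheta^{h}(q')}\bigl(\Xi^{h}(q',\xi')\bigr)\quad\text{and}\quad f(s)=\mathfrak{n}_{q'}(\xi').
\]
Forming the difference quotient in $h$ and letting $h\to0+$, property \nameref{DESC: GE3} applied at $q'$ to the vector $\xi'$ (cf. \eqref{EQ: NormRightDerivativeLimitDPlusDef}) shows that $f$ has a right derivative at every $s\in[0,T)$ equal to $D^{+}_{\mathfrak{n}}(\vartheta^{s}(q);\Xi^{s}(q,\xi))$. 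Since $f$ is absolutely continuous, at a.e. $s$ it is genuinely differentiable, and there the derivative must agree with this right derivative. Using that $f(s)\neq0$ for all $s\in[0,T]$ by hypothesis, the very definition \eqref{EQ: InfinitesimalGrowthExponentSeminormDef} of the infinitesimal growth exponent turns this into
\[
f'(s)=\alpha_{\mathfrak{n}}\bigl(\vartheta^{s}(q);\Xi^{s}(q,\xi)\bigr)\,f(s)\qquad\text{for a.e. }s\in[0,T].
\]

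Finally I would integrate. Because $f$ is continuous and strictly positive on the compact interval $[0,T]$, it is bounded below by some $m>0$, so $g\coloneq\ln f$ is still absolutely continuous on $[0,T]$ (it is the composition of the $W^{1,1}$ map $f$ with $\ln$, which is Lipschitz on $[m,\max_{[0,T]}f]$), and $g'(s)=f'(s)/f(s)=\alpha_{\mathfrak{n}}(\vartheta^{s}(q);\Xi^{s}(q,\xi))$ for a.e. $s$. In particular the orbit map $s\mapsto\alpha_{\mathfrak{n}}(\vartheta^{s}(q);\Xi^{s}(q,\xi))$ lies in $L^{1}(0,T)$, and $g(t)=g(0)+\int_{0}^{t}\alpha_{\mathfrak{n}}(\vartheta^{s}(q);\Xi^{s}(q,\xi))\,ds$ for all $t\in[0,T]$; exponentiating yields exactly \eqref{EQ: SeminormsGrowthViaInfExponentsEq}. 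I expect the only delicate point to be the real-analysis bookkeeping of the middle paragraph — identifying the a.e.-defined derivative of the absolutely continuous function $f$ with the one-sided derivative provided by \nameref{DESC: GE3}, and then securing integrability of $\alpha_{\mathfrak{n}}$ along the orbit, which is precisely what the passage to $\ln f$ takes care of; the dynamical content (the cocycle law together with the invariance of the dense subspaces $\mathcal{N}_{\mathfrak{n}}$) enters only through the entirely routine identity displayed above.
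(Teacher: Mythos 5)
Your proof is correct and follows essentially the same route as the paper's own: differentiate $f(t)=\mathfrak{n}_{\vartheta^{t}(q)}(\Xi^{t}(q,\xi))$, identify the a.e.\ derivative with $D^{+}_{\mathfrak{n}}$ via the cocycle law, divide by $f$, and integrate the Newton--Leibniz formula for $\ln f$. The paper compresses the middle steps, while you usefully spell out why the right derivative from \nameref{DESC: GE3} agrees with the a.e.\ derivative of the absolutely continuous $f$ and why $\ln f$ inherits absolute continuity from the positive lower bound on $[0,T]$; these are exactly the points the paper leaves implicit.
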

\begin{proof}
	From \nameref{DESC: GE2} and \nameref{DESC: GE3}, for almost all $t \in [0,T]$ we have
	\begin{equation}
		\frac{d}{dt}\left( \mathfrak{n}_{\vartheta^{t}(q)}( \Xi^{t}(q,\xi)) \right) = D^{+}_{\mathfrak{n}}(\vartheta^{t}(q);\Xi^{t}(q,\xi)).
	\end{equation}
   From this, \nameref{DESC: GE2}, and \eqref{EQ: InfinitesimalGrowthExponentSeminormDef}, for any $t \in [0,T]$ the Newton-Leibniz formula
    \begin{equation}
    	\label{EQ: InfGrowthExpsLogFormula}
    	\begin{split}
    		\ln\mathfrak{n}_{\vartheta^{t}(q)}(\Xi^{t}(q,\xi)) - \ln\mathfrak{n}_{q}(\xi) &= \int_{0}^{t} \frac{D^{+}_{\mathfrak{n}}(\vartheta^{s}(q);\Xi^{s}(q,\xi))}{\mathfrak{n}_{\vartheta^{s}(q)}(\Xi^{s}(q,\xi))}ds = \\ &= \int_{0}^{t}\alpha_{\mathfrak{n}}(\vartheta^{s}(q);\Xi^{s}(q,\xi))ds.
    	\end{split}
    \end{equation}
    is valid. Applying the exponential function yields \eqref{EQ: SeminormsGrowthViaInfExponentsEq}.
\end{proof}

\begin{remark}
	\label{REM: InfinitesimalGrowthExponentsDiscrete}
	For the case of discrete time $\mathbb{T} \in \{\mathbb{Z}_{+}, \mathbb{Z}\}$, for all $q \in \mathcal{Q}$ and $\xi \in \mathbb{E}$ such that $\mathfrak{n}_{q}(\xi) \not=0$, the growth exponent $\alpha_{\mathfrak{n}}(q;\xi)$ should be defined by
	\begin{equation}
		\alpha _{\mathfrak{n}}(q,\xi) \coloneq \ln \frac{\mathfrak{n}_{\vartheta^{1}(q)}(\Xi^{1}(q,\xi))}{\mathfrak{n}_{q}(\xi)}
	\end{equation}
    with the convention that $\ln 0 \coloneq -\infty$. Then we have an analog of \eqref{EQ: SeminormsGrowthViaInfExponentsEq} satisfied for appropriate $t \in \mathbb{Z}_{+}$ with the integral exchanged with the sum $\sum_{k=0}^{t-1}\alpha _{\mathfrak{n}}(\vartheta^{k}(q),\Xi^{k}(q,\xi))$, where the convention $e^{-\infty}\coloneq0$ is used.
\end{remark}

In what follows, we suppose that $\Xi$ admits infinitesimal growth exponents with respect to a metric $\mathfrak{n}$ over $\mathcal{Q}$, i.e., each $\mathfrak{n}_{q}$ is a norm in $\mathbb{E}$.

Let us consider the \textit{largest uniform Lyapunov exponent} $\lambda_{\mathfrak{n}}(\Xi)$ of $\Xi$ with respect to $\mathfrak{n}$:
\begin{equation}
	\label{EQ: LargetLyapunovExponentWRTMetric}
	\lambda_{\mathfrak{n}}(\Xi) \coloneq \lim_{t \to +\infty} \frac{\ln \sup_{q,\xi}\mathfrak{n}_{\vartheta^{t}(q)}(\Xi^{t}(q,\xi))}{t},
\end{equation}
where the supremum is taken over all $q \in \mathcal{Q}$ and $\xi \in \mathbb{E}$ with $\mathfrak{n}_{q}(\xi)=1$. Since $\mathfrak{n}$ is not uniformly equivalent to $\|\cdot\|_{\mathbb{E}}$, there may be problems related to the existence of the limit. We define $\bar{\lambda}_{\mathfrak{n}}(\Xi)$ (resp. $\underbar{$\lambda$}_{\mathfrak{n}}(\Xi)$) to be the limit superior (resp. inferior) of the corresponding expression. We say that $\mathfrak{n}$ is \textit{admissible} for $\Xi$ if the limit (finite or infinite) indeed exists, i.e., $\bar{\lambda}_{\mathfrak{n}}(\Xi) = \underbar{$\lambda$}_{\mathfrak{n}}(\Xi)$.

If the metric $\mathfrak{n}$ is standard, that is, $\mathfrak{n}_{q}(\cdot) = \| \cdot \|_{\mathbb{E}}$ for any $q \in \mathcal{Q}$, we use the notation $\lambda_{1}(\Xi)$ instead of $\lambda_{\mathfrak{n}}(\Xi)$ and call $\lambda_{1}(\Xi)$ the \textit{uniform Lyapunov exponent} of $\Xi$.

In view of \eqref{EQ: SeminormsGrowthViaInfExponentsEq}, it is natural to maximize each $\alpha_{\mathfrak{n}}(q;\xi)$ over all $q \in \mathcal{Q}$ and $\xi \in \mathcal{N}_{\mathfrak{n}}(q)$, thereby determining the value
\begin{equation}
	\label{EQ: AlphaPlusInfExpMaximizationDefinition}
	\alpha^{+}_{\mathfrak{n}}(\Xi) \coloneq \sup_{q \in \mathcal{Q}} \sup_{\xi \in \mathcal{N}_{\mathfrak{n}}(q)} \alpha_{\mathfrak{n}}(q;\xi).
\end{equation}
Such a process is called the \textit{maximization procedure} of infinitesimal growth exponents $\alpha_{\mathfrak{n}}(q;\xi)$, and it results in the \textit{maximized exponent} $\alpha^{+}_{\mathfrak{n}}(\Xi)$. Thanks to the density of $\mathcal{N}_{\mathfrak{n}}(q)$ from \nameref{DESC: GE1}, \eqref{EQ: SeminormsGrowthViaInfExponentsEq} immediately yields $\bar{\lambda}_{\mathfrak{n}}(\Xi) \leq \alpha^{+}_{\mathfrak{n}}(\Xi)$. We note that even in seemingly nice cases, $\alpha^{+}_{\mathfrak{n}}(\Xi)$ may be infinite (see Section \ref{SEC: DelayEqsSymmetrization}).

\begin{remark}
	\label{REM: AveragedExponents}
	Let $\alpha^{+}_{\mathfrak{n}}(q) = \alpha^{+}_{\mathfrak{n}}(q;\Xi)$ be given by the inner supremum from \eqref{EQ: AlphaPlusInfExpMaximizationDefinition}. We call $\alpha^{+}_{\mathfrak{n}}(q)$ the \textit{maximized exponent} over $q$. When we are working with particular metrics in applications, the estimate via the fiber maxima $\alpha^{+}_{\mathfrak{n}}(q)$:
	\begin{equation}
		\mathfrak{n}_{\vartheta^{t}(q)}(\Xi^{t}(q,\xi)) \leq \mathfrak{n}_{q}(\xi) \exp\left(\int_{0}^{t}\alpha^{+}_{\mathfrak{n}}(\vartheta^{s}(q))ds\right),
	\end{equation}
	when it makes sense (see Remark \ref{REM: LozinskiiNorms}), may be more convenient, as the \textit{averaged exponent}
	\begin{equation}
		\bar{\alpha}_{\mathfrak{n}}(\Xi) \coloneq \lim_{t \to +\infty}\frac{1}{t}\sup_{q \in \mathcal{Q}} \int_{0}^{t}\alpha^{+}_{\mathfrak{n}}(\vartheta^{s}(q))ds
	\end{equation}
	provides sharper estimates for $\lambda_{\mathfrak{n}}(\Xi)$, see \cite{LiMuldowney1996SIAMGlobStab} for a practical example. We say that $\bar{\alpha}_{\mathfrak{n}}(\Xi)$ is obtained via the \textit{averaging procedure} of infinitesimal growth exponents $\alpha_{\mathfrak{n}}(q;\xi)$.
\end{remark}

\begin{remark}
	\label{REM: LozinskiiNorms}
	It can be shown that $\Xi$ admits infinitesimal growth exponents with respect to the standard metric $\mathfrak{n}$, provided that the set
	\begin{equation}
		\label{EQ: IGEStandradMetricDomains}
		\mathcal{N}_{\mathfrak{n}}(q) = \{ \xi \in \mathbb{E} \ | \ [0,\infty) \ni t \mapsto \Xi^{t}(q,\xi) \in \mathbb{E} \text{ is $C^{1}$-differentiable} \}
	\end{equation}
	is dense in $\mathbb{E}$. Then for such a metric, the inequality $\lambda_{1}(\Xi) \leq \alpha^{+}_{\mathfrak{n}}(\Xi)$ can be called the \textit{Lozinskii estimate}. In fact, the Lozinskii estimate \cite{Lozinskii1958}, which immediately follows from \eqref{EQ: SeminormsGrowthViaInfExponentsEq}, is
	\begin{equation}
		\|\Xi^{t}(q,\xi)\|_{\mathbb{E}} \leq \|\xi\|_{\mathbb{E}} \exp\left( \int_{0}^{t}\alpha^{+}_{\mathfrak{n}}(\vartheta^{s}(q))ds \right),
	\end{equation}
    where $\alpha^{+}_{\mathfrak{n}}(q)$ is the fiber maximum over $q$ considered in Remark \ref{REM: AveragedExponents}. Note that in our abstract context, it is not even a priori known whether $s \mapsto \alpha^{+}_{\mathfrak{n}}(\vartheta^{s}(q))$ is measurable, not to mention its integrability.
    
    Consider the generator $A(q)$ of $\Xi$ over $q \in \mathcal{Q}$, which is an operator with domain $\mathcal{N}_{\mathfrak{n}}(q)$ as in \eqref{EQ: IGEStandradMetricDomains} given by
    \begin{equation}
    	\label{EQ: CocycleDerivativeLogarNorm}
    	A(q)\xi \coloneq \lim_{t \to 0+} \frac{\Xi^{t}(q,\xi) - \xi}{t} \qquad \text{for any} \quad \xi \in \mathcal{N}_{\mathfrak{n}}(q).
    \end{equation}
    Then the quantity $\alpha^{+}_{\mathfrak{n}}(q)$ should be called the \textit{Lozinskii} (or \textit{logarithmic}) norm of $A(q)$. 
    
    Note that $\alpha^{+}_{\mathfrak{n}}(q)$ is given by the supremum of pointwise derivatives. Usually, logarithmic norms are defined by taking the supremum first and then differentiating, see \cite{Lozinskii1958, KuzMokKuzKud2020, Davydovetal2024}. First, such a definition is inappropriate in infinite dimensions where $A(q)$ is unbounded. Second, following this definition, one omits the exact Growth Formula \eqref{EQ: SeminormsGrowthViaInfExponentsEq}. For bounded operators, both definitions coincide (this can be proven analogously to Theorem \ref{TH: RegularityOfTraceExponentAndCoincidence}). \qed
\end{remark}

We are interested in approximations of the largest uniform Lyapunov exponent $\lambda_{1}(\Xi)$ in the standard metric, via the quantity $\alpha^{+}_{\mathfrak{n}}(\Xi)$ computed in an adapted metric $\mathfrak{n}$. 

On the abstract level, natural candidates for adapted metrics are Lyapunov metrics. In this direction, we start with the following result.
\begin{theorem}
	\label{TH: LengthsFamilyOfNorms}
	Let $\Xi$ be a uniformly continuous linear cocycle over a semiflow $(\mathcal{Q},\vartheta)$. For any $T \in (0,\infty]$, $p \geq 1$, and $\nu > \lambda_{1}(\Xi)$, consider the Lyapunov metric $\mathfrak{n} = \{\mathfrak{n}_{q}(\cdot)\}_{q \in \mathcal{Q}}$, where the norms $\mathfrak{n}_{q}(\cdot)$ are given by
	\begin{equation}
		\label{EQ: FamilyOfNormsDef}
		\mathfrak{n}_{q}(\xi) \coloneq \left(\int_{0}^{T}\|e^{-\nu t}\Xi^{t}(q,\xi)\|^{p}_{\mathbb{E}}dt\right)^{1/p} \qquad \text{for all} \quad q \in \mathcal{Q} \quad \text{and} \quad \xi \in \mathbb{E}.
	\end{equation} 
    Then $\Xi$ admits infinitesimal growth exponents with respect to $\mathfrak{n}$, which for any $q \in \mathcal{Q}$ and any nonzero $\xi \in \mathcal{N}_{\mathfrak{n}}(q) = \mathbb{E}$ are given by
	\begin{equation}
		\label{EQ: LyapunovMetricApproxAdaptedGrowthExponentFormula}
		\alpha_{\mathfrak{n}}(q;\xi) = \nu + \frac{\|e^{-\nu T}\Xi^{T}(q,\xi)\|^{p}_{\mathbb{E}} - \|\xi\|^{p}_{\mathbb{E}}}{p \cdot \mathfrak{n}^{p}_{q}(\xi)}.
	\end{equation}
    In particular, $\mathfrak{n}$ is continuous and uniformly weaker than $\|\cdot\|_{\mathbb{E}}$, i.e., it satisfies \eqref{EQ: UniformWeaknessFamNorm} with some $M^{+}_{\mathfrak{n}} > 0$. Moreover, there exists $T_{0}=T_{0}(\nu)>0$ such that $\alpha^{+}_{\mathfrak{n}}(\Xi) < \nu$ for any $T \geq T_{0}$, in which case $\mathfrak{n}$ is admissible.
\end{theorem}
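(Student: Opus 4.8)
The goal is to verify the axioms \nameref{DESC: GE1}--\nameref{DESC: GE3} with $\mathcal{N}_{\mathfrak{n}}(q) = \mathbb{E}$, extract the closed expression \eqref{EQ: LyapunovMetricApproxAdaptedGrowthExponentFormula} for $\alpha_{\mathfrak{n}}(q;\xi)$, and then read off the remaining assertions from it. Since $\nu > \lambda_{1}(\Xi)$, fix once and for all $\nu' \in (\lambda_{1}(\Xi),\nu)$ and, using \nameref{DESC: UC2} and the definition of $\lambda_{1}(\Xi)$, a constant $M \geq 1$ with $\|\Xi^{t}(q,\cdot)\|_{\mathcal{L}(\mathbb{E})} \leq Me^{\nu' t}$ for all $t \geq 0$ and $q \in \mathcal{Q}$. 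Then the integrand in \eqref{EQ: FamilyOfNormsDef} is dominated by $(Me^{-(\nu-\nu')t}\|\xi\|_{\mathbb{E}})^{p} \in L^{1}(0,\infty)$, so each $\mathfrak{n}_{q}(\xi)$ is finite and $\mathfrak{n}_{q}(\xi) \leq M^{+}_{\mathfrak{n}}\|\xi\|_{\mathbb{E}}$ with $M^{+}_{\mathfrak{n}} \coloneq M\,(p(\nu-\nu'))^{-1/p}$ independent of $T$ — this is \eqref{EQ: UniformWeaknessFamNorm}. Minkowski's inequality in $L^{p}(0,T;\mathbb{E})$ shows $\mathfrak{n}_{q}$ is a seminorm, and continuity of $t \mapsto \Xi^{t}(q,\xi)$ from \nameref{DESC: CO2} gives $\mathfrak{n}_{q}(\xi) = 0 \Rightarrow \xi = \Xi^{0}(q,\xi) = 0$, so $\mathfrak{n}$ is a metric. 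Continuity of $\mathfrak{n}$ follows from the reverse triangle inequality in $L^{p}(0,T;\mathbb{E})$, namely $\sup_{\|\xi\|_{\mathbb{E}}=1}|\mathfrak{n}_{q}(\xi) - \mathfrak{n}_{q'}(\xi)| \leq \bigl(\int_{0}^{T}e^{-\nu p t}\|\Xi^{t}(q,\cdot) - \Xi^{t}(q',\cdot)\|^{p}_{\mathcal{L}(\mathbb{E})}\,dt\bigr)^{1/p}$, which tends to $0$ as $q' \to q$ by \nameref{DESC: UC1} and dominated convergence.

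The core of the argument is an evolution identity. Fix $q \in \mathcal{Q}$, $\xi \in \mathbb{E}$ and set $\phi(u) \coloneq \|e^{-\nu u}\Xi^{u}(q,\xi)\|^{p}_{\mathbb{E}}$, continuous on $[0,\infty)$ and, by the exponential bound, in $L^{1}(0,\infty)$. Using the cocycle identity $\Xi^{t}(\vartheta^{s}(q),\Xi^{s}(q,\xi)) = \Xi^{t+s}(q,\xi)$ from \nameref{DESC: CO1} and the substitution $u = t+s$,
\begin{equation}
	\label{EQ: PlanEvolutionIdentity}
	\mathfrak{n}^{p}_{\vartheta^{s}(q)}(\Xi^{s}(q,\xi)) = \int_{0}^{T}e^{-\nu p t}\|\Xi^{t+s}(q,\xi)\|^{p}_{\mathbb{E}}\,dt = e^{\nu p s}\int_{s}^{s+T}\phi(u)\,du .
\end{equation}
As $\phi$ is continuous and integrable, the right-hand side is a $C^{1}$ function of $s \geq 0$, with derivative $e^{\nu p s}\bigl(\nu p \int_{s}^{s+T}\phi + \phi(s+T) - \phi(s)\bigr)$ (the term $\phi(s+T)$ understood as $0$ when $T = \infty$). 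Wherever \eqref{EQ: PlanEvolutionIdentity} is strictly positive, its $p$-th root $s \mapsto \mathfrak{n}_{\vartheta^{s}(q)}(\Xi^{s}(q,\xi))$ is $C^{1}$ as well; hence \nameref{DESC: GE3} holds for every nonzero $\xi$ (for $\xi = 0$ all quantities vanish), and differentiating $\mathfrak{n}^{p}$, evaluating at $s = 0$, using $\phi(0) = \|\xi\|^{p}_{\mathbb{E}}$ and dividing by $p\,\mathfrak{n}^{p}_{q}(\xi)$ produces exactly \eqref{EQ: LyapunovMetricApproxAdaptedGrowthExponentFormula}. Property \nameref{DESC: GE1} is immediate since $\mathcal{N}_{\mathfrak{n}}(q) = \mathbb{E}$.

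The one genuinely technical point — and the place where I expect the main obstacle — is \nameref{DESC: GE2}: absolute continuity of $t \mapsto \mathfrak{n}_{\vartheta^{t}(q)}(\Xi^{t}(q,\xi))$ on every $[0,T']$ when its $p$-th power vanishes somewhere (for $p = 1$ there is nothing to do). Here the key observation is that the zero set of $u \mapsto \Xi^{u}(q,\xi)$ is closed and forward invariant (linearity of $\Xi^{t}(q,\cdot)$), hence a half-line $[\tau,\infty)$ with $\tau > 0$ (or empty). Thus the $C^{1}$ nonnegative function \eqref{EQ: PlanEvolutionIdentity} vanishes exactly on $[\tau,\infty)$, its $p$-th root is $C^{1}$ on $[0,\tau)$ and identically zero afterwards, and near $\tau$ one has $\int_{s}^{s+T}\phi = \int_{s}^{\tau}\phi =: \Psi(s)$ with $\Psi' = -\phi \leq 0$; the identity $\int_{s}^{\tau}\Psi^{1/p-1}|\Psi'| = p\,\Psi(s)^{1/p} \to 0$ (as $s \to \tau^-$) shows the derivative of the $p$-th root is integrable up to $\tau$, and absolute continuity on $[0,T']$ follows.

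It remains to establish the last assertion. By \eqref{EQ: LyapunovMetricApproxAdaptedGrowthExponentFormula},
\begin{equation}
	\alpha^{+}_{\mathfrak{n}}(\Xi) - \nu = \sup_{q \in \mathcal{Q}}\ \sup_{0 \neq \xi \in \mathbb{E}}\ \frac{\|e^{-\nu T}\Xi^{T}(q,\xi)\|^{p}_{\mathbb{E}} - \|\xi\|^{p}_{\mathbb{E}}}{p\,\mathfrak{n}^{p}_{q}(\xi)} .
\end{equation}
The numerator is at most $\|\xi\|^{p}_{\mathbb{E}}\bigl(M^{p}e^{-p(\nu-\nu')T} - 1\bigr) \leq -c\,\|\xi\|^{p}_{\mathbb{E}}$ with $c \coloneq 1 - M^{p}e^{-p(\nu-\nu')T} \in (0,1)$ as soon as $T > T_{0} \coloneq (\ln M)/(\nu - \nu')$. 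The denominator, by the uniform weakness proved above, is at most $p\,(M^{+}_{\mathfrak{n}})^{p}\|\xi\|^{p}_{\mathbb{E}}$; since the numerator is negative, bounding the denominator \emph{from above} keeps the ratio bounded away from zero, so $\alpha^{+}_{\mathfrak{n}}(\Xi) - \nu \leq -c\,(p (M^{+}_{\mathfrak{n}})^{p})^{-1} < 0$ for all $T \geq T_{0}$ (the nonexistence of a uniform \emph{coercivity} bound is never used — this is exactly why the argument works with a noncoercive metric). Taking, say, $\nu' = (\lambda_{1}(\Xi) + \nu)/2$ makes $T_{0}$ depend only on $\nu$, which completes the plan.
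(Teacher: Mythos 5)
Your proposal is correct and follows the same route as the paper: the same exponential bound $\|\Xi^{t}(q,\cdot)\|_{\mathcal L(\mathbb E)}\leq Me^{\nu't}$ with $\lambda_{1}(\Xi)<\nu'<\nu$, the same evolution identity (your $e^{\nu p s}\int_{s}^{s+T}\phi$ is the paper's \eqref{EQ: LargLyapExpInLyapMetricIdentity1} after rearranging terms), the same differentiation at $s=0$ to reach \eqref{EQ: LyapunovMetricApproxAdaptedGrowthExponentFormula}, and the same device to push $\alpha^{+}_{\mathfrak{n}}(\Xi)$ strictly below $\nu$ by bounding the denominator from above using only \eqref{EQ: UniformWeaknessFamNorm}.

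The one place where you are genuinely more careful — and you are right to be — is \nameref{DESC: GE2}. The paper asserts that $t\mapsto\mathfrak{n}_{\vartheta^{t}(q)}(\Xi^{t}(q,\xi))$ is $C^{1}$; this is not quite true for noninvertible cocycles. The $p$-th power $g(s)=e^{\nu p s}\int_{s}^{s+T}\phi$ is certainly $C^{1}$, but $g^{1/p}$ can have a jump in its derivative at the left endpoint $\tau$ of the (closed, forward-invariant) zero set of $s\mapsto\Xi^{s}(q,\xi)$. For example, if $\phi(s)\sim c(\tau-s)$ near $\tau$ — as happens for the right-translation semigroup on $L_{2}(-1,0)$ acting on $\chi_{(-1,-1/2)}$ — then $\Psi(s)=\int_{s}^{\tau}\phi\sim c(\tau-s)^{2}/2$, and for $p=2$ a direct computation gives $(g^{1/2})'(s)\to-e^{\nu\tau}\sqrt{c/2}\neq 0$ as $s\to\tau^{-}$, while $(g^{1/2})'\equiv 0$ for $s>\tau$. (Augment that semigroup by the identity on a one-dimensional summand to keep $\lambda_{1}(\Xi)$ finite.) What \nameref{DESC: GE2} actually requires, however, is only $W^{1,1}$ regularity, and your computation $\int_{s}^{\tau}\Psi^{1/p-1}|\Psi'|=p\,\Psi(s)^{1/p}\to 0$ shows $(g^{1/p})'\in L^{1}(0,\tau)$; combined with continuity of $g^{1/p}$ at $\tau$ this gives absolute continuity on every $[0,T']$. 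So your proof patches a small but genuine gap in the paper's argument.

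One cosmetic slip on your end: you obtain $c>0$ for $T>T_{0}=(\ln M)/(\nu-\nu')$ (strict), yet then conclude ``for all $T\geq T_{0}$''; simply replace $T_{0}$ by $T_{0}+1$. The paper's own choice, requiring $M_{\varepsilon}e^{(\lambda_{1}(\Xi)+\varepsilon-\nu)T_{0}}\leq 1$, has the analogous issue: at equality the argument yields only $\alpha^{+}_{\mathfrak{n}}(\Xi)\leq\nu$, not $<\nu$, and one must again enlarge $T_{0}$ slightly so that the numerator in \eqref{EQ: LyapunovMetricApproxAdaptedGrowthExponentFormula} is uniformly negative.
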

\begin{proof}
	Omitting the trivial case $\lambda_{1}(\Xi) = -\infty$ and using \nameref{DESC: UC2} for $\Xi$, for every $\varepsilon>0$ we get a constant $M_{\varepsilon}>0$ such that for all $\xi \in \mathbb{E}$, $q \in \mathcal{Q}$, and $t \geq 0$, the estimate
	\begin{equation}
		\label{EQ: LargLyapExpTheoremExpEstimateNu}
		\|e^{-\nu t}\Xi^{t}(q,\xi)\|_{\mathbb{E}} \leq M_{\varepsilon} e^{(\lambda_{1}(\Xi) + \varepsilon - \nu)t)} \|\xi\|_{\mathbb{E}}
	\end{equation}
    is valid. Since $\nu > \lambda_{1}(\Xi)$, we may take $\varepsilon>0$ such that $\lambda_{1}(\Xi) + \varepsilon - \nu < 0$. This implies that $\mathfrak{n}_{q}(\xi)$ is well-defined by \eqref{EQ: FamilyOfNormsDef}, and there exists a constant $M_{\mathfrak{n}}^{+}>0$ such that $\mathfrak{n}_{q}(\xi) \leq M_{\mathfrak{n}}^{+} \|\xi\|_{\mathbb{E}}$ for all $q \in \mathcal{Q}$ and $\xi \in \mathbb{E}$. Moreover, the mapping $\mathcal{Q} \ni q \mapsto \mathfrak{n}_{q}$ is continuous by the Dominated Convergence Theorem, which is applicable due to Lemma \ref{LEM: LinearCocycleNormMappingRightLowerSemCon} (measurability), \nameref{DESC: UC1} (pointwise convergence), and the estimate \eqref{EQ: LargLyapExpTheoremExpEstimateNu} (integrable majorant).
	
	From \eqref{EQ: FamilyOfNormsDef}, for any $t \geq 0$, we have
	\begin{equation}
		\label{EQ: LargLyapExpInLyapMetricIdentity1}
		\begin{split}
			e^{-p\nu t}\mathfrak{n}^{p}_{\vartheta^{t}(q)}(\Xi^{t}(q,\xi)) - \mathfrak{n}^{p}_{q}(\xi) = \\ = \int_{T}^{T+t}\|e^{-\nu s}\Xi^{s}(q,\xi)\|^{p}_{\mathbb{E}}ds - \int_{0}^{t}\|e^{-\nu s}\Xi^{s}(q,\xi)\|^{p}_{\mathbb{E}}ds.
		\end{split}
	\end{equation}
    From this, it is clear that $\mathfrak{n}$ satisfies \nameref{DESC: GE1} with $\mathcal{N}_{\mathfrak{n}}(q) = \mathbb{E}$ and \nameref{DESC: GE2} with the function $t \mapsto \mathfrak{n}_{\vartheta^{t}(q)}(\Xi^{t}(q,\xi))$ being $C^{1}$-differentiable. Moreover, dividing \eqref{EQ: LargLyapExpInLyapMetricIdentity1} by $t>0$ and taking it to the limit as $t \to 0+$, we get \nameref{DESC: GE3} satisfied such that \eqref{EQ: LyapunovMetricApproxAdaptedGrowthExponentFormula} holds.
    
    Let $T_{0} > 0 $ be such that in terms of \eqref{EQ: LargLyapExpTheoremExpEstimateNu} we have $M_{\varepsilon} e^{(\lambda_{1}(\Xi)+\varepsilon-\nu)T} \leq 1$ for any $T \geq T_{0}$. From \eqref{EQ: LyapunovMetricApproxAdaptedGrowthExponentFormula}, it is clear that for such $T$ we have $\alpha^{+}_{\mathfrak{n}}(\Xi) < \nu$. In particular, $\lambda_{\mathfrak{n}}(\Xi)$ is well-defined by the limit in \eqref{EQ: LargetLyapunovExponentWRTMetric}, where the fraction is bounded from above by $\nu$, thanks to \eqref{EQ: SeminormsGrowthViaInfExponentsEq}, and therefore we may appeal to Lemma \ref{LEM: FeketeLemma}. Consequently, $\mathfrak{n}$ is admissible.
\end{proof}

Let us make some remarks.
\begin{remark}
	\label{REM: InvertibleCocycleCoerciveMetric}
	Suppose that for some $\varepsilon>0$ we have a kind of uniform local invertibility:
	\begin{equation}
		\inf_{q \in \mathcal{Q}}\inf_{t \in [0,\varepsilon]}\| \Xi^{t}(q, \cdot) \|_{\mathcal{L}(\mathbb{E})} > 0. 
	\end{equation}
    Then, it is clear that the Lyapunov metric $\mathfrak{n}$ from Theorem \ref{TH: LengthsFamilyOfNorms} is uniformly coercive with respect to $\|\cdot\|_{\mathbb{E}}$. In particular, $\mathfrak{n}$ is equivalent to $\|\cdot\|_{\mathbb{E}}$, and, consequently, we have $\lambda_{\mathfrak{n}}(\Xi) = \lambda_{1}(\Xi)$. Therefore for all sufficiently large $T$ in \eqref{EQ: FamilyOfNormsDef}, we have
    \begin{equation}
    	\label{EQ: InequalityLarLyapExpsMetricAndClassicAlphaPlus}
    	\lambda_{1}(\Xi) = \lambda_{\mathfrak{n}}(\Xi) \leq \alpha^{+}_{\mathfrak{n}}(\Xi) \leq \lambda_{1}(\Xi) + (\nu-\lambda_{1}(\Xi)).
    \end{equation}
    This shows that for such systems, the maximization procedure yields the quantities $\alpha^{+}_{\mathfrak{n}}(\Xi)$ that can be made arbitrarily close to $\lambda_{1}(\Xi)$ by a proper choice of a uniformly equivalent metric $\mathfrak{n}$. Then \eqref{EQ: SeminormsGrowthViaInfExponentsEq} yields that 
\end{remark}

\begin{remark}
	\label{REM: HilbertSpaceAdaptedMetric}
	If $\mathbb{E} = \mathbb{H}$ is a Hilbert space with the norm $\|\cdot\|_{\mathbb{E}} = |\cdot|_{\mathbb{H}}$ induced by an inner product, the norms $\mathfrak{n}_{q}$ given by \eqref{EQ: FamilyOfNormsDef} with $p=2$ are also induced by inner products. In fact, we can relax the condition $\mathbb{E} = \mathbb{H}$ by requiring that $\mathbb{E} \subset \mathbb{H}$ and $\Xi$ is in a sense smoothing in $\mathbb{E}$ with respect to $|\cdot|_{\mathbb{H}}$, as in \eqref{EQ: SmoothingDelayCocycle}.
\end{remark}

\begin{remark}
	\label{REM: SmoothenOfAdaptedLyapMetrics}
	If it makes sense to speak about a kind of smooth dependence of the cocycle mappings $\Xi^{t}(q,\cdot)$ on $q \in \mathcal{Q}$, then the norms $\mathfrak{n}_{q}$ from \eqref{EQ: FamilyOfNormsDef} with finite $T$ may also inherit this smoothness.
\end{remark}

\begin{remark}
	\label{REM: LyapunovMetricsForLyapExpDiscreteCase}
	For the case of discrete time $\mathbb{T} \in \{ \mathbb{Z}_{+}, \mathbb{Z} \}$, an analog of \eqref{EQ: FamilyOfNormsDef} is given by (here $T \in \mathbb{Z}_{+}$ and $T \geq 1$)
	\begin{equation}
		\label{EQ: LyapunovMetricForLyapExpDiscreteCase}
		\mathfrak{n}_{q}(\xi) \coloneq \left(\sum_{k=0}^{T-1} \| e^{-\nu k} \Xi^{k}(q,\xi) \|^{p}_{\mathbb{E}}\right)^{1/p} \qquad \text{for any} \quad \xi \in \mathbb{E}.
	\end{equation}
    Clearly, $\mathfrak{n}$ is equivalent to $\|\cdot\|_{\mathbb{E}}$ due to the presence of the term in \eqref{EQ: LyapunovMetricForLyapExpDiscreteCase} corresponding to $k=0$. Thus, in the discrete-time case, we automatically have $\lambda_{\mathfrak{n}}(\Xi) = \lambda_{1}(\Xi)$ for such Lyapunov metrics $\mathfrak{n}$. One can also show that $\alpha^{+}_{\mathfrak{n}}(\Xi) < \nu$ for any sufficiently large $T$.
\end{remark}

Since the Lyapunov metric $\mathfrak{n}$ from \eqref{EQ: FamilyOfNormsDef} may not be uniformly coercive with respect to $\|\cdot\|_{\mathbb{E}}$ in infinite dimensions due to the presence of smoothing properties, it is worth studying whether the inequality $\lambda_{1}(\Xi) \leq \lambda_{\mathfrak{n}}(\Xi)$ can be established without the assumption of local invertibility as in Remark \ref{REM: InvertibleCocycleCoerciveMetric}. This, of course, should be concerned with the definition \eqref{EQ: FamilyOfNormsDef}, linking both $\mathfrak{n}$ and $\|\cdot \|_{\mathbb{E}}$ with the dynamics, but it seems that we also have to assume the uniform quasi-compactness of $\Xi$, which we are going to introduce.

Let $\alpha_{K}(\cdot)$ be the Kuratowski measure of noncompactness in $\mathbb{E}$, i.e., for a given bounded subset $\mathcal{B}$ of $\mathbb{E}$, the value $\alpha_{K}(\mathcal{B})$ is given by the infimum over all $r>0$ such that $\mathcal{B}$ can be covered by finitely many balls of radius $r$. We consider the \textit{uniform compactness exponent} $\alpha_{K}(\Xi)$ of $\Xi$ defined by
\begin{equation}
	\label{EQ: KuratowskiExponentOfCocycle}
	\alpha_{K}(\Xi) \coloneq \lim_{t \to +\infty}\frac{\ln \sup_{q \in \mathcal{Q}} \alpha_{K}(\Xi^{t}(q,\mathcal{B}_{1}(0))) }{t},
\end{equation}
where $\mathcal{B}_{1}(0)$ is the ball of radius $1$ in $\mathbb{E}$ centered at $0$. From the standard properties of $\alpha_{K}$, we obtain that the limit exists (see Lemma \ref{LEM: FeketeLemma}) and $\alpha_{K}(\Xi) \leq \lambda_{1}(\Xi)$. If the inequality is strict or both values are equal to $-\infty$, the cocycle $\Xi$ is called \textit{uniformly quasi-compact}. See Remark \ref{REM: AsymCompFiniteLD} for the interpretation of the condition $\alpha_{K}(\Xi) < 0$ often encountered in practice.

We have the following theorem.
\begin{theorem}
	\label{TH: LyapExpAdaptedMetricQuasiCom}
	Let $\Xi$ be a uniformly continuous linear cocycle in $\mathbb{E}$ over a semiflow $(\mathcal{Q},\vartheta)$ on a complete metric space $\mathcal{Q}$. Suppose that $\Xi$ is uniformly quasi-compact and there exists a $2^{\mathcal{Q}}$-attractor for $(\mathcal{Q},\vartheta)$ as in \eqref{EQ: SubsetA0AttractsQ}. Then, the Lyapunov metric $\mathfrak{n}$ given by \eqref{EQ: FamilyOfNormsDef} for $T \geq 1$ satisfies $\underbar{$\lambda$}_{\mathfrak{n}}(\Xi) \geq \lambda_{1}(\Xi)$.
\end{theorem}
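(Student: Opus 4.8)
The plan is to establish the two bounds $\overline{\lambda}_{\mathfrak{n}}(\Xi)\le\lambda_{1}(\Xi)$ and $\underline{\lambda}_{\mathfrak{n}}(\Xi)\ge\lambda_{1}(\Xi)$ separately; admissibility will then come for free. Put $\Phi(t)\coloneq\sup\{\mathfrak{n}_{\vartheta^{t}(q)}(\Xi^{t}(q,\xi))\ :\ q\in\mathcal{Q},\ \mathfrak{n}_{q}(\xi)=1\}$. The cocycle identity \nameref{DESC: CO1} immediately gives submultiplicativity $\Phi(t+s)\le\Phi(t)\Phi(s)$, so once we know $\Phi(t)<\infty$ for all $t>0$, Fekete's lemma yields that $\lambda_{\mathfrak{n}}(\Xi)=\lim_{t\to+\infty}t^{-1}\ln\Phi(t)=\inf_{t>0}t^{-1}\ln\Phi(t)$ exists, i.e. $\mathfrak{n}$ is admissible. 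Throughout we use the rearrangement of \eqref{EQ: LargLyapExpInLyapMetricIdentity1},
\[
\mathfrak{n}^{p}_{\vartheta^{t}(q)}(\Xi^{t}(q,\xi))=e^{p\nu t}\int_{t}^{T+t}\|e^{-\nu u}\Xi^{u}(q,\xi)\|^{p}_{\mathbb{E}}\,du,
\]
which is valid for all $t\ge0$.

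For the upper bound, recall from Theorem \ref{TH: LengthsFamilyOfNorms} that $\mathfrak{n}_{q}(\xi)\le M^{+}_{\mathfrak{n}}\|\xi\|_{\mathbb{E}}$. First I would bound $\|\Xi^{1}(q,\xi)\|_{\mathbb{E}}$ by a fixed multiple of $\mathfrak{n}_{q}(\xi)$: from \nameref{DESC: UC2} one has $\|\Xi^{1}(q,\xi)\|_{\mathbb{E}}\le C_{1}\|\Xi^{s}(q,\xi)\|_{\mathbb{E}}$ for $s\in[0,\min(1,T)]$, and averaging the $p$-th power against $ds$ over $[0,\min(1,T)]$ produces $\|\Xi^{1}(q,\xi)\|_{\mathbb{E}}\le C_{2}\,\mathfrak{n}_{q}(\xi)$. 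Next, for $u\ge1$ the definition of $\lambda_{1}(\Xi)$ gives $\|\Xi^{u}(q,\xi)\|_{\mathbb{E}}\le M_{\varepsilon}e^{(\lambda_{1}(\Xi)+\varepsilon)(u-1)}\|\Xi^{1}(q,\xi)\|_{\mathbb{E}}$. Substituting into the displayed identity and using $\lambda_{1}(\Xi)+\varepsilon-\nu<0$ to estimate the (possibly infinite) tail integral, one obtains $\Phi(t)<\infty$ for all $t$ and $\Phi(t)\le C_{3}e^{(\lambda_{1}(\Xi)+\varepsilon)t}$ for $t\ge1$, whence $\lambda_{\mathfrak{n}}(\Xi)\le\lambda_{1}(\Xi)+\varepsilon$ and, letting $\varepsilon\to0+$, $\lambda_{\mathfrak{n}}(\Xi)\le\lambda_{1}(\Xi)$ (when $\lambda_{1}(\Xi)=-\infty$ the same computation with any real number in its place gives $\lambda_{\mathfrak{n}}(\Xi)=-\infty$). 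This step uses neither quasi-compactness nor the attractor.

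For the lower bound the obstacle is precisely the non-coercivity of $\mathfrak{n}$: a vector that $\Xi$ has stretched by time $t$ to a large $\|\cdot\|_{\mathbb{E}}$-norm may still have small $\mathfrak{n}_{\vartheta^{t}(q)}$-norm, because the trajectory issuing from it over the subsequent interval $[0,T]$ can collapse. The remedy is to exhibit a genuinely persistent maximal direction. I would first show $\lambda_{1}(\Xi)=\lambda_{1}(\Xi|_{\mathcal{A}})$, where $\mathcal{A}$ is the $2^{\mathcal{Q}}$-attractor and $\Xi|_{\mathcal{A}}$ the induced cocycle over the compact base $(\mathcal{A},\vartheta)$: the inequality $\ge$ is trivial, while for $\le$ one cuts a long orbit into blocks of a fixed large length $L$, observes that for $t$ large all block base points lie in a prescribed small neighbourhood of $\mathcal{A}$ (by the attraction property), and combines the uniform continuity of $q\mapsto\Xi^{L}(q,\cdot)$ near $\mathcal{A}$ (a standard compactness consequence of \nameref{DESC: UC1}) with $\sup_{a\in\mathcal{A}}\|\Xi^{L}(a,\cdot)\|_{\mathcal{L}(\mathbb{E})}=e^{(\lambda_{1}(\Xi|_{\mathcal{A}})+o(1))L}$ and the bound on the initial segment from \nameref{DESC: UC2} to reach a contradiction if $\lambda_{1}(\Xi|_{\mathcal{A}})<\lambda_{1}(\Xi)$. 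Over the compact base $\mathcal{A}$ the cocycle $\Xi|_{\mathcal{A}}$ is still quasi-compact, since $\alpha_{K}(\Xi|_{\mathcal{A}})\le\alpha_{K}(\Xi)<\lambda_{1}(\Xi)=\lambda_{1}(\Xi|_{\mathcal{A}})$, so the multiplicative ergodic theorem is available; combining it with the Ergodic Variational Principle of Appendix \ref{APP: EVPNonCompact} (which identifies $\lambda_{1}(\Xi|_{\mathcal{A}})$ with the supremum, over ergodic $\vartheta$-invariant measures supported on $\mathcal{A}$, of the top Lyapunov exponent) produces, for each $\delta>0$, a point $a_{\delta}\in\mathcal{A}$ and a nonzero $\eta_{\delta}\in\mathbb{E}$ lying in a top Oseledets subspace at $a_{\delta}$ with $\|\Xi^{t}(a_{\delta},\eta_{\delta})\|_{\mathbb{E}}\ge e^{(\lambda_{1}(\Xi)-\delta)t}\|\eta_{\delta}\|_{\mathbb{E}}$ for all sufficiently large $t$.

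Feeding such $(a_{\delta},\eta_{\delta})$ into the displayed identity finishes the argument: the integrand on $[t,t+\min(1,T)]$ is bounded below by a constant times $e^{p(\lambda_{1}(\Xi)-\delta-\nu)u}$ (using $\lambda_{1}(\Xi)-\delta-\nu<0$ for small $\delta$), while $\mathfrak{n}_{a_{\delta}}(\eta_{\delta})$ is a positive finite constant, so $\Phi(t)\ge\mathfrak{n}_{\vartheta^{t}(a_{\delta})}(\Xi^{t}(a_{\delta},\eta_{\delta}))/\mathfrak{n}_{a_{\delta}}(\eta_{\delta})\ge c_{\delta}\,e^{(\lambda_{1}(\Xi)-\delta)t}$ for $t$ large, hence $\lambda_{\mathfrak{n}}(\Xi)\ge\lambda_{1}(\Xi)-\delta$; letting $\delta\to0+$ and combining with the upper bound gives $\lambda_{\mathfrak{n}}(\Xi)=\lambda_{1}(\Xi)$. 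The main obstacle is the preceding step — the reduction to the compact attractor and the extraction there of a single persistent maximal direction — which is exactly where uniform quasi-compactness (to make the multiplicative ergodic theorem / a finite-dimensional reduction available in the infinite-dimensional space $\mathbb{E}$) and the existence of the $2^{\mathcal{Q}}$-attractor (to pass from the possibly non-compact base $\mathcal{Q}$, over which the supremum defining $\lambda_{1}(\Xi)$ need not be realized by any fibre, to a compact one) are indispensable.
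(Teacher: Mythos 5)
Your proof is correct and follows essentially the same route as the paper's: the upper bound is the same consequence of Theorem \ref{TH: LengthsFamilyOfNorms} (you usefully spell out the step relating $\|\Xi^{1}(q,\xi)\|_{\mathbb{E}}$ to $\mathfrak{n}_{q}(\xi)$ via \nameref{DESC: UC2}, which the paper leaves implicit), and the lower bound is obtained exactly as in the paper by reducing to the $2^{\mathcal{Q}}$-attractor — your block-cutting argument is a re-derivation of Lemma \ref{LEM: WeakConcetrationSubadditive} / Corollary \ref{COR: DVolumesUniformAsDVolumesOverMeasure} — and then invoking uniform quasi-compactness with the Multiplicative Ergodic Theorem on $\mathcal{A}$ to extract a persistent maximal direction to feed into the identity \eqref{EQ: LargLyapExpInLyapMetricIdentity1}. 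The only cosmetic deviations are that you work with a $\delta$-approximate point $a_{\delta}$ and get admissibility via Fekete/submultiplicativity of $\Phi$, whereas the paper extracts a single $(q_{0},\xi_{0})$ realizing $\lambda_{1}(\Xi)$ exactly and reads off admissibility from the matching two-sided bounds.
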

\begin{proof}
	To explain the idea of the proof, let us suppose first that there exist $q_{0} \in \mathcal{Q}$ and $\xi_{0} \in \mathbb{E}$ with $\|\xi_{0}\|_{\mathbb{E}} = 1$ such that
	\begin{equation}
		\label{EQ: ProofLELyapMetric1}
		\lambda_{1}(\Xi) = \lim_{t \to +\infty} \frac{\ln \| \Xi^{t}(q_{0},\xi_{0})\|_{\mathbb{E}}}{t}.
	\end{equation}
    Then from \eqref{EQ: FamilyOfNormsDef}, the Mean Value Theorem, and \nameref{DESC: UC2}, we get a constant $C>0$ such that for any $t \geq 0$ we have
    \begin{equation}
    	\label{EQ: LyapMetricULEEstimate}
    	\begin{split}
    		\mathfrak{n}_{\vartheta^{t}(q_{0})}(\Xi^{t}(q_{0},\xi_{0})) &= e^{\nu t} \left(\int_{t}^{T+t}\|e^{-\nu s} \Xi^{s}(q_{0},\xi_{0}) \|^{p}_{\mathbb{E}}ds\right)^{1/p} \geq\\ \geq e^{\nu t} \left(\int_{t}^{t+1}\|e^{-\nu s} \Xi^{s}(q_{0},\xi_{0}) \|^{p}_{\mathbb{E}}ds\right)^{1/p} &= e^{\nu t} e^{-\nu s(t)} \| \Xi^{s(t)}(q_{0},\xi_{0}) \|_{\mathbb{E}} \geq \\ &\geq C \| \Xi^{t+1}(q_{0},\xi_{0}) \|_{\mathbb{E}}
    	\end{split}
    \end{equation}
    for some $s(t) \in [t,t+1]$. Dividing both sides of \eqref{EQ: LyapMetricULEEstimate} by $\mathfrak{n}_{q_{0}}(\xi_{0})$, then passing to the supremum on the left-hand side as given below, and finally taking the natural logarithms and dividing by $t > 0$, we obtain
    \begin{equation}
    	\frac{\ln \sup_{q, \xi}\mathfrak{n}_{\vartheta^{t}(q)}(\Xi^{t}(q,\xi))}{t} \geq \frac{\ln C - \ln \mathfrak{n}_{q_{0}}(\xi_{0})}{t} + 
    	\frac{t+1}{t} \cdot \frac{\ln\| \Xi^{t+1}(q_{0},\xi_{0})\|_{\mathbb{E}}}{t+1},
    \end{equation}
     where the supremum is taken over all $q \in \mathcal{Q}$ and $\xi \in \mathbb{E}$ such that $\mathfrak{n}_{q}(\xi) = 1$. Passing to the limit inferior in the above inequality as $t \to +\infty$ yields $\underbar{$\lambda$}_{\mathfrak{n}}(\Xi) \geq \lambda_{1}(\Xi)$.
     
     Now we are going to show the existence of such $q_{0}$ and $\xi_{0}$. If $\lambda_{1}(\Xi) = -\infty$, then any $q_{0}$ and nonzero $\xi_{0}$ satisfy \eqref{EQ: ProofLELyapMetric1}. So, we may assume that $\lambda_{1}(\Xi) > -\infty$.
     
     First, by the Concentration Principle (see Corollary \ref{COR: DVolumesUniformAsDVolumesOverMeasure}), which for $\lambda_{1}(\Xi)$ (i.e., $d=1$) does not require the Hilbert space structure, we obtain an ergodic invariant probability measure $\mu$ on the $2^{\mathcal{Q}}$-attractor $\mathcal{A}$ such that the relations
     \begin{equation}
     	\lambda_{1}(\Xi) = \lim_{t \to +\infty}\frac{\ln \sup_{q \in \mathcal{Q}}\| \Xi^{t}(q,\cdot) \|_{\mathcal{L}(\mathbb{E})}}{t} = \lim_{t \to +\infty}\frac{\ln \|\Xi^{t}(q,\cdot)\|_{\mathcal{L}(\mathbb{E})}}{t},
     \end{equation}
     hold for $\mu$-almost every point $q \in \mathcal{A}$.
     
     By applying the Multiplicative Ergodic Theorem (see, e.g., \cite[Theorem 2.2.2]{Thieullen1992}) to the measure $\mu$, we obtain
     \begin{equation}
     	\lambda_{1}(\Xi) = \lim_{t \to +\infty}\frac{\ln \|\Xi^{t}(q,\cdot)\|_{\mathcal{L}(\mathbb{E})}}{t} = \lim_{t \to +\infty}\frac{\ln \|\Xi^{t}(q,\xi)\|_{\mathbb{E}}}{t}
     \end{equation}
     for $\mu$-almost all $q \in \mathcal{A}$ and any $\xi$ taken from the complement in $\mathbb{E}$ to the second subspace from the Oseledets filtration\footnote{This is the space $F_{2}$ in terms of \cite{Thieullen1992}.} over $q$, which is nontrivial because $\alpha_{K}(\Xi) < \lambda_{1}(\Xi)$. It is sufficient to take as $q_{0}$ and $\xi_{0}$ any such points.
\end{proof}

Let $\mathfrak{N}(\Xi)$ be the set of all metrics $\mathfrak{n} = \{ \mathfrak{n}_{q}\}_{q \in \mathcal{Q}}$ in $\mathbb{E}$ over $\mathcal{Q}$ such that $\mathfrak{n}$ is continuous and uniformly weaker than $\|\cdot\|_{\mathbb{E}}$, see \eqref{EQ: UniformWeaknessFamNorm}; each $\mathfrak{n}_{q}$ is a norm on $\mathbb{E}$;  $\mathfrak{n}$ is admissible for $\Xi$, see below \eqref{EQ: LargetLyapunovExponentWRTMetric}; and $\lambda_{\mathfrak{n}}(\Xi) \geq \lambda_{1}(\Xi)$. If $\mathbb{E} = \mathbb{H}$ is a Hilbert space, we additionally require that each $\mathfrak{n}_{q}$ is induced by an inner product. By combining Theorems \ref{TH: LengthsFamilyOfNorms} and \ref{TH: LyapExpAdaptedMetricQuasiCom} (see Remark \ref{REM: HilbertSpaceAdaptedMetric}), we obtain the following variational description of $\lambda_{1}(\Xi)$.
\begin{theorem}
	\label{TH: VariationalDescriptionLargestLyapunovExponent}
	Under the conditions of Theorem \ref{TH: LyapExpAdaptedMetricQuasiCom}, for the largest uniform Lyapunov exponent $\lambda_{1}(\Xi)$ of $\Xi$, we have
	\begin{equation}
		\lambda_{1}(\Xi) = \inf_{ \mathfrak{n} \in \mathfrak{N}(\Xi)} \alpha^{+}_{\mathfrak{n}}(\Xi),
	\end{equation} 
    where $\alpha^{+}_{\mathfrak{n}}(\Xi)$ is given by \eqref{EQ: AlphaPlusInfExpMaximizationDefinition}.
\end{theorem}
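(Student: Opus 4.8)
The plan is to establish the two inequalities separately; both amount to assembling the ingredients already in place, so the proof is short.

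\textbf{Lower bound.} Fix an arbitrary $\mathfrak{n} \in \mathfrak{N}(\Xi)$. By the density of the subspaces in \nameref{DESC: GE1} — the remark immediately following \eqref{EQ: AlphaPlusInfExpMaximizationDefinition} — we always have $\overline{\lambda}_{\mathfrak{n}}(\Xi) \leq \alpha^{+}_{\mathfrak{n}}(\Xi)$. Since $\mathfrak{n}$ is admissible, $\lambda_{\mathfrak{n}}(\Xi) = \overline{\lambda}_{\mathfrak{n}}(\Xi)$, and by the definition of $\mathfrak{N}(\Xi)$ we have $\lambda_{\mathfrak{n}}(\Xi) \geq \lambda_{1}(\Xi)$. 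Chaining these, $\alpha^{+}_{\mathfrak{n}}(\Xi) \geq \lambda_{\mathfrak{n}}(\Xi) \geq \lambda_{1}(\Xi)$, and taking the infimum over $\mathfrak{n} \in \mathfrak{N}(\Xi)$ yields $\inf_{\mathfrak{n} \in \mathfrak{N}(\Xi)} \alpha^{+}_{\mathfrak{n}}(\Xi) \geq \lambda_{1}(\Xi)$.

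\textbf{Upper bound.} It suffices to produce, for each $\nu > \lambda_{1}(\Xi)$, a metric $\mathfrak{n}_{\nu} \in \mathfrak{N}(\Xi)$ with $\alpha^{+}_{\mathfrak{n}_{\nu}}(\Xi) < \nu$; letting $\nu \downarrow \lambda_{1}(\Xi)$ then forces $\inf_{\mathfrak{n}} \alpha^{+}_{\mathfrak{n}}(\Xi) \leq \lambda_{1}(\Xi)$. I would take $\mathfrak{n}_{\nu}$ to be the Lyapunov metric of Theorem \ref{TH: LengthsFamilyOfNorms} associated with the exponent $\nu$, with $p = 2$ (so that in the Hilbert case $\mathbb{E} = \mathbb{H}$ each $\mathfrak{n}_{q}$ is induced by an inner product, as required in the definition of $\mathfrak{N}(\Xi)$; see Remark \ref{REM: HilbertSpaceAdaptedMetric}) and with finite $T \geq T_{0}(\nu)$ as in that theorem. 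Theorem \ref{TH: LengthsFamilyOfNorms} then gives directly that $\mathfrak{n}_{\nu}$ is continuous, uniformly weaker than $\|\cdot\|_{\mathbb{E}}$, consists of genuine norms, and satisfies $\alpha^{+}_{\mathfrak{n}_{\nu}}(\Xi) < \nu$. What remains is admissibility together with $\lambda_{\mathfrak{n}_{\nu}}(\Xi) \geq \lambda_{1}(\Xi)$, and this is precisely the conclusion of Theorem \ref{TH: LyapExpAdaptedMetricQuasiCom}: under the standing hypotheses (uniform quasi-compactness of $\Xi$ and the existence of a $2^{\mathcal{Q}}$-attractor) that metric is admissible with $\lambda_{\mathfrak{n}_{\nu}}(\Xi) = \lambda_{1}(\Xi)$. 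Hence $\mathfrak{n}_{\nu} \in \mathfrak{N}(\Xi)$, which closes the argument; the degenerate case $\lambda_{1}(\Xi) = -\infty$ is handled identically by letting $\nu \to -\infty$.

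\textbf{Where the real difficulty lies.} This statement is essentially a packaging of the two preceding theorems, so the proof itself presents no obstacle beyond checking that the metric built in the upper-bound step satisfies \emph{all} the defining conditions of $\mathfrak{N}(\Xi)$ at once — in particular that the inner-product requirement in the Hilbert setting is compatible with the rest, which is exactly why the choice $p = 2$ is made. The genuine difficulty is inherited: it sits in the admissibility half of Theorem \ref{TH: LyapExpAdaptedMetricQuasiCom}, where the Concentration Principle and the Multiplicative Ergodic Theorem are invoked to locate a base point $q_{0}$ and a vector $\xi_{0}$ that realize the top exponent, the non-coercivity of the Lyapunov metric being the reason one cannot argue more directly.
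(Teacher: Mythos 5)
Your proof is correct and follows exactly the intended route: the paper gives no explicit proof, stating only that the result follows ``by combining Theorems \ref{TH: LengthsFamilyOfNorms} and \ref{TH: LyapExpAdaptedMetricQuasiCom} (see Remark \ref{REM: HilbertSpaceAdaptedMetric})'', which is precisely the assembly you carry out. Your lower-bound chain $\lambda_{1}(\Xi) \leq \lambda_{\mathfrak{n}}(\Xi) = \overline{\lambda}_{\mathfrak{n}}(\Xi) \leq \alpha^{+}_{\mathfrak{n}}(\Xi)$, your use of the $p=2$ Lyapunov metric to satisfy the inner-product requirement in the Hilbert setting, and your observation that Theorem \ref{TH: LyapExpAdaptedMetricQuasiCom} supplies the admissibility plus $\lambda_{\mathfrak{n}}(\Xi) = \lambda_{1}(\Xi)$ needed for membership in $\mathfrak{N}(\Xi)$ are all as intended.
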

\section{Uniform Lyapunov exponents and Lyapunov dimension for cocycles in Hilbert spaces}
\label{SEC: VariationalDescUnifLyapExpCompCoc}

Throughout this section, we consider a uniformly continuous linear cocycle $\Xi$ over a semiflow $(\mathcal{Q},\vartheta)$ acting on a real or complex separable Hilbert space $\mathbb{H}$. Let $\langle\cdot,\cdot\rangle_{\mathbb{H}}$ denote the inner product in $\mathbb{H}$.

We are going to briefly introduce some basic constructions related to tensor and exterior products of Hilbert spaces. For more details, we refer to \cite{Anikushin2023Comp,Temam1997}.

Given $m \geq 1$, let $\mathbb{H}^{\odot m}$ denote the $m$-fold algebraic tensor product of $\mathbb{H}$. It is spanned by elements $\xi_{1} \otimes \cdots \otimes \xi_{m}$, called decomposable tensors, with $\xi_{1},\ldots,\xi_{m} \in \mathbb{H}$. Then the $m$-fold tensor product $\mathbb{H}^{\otimes m}$ of $\mathbb{H}$ is defined by the completion of $\mathbb{H}^{\odot m}$ in the norm $|\cdot|_{\mathbb{H}^{\otimes m}}$ induced by the inner product $\langle\cdot,\cdot\rangle_{\mathbb{H}^{\otimes m}}$ given by
\begin{equation}
	\label{EQ: TensorProductHm}
	\langle\xi_{1} \otimes \cdots \otimes \xi_{m}, \eta_{1} \otimes \cdots \otimes \eta_{m} \rangle_{\mathbb{H}^{\otimes m}} = \langle \xi_{1},\eta_{1}\rangle_{\mathbb{H}} \cdots \langle\xi_{m},\eta_{m}\rangle_{\mathbb{H}},
\end{equation}
where $\xi_{j},\eta_{j} \in \mathbb{H}$ for each $j \in \{1,\ldots,m\}$. Note that \eqref{EQ: TensorProductHm} is well defined and determines an inner product on $\mathbb{H}^{\odot m}$ due to the universal property of algebraic tensor products.

Let $\mathbb{S}_{m}$ be the symmetric group on $\{ 1,\ldots, m \}$. Define
\begin{equation}
	\label{EQ: WedgeProjectorDef}
	 \Pi^{\wedge}_{m}(\xi_{1} \otimes \cdots \otimes \xi_{m}) \coloneq \frac{1}{m!}\sum_{\sigma \in \mathbb{S}_{m}} (-1)^{\sigma} \xi_{\sigma(1)} \otimes \cdots \otimes \xi_{\sigma(m)}
\end{equation}
for all $\xi_{1},\ldots,\xi_{m} \in \mathbb{H}$. It can be shown that the above formula determines an orthogonal projector $\Pi^{\wedge}_{m}$ in $\mathbb{H}^{\otimes m}$.

Now, define the $m$-fold exterior power $\mathbb{H}^{\wedge m}$ of $\mathbb{H}$ by the range of $\Pi^{\wedge}_{m}$. For $\xi_{1},\ldots,\xi_{m} \in \mathbb{H}$, we set $\xi_{1} \wedge \cdots \wedge \xi_{m} \coloneq \Pi^{\wedge}_{m}(\xi_{1} \otimes \cdots \otimes \xi_{m})$. Such elements span a dense subspace in $\mathbb{H}^{\wedge m}$, and \eqref{EQ: TensorProductHm} induces the inner product $\langle\cdot,\cdot\rangle_{\mathbb{H}^{\wedge m}}$ in $\mathbb{H}^{\wedge m}$ given by
\begin{equation}
	\label{EQ: InnerProductWedgeHDefinition}
	\langle \xi_{1} \wedge \cdots \wedge \xi_{m} , \eta_{1} \wedge \cdots \wedge \eta_{m}\rangle_{\mathbb{H}^{\wedge m}} \coloneq \frac{1}{m!}\operatorname{det}[ \langle\xi_{k},\eta_{j}\rangle_{\mathbb{H}} ]_{k,j =1}^{m},
\end{equation}
where $\xi_{j},\eta_{j} \in \mathbb{H}$ for each $j \in \{1,\ldots,m \}$.

Suppose $\mathbb{H}_{1}$ and $\mathbb{H}_{2}$ are separable Hilbert spaces with norms $|\cdot|_{\mathbb{H}_{1}}$ and $|\cdot|_{\mathbb{H}_{2}}$, respectively. For each bounded linear operator $L \in \mathcal{L}(\mathbb{H}_{1};\mathbb{H}_{2})$, there is the associated \textit{$m$-fold multiplicative compound} $L^{\wedge m} \in \mathcal{L}(\mathbb{H}^{\wedge m}_{1};\mathbb{H}^{\wedge m}_{2})$ of $L$ given by
\begin{equation}
	L^{\wedge m}(\xi_{1}\wedge \cdots \wedge \xi_{m}) \coloneq L\xi_{1} \wedge \cdots \wedge L\xi_{m}
\end{equation}
for all $\xi_{1},\ldots,\xi_{m} \in \mathbb{H}_{1}$.

Recall that the singular values $\sigma_{1}(L) \geq \sigma_{2}(L) \geq \ldots$ of $L$ are defined by
\begin{equation}
	\label{EQ: SingularValueDef}
	\sigma_{k} \coloneq \sup_{\substack{\mathbb{L} \subset \mathbb{H}_{1} \\ \dim \mathbb{L} = k}}\inf_{\substack{\xi \in \mathbb{L} \\ |\xi|_{\mathbb{H}_{1}} = 1}} |L\xi|_{\mathbb{H}_{2}},
\end{equation}
where the supremum is taken over all $k$-dimensional subspaces $\mathbb{L}$ in $\mathbb{H}_{1}$ and $k=1,2,\ldots$. If $\mathbb{H}$ is finite-dimensional, we set $\sigma_{k}(L) \coloneq 0$ for any $k > \dim\mathbb{H}$. Note that the monotonicity of $\sigma_{k}$ in $k$ is clear from the definition. 

For any $d = m + \theta$ with integer $m \geq 0$ and $\theta \in [0,1]$, we define the \textit{function of singular values} of $L$ by
\begin{equation}
	\label{EQ: FuncSingularValueDef}
	 \omega_{d}(L) \coloneq \prod_{j=1}^{m} \sigma_{j}(L) \cdot \sigma^{\theta}_{m+1}(L) = (\omega_{m}(L))^{1-\theta} \cdot (\omega_{m+1}(L))^{\theta},
\end{equation}
where we set $\omega_{0}(L) \coloneq 1$ by definition.

We will need the following well-known lemma, which is essentially \cite[Proposition 1.4, Chapter V]{Temam1997}; however, here we present a more conceptual proof.
\begin{lemma}
	\label{LEM: TemamLemmaWedgeNormComputation}
	For each $L \in \mathcal{L}(\mathbb{H}_{1};\mathbb{H}_{2})$ and any $m=1,2,\ldots$, we have
	\begin{equation}
		\label{EQ: ExteriorNormSingularValuesIdentity}
		\| L^{\wedge m} \|_{\mathcal{L}(\mathbb{H}^{\wedge m}_{1};\mathbb{H}^{\wedge m}_{2})} = \sup_{\substack{e_{1}, \ldots, e_{m} \in \mathbb{H}_{1},\\ |e_{1} \wedge \cdots \wedge e_{m}|_{\mathbb{H}^{\wedge m}_{1}} = 1} } |L^{\wedge m}(e_{1} \wedge \cdots \wedge e_{m})|_{\mathbb{H}^{\wedge m}_{2}} = \Pi_{j=1}^{m} \sigma_{j}(L),
	\end{equation}
	where the supremum is taken over orthogonal vectors $e_{1},\ldots,e_{m} \in \mathbb{H}_{1}$.
\end{lemma}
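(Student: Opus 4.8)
The plan is to prove the two equalities in \eqref{EQ: ExteriorNormSingularValuesIdentity} separately. The first equality (that the operator norm of $L^{\wedge m}$ equals the supremum over \emph{orthogonal} families) is a general fact about exterior powers: any decomposable element $\xi_1 \wedge \ldots \wedge \xi_m$ of $\mathbb{H}_1^{\wedge m}$ can be rewritten, by the Gram–Schmidt process applied to $\xi_1,\ldots,\xi_m$ inside their span, as a scalar multiple of $e_1 \wedge \ldots \wedge e_m$ for an orthogonal (indeed orthonormal up to norms) family, since wedging is alternating and multilinear and adding a multiple of $\xi_j$ to $\xi_k$ ($k \neq j$) does not change the wedge. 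Hence the supremum of $|L^{\wedge m} u|$ over decomposable $u$ with $|u|_{\mathbb{H}_1^{\wedge m}}=1$ is already attained on orthogonal families; and since $\Pi^{\wedge}_m$ is an orthogonal projector and decomposable elements span a dense subspace of $\mathbb{H}_1^{\wedge m}$, it remains to note that the operator norm of $L^{\wedge m}$ is the supremum of $|L^{\wedge m} u|$ over \emph{all} unit $u$, not merely decomposable ones. This requires the standard observation that for a linear map into a Hilbert space the norm is computed by pairing: $|L^{\wedge m}u| = \sup_{|w|=1} |\langle L^{\wedge m} u, w\rangle|$, and both $u$ and $w$ can be approximated by finite sums of decomposable elements; a short convexity/bilinearity argument (or: diagonalizing $(L^{\wedge m})^* L^{\wedge m} = (L^*L)^{\wedge m}$, whose eigenvectors $e_{i_1} \wedge \ldots \wedge e_{i_m}$ built from an eigenbasis of $L^*L$ are themselves decomposable and orthogonal) shows the decomposable supremum equals the full operator norm.

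For the second equality, the cleanest route is precisely the spectral one just hinted at. Assume first $\mathbb{H}_1$ is finite-dimensional (or that $L^*L$ has discrete spectrum); the general case follows by an approximation/compact-operator-free argument using the variational definition of $\sigma_k$ directly, or by restricting to finite-dimensional subspaces since $\omega_m(L)$ and the norm of $L^{\wedge m}$ are both suprema over finite-dimensional subspaces. Let $\{e_i\}$ be an orthonormal basis of $\mathbb{H}_1$ with $L^*L e_i = \sigma_i(L)^2 e_i$, ordered so that $\sigma_1 \geq \sigma_2 \geq \ldots$. Then $\{e_{i_1} \wedge \ldots \wedge e_{i_m} : i_1 < \ldots < i_m\}$ is an orthogonal basis of $\mathbb{H}_1^{\wedge m}$ (by \eqref{EQ: InnerProductWedgeHDefinition}), and $(L^{\wedge m})^* L^{\wedge m} = (L^* L)^{\wedge m}$ acts on it diagonally with eigenvalue $\sigma_{i_1}(L)^2 \cdots \sigma_{i_m}(L)^2$. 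The operator norm of $L^{\wedge m}$ is the square root of the largest such eigenvalue, which by the monotone ordering of the $\sigma_i$ is $\sigma_1(L)^2 \cdots \sigma_m(L)^2$, i.e. $\|L^{\wedge m}\| = \prod_{j=1}^m \sigma_j(L)$. The identity $(L^{\wedge m})^* L^{\wedge m} = (L^*L)^{\wedge m}$ itself follows by checking on decomposable elements using \eqref{EQ: InnerProductWedgeHDefinition}: $\langle L^{\wedge m}(\xi_1 \wedge \ldots), L^{\wedge m}(\eta_1 \wedge \ldots)\rangle = \frac{1}{m!}\det[\langle L\xi_k, L\eta_j\rangle] = \frac{1}{m!}\det[\langle L^*L\xi_k, \eta_j\rangle] = \langle (L^*L)^{\wedge m}(\xi_1 \wedge \ldots), \eta_1 \wedge \ldots\rangle$.

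The main obstacle, and the only genuinely non-cosmetic point, is the passage to the infinite-dimensional, non-compact setting: $L^*L$ need not have any eigenvalues at all, so the spectral argument does not literally apply. I would handle this by reducing to finite dimensions from the outset — for any $\varepsilon > 0$ pick an $m$-dimensional subspace $\mathbb{L} \subset \mathbb{H}_1$ nearly realizing $\omega_m(L) = \prod_{j=1}^m \sigma_j(L)$ in the min–max sense and observe $\|L^{\wedge m}\| \geq \|(L|_{\mathbb{L}})^{\wedge m}\|$, while conversely any decomposable unit element of $\mathbb{H}_1^{\wedge m}$ lives in some $m$-dimensional subspace, on which the finite-dimensional identity plus the first equality of the lemma give $|L^{\wedge m} u| \leq \prod_{j=1}^m \sigma_j(L|_{\mathbb{L}'}) \leq \prod_{j=1}^m \sigma_j(L)$ by monotonicity of singular values under restriction. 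Combining the two inequalities yields the claim. This is why the remark in the text that "we present a more conceptual proof" is apt: the conceptual content is entirely the algebraic identity $(L^{\wedge m})^*L^{\wedge m} = (L^*L)^{\wedge m}$ together with orthogonality of the wedge basis, and the finite-dimensional reduction is routine bookkeeping.
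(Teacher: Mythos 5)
Your route differs from the paper's at the crucial noncompact step: the paper first establishes \eqref{EQ: ExteriorNormSingularValuesIdentity} for compact $L$ via the Hilbert--Schmidt eigenbasis of $L^{*}L$, then passes to general $L$ by truncating with finite-rank projections $\Pi^{f}_{k}L\Pi^{e}_{k}$ and taking a limit, whereas you propose a direct reduction to finite-dimensional restrictions $L|_{\mathbb{L}}$ with $\dim\mathbb{L}<\infty$. Both arguments share the same algebraic core, namely $(L^{\wedge m})^{*}L^{\wedge m}=(L^{*}L)^{\wedge m}$ and the diagonalization of $(L^{*}L)^{\wedge m}$ on a wedge eigenbasis.

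There is, however, a genuine gap in your handling of the noncompact case. In your third paragraph you bound $|L^{\wedge m}u|$ only over \emph{decomposable} unit $u$; this controls the middle term of \eqref{EQ: ExteriorNormSingularValuesIdentity} but not the left-hand operator norm, so you still need $\|L^{\wedge m}\|\le\prod_{j}\sigma_{j}(L)$, and to get that you need the bound on a \emph{dense} set of $u$, not just decomposables. Your first paragraph was supposed to supply this missing piece, but neither of the two devices there works in general: the ``convexity/bilinearity'' argument fails because the set of decomposable elements is not convex and the triangle inequality applied to a finite sum $\sum c_{i}u_{i}$ of decomposables gives no control, since $\sum|c_{i}||u_{i}|$ can far exceed $|u|$; and the parenthetical spectral fallback (eigenbasis of $L^{*}L$) is precisely what is unavailable for noncompact $L$, as you yourself note in paragraph three. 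The fix stays entirely within your framework and is worth stating: replace ``decomposable unit element'' by ``unit element $u\in\mathbb{L}^{\wedge m}$ for some finite-dimensional $\mathbb{L}\subset\mathbb{H}_{1}$.'' Such elements (general, not necessarily decomposable) are dense in $\mathbb{H}_{1}^{\wedge m}$, they satisfy $L^{\wedge m}u=(L|_{\mathbb{L}})^{\wedge m}u$, and the finite-dimensional identity plus the monotonicity $\sigma_{j}(L|_{\mathbb{L}})\le\sigma_{j}(L)$ gives $|L^{\wedge m}u|\le\prod_{j=1}^{m}\sigma_{j}(L)\,|u|$; density and continuity of $L^{\wedge m}$ then yield $\|L^{\wedge m}\|\le\prod_{j}\sigma_{j}(L)$, and combined with your lower bound both equalities in \eqref{EQ: ExteriorNormSingularValuesIdentity} follow simultaneously (so your first paragraph, apart from the Gram--Schmidt observation, is not actually needed). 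With this correction, your finite-dimensional reduction is a legitimate alternative to the paper's truncation argument and in fact sidesteps the convergence claim for the truncated operators that the paper leaves to the reader.
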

\begin{proof}
	We first give a proof under the assumption that $L$ is compact. Then the adjoint $L^{*}$ of $L$ is also compact, and, consequently, $L^{*}L$ is a compact nonnegative definite self-adjoint operator in $\mathbb{H}_{1}$. From the Courant--Fischer--Weyl min-max principle, we obtain that the squared singular values of $L$ coincide with the arranged eigenvalues of $L^{*}L$. By the Hilbert--Schmidt theorem, there exists an orthonormal eigenbasis $\{e_{j} \}_{j \geq 1}$ such that $L^{*}Le_{j} = \sigma^{2}_{j}(L) e_{j}$ for any $j$. In particular, the vectors $e_{j_{1}} \wedge \cdots \wedge e_{j_{m}}/\sqrt{m!}$ taken over all indices $1 \leq j_{1} < \cdots < j_{m}$ form an orthonormal basis in $\mathbb{H}^{\wedge m}_{1}$. Consequently, for any Fourier coefficients $\{c_{j_{1}\ldots j_{m}}\}$ in this basis, we have (the sums are taken over all possible multi-indices $j_{1}\ldots j_{m}$)
	\begin{equation}
		\begin{split}
			(L^{*}L)^{\wedge m}\left( \sum_{j_{1}\ldots j_{m}} \frac{c_{j_{1}\ldots j_{m}}}{\sqrt{m!}} e_{j_{1}} \wedge \cdots \wedge e_{j_{m}}  \right) = \sum_{j_{1}\ldots j_{m}} \frac{c_{j_{1}\ldots j_{m}}}{\sqrt{m!}} L^{*}Le_{j_{1}} \wedge \cdots \wedge L^{*}Le_{j_{m}} =\\= \sum_{j_{1}\ldots j_{m}} \Pi_{k=1}^{m}\sigma^{2}_{j_{k}} \cdot \frac{c_{j_{1}\ldots j_{m}}}{\sqrt{m!}} e_{j_{1}} \wedge \cdots \wedge e_{j_{m}}.
		\end{split}
	\end{equation}
	From this and since $(L^{*}L)^{\wedge m} = (L^{*})^{\wedge m} L^{\wedge m}$ and $(L^{\wedge m})^{*} = (L^{*})^{\wedge m}$, we obtain
	\begin{equation}
		\| L^{\wedge m} \|^{2}_{\mathcal{L}(\mathbb{H}^{\wedge m}_{1};\mathbb{H}^{\wedge m}_{2})} = \| (L^{*} L)^{\wedge m} \|_{\mathcal{L}(\mathbb{H}^{\wedge m}_{1};\mathbb{H}^{\wedge m}_{2})} \leq \Pi_{j=1}^{m} \sigma^{2}_{j}(L).
	\end{equation}
	However, it is clear that the estimate is achieved on $e_{1} \wedge \cdots \wedge e_{m}$. This establishes \eqref{EQ: ExteriorNormSingularValuesIdentity} for compact operators $L$.
	
	For general $L$, we take orthonormal bases $\{e_{i}\}_{i \geq 1}$ in $\mathbb{H}_{1}$ and $\{f_{j}\}_{j \geq 1}$ in $\mathbb{H}_{2}$. For any $k=1,2,\ldots$, consider the projectors $\Pi^{e}_{k}$ and $\Pi^{f}_{k}$ onto $\operatorname{Span}\{ e_{1},\ldots,e_{k} \}$ and $\operatorname{Span}\{ f_{1},\ldots,f_{k} \}$, respectively. Since the truncated operators $\Pi^{f}_{k} L \Pi^{e}_{k}$ have finite-dimensional ranges, they compact and satisfy \eqref{EQ: ExteriorNormSingularValuesIdentity}. Then, it is straightforward to verify that in the limit as $k \to \infty$, all the quantities from \eqref{EQ: ExteriorNormSingularValuesIdentity} for the truncated operators converge to those of $L$ (this is the usual way to compute them in numerical experiments).
\end{proof}

An immediate corollary of Lemma \ref{LEM: TemamLemmaWedgeNormComputation} and the submultiplicativity of the operator norm is the following inequality (also known as the Horn inequality \cite{KuzReit2020}).
\begin{corollary}
	\label{COR: HornInequality}
	For any operators $L_{1} \in \mathcal{L}(\mathbb{H}_{1}, \mathbb{H}_{2})$ and $L_{2} \in \mathcal{L}(\mathbb{H}_{2};\mathbb{H}_{3})$, we have
	\begin{equation}
		\omega_{d}(L_{2}L_{1}) \leq \omega_{d}(L_{1}) \omega_{d}(L_{2}) \qquad \text{for any} \quad d \geq 0.
	\end{equation}
\end{corollary}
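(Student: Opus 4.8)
The plan is to combine the norm formula for multiplicative compounds from Lemma \ref{LEM: TemamLemmaWedgeNormComputation} with the obvious submultiplicativity of the operator norm on exterior powers. First I would recall that for the composition $L_2 L_1 \in \mathcal{L}(\mathbb{H}_1;\mathbb{H}_3)$, the functorial property of the $m$-fold multiplicative compound gives $(L_2 L_1)^{\wedge m} = L_2^{\wedge m} L_1^{\wedge m}$ as operators in $\mathcal{L}(\mathbb{H}_1^{\wedge m};\mathbb{H}_3^{\wedge m})$; this is immediate on decomposable elements $\xi_1 \wedge \ldots \wedge \xi_m$ and extends by density and boundedness. Submultiplicativity of the operator norm then yields
\begin{equation*}
	\| (L_2 L_1)^{\wedge m} \| \leq \| L_2^{\wedge m} \| \cdot \| L_1^{\wedge m} \|.
\end{equation*}
By Lemma \ref{LEM: TemamLemmaWedgeNormComputation}, the left-hand side equals $\prod_{j=1}^{m}\sigma_j(L_2 L_1) = \omega_m(L_2 L_1)$, and each factor on the right equals $\omega_m(L_1)$ and $\omega_m(L_2)$ respectively. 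Hence $\omega_m(L_2 L_1) \leq \omega_m(L_1)\,\omega_m(L_2)$ for every integer $m \geq 1$; the case $m=0$ is trivial since $\omega_0 \equiv 1$.

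To pass from integer orders to arbitrary real $d = m + \gamma$ with $m \geq 0$ integer and $\gamma \in [0,1]$, I would use the interpolation identity $\omega_d(L) = (\omega_m(L))^{1-\gamma}(\omega_{m+1}(L))^{\gamma}$ built into the definition of $\omega_d$. Writing this out for $L_2 L_1$ and applying the already-established integer-order inequality to both $\omega_m$ and $\omega_{m+1}$ gives
\begin{equation*}
	\omega_d(L_2 L_1) \leq \bigl(\omega_m(L_1)\omega_m(L_2)\bigr)^{1-\gamma}\bigl(\omega_{m+1}(L_1)\omega_{m+1}(L_2)\bigr)^{\gamma},
\end{equation*}
and regrouping the factors according to the subscript $1$ or $2$ recovers exactly $\omega_d(L_1)\,\omega_d(L_2)$. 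This finishes the argument.

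There is no serious obstacle here, since Lemma \ref{LEM: TemamLemmaWedgeNormComputation} does the heavy lifting; the only points requiring a line of care are the verification that $(L_2 L_1)^{\wedge m} = L_2^{\wedge m} L_1^{\wedge m}$ (which should be stated as a standard fact about the functoriality of exterior powers, checked on decomposable tensors and extended by continuity using boundedness of all operators involved) and the bookkeeping in the interpolation step, where one must be careful that $\sigma_j(L)$ can vanish — but since all quantities are nonnegative and the inequalities are preserved under multiplication and raising to powers in $[0,1]$, no difficulty arises even when some $\omega$'s are zero. I would present this as a two-sentence proof.
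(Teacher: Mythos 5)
Your proof is correct and follows exactly the route the paper intends: Lemma \ref{LEM: TemamLemmaWedgeNormComputation} identifies $\|L^{\wedge m}\|$ with $\omega_m(L)$, submultiplicativity of the operator norm together with $(L_2L_1)^{\wedge m}=L_2^{\wedge m}L_1^{\wedge m}$ gives the integer case, and the definitional interpolation handles fractional $d$. The paper states this as an immediate corollary without writing out the steps, so your proposal is a faithful (and slightly more detailed) expansion of the same argument.
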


Now we define the \textit{$m$-fold multiplicative compound} $\Xi_{m}$ of $\Xi$ as the cocycle in $\mathbb{H}^{\wedge m}$ with the mappings $\Xi^{t}_{m}(q,\cdot)$ for all $q \in \mathcal{Q}$ and $t \geq 0$ given by the $m$-fold multiplicative compound of $\Xi^{t}(q,\cdot)$, i.e.,
\begin{equation}
	\label{EQ: CompoundCocycleDefMetricsLiov}
	\Xi^{t}_{m}(q,\xi_{1} \wedge \cdots \wedge \xi_{m}) \coloneq \Xi^{t}(q,\xi_{1}) \wedge \cdots \wedge \Xi^{t}(q,\xi_{m})
\end{equation}
for all $\xi_{1},\ldots,\xi_{m} \in \mathbb{H}$. By \cite[Proposition 3.1]{Anikushin2023Comp}, $\Xi_{m}$ is a uniformly continuous linear cocycle in $\mathbb{H}^{\wedge m}$ over $(\mathcal{Q},\vartheta)$.

For any $m=1,2,\ldots$, let us consider the largest uniform Lyapunov exponent $\lambda_{1}(\Xi_{m})$ of $\Xi_{m}$. Then the \textit{uniform Lyapunov exponents} $\lambda_{1}(\Xi), \lambda_{2}(\Xi), \ldots$ of $\Xi$ are defined by induction from the relations for each $m=1,2,\ldots$ as
\begin{equation}
	\label{EQ: UniformLyapExponentsFormulaDef}
	\lambda_{1}(\Xi) + \cdots + \lambda_{m}(\Xi) = \lambda_{1}(\Xi_{m}).
\end{equation}
Note that except $\lambda_{1}(\Xi)$, $\lambda_{j}(\Xi)$ is not necessarily a Lyapunov exponent over any ergodic measure. Moreover, these quantities are not necessarily monotone\footnote{There are, however, misunderstandings in the literature related to the monotonicity. See, e.g., \cite[Section 4]{Gelfert2003}.} in $j$. Both situations are illustrated in the following example.
\begin{example}
	\label{EX: ULENonMonotone}
	Suppose $\mathbb{H} = \mathbb{R}^{3}$, and let $\mathcal{Q}$ consist of three equilibria $q_{1}, q_{2}$, and $q_{3}$. Let local Lyapunov exponents at the equilibria be $\lambda_{1}(q_{1}) = 1$, $\lambda_{2}(q_{1}) = -1$, $\lambda_{3}(q_{1}) = -1$; $\lambda_{1}(q_{2}) = 1/2$, $\lambda_{2}(q_{2}) = 0$, $\lambda_{3}(q_{2}) = -1$; $\lambda_{1}(q_{3}) = \lambda_{2}(q_{3}) = \lambda_{3}(q_{3}) = 1/5$. Since $\lambda_{1}(\Xi)=\lambda_{1}(q_{1})$, $\lambda_{1}(\Xi_{2}) = \lambda_{1}(q_{2})+\lambda_{2}(q_{2})$, and $\lambda_{1}(\Xi_{3}) = \lambda_{1}(q_{3}) + \lambda_{2}(q_{3}) + \lambda_{3}(q_{3}) = 3/5$, we have $\lambda_{1}(\Xi) = 1$, $\lambda_{2}(\Xi) = -1/2$, and $\lambda_{3}(\Xi) = 1/10$. Using techniques from analysis, one can construct less artificial examples with $\mathcal{Q}$ as an attractor for a flow containing similar equilibria as the only ergodic components. Then, the local analysis is sufficient by the Ergodic Variational Principle (see Appendix \ref{APP: EVPNonCompact}).
\end{example}

In view of \eqref{EQ: UniformLyapExponentsFormulaDef}, uniform Lyapunov exponents of $\Xi$ are expressed via the largest uniform Lyapunov exponents $\lambda_{1}(\Xi_{m})$ of $m$-fold compound cocycles $\Xi_{m}$. By applying Theorem \ref{TH: VariationalDescriptionLargestLyapunovExponent} to $\Xi_{m}$, we obtain the following variational description of these quantities.
\begin{theorem}
	\label{TH: VariationalDescriptionUniformLyapunovExponents}
	Let $\Xi$ be a uniformly continuous linear cocycle in $\mathbb{H}$ over a semiflow $(\mathcal{Q},\vartheta)$, which admits a $2^{\mathcal{Q}}$-attractor as in \eqref{EQ: SubsetA0AttractsQ}. For an integer $m \geq 1$, suppose that $\Xi_{m}$ is uniformly quasi-compact. Then we have
	\begin{equation}
		\label{EQ: VariationalDescriptionMUniformLyapExps}
		\lambda_{1}(\Xi) + \cdots + \lambda_{m}(\Xi) = \lambda_{1}(\Xi_{m}) = \inf_{ \mathfrak{m} \in \mathfrak{N}(\Xi_{m})} \alpha^{+}_{\mathfrak{m}}(\Xi_{m}),
	\end{equation}
    where $\alpha^{+}_{\mathfrak{m}}(\Xi_{m})$ is given by \eqref{EQ: AlphaPlusInfExpMaximizationDefinition}, and $\mathfrak{N}(\Xi_{m})$ is described above Theorem \ref{TH: VariationalDescriptionLargestLyapunovExponent}.
\end{theorem}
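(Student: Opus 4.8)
The plan is to reduce the statement, via the defining relation \eqref{EQ: UniformLyapExponentsFormulaDef}, to the variational description of the \emph{largest} uniform Lyapunov exponent already established in Theorem \ref{TH: VariationalDescriptionLargestLyapunovExponent}, but applied not to $\Xi$ itself, rather to its $m$-fold compound cocycle $\Xi_{m}$ acting in the Hilbert space $\mathbb{H}^{\wedge m}$. Indeed, by \eqref{EQ: UniformLyapExponentsFormulaDef} the left-hand side of \eqref{EQ: VariationalDescriptionMUniformLyapExps} equals $\lambda_{1}(\Xi_{m})$, so it is enough to prove
\[
	\lambda_{1}(\Xi_{m}) = \inf_{\mathfrak{m} \in \mathfrak{N}(\Xi_{m})} \alpha^{+}_{\mathfrak{m}}(\Xi_{m}),
\]
which is exactly the conclusion of Theorem \ref{TH: VariationalDescriptionLargestLyapunovExponent} \emph{provided} that its hypotheses (i.e.\ those of Theorem \ref{TH: LyapExpAdaptedMetricQuasiCom}) are satisfied by the pair $(\Xi_{m}, \mathbb{H}^{\wedge m})$.

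First I would check these hypotheses. Since $\mathbb{H}$ is separable, so is $\mathbb{H}^{\wedge m}$; and $\Xi_{m}$ is a uniformly continuous linear cocycle in $\mathbb{H}^{\wedge m}$ over the \emph{same} semiflow $(\mathcal{Q},\vartheta)$ --- the properties \nameref{DESC: UC1} and \nameref{DESC: UC2} for $\Xi_{m}$ follow from those of $\Xi$ together with the submultiplicativity of $L \mapsto L^{\wedge m}$ and Lemma \ref{LEM: TemamLemmaWedgeNormComputation} (this is the construction recalled after \eqref{EQ: CompoundCocycleDefMetricsLiov}, worked out in \cite{Anikushin2023Comp}). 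The base $(\mathcal{Q},\vartheta)$ is unchanged, so $\mathcal{Q}$ is still a complete metric space admitting a $2^{\mathcal{Q}}$-attractor, as assumed. Finally, uniform quasi-compactness of $\Xi_{m}$ is part of the present hypotheses. Hence the hypotheses of Theorem \ref{TH: LyapExpAdaptedMetricQuasiCom}, and so of Theorem \ref{TH: VariationalDescriptionLargestLyapunovExponent}, hold with $\Xi$ replaced by $\Xi_{m}$ and $\mathbb{E}$ replaced by $\mathbb{H}^{\wedge m}$.

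Applying Theorem \ref{TH: VariationalDescriptionLargestLyapunovExponent} then yields the displayed identity for $\lambda_{1}(\Xi_{m})$, and combining it with \eqref{EQ: UniformLyapExponentsFormulaDef} gives \eqref{EQ: VariationalDescriptionMUniformLyapExps}. One point worth a remark is that, since $\mathbb{H}^{\wedge m}$ is a Hilbert space, the class $\mathfrak{N}(\Xi_{m})$ is understood with the additional requirement that each norm $\mathfrak{m}_{q}$ be induced by an inner product; the infimum is still approximated arbitrarily well within this smaller class because the Lyapunov metrics \eqref{EQ: FamilyOfNormsDef} used to prove Theorem \ref{TH: LengthsFamilyOfNorms} may be taken with $p = 2$, as in Remark \ref{REM: HilbertSpaceAdaptedMetric}, which produces inner-product norms on $\mathbb{H}^{\wedge m}$.

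Since essentially all the analytic content is packed into Theorems \ref{TH: LengthsFamilyOfNorms} and \ref{TH: LyapExpAdaptedMetricQuasiCom}, the only genuine work here is the bookkeeping that transports those statements from $\Xi$ to $\Xi_{m}$. The single place where care is needed --- and the main obstacle, such as it is --- is that uniform quasi-compactness does not transfer automatically from $\Xi$ to $\Xi_{m}$, which is precisely why it is listed as an explicit hypothesis rather than derived; I would flag this and, if useful, indicate sufficient conditions on $\Xi$ (for instance a spectral gap at the level of the $m$-th uniform Lyapunov exponent) guaranteeing $\alpha_{K}(\Xi_{m}) < \lambda_{1}(\Xi_{m})$.
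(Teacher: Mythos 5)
Your proposal is correct and coincides with the paper's own (implicit) argument: the paper states Theorem~\ref{TH: VariationalDescriptionUniformLyapunovExponents} as an immediate consequence of the defining relation \eqref{EQ: UniformLyapExponentsFormulaDef} together with Theorem~\ref{TH: VariationalDescriptionLargestLyapunovExponent} applied to $\Xi_m$ in $\mathbb{H}^{\wedge m}$, leaving the verification of hypotheses to the reader exactly as you spell it out. Your closing observation that the uniform quasi-compactness of $\Xi_m$ cannot be dropped (and does not follow from that of $\Xi$) is also the right thing to flag.
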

\begin{remark}
	If $\Xi$ is eventually compact, i.e., there exists $\tau>0$ such that $\Xi^{\tau}(q,\cdot)$ is compact for every $q \in \mathcal{Q}$, then $\Xi_{m}$ is also eventually compact by \cite[Proposition 3.2]{Anikushin2023Comp} and, as a consequence, quasi-compact. This makes the characterization \eqref{EQ: VariationalDescriptionMUniformLyapExps} applicable to parabolic and delay equations.
\end{remark}
We postpone some related discussions concerning Theorem \ref{TH: VariationalDescriptionUniformLyapunovExponents} to the end of the section. Next, we introduce the Lyapunov dimension of a cocycle.

Consider the \textit{averaged function of singular values} of $\Xi$, which is given by\footnote{We emphasize that in this definition, each mapping of the cocycle is considered in the same space $\mathbb{H}$ endowed with the same inner product, although considering uniformly equivalent metrics over $\mathcal{Q}$ yields the same asymptotic quantity $\omega_{d}(\Xi)$, see \eqref{EQ: OmegaInGeneralMetric}. For estimating the corresponding quantities in varying metrics, see the discussion below \eqref{EQ: AveragedFSVViaMetricsProblem} and Section \ref{SEC: ComputationOfInfExponents}.}
\begin{equation}
	\label{EQ: AveragedFunctionOfSingularValues}
	\omega_{d}(\Xi) \coloneq \lim_{t \to +\infty} \left(\sup_{q \in \mathcal{Q}}\omega_{d}(\Xi^{t}(q,\cdot))\right)^{1/t},
\end{equation}
where the limit exists due to Corollary \ref{COR: HornInequality} and Lemma \ref{LEM: FeketeLemma}. From this, one can define the \textit{Lyapunov dimension} of $\Xi$ as
\begin{equation}
	\label{EQ: LyapunovDimensionDefinition}
	\dim_{\operatorname{L}}\Xi = \dim_{\operatorname{L}}(\Xi;\mathcal{Q}) \coloneq \inf\{ d \geq 0 \ | \ \omega_{d}(\Xi) < 1 \},
\end{equation}
where the infimum over an empty set is assumed to be $\infty$. By the monotonicity of singular values, $\omega_{d_{0}}(\Xi) < 1$ for some $d_{0} \geq 0$ implies that $\omega_{d}(\Xi) < 1$ for any $d \geq d_{0}$. This shows the correctness of the definition \eqref{EQ: LyapunovDimensionDefinition}.

From Lemma \ref{LEM: TemamLemmaWedgeNormComputation}, we obtain $\ln\omega_{m}(\Xi) = \lambda_{1}(\Xi_{m})$ for any $m=1,2,\ldots$. Moreover, for $d = m + \theta$ with $\theta \in [0,1]$, we have
\begin{equation}
	\label{EQ: FunctionOfSingularValuesViaExtProd}
	\ln\omega_{d}(\Xi) = \lim_{t \to +\infty} \frac{1}{t} \ln \left( \|\Xi^{t}_{m}(q,\cdot)\|^{1-\theta}_{\mathcal{L}(\mathbb{H}^{\wedge m})} \cdot \|\Xi^{t}_{m+1}(q,\cdot)\|^{\theta}_{\mathcal{L}(\mathbb{H}^{\wedge (m+1)})} \right).
\end{equation}
In particular, for any $d = m + \theta$ as above, we have the inequality
\begin{equation}
	\label{EQ: AveragedSingularMajorizedViaFractionalSumULE}
	\ln \omega_{d}(\Xi) \leq \sum_{j=1}^{m}\lambda_{j}(\Xi) + \theta \lambda_{m+1}(\Xi)
\end{equation}
which becomes equality for $\theta = 0$.

\begin{remark}
	If $\mathcal{Q}$ is a compact subset of $\mathbb{H}$, and $\Xi$ is a linearization cocycle of $\vartheta$ over $\mathcal{Q}$ (i.e., each mapping $\Xi^{t}(q,\cdot)$ is a quasi-differential of $\vartheta^{t}$ at $q \in \mathcal{Q}$), then the Lyapunov dimension of $\Xi$ bounds from above the fractal dimension of $\mathcal{Q}$, see \cite{ChepyzhovIlyin2004, ZelikAttractors2022}.
\end{remark}
\begin{remark}
	More precisely, $\dim_{\operatorname{L}}\Xi$ should be referred to as the \textit{uniform Lyapunov dimension} to distinguish it from the Lyapunov dimension over an ergodic measure. We also note that in \cite{EdenLocalEstimates1990}, $\dim_{\operatorname{L}}\Xi$ is called the Douady-Oesterl\'{e} dimension, while the term Lyapunov dimension corresponds to the quantity $\dim^{KY}_{\operatorname{L}}\Xi$ introduced below. Currently, our terminology is the most widely used \cite{KuzReit2020, ZelikAttractors2022}.
\end{remark}
\begin{remark}
	\label{REM: AsymCompFiniteLD}
	Recall the uniform compactness exponent $\alpha_{K}(\Xi)$ of $\Xi$ defined in \eqref{EQ: KuratowskiExponentOfCocycle}. If $\alpha_{K}(\Xi) < 0$, the cocycle is called \textit{uniformly asymptotically compact}. Supposing that $(\mathcal{Q},\vartheta)$ admits a $2^{\mathcal{Q}}$-attractor, from the Ergodic Variational Principle (see Section \ref{APP: EVPNonCompact}) and the Multiplicative Ergodic Theorem (see \cite{Thieullen1992}), it may be deduced that $\alpha_{K}(\Xi) < 0$ if and only if $\dim_{\operatorname{L}}\Xi < \infty$.
\end{remark}

Let us discuss a related variant of dimension defined in terms of uniform Lyapunov exponents. Consider the largest integer $m \geq 0$ (if it exists) such that $\lambda_{1}(\Xi) + \cdots + \lambda_{m}(\Xi) \geq 0$, where for $m=0$ the sum is taken to be zero. Then, it follows that $\lambda_{1}(\Xi) + \cdots + \lambda_{m+1}(\Xi) < 0$, and the \textit{uniform Kaplan--Yorke Lyapunov dimension} can be defined by
\begin{equation}
	\label{EQ: Kaplan-YorkeFormula}
	\dim^{KY}_{\operatorname{L}}\Xi = \dim^{KY}_{\operatorname{L}}(\Xi;\mathcal{Q}) \coloneq m + \frac{\sum_{j=1}^{m}\lambda_{j}(\Xi)}{|\lambda_{m+1}(\Xi)|}.
\end{equation}
If there is no such $m$, we set $\dim^{KY}_{\operatorname{L}}\Xi \coloneq \infty$. Without specifying a name, this quantity apparently first appeared in the work \cite{ConstantinFoias1985}, and, of course, it was motivated by the Kaplan--Yorke conjecture.

\begin{remark}
	If $\mathbb{H} = \mathbb{R}^{n}$, then we have $\lambda_{m}(\Xi) = -\infty$ for $m > n$ by definition. In particular, even if $\lambda_{1}(\Xi) + \cdots + \lambda_{n}(\Xi) \geq 0$, \eqref{EQ: Kaplan-YorkeFormula} gives $\dim^{KY}_{\operatorname{L}}\Xi = n$.
\end{remark}

\begin{remark}
	\label{REM: LyapDimEstApplications}
	Let $\Sigma(m) \coloneq \sum_{j=1}^{m}\lambda_{j}(\Xi)$ for any integer $m \geq 1$, $\Sigma(0)\coloneq 0$, and define $\Sigma(m+\theta) \coloneq \Sigma(m) + \theta\lambda_{m+1}$ for $\theta \in [0,1]$ and any integer $m \geq 0$. Then for $\lambda_{m+1} > -\infty$, the fractional term in \eqref{EQ: Kaplan-YorkeFormula} is given by $\theta^{*} \in [0,1]$ such that $\Sigma(m+\theta^{*}) = 0$. In applications, it often happens that we deal with an estimate $\Sigma(m) \leq \sigma^{+}(m)$ that holds for each integer $m \geq 1$ and some function $\sigma^{+}(\cdot)$. Suppose that $\sigma^{+}(m)$ can be defined for all real $m \geq 1$. Then it is clear that $\sigma^{+}(d^{*}) = 0$ for some $d^{*} \in [m,m+1]$ would imply $\dim^{KY}_{\operatorname{L}}\Xi \leq d^{*}$ provided that $\sigma^{+}(\cdot)$ is concave on $[m,m+1]$. By virtue of \eqref{EQ: KYLDboundsLD}, this observation makes the quantity $\dim^{KY}_{\operatorname{L}}\Xi$ more convenient to handle when exact computations are not required, but rather asymptotically sharp upper estimates (in certain parameters) are desired. 
\end{remark}

It is not difficult to see the following relation between the introduced quantities.
\begin{proposition}
	We always have the inequalities
	\begin{equation}
		\label{EQ: KYLDboundsLD}
		\dim^{KY}_{\operatorname{L}}\Xi - 1 < \dim_{\operatorname{L}}\Xi \leq \dim^{KY}_{\operatorname{L}}\Xi.
	\end{equation}
\end{proposition}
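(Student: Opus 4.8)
The plan is to work directly with the definitions of $\dim_{\operatorname{L}}\Xi$ (see \eqref{EQ: LyapunovDimensionDefinition}) and $\dim^{KY}_{\operatorname{L}}\Xi$ (see \eqref{EQ: Kaplan-YorkeFormula}), using the concavity of the map $d \mapsto \ln\overline{\omega}_{d}(\Xi)$ on each interval $[m,m+1]$. First I would record the elementary but crucial fact that $d \mapsto \ln\overline{\omega}_{d}(\Xi)$ is piecewise affine in $d$: by \eqref{EQ: AveragedSingularMajorizedViaFractionalSumULE} with equality at integer points, on $[m,m+1]$ we have $\ln\overline{\omega}_{m+\gamma}(\Xi) = \Sigma(m) + \gamma\lambda_{m+1}(\Xi)$ where $\Sigma(m) = \sum_{j=1}^{m}\lambda_{j}(\Xi)$. (One should check the degenerate cases: if $\overline{\omega}_{d}(\Xi) \geq 1$ for all $d$, or $\Xi$ is not uniformly asymptotically compact, then both dimensions are $+\infty$ — see Remark \ref{REM: AsymCompFiniteLD} — and the inequality holds trivially; so assume both are finite, which in particular forces $\lambda_{m+1}(\Xi) < 0$ at the relevant index.)

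Next I would identify the integer $m$ from the definition of $\dim^{KY}_{\operatorname{L}}\Xi$: it is the largest integer with $\Sigma(m) \geq 0$, so $\Sigma(m) \geq 0 > \Sigma(m+1)$ and hence $\lambda_{m+1}(\Xi) < 0$. Then $\dim^{KY}_{\operatorname{L}}\Xi = m + \Sigma(m)/|\lambda_{m+1}(\Xi)| = m + \gamma^{*}$ where $\gamma^{*} = \Sigma(m)/|\lambda_{m+1}(\Xi)| \in [0,1)$ is precisely the point at which the affine function $\gamma \mapsto \Sigma(m) + \gamma\lambda_{m+1}(\Xi)$ vanishes. Therefore $\ln\overline{\omega}_{d}(\Xi) < 0$, i.e. $\overline{\omega}_{d}(\Xi) < 1$, exactly for $d > m + \gamma^{*}$ on the interval $[m, m+1]$, and by monotonicity of singular values (noted after \eqref{EQ: LyapunovDimensionDefinition}) also for all $d \geq m+1$. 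This gives $\dim_{\operatorname{L}}\Xi = \inf\{d \geq 0 : \overline{\omega}_{d}(\Xi) < 1\} = m + \gamma^{*} = \dim^{KY}_{\operatorname{L}}\Xi$ when $\gamma^{*} > 0$, establishing the upper bound $\dim_{\operatorname{L}}\Xi \leq \dim^{KY}_{\operatorname{L}}\Xi$ in general, and for the lower bound one observes that on $[m-1+\gamma^{*}, m]$ (or rather just to the left of $m+\gamma^{*}$) the function $\ln\overline{\omega}_{d}(\Xi)$ is $\geq 0$, so $\overline{\omega}_{d}(\Xi) \geq 1$ for $d < m+\gamma^{*}$; in particular it is $\geq 1$ on all of $[0, m+\gamma^{*})$ which contains $(\dim^{KY}_{\operatorname{L}}\Xi - 1, \dim^{KY}_{\operatorname{L}}\Xi)$, whence $\dim_{\operatorname{L}}\Xi \geq \dim^{KY}_{\operatorname{L}}\Xi - 1$, and the inequality is strict because the infimum is not attained (the set $\{\overline{\omega}_{d} < 1\}$ is open from the left at its infimum only in the boundary case, but $\overline{\omega}_{d} \geq 1$ strictly on an interval of positive length below $m+\gamma^{*}$ forces $\dim_{\operatorname{L}}\Xi$ strictly above $\dim^{KY}_{\operatorname{L}}\Xi - 1$).

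The one genuinely fiddly point — and the step I'd expect to demand the most care — is the handling of the edge cases in the definition of $\dim^{KY}_{\operatorname{L}}\Xi$: when $\mathbb{H} = \mathbb{R}^{n}$ and $\Sigma(n) \geq 0$ so that $m = n$ and the formula returns $n$ with no fractional part (the remark after \eqref{EQ: Kaplan-YorkeFormula}), and when $\gamma^{*} = 0$ exactly (i.e. $\Sigma(m) = 0$), where $\dim^{KY}_{\operatorname{L}}\Xi = m$ but $\overline{\omega}_{m}(\Xi) = 1$ so that the infimum defining $\dim_{\operatorname{L}}\Xi$ is still $m$ and both inequalities in \eqref{EQ: KYLDboundsLD} remain valid (the left one strictly, since $\overline{\omega}_{d} \geq 1$ on $[m-1, m)$ as $\lambda_{m}(\Xi) \geq 0$ there would need the largest-integer property — one must argue that $\Sigma(m-1) \geq \Sigma(m) = 0$ fails in general, so instead use that $\ln\overline{\omega}_{d}$ is $\leq 0$-valued only from $m$ onward combined with right-continuity considerations). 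I would dispatch these by splitting into the cases $\gamma^{*} \in (0,1)$, $\gamma^{*} = 0$, and $\dim^{KY}_{\operatorname{L}}\Xi = +\infty$ or $= n$, verifying \eqref{EQ: KYLDboundsLD} directly in each; the argument is short in every case once the piecewise-affine description of $\ln\overline{\omega}_{d}(\Xi)$ is in hand.
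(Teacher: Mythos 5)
Your lower-bound argument rests on an error: you read \eqref{EQ: AveragedSingularMajorizedViaFractionalSumULE} as an \emph{equality} on $[m, m+1]$, i.e.\ that $d \mapsto \ln\overline{\omega}_{d}(\Xi)$ is piecewise affine in $d$, but the paper only establishes the one-sided bound
$\ln\overline{\omega}_{m+\gamma}(\Xi) \leq \sum_{j=1}^{m}\lambda_{j}(\Xi) + \gamma\lambda_{m+1}(\Xi)$,
with equality guaranteed only at the integer endpoints ($\gamma = 0$). The inequality can be strict for $\gamma \in (0,1)$: when passing from the $q$-supremum of the H\"{o}lder-type product $\|\Xi^{t}_{m}(q,\cdot)\|^{1-\gamma}\|\Xi^{t}_{m+1}(q,\cdot)\|^{\gamma}$ in \eqref{EQ: FunctionOfSingularValuesViaExtProd} to the product of separate suprema, the bound is not tight unless the same point (or measure) maximizes both factors --- precisely the obstruction discussed in the remark immediately after this proposition, and the reason the subsequent proposition needs an extra hypothesis to conclude $\dim_{\operatorname{L}}\Xi = \dim^{KY}_{\operatorname{L}}\Xi$. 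Indeed, if your piecewise-affine claim were true it would prove the unconditional identity $\dim_{\operatorname{L}}\Xi = \dim^{KY}_{\operatorname{L}}\Xi$, which the paper's own discussion disclaims. So the step ``$\ln\overline{\omega}_{d}(\Xi) \geq 0$ for $d$ just below $m + \gamma^{*}$'' is unjustified, and the lower bound does not follow from your argument.

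The paper sidesteps this by invoking the equality only at the integer: $\ln\overline{\omega}_{m}(\Xi) = \sum_{j=1}^{m}\lambda_{j}(\Xi) \geq 0$, so $\overline{\omega}_{m}(\Xi) \geq 1$; the monotonicity fact stated after \eqref{EQ: LyapunovDimensionDefinition} (that $\overline{\omega}_{d_{0}}(\Xi) < 1$ for some $d_{0}$ forces $\overline{\omega}_{d}(\Xi) < 1$ for all $d \geq d_{0}$) then gives $\dim_{\operatorname{L}}\Xi \geq m$. Since $\gamma^{*} < 1$, this already yields the strict bound $\dim_{\operatorname{L}}\Xi \geq m > m + \gamma^{*} - 1 = \dim^{KY}_{\operatorname{L}}\Xi - 1$, with no need for any attainment-of-infimum argument. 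Your upper-bound reasoning and your handling of the infinite case are correct and coincide with the paper's, since there only the inequality direction of \eqref{EQ: AveragedSingularMajorizedViaFractionalSumULE} is used.
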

\begin{proof}
	Since $\ln \omega_{m}(\Xi) = \sum_{j=1}^{m}\lambda_{j}(\Xi)$ for any integer $m \geq 0$, we have that $\dim^{KY}_{\operatorname{L}}\Xi=\infty$ if and only if $\dim_{\operatorname{L}}\Xi = \infty$. Therefore, we may assume that both quantities are finite.
	
	Now suppose $m$ and $\theta^{*} \in [0,1)$ are such that $\dim^{KY}_{\operatorname{L}}\Xi = m + \theta^{*}$.  Then $\ln \omega_{m}(\Xi) = \sum_{j=1}^{m}\lambda_{j}(\Xi) \geq 0$ and $\ln \omega_{m+1}(\Xi) = \sum_{j=1}^{m+1}\lambda_{j}(\Xi) < 0$. Consequently, $\dim_{\operatorname{L}}\Xi \in [m, m+1)$. Furthermore, from \eqref{EQ: AveragedSingularMajorizedViaFractionalSumULE} with $d = m + \theta$ and any $\theta \in (\theta^{*}, 1]$, we obtain $\ln \omega_{d}(\Xi) < 0$. Thus, $\dim_{\operatorname{L}}\Xi \leq m + \theta$ for any $\theta > \theta^{*}$. Taking it to the limit as $\theta \to \theta^{*}$ yields \eqref{EQ: KYLDboundsLD}.
\end{proof}

We also have the following simple criterion for the coincidence of both quantities.
\begin{proposition}
	Suppose there exists $q \in \mathcal{Q}$ such that for some integer $m \geq 1$ we have
	\begin{equation}
		\begin{split}
			&\lim_{t \to +\infty}\frac{1}{t}\ln \| \Xi^{t}_{m}(q,\cdot) \|_{\mathcal{L}(\mathbb{H}^{\wedge m})} = \lambda_{1}(\Xi_{m}) \geq 0,\\
			&\lim_{t \to +\infty}\frac{1}{t}\ln \| \Xi^{t}_{m+1}(q,\cdot) \|_{\mathcal{L}(\mathbb{H}^{\wedge (m+1)})} = \lambda_{1}(\Xi_{m+1}) < 0.
		\end{split}
	\end{equation}
	Then
	\begin{equation}
		\dim_{\operatorname{L}}\Xi = \dim^{KY}_{\operatorname{L}}\Xi.
	\end{equation}
\end{proposition}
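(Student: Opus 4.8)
The plan is to pin down the function $d\mapsto\ln\overline{\omega}_{d}(\Xi)$ on the interval $[m,m+1]$: I will show that there it coincides with the affine interpolant $\Sigma(d)=\Sigma(m)+(d-m)\lambda_{m+1}(\Xi)$ of the uniform Lyapunov exponents, and then read both dimensions off the unique zero of this affine function. First I would record the endpoints: by Lemma \ref{LEM: TemamLemmaWedgeNormComputation} and the definitions, $\ln\overline{\omega}_{m}(\Xi)=\lambda_{1}(\Xi_{m})=\Sigma(m)$ and $\ln\overline{\omega}_{m+1}(\Xi)=\lambda_{1}(\Xi_{m+1})=\Sigma(m+1)$, and the hypothesis gives $\Sigma(m)\geq 0>\Sigma(m+1)$, so in particular $\lambda_{m+1}(\Xi)<0$. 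The upper bound $\ln\overline{\omega}_{d}(\Xi)\leq\Sigma(d)$ for every $d=m+\gamma$ with $\gamma\in[0,1]$ is already \eqref{EQ: AveragedSingularMajorizedViaFractionalSumULE}; it follows from the factorization $\omega_{d}(L)=\|L^{\wedge m}\|^{1-\gamma}\|L^{\wedge(m+1)}\|^{\gamma}$ together with Hölder's inequality applied to the supremum over $\mathcal{Q}$.

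The one genuinely new step is the matching lower bound, and this is where the distinguished point $q$ is used. For $d=m+\gamma$ one has
\[
\frac{1}{t}\ln\omega_{d}(\Xi^{t}(q,\cdot))=(1-\gamma)\,\frac{1}{t}\ln\|\Xi^{t}_{m}(q,\cdot)\|_{\mathcal{L}(\mathbb{H}^{\wedge m})}+\gamma\,\frac{1}{t}\ln\|\Xi^{t}_{m+1}(q,\cdot)\|_{\mathcal{L}(\mathbb{H}^{\wedge(m+1)})},
\]
and by hypothesis the right-hand side converges, as $t\to+\infty$, to $(1-\gamma)\lambda_{1}(\Xi_{m})+\gamma\lambda_{1}(\Xi_{m+1})=\Sigma(d)$. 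Since $\sup_{q'\in\mathcal{Q}}\omega_{d}(\Xi^{t}(q',\cdot))\geq\omega_{d}(\Xi^{t}(q,\cdot))$ and the limit defining $\overline{\omega}_{d}(\Xi)$ exists by Fekete's lemma, passing to $\frac{1}{t}\ln(\cdot)$ and letting $t\to+\infty$ yields $\ln\overline{\omega}_{d}(\Xi)\geq\Sigma(d)$. Combined with the upper bound, this gives $\ln\overline{\omega}_{d}(\Xi)=\Sigma(d)$ for all $d\in[m,m+1]$.

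Finally, $\Sigma$ is affine and strictly decreasing on $[m,m+1]$ with $\Sigma(m)\geq 0>\Sigma(m+1)$, hence has a unique zero $d^{*}=m+\Sigma(m)/|\lambda_{m+1}(\Xi)|\in[m,m+1)$, which is exactly $\dim^{KY}_{\operatorname{L}}\Xi$. Consequently $\overline{\omega}_{d}(\Xi)>1$ on $[m,d^{*})$ and $\overline{\omega}_{d}(\Xi)<1$ on $(d^{*},m+1]$. Since the set $\{d\geq 0:\overline{\omega}_{d}(\Xi)<1\}$ is upward closed (monotonicity of singular values, as noted below \eqref{EQ: LyapunovDimensionDefinition}), it must be disjoint from $[m,d^{*})$ and contain $(d^{*},+\infty)$, so its infimum equals $d^{*}$; that is, $\dim_{\operatorname{L}}\Xi=d^{*}=\dim^{KY}_{\operatorname{L}}\Xi$. (When $\Sigma(m)>0$ one may instead simply combine \eqref{EQ: KYLDboundsLD} with the established identity $\overline{\omega}_{d}(\Xi)>1$ on $[m,d^{*})$, which already forces $\dim_{\operatorname{L}}\Xi\geq d^{*}$.)

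The main difficulty is conceptual rather than computational: it is the observation that a \emph{single} base point $q$ simultaneously realizes the exponential growth rates of $\Xi_{m}$ and of $\Xi_{m+1}$, so that the Hölder-type interpolation between the two operator norms is saturated and not merely an inequality; once this is in hand, the rest is routine bookkeeping with Fekete limits and with the affine function $\Sigma$.
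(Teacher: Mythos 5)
Your proof is correct and follows essentially the same route as the paper's: the same sandwich between the upper bound \eqref{EQ: AveragedSingularMajorizedViaFractionalSumULE} and the lower bound supplied by the single distinguished base point $q$, yielding $\ln\overline{\omega}_{d}(\Xi)=\Sigma(m)+\gamma\lambda_{m+1}(\Xi)$ on $[m,m+1]$. You merely spell out the final bookkeeping step (locating the common zero $d^{*}$ and invoking the upward-closedness of $\{d:\overline{\omega}_{d}(\Xi)<1\}$), which the paper compresses into ``from this, the conclusion follows.''
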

\begin{proof}
	Indeed, from \eqref{EQ: AveragedSingularMajorizedViaFractionalSumULE} and the hypotheses, for any $d = m + \theta$ with $\theta \in [0,1]$, we have
	\begin{equation}
		\begin{split}
			\sum_{j=1}^{m}\lambda_{j}(\Xi) + \theta \lambda_{m+1}(\Xi) \geq \omega_{d}(\Xi) \geq \\
			\geq \lim_{t \to +\infty}\frac{1}{t}\ln \left(\| \Xi^{t}_{m}(q,\cdot) \|^{1 - \theta}_{\mathcal{L}(\mathbb{H}^{\wedge m})} \cdot \| \Xi^{t}_{m+1}(q,\cdot) \|^{\theta}_{\mathcal{L}(\mathbb{H}^{\wedge (m+1)})}\right) = \sum_{j=1}^{m}\lambda_{j}(\Xi) + \theta \lambda_{m+1}(\Xi).
		\end{split}
	\end{equation}
	From this, the conclusion follows.
\end{proof}

\begin{remark}
	In terms of the ergodic characterization discussed in Appendix \ref{APP: EVPNonCompact}, the strict inequality $\dim_{\operatorname{L}}\Xi < \dim^{KY}_{\operatorname{L}}\Xi$ is likely to occur\footnote{In particular, the reversed inequality $\dim^{KY}_{\operatorname{L}}\Xi \leq \dim_{\operatorname{L}}\Xi$ stated in \cite[formula (21)]{Gelfert2003} is incorrect.} when there is no common maximizing measure for $\lambda_{1}(\Xi_{m})$ and $\lambda_{1}(\Xi_{m+1})$ (an example can be constructed similarly to Example \ref{EX: ULENonMonotone} in $\mathbb{H} = \mathbb{R}^{2}$ and with two equilibria). Such problems can be analyzed in some low-dimensional systems, such as the H\'{e}non map or the Lorenz system, where the derivative cocycle mappings have constant determinants, allowing the smallest Lyapunov exponent to be determined from the preceding ones. For the former, it suffices to find a maximizing measure (or even a single point) for $\lambda_{1}(\Xi)$, after which $\omega_{d}(\Xi)$ can be computed for any $d \in [0,2]$. For the latter, it is only necessary to find a maximizing measure for $\lambda_{1}(\Xi_{2})$. Although the existing computations of the Lyapunov dimension for such systems (see \cite{LeonovForHenonLor2001, LeoKuzKorzhKusakin2016}) do not explicitly use this observation, it may simplify the calculations.
\end{remark}

Modulo the conditions of Theorem \ref{TH: VariationalDescriptionUniformLyapunovExponents}, uniform Lyapunov exponents are computable via adapted metrics. Consequently, $\dim^{KY}_{\operatorname{L}}\Xi$ is also computable. Beyond the case where $\dim_{\operatorname{L}}\Xi$ coincides with $\dim^{KY}_{\operatorname{L}}\Xi$, it should be asked whether the former can also be computed in this manner. More precisely, in the case of discrete time, this question relates to whether
\begin{equation}
	\label{EQ: AveragedFSVViaMetricsProblem}
	\omega_{d}(\Xi) = \inf_{\mathfrak{n}}\sup_{q \in \mathcal{Q}} \omega^{(\mathfrak{n})}_{d}(\Xi^{1}(q,\cdot)),
\end{equation}
where the supremum is taken over all continuous and equivalent metrics $\mathfrak{n}$ in $\mathbb{H}$ over $\mathcal{Q}$ associated with families of inner products, and $\omega^{(\mathfrak{n})}_{d}(\Xi^{1}(q,\cdot))$ is the function of singular values of $\Xi^{1}(q,\cdot)$ considered as an operator from $(\mathbb{H}, \mathfrak{n}_{q})$ to $(\mathbb{H}, \mathfrak{n}_{\vartheta^{1}(q)})$.

In \cite{KawanPogromsky2021}, Kawan, Matveev, and Pogromsky, building on the geometric ideas of Bochi \cite{Bochi2018}, established the validity of \eqref{EQ: AveragedFSVViaMetricsProblem} for finite-dimensional invertible cocycles (both with discrete and continuous\footnote{For continuous time, the question relates to whether the estimate \eqref{EQ: OmegaEstimateViaMaxOfBetas} can be made arbitrarily close to $\ln \omega_{d}(\Xi)$ by varying $\mathfrak{n}$.} time) over a compact base $\mathcal{Q}$. Moreover, the corresponding approximating metrics --- smooth and equivalent to the standard metric --- can be chosen for all $d$ simultaneously. In particular, this result strengthens Theorem \ref{TH: VariationalDescriptionUniformLyapunovExponents} by demonstrating that one can vary over metrics $\mathfrak{m}$ on $\mathbb{H}^{\wedge m}$ over $\mathcal{Q}$ that are associated with metrics $\mathfrak{n}$ on $\mathbb{H}$ over $\mathcal{Q}$. 

Note also that the approximating metrics in \cite{KawanPogromsky2021} are defined as (unweighted) barycenters (in the space of metrics) taken over consecutive pullbacks of a given metric under the action of cocycle mappings, which requires invertibility. This construction, representing a time-averaging of spatial structures, is a counterpart to Lyapunov metrics, which are constructed via time-averaging along individual trajectories. We also note that metrics of this kind provide a modern perspective on the entropy theory of dynamical systems; in fact, this perspective was originally proposed by Shannon but remained unnoticed for a long time (see the survey of Vershik, Veprev, and Zatitskii \cite{VershikVeprevZatitskii2023}). Therefore, it is reasonable to call such metrics \textit{Shannon-like}.

\begin{remark}
	As we noted, the result of \cite{KawanPogromsky2021} is based on ideas from \cite{Bochi2018}, where similar results are established for a given ergodic measure $\mu$, see \cite[p.~17]{Bochi2018}. Here, instead of taking the supremum over $q \in \mathcal{Q}$ in \eqref{EQ: AveragedFSVViaMetricsProblem}, one should take the integral over $\mu$. Then, by minimizing over metrics, we obtain $\omega_{d}(\Xi;\mu)$, which is defined analogously to \eqref{EQ: AveragedFunctionOfSingularValues} by pointwise limits holding $\mu$-almost everywhere, thanks to Theorem \ref{TH: KingmanSETDiscreteTime}. Therefore, to perform related computations in practice, one must know the distribution of $\mu$.
\end{remark}
\begin{remark}
	\label{REM: MaximizingMeasuresEdenConjecture}
	According to Corollary \ref{COR: DVolumesUniformAsDVolumesOverMeasure}, $\omega_{d}(\Xi)$ can be realized as $\omega_{d}(\Xi;\mu)$ for an ergodic measure $\mu$, referred to as a \textit{maximizing measure} for $\omega_{d}(\Xi)$. In this context, we also say that $\omega_{d}(\Xi)$ is achieved by $\mu$. There is a conjecture in ergodic optimization \cite{Bochi2018} which states that, generically, maximizing measures for subadditive families are supported on periodic orbits with relatively small periods. For particular systems, the realization of $\omega_{d}(\Xi)$ with $d=\dim_{\operatorname{L}}\Xi$ on a periodic orbit or at an equilibrium is known as the Eden conjecture, originating from the studies of Eden \cite{EdenLocalEstimates1990}.
\end{remark}

Let us discuss the attainability of the infimum in \eqref{EQ: AveragedFSVViaMetricsProblem} and its continuous-time version. Known exact computations of the Lyapunov dimension for the H\'{e}non map \cite{LeonovForHenonLor2001, KuzReit2020}, the Lorenz and Lorenz-like systems \cite{KuzMokKuzKud2020, Leonov2016LorenzLike, LeoKuzKorzhKusakin2016}, and the Ginzburg-Landau equation \cite{Doeringetal1987} show that it may be sufficient to use only a single adapted metric that is smooth, coercive, and induced by a metric on $\mathbb{H}$. In all of these works, the maximizing measures (see Remark \ref{REM: MaximizingMeasuresEdenConjecture}) are concentrated at equilibria. Moreover, for some problems, the infimum may be attained not on a Riemannian metric but rather on a norm, as in the case of Barabanov norms \cite{Protasov2022}.

This suggests that variational problems should be considered within restricted classes of metrics, which are much easier to handle. Recent advances in this area focus on the numerical resolution of such problems via subgradient optimization \cite{LouzeiroetAlAdaptedMetricsComp2022, KawanHafsteinGiesl2021} and nonlinear constrained optimization \cite{AnikushinRomanov2025RobustEstimates}. In practice, the subgradient method is limited to geodesically convex subsets in the space of metrics, for which related formulas for its iterations can also be derived. To date, effective algorithms have only been developed for optimization within the class of metrics $e^{V(q)}|\cdot|_{\mathbb{H}}$, where $V(q)$ is a varying polynomial and $|\cdot|_{\mathbb{H}}$ is a varying Euclidean norm in $\mathbb{H} = \mathbb{R}^{n}$. As a general multiparametric class of metrics has nothing to do with the geodesic convexity, it is unlikely that the subgradient method can be used for optimization over general smooth families. In contrast, the iterative nonlinear programming method proposed in \cite{AnikushinRomanov2025RobustEstimates} does not have such limitations. In particular, it successfully justified that the positive equilibrium on the classical H\'{e}non attractor is maximizing, a result not achieved in \cite{LouzeiroetAlAdaptedMetricsComp2022, KawanHafsteinGiesl2021}. For further discussion, we refer the reader to \cite{AnikushinRomanov2025RobustEstimates}.

Thus, there is potential for more detailed studies of restricted variational problems and the development of approximation techniques.

Although, in theory, it may be sufficient to use only associated metrics on exterior powers, practical approaches often lead to non-associated metrics. For example, there are approaches involving applications of the Frequency Theorem \cite{Anikushin2020FreqDelay, Anikushin2020FreqParab}, which provides conditions (frequency inequalities) for the existence of quadratic Lyapunov-like functionals by resolving infinite-horizon indefinite quadratic regulator problems. For systems on exterior powers (infinitesimal generators of $\Xi_{m}$), such functionals correspond to non-associated metrics. In our work \cite{Anikushin2023Comp}, we laid theoretical foundations for related applications in the case of delay equations. Furthermore, in our joint paper with Romanov \cite{AnikushinRomanov2023FreqConds}, we developed approximation schemes to verify frequency inequalities through numerical experiments. In certain cases, these results can potentially be rigorously validated using interval arithmetic.

It is natural that, for continuous-time systems in infinite-dimensional spaces, Lyapunov metrics may fail to be coercive due to the presence of smoothing properties. This phenomenon is frequently encountered in the study of stability problems and inertial manifolds using quadratic Lyapunov-like functionals (see our works cited above and \cite{Anikushin2020Geom, Anikushin2022Semigroups, AnikushinAADyn2021}).
\section{Infinitesimal growth exponents of compound cocycles in associated metrics}
\label{SEC: ComputationOfInfExponents}

Let $\Xi$ be a uniformly continuous linear cocycle in a separable real or complex Hilbert space $\mathbb{H}$ over a semiflow $(\mathcal{Q},\vartheta)$ on a complete metric space $\mathcal{Q}$. Suppose $\mathfrak{n}=\{ \mathfrak{n}_{q} \}_{q \in \mathcal{Q}}$ is a metric in $\mathbb{H}$ over $\mathcal{Q}$, and let $\Xi$ admit infinitesimal growth exponents with respect to $\mathfrak{n}$, i.e., \nameref{DESC: GE1}, \nameref{DESC: GE2}, and \nameref{DESC: GE3} are satisfied. We further suppose that each $\mathfrak{n}_{q}$ is a norm induced by an inner product $\langle \cdot, \cdot \rangle_{q}$ in $\mathbb{H}$, which is equivalent (not necessarily uniformly) to the standard one $\langle \cdot, \cdot \rangle_{\mathbb{H}}$. In this section, we aim to provide a basis for computation of infinitesimal growth exponents in associated with $\mathfrak{n}$ metrics $\mathfrak{n}^{\wedge m}$ on $\mathbb{H}^{\wedge m}$.

\begin{remark}
	For cocycles in Banach spaces, the development of appropriate theories of computation is a topic of recent research. Here, differentiation of norms leads to so-called weak pairings. See \cite{Davydovetal2024} for the case of constant metrics.
\end{remark}

Since a symmetric\footnote{We use the same term ``symmetric form'' for both symmetric bilinear and Hermitian (sesqulinear) forms in the real and complex cases, respectively. Moreover, we follow the convention that a Hermitian form is linear in the first argument and conjugate-linear in the second.} form can be expressed via its quadratic form, by \nameref{DESC: GE3}, the limit
\begin{equation}
	\label{EQ: CurvatureFormDefinition}
	B_{\mathfrak{n}}(q;\xi,\eta) \coloneq \frac{1}{2} \lim_{t \to 0+} \frac{\langle \Xi^{t}(q,\xi), \Xi^{t}(q,\eta) \rangle_{\vartheta^{t}(q)} - \langle \xi, \eta \rangle_{q}  }{t}
\end{equation}
exists for all $q \in \mathcal{Q}$ and $\xi,\eta \in \mathcal{N}(q)$, where $\mathcal{N}(q)$ is delivered by \nameref{DESC: GE1}. Clearly, $B_{\mathfrak{n}}(q;\xi,\eta)$ is linear in $\xi$ and conjugate-linear in $\eta$. We use the notation $B_{\mathfrak{n}}(q)$ for the corresponding symmetric form, to which we also refer as the \textit{curvature form} of the cocycle $\Xi$ in the metric $\mathfrak{n}$ over $q$. Taken together, these forms are denoted by $B_{\mathfrak{n}}$ and are called the \textit{curvature form} of $\Xi$ in the metric $\mathfrak{n}$.

We call a (possibly unbounded) operator $T_{\mathfrak{n}}(q)$ in $\mathbb{H}$ a \textit{representation} of $B_{\mathfrak{n}}(q)$ if its domain $\mathcal{D}(T_{\mathfrak{m}}(q))$ contains $\mathcal{N}_{\mathfrak{n}}(q)$ and
\begin{equation}
	B_{\mathfrak{n}}(q;\xi,\eta) = \langle \xi, T_{\mathfrak{n}}(q)\eta  \rangle_{q} \qquad \text{for all} \quad \xi, \eta \in \mathcal{N}(q).
\end{equation}
Since $B_{\mathfrak{n}}(q)$ is symmetric, $T_{\mathfrak{n}}(q)$ must be symmetric with respect to $\langle \cdot, \cdot \rangle_{q}$. In fact, in infinite dimensions, finding a representation of $B_{\mathfrak{n}}(q)$ on the entire $\mathcal{N}(q)$ is not always possible. This will be discussed later.

Given an operator $T$ in $\mathbb{H}$ with domain $\mathcal{D}(T)$, we define the \textit{algebraic $m$-fold additive compound} $T^{[\wedge m]}$ of $T$ as an operator on $\mathbb{H}^{\wedge m}$ with the domain $\mathcal{D}(T^{[\wedge m]})$ spanned by $\xi_{1} \wedge \cdots \wedge \xi_{m}$ with $\xi_{j} \in \mathcal{D}(T)$ for each $j \in \{1,\ldots,m\}$ and given by
\begin{equation}
	\label{EQ: AlgebraicAdditiveCompoundDef}
	T^{[\wedge m]}(\xi_{1} \wedge \cdots \wedge \xi_{m}) \coloneq \sum_{j=1}^{m} \xi_{1} \wedge \cdots \wedge T\xi_{j} \wedge \cdots \wedge \xi_{m}.
\end{equation}
From the universal property of algebraic tensor products and the antisymmetricity of the above formula, it is not difficult to see that $T^{[\wedge m]}$ is well-defined.

With $\mathfrak{n}$, we can associate a metric $\mathfrak{n}^{\wedge m}$ in $\mathbb{H}^{\wedge m}$ over $\mathcal{Q}$, which is induced by the associated with $\langle \cdot, \cdot \rangle_{q}$ inner products as in \eqref{EQ: InnerProductWedgeHDefinition}.

We start by establishing several algebraic statements.
\begin{theorem}[Representations in associated metrics]
	Let $\mathfrak{n}$ and $\mathfrak{n}^{\wedge m}$ be as above. Then $\Xi_{m}$ admits infinitesimal growth exponents with respect to $\mathfrak{n}^{\wedge m}$ with domains $\mathcal{N}_{\mathfrak{n}^{\wedge m}}(q)$ given by the algebraic $m$-fold exterior products $(\mathcal{N}_{\mathfrak{n}}(q))^{\wedge m}$ of $\mathcal{N}_{\mathfrak{n}}(q)$ for any $q \in \mathcal{Q}$. Moreover, for any representation $T_{\mathfrak{n}}(q)$ of $B_{\mathfrak{n}}(q)$, the operator $T^{[\wedge m]}_{\mathfrak{n}}(q) \coloneq $ $(T_{\mathfrak{n}}(q))^{[\wedge m]}$ is a representation of $B_{\mathfrak{n}^{\wedge m}}(q)$.
\end{theorem}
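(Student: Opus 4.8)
The plan is to reduce the statement to \emph{decomposable} elements $\xi=\xi_{1}\wedge\ldots\wedge\xi_{m}$ and $\eta=\eta_{1}\wedge\ldots\wedge\eta_{m}$ with all $\xi_{j},\eta_{j}$ taken from $\mathcal{N}_{\mathfrak{n}}(q)$, and then to exploit the explicit Gram--determinant formula \eqref{EQ: InnerProductWedgeHDefinition} for $\langle\cdot,\cdot\rangle^{\wedge m}$ together with the multilinearity of the determinant in its columns. The verification of \nameref{DESC: GE1} is immediate; the core of the argument is one purely algebraic identity that simultaneously shows that $B_{\mathfrak{n}^{\wedge m}}(q)$ is well defined and that it is represented by $T^{[\wedge m]}_{\mathfrak{n}}(q)$; the transfer of \nameref{DESC: GE2}--\nameref{DESC: GE3} from $\mathfrak{n}$ to $\mathfrak{n}^{\wedge m}$ is the only place that needs genuine care.

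For \nameref{DESC: GE1}: the vectors $\xi_{1}\wedge\ldots\wedge\xi_{m}$ with $\xi_{j}\in\mathcal{N}_{\mathfrak{n}}(q)$ span a dense subspace of $\mathbb{H}^{\wedge m}$ (density of $\mathcal{N}_{\mathfrak{n}}(q)$ in $\mathbb{H}$ plus multilinearity and continuity of $\wedge$), and the inclusion $\Xi^{t}_{m}(q,\mathcal{N}_{\mathfrak{n}}(q)^{\wedge m})\subset\mathcal{N}_{\mathfrak{n}}(\vartheta^{t}(q))^{\wedge m}$ is exactly \eqref{EQ: CompoundCocycleDefMetricsLiov} combined with \nameref{DESC: GE1} for $\mathfrak{n}$. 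For the algebraic identity, fix decomposable $\xi,\eta$ and put $G_{kj}(t):=\langle\Xi^{t}(q,\xi_{k}),\Xi^{t}(q,\eta_{j})\rangle_{\vartheta^{t}(q)}$, so that by \eqref{EQ: InnerProductWedgeHDefinition} one has $m!\,\langle\Xi^{t}_{m}(q,\xi),\Xi^{t}_{m}(q,\eta)\rangle^{\wedge m}_{\vartheta^{t}(q)}=\det G(t)$. Here $G_{kj}(0)=\langle\xi_{k},\eta_{j}\rangle_{q}$ and, directly from the definition \eqref{EQ: CurvatureFormDefinition}, $G_{kj}$ is right-differentiable at $0$ with $\tfrac{d}{dt}\big|_{0+}G_{kj}(t)=2B_{\mathfrak{n}}(q;\xi_{k},\eta_{j})=2\langle\xi_{k},T_{\mathfrak{n}}(q)\eta_{j}\rangle_{q}$. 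Since $\det G(t)$ is a polynomial in the continuous, right-differentiable entries $G_{kj}(t)$, its right derivative at $0$ exists, and by the column Leibniz rule it equals $\sum_{j=1}^{m}$ of the determinant of $[\langle\xi_{k},\eta_{i}\rangle_{q}]_{k,i}$ with the $j$-th column replaced by $(2\langle\xi_{k},T_{\mathfrak{n}}(q)\eta_{j}\rangle_{q})_{k}$. Recognizing each summand via \eqref{EQ: InnerProductWedgeHDefinition} as $2\,m!$ times $\langle\xi,\eta_{1}\wedge\ldots\wedge T_{\mathfrak{n}}(q)\eta_{j}\wedge\ldots\wedge\eta_{m}\rangle^{\wedge m}_{q}$ and summing over $j$ yields $\tfrac{d}{dt}\big|_{0+}\det G(t)=2\,m!\,\langle\xi,T^{[\wedge m]}_{\mathfrak{n}}(q)\eta\rangle^{\wedge m}_{q}$. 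Comparing with \eqref{EQ: CurvatureFormDefinition} applied to $\Xi_{m}$ and $\mathfrak{n}^{\wedge m}$ shows that the defining limit for $B_{\mathfrak{n}^{\wedge m}}(q;\xi,\eta)$ exists on decomposables and equals $\langle\xi,T^{[\wedge m]}_{\mathfrak{n}}(q)\eta\rangle^{\wedge m}_{q}$; by (conjugate-)bilinearity this extends to all of $\mathcal{N}_{\mathfrak{n}}(q)^{\wedge m}$, which is precisely the representation claim.

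For \nameref{DESC: GE2} and \nameref{DESC: GE3} I would argue through the squared norm. For decomposable $\xi$ one has $\mathfrak{n}^{\wedge m}_{\vartheta^{t}(q)}(\Xi^{t}_{m}(q,\xi))^{2}=\tfrac{1}{m!}\det G(t)$ with $G_{kj}(t)=\langle\Xi^{t}(q,\xi_{k}),\Xi^{t}(q,\xi_{j})\rangle_{\vartheta^{t}(q)}$, and by polarization each $G_{kj}(t)$ is a finite linear combination of squared base norms $\mathfrak{n}_{\vartheta^{t}(q)}(\Xi^{t}(q,\zeta))^{2}$ with $\zeta\in\mathcal{N}_{\mathfrak{n}}(q)$. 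Each such function lies in $W^{1,1}(0,T)\cap L^{\infty}(0,T)$ --- being the square of the bounded $W^{1,1}$ function $\mathfrak{n}_{\vartheta^{t}(q)}(\Xi^{t}(q,\zeta))$, by \nameref{DESC: GE2} for $\mathfrak{n}$ --- and is right-differentiable everywhere (applying \nameref{DESC: GE3} for $\mathfrak{n}$ along the shifted base point, using \nameref{DESC: CO1} and \nameref{DESC: GE1}). As a polynomial in the $G_{kj}(t)$, $\det G(t)$ is then bounded, absolutely continuous and everywhere right-differentiable, and by (conjugate-)bilinearity so is $\mathfrak{n}^{\wedge m}_{\vartheta^{t}(q)}(\Xi^{t}_{m}(q,\xi))^{2}$ for arbitrary $\xi\in\mathcal{N}_{\mathfrak{n}}(q)^{\wedge m}$.

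The step I expect to be the main obstacle is passing from the squared wedge norm $f(t)$ to $\sqrt{f(t)}$ itself, since absolute continuity of a nonnegative function is not in general inherited by its square root and the metric on $\mathbb{H}^{\wedge m}$ varies along the orbit. On every subinterval where $\mathfrak{n}^{\wedge m}_{\vartheta^{t}(q)}(\Xi^{t}_{m}(q,\xi))>0$ the passage is harmless: there $\sqrt{f}$ is absolutely continuous with right derivative $\tfrac{1}{2}f'_{+}/\sqrt{f}$, so \eqref{EQ: InfinitesimalGrowthExponentSeminormDef} gives $\alpha_{\mathfrak{n}^{\wedge m}}(q;\xi)=B_{\mathfrak{n}^{\wedge m}}(q;\xi,\xi)/\mathfrak{n}^{\wedge m}_{q}(\xi)^{2}$, which is exactly what the Growth Formula and the maximization procedure \eqref{EQ: AlphaPlusInfExpMaximizationDefinition} actually use (both only involve $\xi$ with $\mathfrak{n}^{\wedge m}_{q}(\xi)\ne 0$). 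To reach the conclusion on all of $[0,T]$ one can try a sum-of-squares representation of the Gram determinant: writing $\langle\cdot,\cdot\rangle_{q}=\langle\cdot,S_{q}\cdot\rangle_{\mathbb{H}}$ with $S_{q}$ uniformly bounded (by \eqref{EQ: UniformWeaknessFamNorm}) and applying a Cauchy--Binet expansion, $f(t)=\sum_{I}p_{I}(t)^{2}$ for an at most countable family $\{p_{I}\}$, whence $\mathfrak{n}^{\wedge m}_{\vartheta^{t}(q)}(\Xi^{t}_{m}(q,\xi))=\|(p_{I}(t))_{I}\|_{\ell^{2}}$ becomes a $1$-Lipschitz function of an $\ell^{2}$-valued curve; one still has to read off absolute continuity and right-differentiability of the individual $p_{I}$ from the base-metric hypotheses, and this --- controlling the behaviour near the zeros of the wedge norm, i.e. where the images $\Xi^{t}(q,\xi_{j})$ become linearly dependent --- is the genuinely delicate part. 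I would also note that in the principal cases (a standard metric with $C^{1}$ orbits as in Remark~\ref{REM: LozinskiiNorms}, a metric equivalent to $\|\cdot\|_{\mathbb{H}}$, or finite-dimensional $\mathbb{H}$) all the functions above are in fact $C^{1}$ and the square-root difficulty simply disappears, so the obstacle is confined to the full generality of the stated hypotheses.
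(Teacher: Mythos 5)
Your proof takes the same route as the paper's: fix decomposable tensors, express the $\mathfrak{n}^{\wedge m}$-inner product as the Gram determinant \eqref{EQ: InnerProductWedgeHDefinition}, differentiate it termwise, and match the result against $\langle\xi,T^{[\wedge m]}_{\mathfrak{n}}(q)\eta\rangle_{q}$ by a second determinant expansion. Your ``column Leibniz rule'' is exactly the paper's product-rule differentiation of the permutation sum in \eqref{EQ: RepresentationAssociatedMetric1}--\eqref{EQ: RepresentationAssociatedMetric2}, and the constants ($m!$ from \eqref{EQ: InnerProductWedgeHDefinition}, the factor $2$ from \eqref{EQ: CurvatureFormDefinition}) come out as you compute, so the representation claim is proved.

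On the square-root obstacle you flag: it is genuine, and you are being more scrupulous than the source, which dispatches it in a single clause (``Consequently, $\mathfrak{n}^{\wedge m}$ is admissible with that domain''). Two remarks close most of it more cheaply than a Cauchy--Binet detour. For \nameref{DESC: GE3}: each $\mathfrak{n}^{\wedge m}_{q}$ is a norm, so $\mathfrak{n}^{\wedge m}_{q}(\xi)=0$ forces $\xi=0$, in which case the difference quotient in \eqref{EQ: NormRightDerivativeLimitDPlusDef} vanishes identically; for $\xi\neq 0$ the squared-norm function $g$ has $g(0)>0$, so $\sqrt{g}$ inherits right-differentiability at $0$ with no further condition. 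Thus \nameref{DESC: GE3} holds unconditionally. For \nameref{DESC: GE2}: by the cocycle property, once $\mathfrak{n}^{\wedge m}_{\vartheta^{t_{0}}(q)}(\Xi^{t_{0}}_{m}(q,\xi))=0$ it stays $0$ for all $t\geq t_{0}$, so the only point at issue is integrability of $|g'|/\sqrt{g}$ as $t\uparrow t_{0}$ near the first zero. That does \emph{not} follow from \nameref{DESC: GE1}--\nameref{DESC: GE3} for $\mathfrak{n}$ alone --- one can construct an absolutely continuous $g\geq 0$ with $g>0$ on $[0,t_{0})$, $g(t_{0})=0$ and $\sqrt{g}\notin W^{1,1}$ --- so there is a silent assumption here, shared with the paper. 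It disappears in the paper's actual applications: in Sections~\ref{SEC: LiouvilleTraceFormulaAndSymProc} and~\ref{SEC: DimensionEstimatesViaLiouvDelayEqs} the orbit maps $t\mapsto\Xi^{t}(q,\xi)$ are $C^{1}$ for $\xi$ in the chosen domain \eqref{EQ: DomainAqDefinitionCocycleLiovForm}, so $|\Xi^{t}_{m}(q,\xi)|_{\mathbb{H}^{\wedge m}}$ is locally Lipschitz and the conformal factor is bounded and absolutely continuous, which gives \nameref{DESC: GE2} directly; and for the Lyapunov metrics of Theorem~\ref{TH: LengthsFamilyOfNorms} the relevant function is already $C^{1}$ via \eqref{EQ: LargLyapExpInLyapMetricIdentity1}.
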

\begin{proof}
	By linearity, it is sufficient to establish the statements for decomposable tensors $\xi_{1} \wedge \cdots \wedge \xi_{m}$ and $\eta_{1} \wedge \cdots \wedge \eta_{m}$, where $\xi_{j}, \eta_{j} \in \mathcal{N}_{\mathfrak{n}}(q)$ for each $j \in \{1,\ldots,m\}$.
	
	From the definitions of $\mathfrak{n}^{\wedge m}$ and $\Xi_{m}$ and the Leibniz formula for the determinant (see below), it immediately follows that the function
	\begin{equation}
		t \mapsto \left\langle \Xi^{t}_{m}(q, \xi_{1} \wedge \cdots \wedge \xi_{m}),  \Xi^{t}_{m}(q, \eta_{1} \wedge \cdots \wedge \eta_{m}) \right\rangle_{\vartheta^{t}(q)}
	\end{equation}
	is absolutely continuous on finite intervals and differentiable from the right at $t=0$. By linearity, this also holds on the entire $(\mathcal{N}(q))^{\wedge m}$. Consequently, $\Xi_{m}$ admits infinitesimal growth exponents with respect to $\mathfrak{n}^{\wedge m}$.
	
	From the above, we have (here the derivative $\frac{d}{dt}$ is taken at $t=0$)
	\begin{equation}
		\label{EQ: RepresentationAssociatedMetric1}
		\begin{split}
			m! \cdot B_{\mathfrak{n}^{\wedge m}}(q; \xi_{1} \wedge \cdots \wedge \xi_{m}, \eta_{1} \wedge \cdots \wedge \eta_{m}) =\\
			= \frac{1}{2}\frac{d}{dt} \det\left[ \left\langle \Xi^{t}(q,\xi_{i}), \Xi^{t}(q,\eta_{j}) \right\rangle_{\vartheta^{t}(q)} \right]_{i,j=1}^{m} =\\
			= \frac{1}{2}\frac{d}{dt} \sum_{ \sigma \in \mathbb{S}_{m} } (-1)^{\sigma} \prod_{j=1}^{m}  \left\langle \Xi^{t}(q,\xi_{\sigma(j)}), \Xi^{t}(q,\eta_{j}) \right\rangle_{\vartheta^{t}(q)} =\\
			= \frac{1}{2}\sum_{ \sigma \in \mathbb{S}_{m} } (-1)^{\sigma} \sum_{j=1}^{m} \frac{d}{dt}\left[\left\langle \Xi^{t}(q,\xi_{\sigma(j)}), \Xi^{t}(q,\eta_{j}) \right\rangle_{\vartheta^{t}(q)} \right] \cdot \prod_{l \not= j} \left\langle \xi_{\sigma(l)}, \eta_{l} \right\rangle_{q} =\\
			= \sum_{j=1}^{m} \sum_{ \sigma \in \mathbb{S}_{m} } (-1)^{\sigma} B_{\mathfrak{n}}(q; \xi_{\sigma(j)}, \eta_{j}) \cdot \prod_{l\not= j} \left\langle \xi_{\sigma(l)}, \eta_{l} \right\rangle_{q}.
		\end{split}
	\end{equation}
	
	On the other hand, we have
	\begin{equation}
		\label{EQ: RepresentationAssociatedMetric2}
		\begin{split}
			m! \cdot \left\langle \xi_{1} \wedge \cdots \wedge \xi_{m}, T^{[\wedge m]}_{\mathfrak{n}}(q)(\eta_{1} \wedge \cdots \wedge \eta_{m}) \right\rangle_{q} =\\
			= m! \cdot \sum_{j=1}^{m} \left\langle \xi_{1} \wedge \cdots \wedge \xi_{m}, \eta_{1} \wedge \cdots \wedge T_{\mathfrak{n}}(q) \eta_{j} \wedge \cdots \wedge \eta_{m} \right\rangle_{q}  =\\
			=\sum_{j=1}^{m} \det \begin{pmatrix}
				\langle \xi_{1}, \eta_{1} \rangle_{q} && \cdots && \langle \xi_{1}, T_{\mathfrak{n}}\eta_{j}\rangle_{q} && \cdots && \langle \xi_{1}, \eta_{m} \rangle_{q}\\
				\vdots && \ &&\vdots && \ && \vdots\\
				\langle \xi_{m}, \eta_{1}\rangle_{q} && \cdots && \langle \xi_{m}, T_{\mathfrak{n}}\eta_{j}\rangle_{q} && \cdots && \langle \xi_{m}, \eta_{m} \rangle_{q}
			\end{pmatrix} =\\
			= \sum_{j=1}^{m} \sum_{ \sigma \in \mathbb{S}_{m} } (-1)^{\sigma} \langle \xi_{\sigma(j)}, T_{\mathfrak{n}}\eta_{j} \rangle_{q}  \cdot \prod_{l \not= j} \left\langle \xi_{\sigma(l)}, \eta_{l} \right\rangle_{q}.
		\end{split}
	\end{equation}
	From this, \eqref{EQ: RepresentationAssociatedMetric1}, and since $B_{\mathfrak{n}}(q; \xi_{\sigma(j)}, \eta_{j}) = \langle \xi_{\sigma(j)}, T_{\mathfrak{n}}\eta_{j} \rangle_{q}$, we obtain that $T^{[\wedge m]}_{\mathfrak{n}}(q)$ is a representation of $B_{\mathfrak{n}^{\wedge m}}$.
\end{proof}

Now note that
\begin{equation}
	\label{EQ: InfinitesimalGEviaCurvatureForm}
	\alpha_{\mathfrak{n}}(q; \xi) = \frac{B_{\mathfrak{n}}(q; \xi, \xi)}{\langle \xi, \xi \rangle_{q}} \qquad \text{for any} \quad 0\not=\xi \in \mathcal{N}_{\mathfrak{n}}(q).
\end{equation}
Thus, the optimization of $\alpha_{\mathfrak{n}}(q; \xi)$ over $\xi$ is related to the optimization of $B_{\mathfrak{n}}(q; \xi,\xi)$ on the unit ball with respect to $\langle \cdot, \cdot \rangle_{q}$.

We call an operator $A_{\mathfrak{n}}(q)$ in $\mathbb{H}$ a \textit{mean representation} of $B_{\mathfrak{n}}(q)$ if its domain $\mathcal{D}(A_{\mathfrak{n}}(q))$ contains $\mathcal{N}_{\mathfrak{n}}(q)$ and
\begin{equation}
	\label{EQ: SymmetricRepresentationDefinition}
	B_{\mathfrak{n}}(q;\xi,\xi) = \operatorname{Re}\langle \xi, A_{\mathfrak{n}}(q)\xi  \rangle_{q} \qquad \text{for any} \quad \xi \in \mathcal{N}_{\mathfrak{n}}(q).
\end{equation}
Equivalently, we have
\begin{equation}
	\label{EQ: SymmetricRepresentationDefinitionSymmetric}
	B_{\mathfrak{n}}(q;\xi,\eta) = \frac{1}{2} \left( \langle A_{\mathfrak{n}}(q)\xi, \eta \rangle_{q} + \langle \xi, A_{\mathfrak{n}}(q)\eta \rangle_{q}  \right) \qquad \text{for any} \quad \xi, \eta \in \mathcal{N}_{\mathfrak{n}}(q)
\end{equation}
that justifies the word ``mean'' in the name. In particular, any representation is a mean representation. However, even in finite dimensions, there are different mean representations.

If $A_{\mathfrak{n}}(q)$ is given for all $q \in \mathcal{Q}$, we denote the corresponding family by $A_{\mathfrak{n}}$ and call it a \textit{mean representation} of $B_{\mathfrak{n}}$.

\begin{theorem}[Mean representations in associated metrics]
	\label{TH: SymmetricRepresentationsInAssociatedMetrics}
	Let $A_{\mathfrak{n}}(q)$ be a mean representation of $B_{\mathfrak{n}}(q)$. Then $A^{[\wedge m]}_{\mathfrak{n}}(q) \coloneq (A_{\mathfrak{n}}(q))^{[\wedge m]}$ is a mean representation of $B_{\mathfrak{n}^{\wedge m}}(q)$. Furthermore, for all $\xi_{1},\ldots,\xi_{m} \in \mathcal{N}_{\mathfrak{n}}(q)$, we have
	\begin{equation}
		\label{EQ: SymmetricRepresentationTraceFormula}
		\langle A_{\mathfrak{n}^{\wedge m}}(q)(\xi_{1} \wedge \cdots \wedge \xi_{m}), \xi_{1} \wedge \cdots \wedge \xi_{m} \rangle_{q} = \mathfrak{n}^{\wedge m}_{q}(\xi_{1} \wedge \cdots \wedge \xi_{m})^{2} \cdot \operatorname{Tr}( \Pi \circ A_{\mathfrak{n}}(q) \circ \Pi), 
	\end{equation}
	where $\Pi$ is the orthogonal with respect to $\langle \cdot, \cdot \rangle_{q}$ projector in $\mathbb{H}$ onto $\operatorname{Span}\{ \xi_{1}, \ldots, \xi_{m} \}$ and $\operatorname{Tr}$ denotes the trace functional.
\end{theorem}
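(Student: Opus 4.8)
The plan is to handle the two assertions separately; both reduce to the Leibniz expansion of a determinant already carried out in the proof of the previous theorem on representations in associated metrics.

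\emph{The operator $A^{[\wedge m]}_{\mathfrak{n}}(q)$ is a mean representation of $B_{\mathfrak{n}^{\wedge m}}(q)$.} I would not verify the defining quadratic identity \eqref{EQ: SymmetricRepresentationDefinition} directly, since a quadratic form need not be determined by its values on a spanning set. Instead I pass to the equivalent polarized identity \eqref{EQ: SymmetricRepresentationDefinitionSymmetric}, which is sesquilinear and hence determined by its values on the spanning decomposable tensors of $(\mathcal{N}_{\mathfrak{n}}(q))^{\wedge m}$. Thus it suffices to check, for $\xi_j,\eta_j\in\mathcal{N}_{\mathfrak{n}}(q)$,
\[
B_{\mathfrak{n}^{\wedge m}}(q;\xi_1\wedge\ldots\wedge\xi_m,\eta_1\wedge\ldots\wedge\eta_m)=\tfrac12\big(\langle A^{[\wedge m]}_{\mathfrak{n}}(q)(\xi_1\wedge\ldots\wedge\xi_m),\eta_1\wedge\ldots\wedge\eta_m\rangle_q+\langle\xi_1\wedge\ldots\wedge\xi_m,A^{[\wedge m]}_{\mathfrak{n}}(q)(\eta_1\wedge\ldots\wedge\eta_m)\rangle_q\big).
\]
The left-hand side has already been expanded in the proof of the previous theorem (cf. \eqref{EQ: RepresentationAssociatedMetric1}) as $\tfrac1{m!}\sum_{j}\sum_{\sigma\in\mathbb{S}_m}(-1)^{\sigma}B_{\mathfrak{n}}(q;\xi_{\sigma(j)},\eta_j)\prod_{l\ne j}\langle\xi_{\sigma(l)},\eta_l\rangle_q$. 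Expanding the two inner products on the right by exactly the determinantal computation that produced \eqref{EQ: RepresentationAssociatedMetric2} — once with $A_{\mathfrak{n}}(q)$ acting on the $\eta$'s, once on the $\xi$'s, the latter requiring a reindexing of the permutation sum — gives $\tfrac1{m!}\sum_j\sum_\sigma(-1)^\sigma\langle\xi_{\sigma(j)},A_{\mathfrak{n}}(q)\eta_j\rangle_q\prod_{l\ne j}\langle\xi_{\sigma(l)},\eta_l\rangle_q$ and $\tfrac1{m!}\sum_j\sum_\sigma(-1)^\sigma\langle A_{\mathfrak{n}}(q)\xi_{\sigma(j)},\eta_j\rangle_q\prod_{l\ne j}\langle\xi_{\sigma(l)},\eta_l\rangle_q$. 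Averaging these and using that $A_{\mathfrak{n}}(q)$ is a mean representation of $B_{\mathfrak{n}}(q)$, i.e. $\tfrac12(\langle A_{\mathfrak{n}}(q)\xi,\eta\rangle_q+\langle\xi,A_{\mathfrak{n}}(q)\eta\rangle_q)=B_{\mathfrak{n}}(q;\xi,\eta)$, returns precisely the left-hand side. Conceptually this is just the fact that $T\mapsto T^{[\wedge m]}$ is linear and intertwines adjoints on the relevant domains, so the form-symmetrization of $A^{[\wedge m]}_{\mathfrak{n}}(q)$ is the $m$-fold additive compound of the form-symmetrization of $A_{\mathfrak{n}}(q)$, which represents $B_{\mathfrak{n}}(q)$; the direct check sidesteps the domain subtleties of $A_{\mathfrak{n}}(q)^{*}$.

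\emph{The trace formula.} Here $A_{\mathfrak{n}^{\wedge m}}(q)$ denotes the mean representation $A^{[\wedge m]}_{\mathfrak{n}}(q)$ just produced, and I take $\xi_1,\ldots,\xi_m\in\mathcal{N}_{\mathfrak{n}}(q)$ (this is all of $\mathbb{H}$ when $A_{\mathfrak{n}}(q)$ is bounded). If the $\xi_i$ are linearly dependent, then $\xi_1\wedge\ldots\wedge\xi_m=0$ and both sides vanish, so assume independence. Applying Gram–Schmidt with respect to $\langle\cdot,\cdot\rangle_q$ produces an orthonormal basis $e_1,\ldots,e_m$ of $\operatorname{Span}\{\xi_1,\ldots,\xi_m\}$, the range of $\Pi$; each $e_k\in\mathcal{N}_{\mathfrak{n}}(q)$ since $\mathcal{N}_{\mathfrak{n}}(q)$ is a subspace. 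Triangularity of the change of basis gives $\xi_1\wedge\ldots\wedge\xi_m=c\,(e_1\wedge\ldots\wedge e_m)$ with $|c|^2=\det[\langle\xi_i,\xi_j\rangle_q]_{i,j=1}^m$, so by \eqref{EQ: InnerProductWedgeHDefinition} $\mathfrak{n}^{\wedge m}_q(\xi_1\wedge\ldots\wedge\xi_m)^2=|c|^2/m!$. Now
\[
\langle A^{[\wedge m]}_{\mathfrak{n}}(q)(e_1\wedge\ldots\wedge e_m),e_1\wedge\ldots\wedge e_m\rangle_q=\sum_{j=1}^m\langle e_1\wedge\ldots\wedge A_{\mathfrak{n}}(q)e_j\wedge\ldots\wedge e_m,\ e_1\wedge\ldots\wedge e_m\rangle_q,
\]
and by \eqref{EQ: InnerProductWedgeHDefinition} and orthonormality of the $e_k$, the $j$-th summand equals $\tfrac1{m!}$ times the determinant of the $m\times m$ identity matrix with its $j$-th row replaced by $(\langle A_{\mathfrak{n}}(q)e_j,e_k\rangle_q)_{k=1}^m$, i.e. $\tfrac1{m!}\langle A_{\mathfrak{n}}(q)e_j,e_j\rangle_q$. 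Summing over $j$, recalling that $\Pi\circ A_{\mathfrak{n}}(q)\circ\Pi$ has finite rank and $\operatorname{Tr}(\Pi\circ A_{\mathfrak{n}}(q)\circ\Pi)=\sum_{j=1}^m\langle A_{\mathfrak{n}}(q)e_j,e_j\rangle_q$ independently of the chosen orthonormal basis of the range of $\Pi$, and multiplying through by $|c|^2$, I obtain \eqref{EQ: SymmetricRepresentationTraceFormula}.

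\emph{Main obstacle.} Neither computation is deep; the real care is in the infinite-dimensional bookkeeping — making sure every vector on which $A_{\mathfrak{n}}(q)$ is applied lies in $\mathcal{N}_{\mathfrak{n}}(q)$, and that $B_{\mathfrak{n}^{\wedge m}}(q)$ is genuinely defined (the content of the previous theorem). The one genuinely delicate point is the passage, in the first assertion, from the quadratic identity \eqref{EQ: SymmetricRepresentationDefinition} to its polarization \eqref{EQ: SymmetricRepresentationDefinitionSymmetric} before restricting to decomposable tensors: this is legitimate under both the real-symmetric and the Hermitian conventions used in the paper, and it is what makes the reduction to spanning elements valid (a mean representation by itself only fixes the quadratic form, which is not determined by its values on decomposable tensors alone).
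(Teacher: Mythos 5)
Your proof is correct and takes essentially the same route as the paper's: verify the polarized identity \eqref{EQ: SymmetricRepresentationDefinitionSymmetric} on decomposable tensors by expanding both sides via the Leibniz formula for the determinant (with the reindexing $\tau=\sigma^{-1}$), and for the trace formula reduce to an orthonormal family and compute. The only place you add substance is in explicitly justifying the reduction to orthonormal vectors via Gram--Schmidt and the scaling $\xi_{1}\wedge\ldots\wedge\xi_{m}=c\,(e_{1}\wedge\ldots\wedge e_{m})$ with $|c|^{2}=\det[\langle\xi_{i},\xi_{j}\rangle_{q}]$ — the paper states the reduction without comment — and your opening remark about why one must pass from the quadratic identity \eqref{EQ: SymmetricRepresentationDefinition} to its polarization \eqref{EQ: SymmetricRepresentationDefinitionSymmetric} before restricting to decomposable tensors is a correct articulation of what the paper does implicitly by working with \eqref{EQ: SymmetricRepresentationDefinitionSymmetric} from the outset.
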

\begin{proof}
	We need to show an analog of \eqref{EQ: SymmetricRepresentationDefinitionSymmetric} for $n^{\wedge m}$. Due to linearity, it is sufficient to consider decomposable tensors $\xi_{1} \wedge \cdots \wedge \xi_{m}$ and $\eta_{1} \wedge \cdots \wedge \eta_{m}$. Similarly to \eqref{EQ: RepresentationAssociatedMetric2}, we have the relations
	\begin{equation}
		\label{EQ: SymmetricRepresentationTh1}
		\begin{split}
			m! \cdot \left\langle \xi_{1} \wedge \cdots \wedge \xi_{m}, A^{[\wedge m]}_{\mathfrak{n}}(q)(\eta_{1} \wedge \cdots \wedge \eta_{m}) \right\rangle_{q} =\\
			= \sum_{\sigma \in \mathbb{S}_{m}} (-1)^{\sigma} \sum_{j=1}^{m} \langle \xi_{\sigma(j)}, A_{\mathfrak{n}}\eta_{j} \rangle_{q}  \cdot \prod_{l \not= j} \left\langle \xi_{\sigma(l)}, \eta_{l} \right\rangle_{q}.
		\end{split}
	\end{equation}
	and (here $\sigma$ in the first appearance is arbitrary)
	\begin{equation}
		\label{EQ: SymmetricRepresentationTh2}
		\begin{split}
			m! \cdot \left\langle A^{[\wedge m]}_{\mathfrak{n}}(q)(\xi_{1} \wedge \cdots \wedge \xi_{m}), \eta_{1} \wedge \cdots \wedge \eta_{m} \right\rangle_{q} =\\
			= \sum_{\tau \in \mathbb{S}_{m}} (-1)^{\tau} \sum_{j=1}^{m} \langle A_{\mathfrak{n}}\xi_{j}, \eta_{\tau(j)} \rangle_{q}  \cdot \prod_{l \not= j} \left\langle \xi_{l}, \eta_{\tau(l)} \right\rangle_{q} =\\
			= \sum_{\tau \in \mathbb{S}_{m}} (-1)^{\tau} \sum_{j=1}^{m} \langle A_{\mathfrak{n}}\xi_{\sigma(j)}, \eta_{\tau(\sigma(j))} \rangle_{q}  \cdot \prod_{l \not= \sigma(j)} \left\langle \xi_{l}, \eta_{\tau(l)} \right\rangle_{q} =\\
			\overset{\tau = \sigma^{-1}}{=} \sum_{\sigma \in \mathbb{S}_{m}} (-1)^{\sigma} \sum_{j=1}^{m} \langle A_{\mathfrak{n}}\xi_{\sigma(j)}, \eta_{j} \rangle_{q}  \cdot \prod_{l \not= j} \left\langle \xi_{\sigma(l)}, \eta_{l} \right\rangle_{q}
			,
		\end{split}
	\end{equation}
	where we also used that $\prod_{l \not= \sigma(j)} \left\langle \xi_{l}, \eta_{\tau(l)} \right\rangle_{q} = \prod_{l \not= j} \left\langle \xi_{\sigma(l)}, \eta_{\tau(\sigma(l))} \right\rangle_{q}$. Now summing \eqref{EQ: SymmetricRepresentationTh1} and \eqref{EQ: SymmetricRepresentationTh2} and using \eqref{EQ: SymmetricRepresentationDefinitionSymmetric} along with \eqref{EQ: RepresentationAssociatedMetric1}, we obtain that $A^{[\wedge m]}_{\mathfrak{n}}(q)$ is a mean representation of $B_{\mathfrak{n}^{\wedge m}}(q)$.
	
	For \eqref{EQ: SymmetricRepresentationTraceFormula}, it is sufficient to consider orthonormal families of $\xi_{j}$. Putting these into \eqref{EQ: SymmetricRepresentationTh2} and noting that the product vanishes for all permutations except the identity, we obtain
	\begin{equation}
		\begin{split}
			m! \cdot \left\langle A^{[\wedge m]}_{\mathfrak{n}}(q)(\xi_{1} \wedge \cdots \wedge \xi_{m}), \xi_{1} \wedge \cdots \wedge \xi_{m} \right\rangle_{q} =\\
			= \sum_{j=1}^{m} \langle A_{\mathfrak{n}} \xi_{j}, \xi_{j} \rangle_{q} = \sum_{j=1}^{m} \langle \Pi \circ A_{\mathfrak{n}} \circ \Pi \xi_{j}, \xi_{j} \rangle_{q} = \operatorname{Tr}(\Pi \circ A_{\mathfrak{n}} \circ \Pi).
		\end{split}
	\end{equation}
	Since $1/m!$ is the squared norm of $\xi_{1} \wedge \cdots \wedge \xi_{m}$, this shows the required.
\end{proof}

From this, we immediately have the generalized Liouville trace formula.
\begin{theorem}
	\label{TH: GeneralizedLiouvilleTraceFormula}
	Let $\mathfrak{n}$ be as above, and suppose $A_{\mathfrak{n}}$ is a mean representation of $B_{\mathfrak{n}}$. Then for all $m=1,2,\ldots$, $q \in \mathcal{Q}$, $\xi_{1},\ldots,\xi_{m} \in \mathcal{N}_{\mathfrak{n}}(q)$, and $t \geq 0$ such that $\Xi^{t}_{m}(q,\xi_{1} \wedge \cdots \wedge \xi_{m}) \not= 0$, we have the \textit{Liouville trace formula}
	\begin{equation}
		\label{EQ: GeneralizedTraceFormula}
		\begin{split}
			\mathfrak{n}^{\wedge m}_{\vartheta^{t}(q)}(\Xi^{t}_{m}(q, \xi_{1} \wedge \cdots \wedge \xi_{m}) = \mathfrak{n}^{\wedge m}_{q}(\xi_{1} \wedge \cdots \wedge \xi_{m}) \times \\
			\times \exp\left(\int_{0}^{t} \operatorname{Re} \operatorname{Tr} (\Pi(s) \circ A_{\mathfrak{n}}(\vartheta^{s}(q)) \circ \Pi(s)) ds \right),
		\end{split}
	\end{equation}
	where $\Pi(s)=\Pi(s; q; \xi_{1},\ldots,\xi_{m})$ is the orthogonal with respect to $\langle \cdot, \cdot \rangle_{\vartheta^{s}(q)}$ projector in $\mathbb{H}$ onto $\operatorname{Span}\left\{ \Xi^{s}(q,\xi_{1}), \ldots, \Xi^{s}(q,\xi_{m}) \right\}$.
\end{theorem}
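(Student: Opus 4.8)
The plan is to combine the Growth Formula with Theorem \ref{TH: SymmetricRepresentationsInAssociatedMetrics}, applied to the compound cocycle $\Xi_{m}$ with the associated metric $\mathfrak{n}^{\wedge m}$. First I would recall that by the Representations Theorem, $\Xi_{m}$ admits infinitesimal growth exponents w.r.t. $\mathfrak{n}^{\wedge m}$ with domains $\mathcal{N}_{\mathfrak{n}^{\wedge m}}(q) = (\mathcal{N}_{\mathfrak{n}}(q))^{\wedge m}$, so the Growth Formula \eqref{EQ: SeminormsGrowthViaInfExponentsEq} applies to $\Xi_{m}$ and gives, for $\xi = \xi_{1} \wedge \ldots \wedge \xi_{m}$ with each $\xi_{j} \in \mathcal{N}_{\mathfrak{n}}(q)$,
\begin{equation}
    \mathfrak{n}^{\wedge m}_{\vartheta^{t}(q)}(\Xi^{t}_{m}(q,\xi)) = \mathfrak{n}^{\wedge m}_{q}(\xi) \exp\left( \int_{0}^{t} \alpha_{\mathfrak{n}^{\wedge m}}(\vartheta^{s}(q); \Xi^{s}_{m}(q,\xi))\, ds \right),
\end{equation}
valid as long as the length stays nonzero along the trajectory. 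It thus remains to identify the integrand with the real trace of the compressed mean representation.

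Next I would use \eqref{EQ: InfinitesimalGEviaCurvatureForm} applied to $\Xi_{m}$: the infinitesimal growth exponent equals $B_{\mathfrak{n}^{\wedge m}}(\cdot;\eta,\eta)/\langle \eta,\eta\rangle$ evaluated at $\eta = \Xi^{s}_{m}(q,\xi) = \Xi^{s}(q,\xi_{1}) \wedge \ldots \wedge \Xi^{s}(q,\xi_{m})$ over the base point $\vartheta^{s}(q)$. By Theorem \ref{TH: SymmetricRepresentationsInAssociatedMetrics}, $A^{[\wedge m]}_{\mathfrak{n}}$ is a mean representation of $B_{\mathfrak{n}^{\wedge m}}$, so $B_{\mathfrak{n}^{\wedge m}}(\vartheta^{s}(q);\eta,\eta) = \operatorname{Re}\langle \eta, A^{[\wedge m]}_{\mathfrak{n}}(\vartheta^{s}(q))\eta\rangle_{\vartheta^{s}(q)}$; then identity \eqref{EQ: SymmetricRepresentationTraceFormula} (applied over the point $\vartheta^{s}(q)$) rewrites this inner product as $\mathfrak{n}^{\wedge m}_{\vartheta^{s}(q)}(\eta)^{2} \cdot \operatorname{Tr}(\Pi(s) \circ A_{\mathfrak{n}}(\vartheta^{s}(q)) \circ \Pi(s))$, where $\Pi(s)$ is the $\langle\cdot,\cdot\rangle_{\vartheta^{s}(q)}$-orthogonal projector onto $\operatorname{Span}\{\Xi^{s}(q,\xi_{1}),\ldots,\Xi^{s}(q,\xi_{m})\}$. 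Dividing by $\langle\eta,\eta\rangle_{\vartheta^{s}(q)} = \mathfrak{n}^{\wedge m}_{\vartheta^{s}(q)}(\eta)^{2}$ cancels the norm factor and leaves precisely $\operatorname{Re}\operatorname{Tr}(\Pi(s) \circ A_{\mathfrak{n}}(\vartheta^{s}(q)) \circ \Pi(s))$, which substituted into the Growth Formula yields \eqref{EQ: GeneralizedTraceFormula}.

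One point that deserves care, and which I expect to be the main (if modest) obstacle, is the degenerate-rank issue: when the vectors $\Xi^{s}(q,\xi_{j})$ become linearly dependent the wedge $\eta$ vanishes and formula \eqref{EQ: SymmetricRepresentationTraceFormula} — which I applied after normalizing to an orthonormal family — degenerates, and moreover the logarithmic form of the Growth Formula requires $\mathfrak{n}^{\wedge m}_{\vartheta^{t}(q)}(\Xi^{t}_{m}(q,\xi)) \neq 0$ throughout. Here I would note, as in the Growth Formula hypothesis, that the identity is stated precisely on those intervals $[0,t]$ where the length is nonzero; on such intervals the $\Xi^{s}(q,\xi_{j})$ stay linearly independent, $\Pi(s)$ is a well-defined rank-$m$ projector depending continuously (in fact, by \nameref{DESC: GE2}, absolutely continuously in the relevant sense) on $s$, and the trace expression is finite and measurable so the integral makes sense. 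If one wishes a statement free of the nonvanishing caveat, one can instead integrate the (signed, pointwise-defined) derivative of $t \mapsto \mathfrak{n}^{\wedge m}_{\vartheta^{t}(q)}(\Xi^{t}_{m}(q,\xi))^{2}$, which by \nameref{DESC: GE2} applied to $\Xi_{m}$ is absolutely continuous and whose a.e. derivative equals $2 B_{\mathfrak{n}^{\wedge m}}(\vartheta^{t}(q);\eta,\eta)$, but the exponential form requires the nonvanishing hypothesis anyway, so I would simply keep it. The remaining verifications — that $A^{[\wedge m]}_{\mathfrak{n}}(q)$ has $(\mathcal{N}_{\mathfrak{n}}(q))^{\wedge m}$ in its domain, and that the orthonormalization used to derive \eqref{EQ: SymmetricRepresentationTraceFormula} does not affect the span or the projector — are routine and already covered by the two preceding theorems.
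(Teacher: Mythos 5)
Your proposal is correct and follows essentially the same path as the paper's own proof: apply the Growth Formula to $\Xi_{m}$ in the metric $\mathfrak{n}^{\wedge m}$ with initial vector $\xi_{1}\wedge\ldots\wedge\xi_{m}$, then rewrite the resulting integrand via \eqref{EQ: InfinitesimalGEviaCurvatureForm} (for the curvature form of the compound cocycle) and \eqref{EQ: SymmetricRepresentationTraceFormula} (the trace identity for the wedge mean representation from Theorem \ref{TH: SymmetricRepresentationsInAssociatedMetrics}). The paper's proof is a three-line pointer to exactly these ingredients; your version just spells out the intermediate identifications, and your remarks on the nonvanishing hypothesis and well-definedness of $\Pi(s)$ are correct observations already subsumed in the caveat built into the Growth Formula.
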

\begin{proof}
	We apply the Growth Formula \eqref{EQ: SeminormsGrowthViaInfExponentsEq} to $\Xi_{m}$ in the metric $\mathfrak{n}^{\wedge m}$ for the initial vector $\xi_{1} \wedge \cdots \wedge \xi_{m}$. Then the conclusion follows from \eqref{EQ: InfinitesimalGEviaCurvatureForm} and \eqref{EQ: SymmetricRepresentationDefinition} for $\mathfrak{n}^{\wedge m}$ and \eqref{EQ: SymmetricRepresentationTraceFormula}.
\end{proof}
\begin{remark}
	In the paper of Savostianov and Zelik \cite{SavostianovZelik2016}, the Liouville trace formula is considered for cocycles with bounded generators.
\end{remark}

For all $m=1,2,\ldots$ and $q \in \mathcal{Q}$, we define the \textit{$m$-trace exponent} of $\Xi$ over $q$ as the quantity
\begin{equation}
	\label{EQ: TraceExponentDefinition}
	\alpha^{tr}_{\mathfrak{n}^{\wedge m}}(q) = \alpha^{tr}_{\mathfrak{n}^{\wedge m}}(q;\Xi) \coloneq \sup_{\xi_{1}, \ldots, \xi_{m}} \alpha_{\mathfrak{n}^{\wedge m}}(q; \xi_{1} \wedge \cdots \wedge \xi_{m}),
\end{equation}
where the supremum is taken over all linearly independent vectors $\xi_{1},\ldots,\xi_{m} \in \mathcal{N}_{\mathfrak{n}}(q)$.

\begin{theorem}
	Let $\mathfrak{n}$ be as above, and suppose $A_{\mathfrak{n}}(q)$ is a mean representation of $B_{\mathfrak{n}}(q)$. Then
	\begin{equation}
		\label{EQ: TraceExponentViaTraceOfSymmetricRepresentation}
		\alpha^{tr}_{\mathfrak{n}^{\wedge m}}(q) = \sup_{\mathbb{L} \subset \mathcal{N}_{\mathfrak{n}}(q)} \operatorname{Re}\operatorname{Tr}( \Pi_{\mathbb{L}} \circ A_{\mathfrak{n}}(q) \circ \Pi_{\mathbb{L}} ),
	\end{equation}
	where the supremum is taken over all $m$-dimensional subspaces $\mathbb{L}$ of $\mathcal{N}_{\mathfrak{n}}(q)$, and $\Pi_{\mathbb{L}}$ is the orthogonal with respect to $\langle \cdot, \cdot \rangle_{q}$ projector in $\mathbb{H}$ onto $\mathbb{L}$. In other words,
	\begin{equation}
		\label{EQ: TraceExponentViaTraceNumbers}
		\alpha^{tr}_{\mathfrak{n}^{\wedge m}}(q) = \sum_{j=1}^{m} \beta_{j}(A_{\mathfrak{n}}(q);\mathcal{N}(q)),
	\end{equation}
	where $\beta_{1}(A_{\mathfrak{n}}(q); \mathcal{N}(q)) \geq \beta_{2}(A_{\mathfrak{n}}(q);\mathcal{N}(q)) \geq \ldots$ are the trace numbers of $A_{\mathfrak{n}}(q)$ as an operator defined on $\mathcal{N}(q)$ in the Hilbert space $\mathbb{H}$ endowed with $\langle \cdot, \cdot \rangle_{q}$, see \eqref{EQ: TraceNumbersDef}.
	
	Furthermore, if $s \mapsto \alpha^{tr}_{\mathfrak{n}^{\wedge m}}(\vartheta^{s}(q))$ is integrable on a segment $[0,T]$, then\footnote{By $\|\Xi^{t}(q,\cdot)\|_{\mathfrak{n}}$ we denote the norm of $\Xi^{t}(q,\cdot)$ as an operator from $(\mathbb{H},\mathfrak{n}_{q})$ to $(\mathbb{H},\mathfrak{n}_{\vartheta^{t}(q)})$, and similarly for its multiplicative compounds.}
	\begin{equation}
		\label{EQ: CocycleNormEstimateViaTraceExponent}
		\| \Xi^{t}_{m}(q, \cdot) \|_{\mathfrak{n}^{\wedge m}} \leq \exp\left( \int_{0}^{t} \alpha^{tr}_{\mathfrak{n}^{\wedge m}}(\vartheta^{s}(q))ds\right) \qquad \text{for any} \quad t \in [0,T].
	\end{equation}
\end{theorem}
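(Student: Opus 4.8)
The plan is to combine the algebra of mean representations in associated metrics (Theorem~\ref{TH: SymmetricRepresentationsInAssociatedMetrics} and the trace identity~\eqref{EQ: SymmetricRepresentationTraceFormula}) with the Growth Formula~\eqref{EQ: SeminormsGrowthViaInfExponentsEq}. First I would evaluate the infinitesimal growth exponent of $\Xi_{m}$ on a decomposable vector. Fix $q$ and linearly independent $\xi_{1},\ldots,\xi_{m}\in\mathcal{N}_{\mathfrak{n}}(q)$, and put $\xi=\xi_{1}\wedge\ldots\wedge\xi_{m}$. Applying~\eqref{EQ: InfinitesimalGEviaCurvatureForm} to the metric $\mathfrak{n}^{\wedge m}$ gives $\alpha_{\mathfrak{n}^{\wedge m}}(q;\xi)=B_{\mathfrak{n}^{\wedge m}}(q;\xi,\xi)/\langle\xi,\xi\rangle_{q}$. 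Since $A^{[\wedge m]}_{\mathfrak{n}}(q)$ is a mean representation of $B_{\mathfrak{n}^{\wedge m}}(q)$ by Theorem~\ref{TH: SymmetricRepresentationsInAssociatedMetrics}, the numerator equals $\operatorname{Re}\langle A^{[\wedge m]}_{\mathfrak{n}}(q)\xi,\xi\rangle_{q}$, which by~\eqref{EQ: SymmetricRepresentationTraceFormula} equals $\mathfrak{n}^{\wedge m}_{q}(\xi)^{2}\cdot\operatorname{Re}\operatorname{Tr}(\Pi\circ A_{\mathfrak{n}}(q)\circ\Pi)$, where $\Pi$ is the $\langle\cdot,\cdot\rangle_{q}$-orthogonal projector onto $\operatorname{Span}\{\xi_{1},\ldots,\xi_{m}\}$. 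As $\langle\xi,\xi\rangle_{q}=\mathfrak{n}^{\wedge m}_{q}(\xi)^{2}$, dividing yields the pointwise identity $\alpha_{\mathfrak{n}^{\wedge m}}(q;\xi_{1}\wedge\ldots\wedge\xi_{m})=\operatorname{Re}\operatorname{Tr}(\Pi\circ A_{\mathfrak{n}}(q)\circ\Pi)$.

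Next I would take the supremum over all linearly independent $\xi_{1},\ldots,\xi_{m}\in\mathcal{N}_{\mathfrak{n}}(q)$. Since the right-hand side just obtained depends only on $\mathbb{L}\coloneq\operatorname{Span}\{\xi_{1},\ldots,\xi_{m}\}$, these tuples sweep out precisely the $m$-dimensional subspaces $\mathbb{L}\subset\mathcal{N}_{\mathfrak{n}}(q)$, with $\Pi=\Pi_{\mathbb{L}}$; this is~\eqref{EQ: TraceExponentViaTraceOfSymmetricRepresentation}. Identity~\eqref{EQ: TraceExponentViaTraceNumbers} is then just the definition~\eqref{EQ: TraceNumbersDef} of the trace numbers unwound: because $\Pi_{\mathbb{L}}$ is self-adjoint, $\operatorname{Re}\operatorname{Tr}(\Pi_{\mathbb{L}}\circ A_{\mathfrak{n}}(q)\circ\Pi_{\mathbb{L}})$ depends only on the symmetric part of $A_{\mathfrak{n}}(q)$, equivalently on the form $B_{\mathfrak{n}}(q)$ itself, and the supremum of this quantity over $m$-dimensional subspaces of $\mathcal{N}(q)$ is by definition $\beta_{1}(A_{\mathfrak{n}}(q);\mathcal{N}(q))+\ldots+\beta_{m}(A_{\mathfrak{n}}(q);\mathcal{N}(q))$ (this is the step that links the computation to the symmetrization discussed in Appendix~\ref{SEC: SymmetrizationOfOperators}).

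For the estimate~\eqref{EQ: CocycleNormEstimateViaTraceExponent} I would plug these bounds into the Growth Formula~\eqref{EQ: SeminormsGrowthViaInfExponentsEq} for $\Xi_{m}$ in the metric $\mathfrak{n}^{\wedge m}$, started at $\xi=\xi_{1}\wedge\ldots\wedge\xi_{m}$ with $\xi_{j}\in\mathcal{N}_{\mathfrak{n}}(q)$; recall from the preceding theorem on representations in associated metrics that the admissibility domain is $\mathcal{N}_{\mathfrak{n}^{\wedge m}}(\cdot)=(\mathcal{N}_{\mathfrak{n}}(\cdot))^{\wedge m}$. By~\nameref{DESC: GE1} the propagated vector $\Xi^{s}_{m}(q,\xi)=\Xi^{s}(q,\xi_{1})\wedge\ldots\wedge\Xi^{s}(q,\xi_{m})$ lies in $\mathcal{N}_{\mathfrak{n}^{\wedge m}}(\vartheta^{s}(q))$, so wherever it is nonzero the integrand in the Growth Formula for $\Xi_{m}$ is at most $\alpha^{tr}_{\mathfrak{n}^{\wedge m}}(\vartheta^{s}(q))$ by~\eqref{EQ: TraceExponentDefinition}; if $\Xi^{s_{0}}_{m}(q,\xi)=0$ for some $s_{0}$, then it vanishes for all $s\ge s_{0}$ by linearity and~\nameref{DESC: CO1}, so the desired inequality holds trivially there, and on $[0,s_{0})$ the Growth Formula gives it, extending to $t=s_{0}$ by the absolute continuity in~\nameref{DESC: GE2}. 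This produces $\mathfrak{n}^{\wedge m}_{\vartheta^{t}(q)}(\Xi^{t}_{m}(q,\xi))\le\mathfrak{n}^{\wedge m}_{q}(\xi)\exp(\int_{0}^{t}\alpha^{tr}_{\mathfrak{n}^{\wedge m}}(\vartheta^{s}(q))\,ds)$, finite by the integrability hypothesis. Finally I would invoke Lemma~\ref{LEM: TemamLemmaWedgeNormComputation}, applied to $\Xi^{t}(q,\cdot)$ as an operator between $(\mathbb{H},\mathfrak{n}_{q})$ and $(\mathbb{H},\mathfrak{n}_{\vartheta^{t}(q)})$, to see that $\|\Xi^{t}_{m}(q,\cdot)\|_{\mathfrak{n}^{\wedge m}}$ is realized as a supremum over decomposable tensors, and pass to such tensors with factors in the dense subspace $\mathcal{N}_{\mathfrak{n}}(q)$ by approximation.

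The algebra is routine once Theorem~\ref{TH: SymmetricRepresentationsInAssociatedMetrics} and~\eqref{EQ: SymmetricRepresentationTraceFormula} are available. The two places where I expect to need genuine care are: (i) the degenerate case in which $\Xi^{s}(q,\xi_{1}),\ldots,\Xi^{s}(q,\xi_{m})$ become linearly dependent, so that the Growth Formula must be run on a truncated interval and then closed off by continuity; and (ii) the concluding density/approximation argument, passing from decomposable tensors built out of $\mathcal{N}_{\mathfrak{n}}(q)$ to the full operator norm $\|\Xi^{t}_{m}(q,\cdot)\|_{\mathfrak{n}^{\wedge m}}$ on $\mathbb{H}^{\wedge m}$.
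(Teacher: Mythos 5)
Your proposal is correct and follows essentially the same route as the paper's proof: the pointwise identity for decomposable tensors via \eqref{EQ: InfinitesimalGEviaCurvatureForm}, Theorem~\ref{TH: SymmetricRepresentationsInAssociatedMetrics} and \eqref{EQ: SymmetricRepresentationTraceFormula}, then the trace-number reformulation from \eqref{EQ: TraceNumbersDef}, and finally the Growth Formula plus Lemma~\ref{LEM: TemamLemmaWedgeNormComputation}. You supply more detail than the paper (the degenerate case where the propagated vector vanishes and the density argument passing from factors in $\mathcal{N}_{\mathfrak{n}}(q)$ to the full operator norm), but the underlying argument is the same.
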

\begin{proof}
	We immediately obtain \eqref{EQ: TraceExponentViaTraceOfSymmetricRepresentation} by using analogs of \eqref{EQ: InfinitesimalGEviaCurvatureForm} and \eqref{EQ: SymmetricRepresentationDefinition} for the metric $\mathfrak{n}^{\wedge m}$ and then applying Theorem \ref{TH: SymmetricRepresentationsInAssociatedMetrics} and \eqref{EQ: SymmetricRepresentationTraceFormula}. Then, \eqref{EQ: TraceExponentViaTraceNumbers} is just a reformulation of \eqref{EQ: TraceExponentViaTraceOfSymmetricRepresentation} in terms of the trace numbers defined in \eqref{EQ: TraceNumbersDef}.
	
	Finally, \eqref{EQ: CocycleNormEstimateViaTraceExponent} follows from the Liouville trace formula \eqref{EQ: GeneralizedTraceFormula} and Lemma \ref{LEM: TemamLemmaWedgeNormComputation}.
\end{proof}

For estimating the Lyapunov dimension defined by \eqref{EQ: LyapunovDimensionDefinition}, we need to deal with the averaged function of singular values $\omega_{d}(\Xi)$ for non-integer $d$, especially if we want to obtain sharp estimates. For the moment, we suppose that $\mathfrak{n}$ is equivalent to the standard metric in $\mathbb{H}$.

For $d = m + \theta$ with integer $m \geq 0$ and $\theta \in [0,1]$, the function of singular values of the cocycle mappings in the metric $\mathfrak{n}$ is given by
\begin{equation}
	\label{EQ: OmegaInGeneralMetric}
	\omega^{(\mathfrak{n})}_{d}(\Xi^{t}(q,\cdot)) = \| \Xi^{t}_{m}(q,\cdot) \|^{1-\theta}_{\mathfrak{n}^{\wedge m}} \cdot \| \Xi^{t}_{m+1}(q,\cdot) \|^{\theta}_{\mathfrak{n}^{\wedge (m+1)}},
\end{equation}
where for $m=0$ we set the left multiplier to be $1$. It is not difficult to show that
\begin{equation}
	\omega_{d}(\Xi) = \lim_{t \to +\infty} \left( \sup_{q \in \mathcal{Q}} \omega^{(\mathfrak{n})}_{d}(\Xi^{t}(q,\cdot))  \right)^{1/t}.
\end{equation}
In other words, $\omega_{d}(\Xi)$ is independent of the choice of an equivalent metric $\mathfrak{n}$.

\begin{theorem}
	\label{TH: AveragedOmegaDEstimateViaTraceNumbers}
	Let $\mathfrak{n}$, as above, be equivalent to the standard metric in $\mathbb{H}$, and consider $d = m + \theta$ with integer $m \geq 0$ and $\theta \in [0,1]$. Suppose $A_{\mathfrak{n}}$ is a mean representation of $B_{\mathfrak{n}}$. Then
	\begin{equation}
		\label{EQ: OmegaEstimateViaMaxOfBetas}
		\ln \omega_{d}(\Xi) \leq \sup_{q \in \mathcal{Q}} \left[ \sum_{j=1}^{m}\beta_{j}(A_{\mathfrak{n}}(q)) + \theta\beta_{m+1}(A_{\mathfrak{n}}(q)) \right].
	\end{equation}
	Furthermore, if for all $q \in \mathcal{Q}$ and $j \in \{1,\ldots,m+1\}$, the function $s \mapsto \beta_{j}(A_{\mathfrak{n}}(\vartheta^{s}(q)))$ is integrable on finite time intervals, then
	\begin{equation}
		\label{EQ: OmegaEstimateViaTimeAveragedMaxOfBetas}
		\begin{split}
			\ln\omega_{d}(\Xi)
			\leq \lim_{t \to +\infty}\frac{1}{t}\sup_{q \in \mathcal{Q}}\int_{0}^{t} \left[\sum_{j=1}^{m}\beta_{j}(A_{\mathfrak{n}}(\vartheta^{s}(q))) + \theta\beta_{m+1}(A_{\mathfrak{n}}(\vartheta^{s}(q)))\right] ds.
		\end{split}
	\end{equation}
\end{theorem}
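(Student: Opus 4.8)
\emph{Proof plan.} The plan is to run the generalized Liouville trace formula \eqref{EQ: GeneralizedTraceFormula} of Theorem \ref{TH: GeneralizedLiouvilleTraceFormula} at the two compound orders $k=m$ and $k=m+1$ simultaneously, combining their contributions inside a single time integral \emph{before} any supremum over the base is taken; this is what will produce the sharp exponent in \eqref{EQ: OmegaEstimateViaMaxOfBetas} rather than the cruder $(1-\gamma)\sup_{q}\sum_{j\le m}\beta_j(A_{\mathfrak{n}}(q))+\gamma\sup_{q}\sum_{j\le m+1}\beta_j(A_{\mathfrak{n}}(q))$. First I would recall, from the discussion above the theorem, that since $\mathfrak{n}$ is uniformly equivalent to the standard inner product one has $\ln\overline{\omega}_d(\Xi)=\lim_{t\to+\infty}\frac1t\ln\sup_{q}\omega^{(\mathfrak{n})}_d(\Xi^t(q,\cdot))$ with $\omega^{(\mathfrak{n})}_d(\Xi^t(q,\cdot))=\|\Xi^t_m(q,\cdot)\|^{1-\gamma}_{\mathfrak{n}^{\wedge m}}\,\|\Xi^t_{m+1}(q,\cdot)\|^{\gamma}_{\mathfrak{n}^{\wedge (m+1)}}$, the operator norms being taken between the $\mathfrak{n}^{\wedge k}$-inner products on the relevant fibers. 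By Lemma \ref{LEM: TemamLemmaWedgeNormComputation} each factor is a supremum of $\mathfrak{n}^{\wedge k}_{\vartheta^t(q)}(\Xi^t_k(q,\xi_1\wedge\ldots\wedge\xi_k))$ over decomposable tensors normalized in $\mathfrak{n}^{\wedge k}_q$, and, by density of $\mathcal{N}_{\mathfrak{n}}(q)$, one may take the components in $\mathcal{N}_{\mathfrak{n}}(q)$; for such a near-maximizer the cocycle property forces $\Xi^s_k(q,\xi_1\wedge\ldots\wedge\xi_k)\neq0$ on all of $[0,t]$ whenever $\|\Xi^t_k(q,\cdot)\|_{\mathfrak{n}^{\wedge k}}>0$, so \eqref{EQ: GeneralizedTraceFormula} applies and the logarithm of that quantity equals $\int_0^t\operatorname{Re}\operatorname{Tr}(\Pi_k(s)\circ A_{\mathfrak{n}}(\vartheta^s(q))\circ\Pi_k(s))\,ds$ with $\Pi_k(s)$ a rank-$k$ orthogonal projector onto a subspace of $\mathcal{N}_{\mathfrak{n}}(\vartheta^s(q))$, the integrand being integrable on $[0,t]$ by property \textbf{(GE2)} for $\mathfrak{n}^{\wedge k}$ (the case $\|\Xi^t_k(q,\cdot)\|_{\mathfrak{n}^{\wedge k}}=0$ makes the corresponding logarithm $-\infty$ and is harmless).

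Next I would fix $\varepsilon>0$, choose $\varepsilon$-near-maximizing decomposable tensors for $\|\Xi^t_m(q,\cdot)\|_{\mathfrak{n}^{\wedge m}}$ and for $\|\Xi^t_{m+1}(q,\cdot)\|_{\mathfrak{n}^{\wedge(m+1)}}$, and multiply the two resulting trace-formula identities with exponents $1-\gamma$ and $\gamma$ to obtain
\[
	\ln\omega^{(\mathfrak{n})}_d(\Xi^t(q,\cdot))\ \le\ 2\varepsilon+\int_0^t\Big[(1-\gamma)\operatorname{Re}\operatorname{Tr}\big(\Pi_m(s)A_{\mathfrak{n}}(\vartheta^s(q))\Pi_m(s)\big)+\gamma\operatorname{Re}\operatorname{Tr}\big(\Pi_{m+1}(s)A_{\mathfrak{n}}(\vartheta^s(q))\Pi_{m+1}(s)\big)\Big]ds,
\]
with $\Pi_m(s),\Pi_{m+1}(s)$ orthogonal projectors of ranks $m$ and $m+1$. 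Since for a rank-$k$ orthogonal projector $\Pi$ onto a subspace of $\mathcal{N}_{\mathfrak{n}}(p)$ one has $\operatorname{Re}\operatorname{Tr}(\Pi A_{\mathfrak{n}}(p)\Pi)\le\sup_{\dim\mathbb{L}=k}\operatorname{Re}\operatorname{Tr}(\Pi_{\mathbb{L}}A_{\mathfrak{n}}(p)\Pi_{\mathbb{L}})=\sum_{j=1}^k\beta_j(A_{\mathfrak{n}}(p))$ by \eqref{EQ: TraceExponentViaTraceOfSymmetricRepresentation}--\eqref{EQ: TraceExponentViaTraceNumbers}, the bracket is, for a.e.\ $s$, at most $(1-\gamma)\sum_{j=1}^m\beta_j(A_{\mathfrak{n}}(\vartheta^s(q)))+\gamma\sum_{j=1}^{m+1}\beta_j(A_{\mathfrak{n}}(\vartheta^s(q)))=\sum_{j=1}^m\beta_j(A_{\mathfrak{n}}(\vartheta^s(q)))+\gamma\beta_{m+1}(A_{\mathfrak{n}}(\vartheta^s(q)))$, with everything evaluated at the \emph{same} point $\vartheta^s(q)\in\mathcal{Q}$ — this is where the combination at a common base point pays off.

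To finish \eqref{EQ: OmegaEstimateViaMaxOfBetas} I would majorize the bracket by the constant $C:=\sup_{p\in\mathcal{Q}}\big[\sum_{j=1}^m\beta_j(A_{\mathfrak{n}}(p))+\gamma\beta_{m+1}(A_{\mathfrak{n}}(p))\big]$ (nothing to prove if $C=+\infty$), getting $\ln\omega^{(\mathfrak{n})}_d(\Xi^t(q,\cdot))\le2\varepsilon+tC$, then let $\varepsilon\to0$, take $\sup_q$, divide by $t$ and let $t\to+\infty$. For \eqref{EQ: OmegaEstimateViaTimeAveragedMaxOfBetas} I would keep the integral: under the integrability hypothesis the function $h(t):=\sup_q\int_0^t\big[\sum_{j=1}^m\beta_j(A_{\mathfrak{n}}(\vartheta^s(q)))+\gamma\beta_{m+1}(A_{\mathfrak{n}}(\vartheta^s(q)))\big]\,ds$ is subadditive — split $\int_0^{t+s}$ as $\int_0^t+\int_0^s(\,\cdot\,\circ\vartheta^t)$ and use $\vartheta^t(\mathcal{Q})\subset\mathcal{Q}$ — so $\lim_{t\to+\infty}h(t)/t$ exists by Fekete's lemma, and the same passage to the limit yields the stated bound (the case $h\equiv+\infty$ again being trivial).

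The step I expect to be the main obstacle is the first one, and specifically the measurability/integrability bookkeeping: replacing the orbit-dependent trace $\operatorname{Re}\operatorname{Tr}(\Pi_k(s)A_{\mathfrak{n}}(\vartheta^s(q))\Pi_k(s))$ — which is integrable for free by \textbf{(GE2)} — by the base-point trace number $\sum_{j\le k}\beta_j(A_{\mathfrak{n}}(\vartheta^s(q)))$ can turn a finite integrable quantity into a possibly non-integrable one, which is exactly why the theorem is phrased with $\sup_q$ of the $\beta_j$'s (resp.\ with the Fekete limit of integrals) rather than with honest trajectory integrals, and why the degenerate cases $C=+\infty$ and $h\equiv+\infty$ must be dispatched as trivial. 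Everything else — Lemma \ref{LEM: TemamLemmaWedgeNormComputation}, Theorem \ref{TH: GeneralizedLiouvilleTraceFormula}, the inclusion $\Xi^s(q,\mathcal{N}_{\mathfrak{n}}(q))\subset\mathcal{N}_{\mathfrak{n}}(\vartheta^s(q))$ from \textbf{(GE1)}, and the elementary convexity built into $\omega^{(\mathfrak{n})}_d=\|\cdot\|^{1-\gamma}\|\cdot\|^{\gamma}$ — should be routine.
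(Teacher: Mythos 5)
Your proposal is correct and follows essentially the same route as the paper's own (very terse) proof: apply the Liouville trace formula separately to the $m$-fold and $(m+1)$-fold compounds along a single trajectory, bound each trace-formula integrand by the trace-number sum $\sum_{j\le k}\beta_j(A_{\mathfrak{n}}(\vartheta^s(q)))$ at the common base point $\vartheta^s(q)$ (this is what yields the sharp bound rather than the sum of separately-maximized quantities), and then either pass to the constant $\sup_{p\in\mathcal{Q}}$ bound or keep the integral and invoke subadditivity with Fekete. The additional bookkeeping you supply — the $\varepsilon$-near-maximizer over decomposable tensors with components in the dense subspace $\mathcal{N}_{\mathfrak{n}}(q)$, the nonvanishing of $\Xi^s_k(q,\xi)$ along $[0,t]$ via the cocycle property, the subadditivity of $h(t)=\sup_q\int_0^t(\cdots)ds$, and the dispatch of the $+\infty$ degenerate cases — is all correct and simply makes explicit what the paper leaves implicit in its one-sentence proof, which instead points to the already-stated inequality \eqref{EQ: CocycleNormEstimateViaTraceExponent} for the time-averaged case.
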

\begin{proof}
	We estimate $\| \Xi^{t}_{m}(q,\cdot) \|^{1-\theta}_{\mathfrak{n}^{\wedge m}} \cdot \| \Xi^{t}_{m+1}(q,\cdot) \|^{\theta}_{\mathfrak{n}^{\wedge (m+1)}}$ by applying the Liouville trace formula \eqref{EQ: GeneralizedTraceFormula} to each multiplier and then using \eqref{EQ: TraceExponentViaTraceNumbers} and also \eqref{EQ: CocycleNormEstimateViaTraceExponent} (for the second estimate). This and
	\begin{equation}
		\ln \omega_{d}(\Xi) = \lim_{t \to +\infty}\frac{1}{t}\sup_{q \in \mathcal{Q}}\ln\left( \| \Xi^{t}_{m}(q,\cdot) \|^{1-\theta}_{\mathfrak{n}^{\wedge m}} \cdot \| \Xi^{t}_{m+1}(q,\cdot) \|^{\theta}_{\mathfrak{n}^{\wedge (m+1)}} \right)
	\end{equation}
	lead to both conclusions.
\end{proof}

It is clear that $\alpha^{tr}_{\mathfrak{n}^{\wedge m}}(q) \leq \alpha^{+}_{\mathfrak{n}^{\wedge m}}(q)$. However, for the converse inequality, which provides an infinitesimal analog of Lemma \ref{LEM: TemamLemmaWedgeNormComputation}, we need some regularity assumptions on $\alpha^{tr}_{\mathfrak{n}^{\wedge m}}(q)$ over trajectories of the base system as follows.
\begin{theorem}
	\label{TH: RegularityOfTraceExponentAndCoincidence}
	Let $\mathfrak{n}$ be as above. Then we have
	\begin{equation}
		\label{EQ: MaximizedExponentViaMaximizedTraceExponent}
		\alpha^{+}_{\mathfrak{n}^{\wedge m}}(\Xi_{m}) = \sup_{q \in \mathcal{Q}} \alpha^{tr}_{\mathfrak{n}^{\wedge m}}(q).
	\end{equation}
	Furthermore, if for some $q \in \mathcal{Q}$, the function $s \mapsto \alpha^{tr}_{\mathfrak{n}^{\wedge m}}(\vartheta^{s}(q))$ is integrable on $[0,\varepsilon]$ for some $\varepsilon>0$ and satisfies
	\begin{equation}
		\label{EQ: TraceExponentRegularityCond}
		\liminf_{t \to 0+}\frac{1}{t}\int_{0}^{t}\alpha^{tr}_{\mathfrak{n}^{\wedge m}}(\vartheta^{s}(q))ds \leq \alpha^{tr}_{\mathfrak{n}^{\wedge m}}(q),
	\end{equation}
	then $\alpha^{tr}_{\mathfrak{n}^{\wedge m}}(q) = \alpha^{+}_{\mathfrak{n}^{\wedge m}}(q)$.
\end{theorem}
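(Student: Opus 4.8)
The plan is to reduce \eqref{EQ: MaximizedExponentViaMaximizedTraceExponent} to the pointwise inequality $\alpha^{+}_{\mathfrak{n}^{\wedge m}}(q) \leq \alpha^{tr}_{\mathfrak{n}^{\wedge m}}(q)$. Indeed, by \eqref{EQ: AlphaPlusInfExpMaximizationDefinition} and Remark~\ref{REM: AveragedExponents} we have $\alpha^{+}_{\mathfrak{n}^{\wedge m}}(\Xi_{m}) = \sup_{q \in \mathcal{Q}} \alpha^{+}_{\mathfrak{n}^{\wedge m}}(q)$, while $\alpha^{tr}_{\mathfrak{n}^{\wedge m}}(q) \leq \alpha^{+}_{\mathfrak{n}^{\wedge m}}(q)$ is immediate from \eqref{EQ: TraceExponentDefinition}, since every decomposable tensor $\xi_{1} \wedge \ldots \wedge \xi_{m}$ with $\xi_{j} \in \mathcal{N}_{\mathfrak{n}}(q)$ lies in $\mathcal{N}_{\mathfrak{n}^{\wedge m}}(q) = (\mathcal{N}_{\mathfrak{n}}(q))^{\wedge m}$. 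So once the pointwise equality $\alpha^{+}_{\mathfrak{n}^{\wedge m}}(q) = \alpha^{tr}_{\mathfrak{n}^{\wedge m}}(q)$ is in hand, taking $\sup_{q}$ gives \eqref{EQ: MaximizedExponentViaMaximizedTraceExponent} and the second assertion follows as well.

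To prove $\alpha^{+}_{\mathfrak{n}^{\wedge m}}(q) \leq \alpha^{tr}_{\mathfrak{n}^{\wedge m}}(q)$ I would argue by a finite-dimensional reduction. Recall from \eqref{EQ: InfinitesimalGEviaCurvatureForm} applied to $\mathfrak{n}^{\wedge m}$ that $\alpha_{\mathfrak{n}^{\wedge m}}(q;\Theta) = B_{\mathfrak{n}^{\wedge m}}(q;\Theta,\Theta)/\langle \Theta,\Theta\rangle_{q}$, and that $B_{\mathfrak{n}^{\wedge m}}(q)$ is sesquilinear -- it is the derivative at $t=0$ of the sesquilinear map $(\Theta,\Psi)\mapsto \langle \Xi^{t}_{m}(q,\Theta),\Xi^{t}_{m}(q,\Psi)\rangle_{\vartheta^{t}(q)}$, by \eqref{EQ: CurvatureFormDefinition} -- hence determined by its values on decomposables, given explicitly in \eqref{EQ: RepresentationAssociatedMetric1}. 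Fix a nonzero $\Theta \in (\mathcal{N}_{\mathfrak{n}}(q))^{\wedge m}$; writing it as a finite sum of decomposables and collecting the factors, these span a finite-dimensional $\mathbb{L} \subseteq \mathcal{N}_{\mathfrak{n}}(q)$, so $\Theta \in \mathbb{L}^{\wedge m}$. Choose a $\langle\cdot,\cdot\rangle_{q}$-orthonormal basis $f_{1},\ldots,f_{N}$ of $\mathbb{L}$ and set $G \coloneq [B_{\mathfrak{n}}(q;f_{i},f_{j})]_{i,j=1}^{N}$, which is Hermitian. A direct check using \eqref{EQ: RepresentationAssociatedMetric1} and \eqref{EQ: InnerProductWedgeHDefinition} shows that, restricted to $\mathbb{L}^{\wedge m}$, the pair $(B_{\mathfrak{n}^{\wedge m}}(q),\langle\cdot,\cdot\rangle_{q})$ is unitarily equivalent to the $m$-fold additive compound $(G^{[\wedge m]}, \text{standard inner product})$ on $\bigwedge^{m}\mathbb{C}^{N}$ (both sides are produced from $G$ and the identity Gram matrix by the same combinatorial formula).

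The remaining step is linear algebra over $\mathbb{C}^{N}$. Diagonalizing $G$ with eigenvalues $\mu_{1}(G) \geq \ldots \geq \mu_{N}(G)$, one reads off in the induced wedge basis that the eigenvalues of $G^{[\wedge m]}$ are the sums $\mu_{i_{1}}(G) + \ldots + \mu_{i_{m}}(G)$ over $i_{1} < \ldots < i_{m}$, so its numerical range is bounded by $\mu_{1}(G) + \ldots + \mu_{m}(G)$; hence $\alpha_{\mathfrak{n}^{\wedge m}}(q;\Theta) \leq \mu_{1}(G) + \ldots + \mu_{m}(G)$. By the Ky Fan maximum principle, $\mu_{1}(G) + \ldots + \mu_{m}(G)$ equals the maximum of $\sum_{j=1}^{m} B_{\mathfrak{n}}(q;v_{j},v_{j})$ over $\langle\cdot,\cdot\rangle_{q}$-orthonormal $m$-tuples $v_{1},\ldots,v_{m}$ in $\mathbb{L}$. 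Finally, specializing \eqref{EQ: RepresentationAssociatedMetric1} to orthonormal arguments (only the identity permutation survives, exactly as in the derivation of \eqref{EQ: SymmetricRepresentationTraceFormula}) gives $\alpha_{\mathfrak{n}^{\wedge m}}(q;v_{1}\wedge\ldots\wedge v_{m}) = \sum_{j=1}^{m} B_{\mathfrak{n}}(q;v_{j},v_{j})$, and since $\mathbb{L} \subseteq \mathcal{N}_{\mathfrak{n}}(q)$ this quantity is $\leq \alpha^{tr}_{\mathfrak{n}^{\wedge m}}(q)$ by \eqref{EQ: TraceExponentDefinition} (equivalently \eqref{EQ: TraceExponentViaTraceOfSymmetricRepresentation}). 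Chaining the inequalities yields $\alpha_{\mathfrak{n}^{\wedge m}}(q;\Theta) \leq \alpha^{tr}_{\mathfrak{n}^{\wedge m}}(q)$, and taking the supremum over $\Theta$ completes the pointwise estimate.

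The main obstacle is the infinite-dimensional bookkeeping of the second paragraph: one must check that $B_{\mathfrak{n}^{\wedge m}}(q)$ genuinely extends sesquilinearly from decomposable tensors to all of $\mathbb{L}^{\wedge m}$, that orthonormalizing the finitely many factors of $\Theta$ keeps one inside $\mathcal{N}_{\mathfrak{n}}(q)$, and that the expansion of $B_{\mathfrak{n}^{\wedge m}}(q)$ in \eqref{EQ: RepresentationAssociatedMetric1} matches that of the additive compound $G^{[\wedge m]}$ term by term; I expect no difficulty beyond this. The hypotheses on \eqref{EQ: TraceExponentRegularityCond} in the ``moreover'' part then become the natural condition under which one can, for a single fixed $q$, recover the equality directly from the Growth Formula \eqref{EQ: SeminormsGrowthViaInfExponentsEq} rather than via the reduction: for a decomposable initial vector the cocycle keeps $\Xi^{s}_{m}(q,\xi_{1}\wedge\ldots\wedge\xi_{m})$ decomposable with factors in $\mathcal{N}_{\mathfrak{n}}(\vartheta^{s}(q))$ by \nameref{DESC: GE1}, so there the integrand of \eqref{EQ: SeminormsGrowthViaInfExponentsEq} is dominated by $s \mapsto \alpha^{tr}_{\mathfrak{n}^{\wedge m}}(\vartheta^{s}(q))$, and \eqref{EQ: TraceExponentRegularityCond} allows passage to the limit $t \to 0+$; upgrading this to non-decomposable directions via \eqref{EQ: MaximizedExponentViaMaximizedTraceExponent} applied along the trajectory closes the loop.
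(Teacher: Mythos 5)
Your approach is correct, and it takes a genuinely different route from the paper's. The paper derives \eqref{EQ: MaximizedExponentViaMaximizedTraceExponent} dynamically: it feeds an arbitrary unit $\mathcal{P}$ into the Growth Formula \eqref{EQ: SeminormsGrowthViaInfExponentsEq}, bounds the result by the operator norm $\|\Xi^{t}_{m}(q,\cdot)\|_{\mathfrak{n}^{\wedge m}}$, bounds that via Lemma \ref{LEM: TemamLemmaWedgeNormComputation} together with the Liouville trace formula by $\exp\bigl(t\sup_{q}\alpha^{tr}_{\mathfrak{n}^{\wedge m}}(q)\bigr)$, and then sends $t\to 0+$. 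The pointwise equality is obtained by the same chain but with the trajectory average $\int_{0}^{t}\alpha^{tr}_{\mathfrak{n}^{\wedge m}}(\vartheta^{s}(q))\,ds$ in place of $t\sup_{q}\alpha^{tr}_{\mathfrak{n}^{\wedge m}}(q)$; the regularity condition \eqref{EQ: TraceExponentRegularityCond} is then exactly what is needed to pass to the limit $t\to 0+$ and land at $\alpha^{tr}_{\mathfrak{n}^{\wedge m}}(q)$ rather than at something larger.

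Your argument bypasses the dynamics entirely and, as far as I can see, it actually proves more than the theorem states. The finite-dimensional reduction is sound: for $\Theta\in(\mathcal{N}_{\mathfrak{n}}(q))^{\wedge m}$ its constituent factors span a finite-dimensional $\mathbb{L}\subseteq\mathcal{N}_{\mathfrak{n}}(q)$ with $\Theta\in\mathbb{L}^{\wedge m}$, formula \eqref{EQ: RepresentationAssociatedMetric1} shows $B_{\mathfrak{n}^{\wedge m}}(q)|_{\mathbb{L}^{\wedge m}}$ is determined purely by $B_{\mathfrak{n}}(q)|_{\mathbb{L}}$ and $\langle\cdot,\cdot\rangle_{q}|_{\mathbb{L}}$, and diagonalizing the Hermitian matrix $G$ shows that $B_{\mathfrak{n}^{\wedge m}}(q)|_{\mathbb{L}^{\wedge m}}$ is diagonal in the basis $g_{i_{1}}\wedge\ldots\wedge g_{i_{m}}$ with eigenvalues $\mu_{i_{1}}(G)+\ldots+\mu_{i_{m}}(G)$. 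The maximum eigenvalue $\mu_{1}(G)+\ldots+\mu_{m}(G)$ is therefore attained on the \emph{decomposable} vector $g_{1}\wedge\ldots\wedge g_{m}$ with $g_{j}\in\mathbb{L}\subseteq\mathcal{N}_{\mathfrak{n}}(q)$, giving $\alpha_{\mathfrak{n}^{\wedge m}}(q;\Theta)\leq\alpha^{tr}_{\mathfrak{n}^{\wedge m}}(q)$ for every $\Theta$ and hence $\alpha^{+}_{\mathfrak{n}^{\wedge m}}(q)=\alpha^{tr}_{\mathfrak{n}^{\wedge m}}(q)$ \emph{unconditionally}. This is stronger than the theorem: the discussion preceding it explicitly worries that the Rayleigh quotient might be maximized off the decomposable cone, and the ``moreover'' hypothesis \eqref{EQ: TraceExponentRegularityCond} is introduced precisely to rule that out through the dynamics; your observation that the spectral decomposition of the induced form on $\mathbb{L}^{\wedge m}$ is carried by decomposable eigenvectors makes the distinction between ``full space'' and ``decomposable'' vacuous for the associated metric $\mathfrak{n}^{\wedge m}$, so the regularity condition is not needed and the ``moreover'' part follows for free. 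Your closing paragraph sketching a Growth-Formula route to the second claim is dispensable once the algebraic pointwise argument is in place. This would be worth a careful write-up, since if correct it tightens Theorem \ref{TH: RegularityOfTraceExponentAndCoincidence} and removes the caveat voiced just before it.
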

\begin{proof}
	For any $\mathcal{P} \in \mathcal{N}_{\mathfrak{n}^{\wedge m}}(q)$ with $\mathfrak{n}^{\wedge m}_{q}(\mathcal{P}) = 1$, from the Growth Formula \eqref{EQ: SeminormsGrowthViaInfExponentsEq}, we have
	\begin{equation}
	\label{EQ: MaximizedTraceExponentEstimatesGrowth}
	\begin{split}
		\exp\left( \int_{0}^{t} \alpha_{\mathfrak{n}^{\wedge m}}(\vartheta^{s}(q); \Xi^{s}_{m}(q,\mathcal{P}))ds \right) = \mathfrak{n}^{\wedge m}_{\vartheta^{t}(q)}(\Xi^{t}_{\mathfrak{m}}(q,\mathcal{P})) \leq\\
		\leq \| \Xi^{t}_{m}(q, \cdot) \|_{\mathfrak{n}^{\wedge m}} \leq \exp\left(t \sup_{q \in \mathcal{Q}} \alpha^{tr}_{\mathfrak{n}^{\wedge m}}(q) \right).
	\end{split}
	\end{equation}
	By taking logarithms, dividing by $t$ and letting $t$ go to zero, we obtain
	\begin{equation}
		\alpha_{\mathfrak{n}^{\wedge m}}(q; \mathcal{P}) \leq \sup_{q \in \mathcal{Q}} \alpha^{tr}_{\mathfrak{n}^{\wedge m}}(q).
	\end{equation}
	Now taking the supremum over $\mathcal{P}$ and then over $q \in \mathcal{Q}$ gives \eqref{EQ: MaximizedExponentViaMaximizedTraceExponent}.
	
	Within \eqref{EQ: TraceExponentRegularityCond}, we act as in \eqref{EQ: MaximizedTraceExponentEstimatesGrowth} except for the last inequality, where we now use the integral of $\alpha^{tr}_{\mathfrak{n}^{\wedge m}}(\vartheta^{s}(q))$ over $[0,t]$ with $t \in (0,\varepsilon]$. This gives
	\begin{equation}
		\begin{split}
			\int_{0}^{t} \alpha_{\mathfrak{n}^{\wedge m}}(\vartheta^{s}(q); \Xi^{s}_{m}(q,\mathcal{P}) ) \leq \int_{0}^{t} \alpha^{tr}_{\mathfrak{n}^{\wedge m}}(\vartheta^{s}(q))ds.
		\end{split}
	\end{equation}
	Dividing both sides by $t$, then taking the limit inferior as $t$ goes to zero, and using \eqref{EQ: TraceExponentRegularityCond}, we obtain $\alpha_{\mathfrak{n}^{\wedge m}}(q; \mathcal{P}) \leq \alpha^{tr}_{\mathfrak{n}^{\wedge m}}(q)$. Now, taking the supremum over all $\mathcal{P}$ yields $\alpha^{+}_{\mathfrak{n}^{\wedge m}}(q) \leq \alpha^{tr}_{\mathfrak{n}^{\wedge m}}(q)$.
\end{proof}

\begin{remark}
	Mean representations of curvature forms essentially arise from differentiation by directly applying the definition \eqref{EQ: CurvatureFormDefinition}. Consequently, \eqref{EQ: SymmetricRepresentationDefinitionSymmetric} simply reflects the differentiation rule for symmetric forms. Although these operators often appear without explicit names, they serve as a foundation for deriving dimension estimates through particular cases of the Liouville trace formula \eqref{EQ: GeneralizedTraceFormula} in numerous works. There are two main reasons for this lack of naming. First, most studies in infinite-dimensional settings involve constant standard metrics, where the operators correspond to those arising in linearized equations, and the associated variational problems can be analyzed straightforwardly due to sufficient regularity\footnote{In many cases, computing the exact $m$-trace exponents is challenging; typically, they are estimated from above by treating $A_{\mathfrak{n}}(q)$ as a perturbation of a constant operator $A$, which is usually self-adjoint, such as the Laplace \cite{Temam1997} or Stokes operators \cite{ChepyzhovIlyin2004}.}. Second, on finite-dimensional manifolds with general smooth metrics, mean representations are expressed in terms of covariant derivatives compatible with the metric. In both contexts, there are no significant complications. However, in infinite dimensions—even for constant but varying metrics—the resolution of variational problems may critically depend on the metric (see Section \ref{SEC: DelayEqsSymmetrization}). Therefore, it becomes necessary to explicitly identify these challenges.
\end{remark}

In light of the above results, it remains to discuss the computation of the $m$-trace exponents $\alpha^{tr}_{\mathfrak{n}^{m}}(q)$. According to \eqref{EQ: TraceExponentViaTraceNumbers}, this problem reduces to computing the trace numbers $\beta_{k}(A_{\mathfrak{n}}(q))$ of a mean representation $A_{\mathfrak{n}}(q)$ of $B_{\mathfrak{n}}(q)$. Note that the supremum in \eqref{EQ: TraceExponentDefinition} need not be attained on vectors from the domain $\mathcal{N}_{\mathfrak{n}}(q)$ nor from the domain $\mathcal{D}(A_{\mathfrak{n}}(q))$.

In finite dimensions, the trace numbers of $A_{\mathfrak{n}}(q)$ correspond exactly to the arranged eigenvalues of the additive symmetrization $S_{0}(A_{\mathfrak{n}}(q)) \coloneq (A_{\mathfrak{n}}(q) + A^{*}_{\mathfrak{n}}(q))/2$, where the adjoint is taken with respect to $\langle \cdot, \cdot \rangle_{q}$. In infinite dimensions, there is also a strong relationship with these operators; however, many new phenomena may arise. We refer to Appendix \ref{SEC: SymmetrizationOfOperators} for discussions on additive symmetrization and to Section \ref{SEC: DelayEqsSymmetrization} for an illustrative example of these phenomena using delay operators.
\section{Infinitesimal growth exponents in the conformal class of a constant metric}
\label{SEC: LiouvilleTraceFormulaAndSymProc}
We continue to work with a uniformly continuous linear cocycle $\Xi$ in a separable real or complex Hilbert space $\mathbb{H}$ over a semiflow $(\mathcal{Q},\vartheta)$ on a complete metric space $\mathcal{Q}$.

Here we aim to describe infinitesimal exponents $\alpha_{\mathfrak{n}^{\wedge m}}(q; \xi)$ for metrics $\mathfrak{n}$ over $\mathcal{Q}$ belonging to the conformal class of the standard metric in $\mathbb{H}$ and constructed with the aid of Lyapunov-like functions. In applications, this constitutes the core of the Leonov method \cite{LeonovBoi1992, Kuznetsov2016}. 

Let us consider a bounded scalar function $V \colon \mathcal{Q} \to \mathbb{R}$ such that
\begin{description}[before=\let\makelabel\descriptionlabel]
	\item[\textbf{(V1)}\label{DESC: LyapFuncV1}] for any $T>0$, the mapping $[0,T] \ni t \mapsto V(\vartheta^{t}(q)) \in \mathbb{R}$ is absolutely continuous;
	\item[\textbf{(V2)}\label{DESC: LyapFuncV2}] for any $q \in \mathcal{Q}$, there exists the right derivative of $V$ along the trajectory of $q$, i.e.,
	\begin{equation}
		\dot{V}(q) \coloneq \lim_{t \to 0+}\frac{V(\vartheta^{t}(q))-V(q)}{t}.
	\end{equation}
\end{description}

For a fixed integer $m \geq 1$, consider the metric $\mathfrak{n}=\{ \mathfrak{n}_{q} \}_{q \in \mathcal{Q}}$ in $\mathbb{H}$ which is given for any $q \in \mathcal{Q}$ by $\mathfrak{n}_{q}(\cdot) \coloneq e^{\frac{1}{m}V(q)}|\cdot|_{\mathbb{H}}$. Here $|\cdot|_{\mathbb{H}}$ is the norm in $\mathbb{H}$ induced by the standard inner product $\langle \cdot,\cdot \rangle_{\mathbb{H}}$. This metric induces a metric $\mathfrak{n}^{\wedge m} = \{ \mathfrak{n}^{\wedge m}_{q} \}_{q \in \mathcal{Q}}$ on $\mathbb{H}^{\wedge m}$ defined for all $q \in \mathcal{Q}$ and $\xi_{1},\ldots,\xi_{m} \in \mathbb{H}$, by
\begin{equation}
	\label{EQ: AdaptedMetricOnExteriorProductViaLyapFunc}
	\begin{split}
		\mathfrak{n}^{\wedge m}_{q}(\xi_{1} \wedge \cdots \wedge \xi_{m}) \coloneq& \frac{1}{\sqrt{m!}}\left(\operatorname{det}[ e^{\frac{2}{m}V(q)}\langle\xi_{i},\xi_{j}\rangle_{\mathbb{H}} ]_{i,j =1}^{m}\right)^{1/2} =\\=& e^{V(q)} |\xi_{1} \wedge \cdots \wedge \xi_{m}|_{\mathbb{H}^{\wedge m}}.
	\end{split}
\end{equation}
Since $V$ is bounded, the just introduced metric $\mathfrak{n}^{\wedge m}$ is uniformly equivalent to $|\cdot|_{\mathbb{H}^{\wedge m}}$.

To describe infinitesimal growth exponents, we will assume some differentiability of trajectories. Namely, let us assume that for any $q \in \mathcal{Q}$, the set
\begin{equation}
	\label{EQ: DomainAqDefinitionCocycleLiovForm}
	\mathcal{D}(A(q)) \coloneq \{ \xi \in \mathbb{H} \ | \ [0,\infty) \ni t \mapsto \Xi^{t}(q,\xi) \text{ is } C^{1}\text{-differentiable} \}
\end{equation}
is dense in $\mathbb{H}$ (this is easily verified in practice). To justify the notation, note that on $\mathcal{D}(A(q))$ there is an operator $A(q)$ given by
\begin{equation}
	\label{EQ: CocycleGeneratorDefinitionInfExps}
	A(q)\xi \coloneq \lim_{t \to 0+} \frac{\Xi^{t}(q,\xi) - \xi}{t} \qquad \text{for any} \quad \xi \in \mathcal{D}(A(q)).
\end{equation}
By definition and the cocycle property, we have $\Xi^{t}(q,\mathcal{D}(A(q))) \subset \mathcal{D}(A(\vartheta^{t}(q)))$ for all $q \in \mathcal{Q}$ and $t \geq 0$.

By straightforward differentiation, it is not difficult to see that $\Xi$ admits infinitesimal growth exponents with respect to $\mathfrak{n}$ with domains $\mathcal{N}_{\mathfrak{n}}(q) = \mathcal{D}(A(q))$, in which the curvature form $B_{\mathfrak{n}}$ admits a mean representation $A_{\mathfrak{n}}(q)$ given by
\begin{equation}
	\label{EQ: MeanRepresentationConformalClass}
	A_{\mathfrak{n}}(q) = A(q) + \frac{1}{m} \dot{V}(q) \operatorname{Id}_{\mathbb{H}} \qquad \text{for any} \quad q \in \mathcal{Q}.
\end{equation}

By Theorem \ref{TH: SymmetricRepresentationsInAssociatedMetrics} and \eqref{EQ: MeanRepresentationConformalClass}, for any linearly independent vectors $\xi_{1},\ldots,\xi_{m} \in \mathcal{D}(A(q))$, we have
\begin{equation}
	\alpha_{\mathfrak{n}^{\wedge m}}(q;\xi_{1} \wedge \cdots \wedge \xi_{m}) = \operatorname{Re}\operatorname{Tr}\left( \Pi \circ A(q) \circ \Pi \right) + \dot{V}(q),
\end{equation}
where $\Pi$ is the orthogonal projector in $\mathbb{H}$ onto $\operatorname{Span}\{\xi_{1}, \ldots, \xi_{m}\}$.

In terms of the trace numbers $\beta_{1}(A(q)) \geq \beta_{2}(A(q)) \geq \ldots$ of $A(q)$, we have
\begin{equation}
	\label{EQ: LeonovContextTraceExponentsViaTraceNumbers}
	\alpha^{tr}_{\mathfrak{n}^{\wedge m}}(q) = \dot{V}(q) + \sum_{j=1}^{m} \beta_{j}(A(q)) \qquad \text{for any} \quad q \in \mathcal{Q}.
\end{equation}

We will assume that each $A(q)$ admits a symmetrization as follows.
\begin{description}[before=\let\makelabel\descriptionlabel]
	\item[\textbf{(SYM)}\label{DESC: SYM}] For any $q \in \mathcal{Q}$, the operator $A(q)$ defined by \eqref{EQ: CocycleGeneratorDefinitionInfExps} admits a symmetrization $S(q)$, i.e., $S(q)$ is a self-adjoint operator in $\mathbb{H}$ satisfying $\beta_{k}(A(q)) = \beta_{k}(S(q))$ for any $k=1,2,\ldots$.
\end{description}
Under \textbf{(SYM)}, the \textit{symmetrization procedure} is applicable in \eqref{EQ: LeonovContextTraceExponentsViaTraceNumbers} and yields
\begin{equation}
	\label{EQ: LeonovContextTraceExponentsViaTraceNumbersSelfAdj}
	\alpha^{tr}_{\mathfrak{n}^{\wedge m}}(q) = \dot{V}(q) + \sum_{j=1}^{m}\beta_{j}(S(q)).
\end{equation}
By Theorem \ref{TH: ComputationSwedgeSpectralBound}, the trace numbers of $S(q)$ coincide with its characteristic numbers defined in \eqref{EQ: CharacteristicNumbersS}.

We collect the above considerations into the following theorem.
\begin{theorem}
	\label{TH: LiouvilleSymmetrizationSummary}
	Let \nameref{DESC: LyapFuncV1}, \nameref{DESC: LyapFuncV2}, and \nameref{DESC: SYM} be satisfied. For any $q \in \mathcal{Q}$, consider the largest $m$ characteristic numbers $\alpha_{1}(q) \geq \alpha_{2}(q) \geq \cdots \geq \alpha_{m}(q)$ of $S(q)$. Then for the metric $\mathfrak{n}^{\wedge m}$ given by \eqref{EQ: AdaptedMetricOnExteriorProductViaLyapFunc}, we have
	\begin{equation}
		\label{EQ: TraceMaximizationComputationFormula}
		\alpha^{tr}_{\mathfrak{n}^{\wedge m}}(q) = \dot{V}(q) + \sum_{j=1}^{m} \alpha_{j}(q).
	\end{equation}
	In particular,
	\begin{equation}
		\label{EQ: ComputationMaxExponent}
		\begin{split}
			\alpha^{+}_{\mathfrak{n}^{\wedge m}}(\Xi_{m}) = \sup_{q \in \mathcal{Q}} \left[\dot{V}(q) + \sum_{j=1}^{m} \alpha_{j}(q) \right],
		\end{split}
	\end{equation}
	and for any $d = m + \theta$ with integer $m \geq 0$ and $\theta \in [0,1]$, we have
	\begin{equation}
		\label{EQ: LeonovContextOmegaDViaAlphas}
		\ln\omega_{d}(\Xi) \leq \sup_{q \in \mathcal{Q}}\left[ \sum_{j=1}^{m}\alpha_{j}(q) + \theta \alpha_{m+1}(q)\right].
	\end{equation}
\end{theorem}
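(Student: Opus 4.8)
The plan is to assemble Theorem~\ref{TH: LiouvilleSymmetrizationSummary} essentially by collecting the pieces already laid out in the text before the statement, so the proof should be short. First I would invoke \nameref{DESC: SYM} together with Theorem~\ref{TH: ComputationSwedgeSpectralBound}: the latter states precisely that, under its hypotheses on a self-adjoint operator $S$, the leading trace numbers coincide with the leading eigenvalues counted with multiplicity, i.e. $\beta_{j}(S(q)) = \alpha_{j}(q)$ for $j = 1,\ldots,m$. Combined with \eqref{EQ: LeonovContextTraceExponentsViaTraceNumbersSelfAdj}, which was derived just above the statement from the explicit mean representation $A_{\mathfrak{n}}(q) = A(q) + \tfrac{1}{m}\dot V(q)\operatorname{Id}_{\mathbb{H}}$ and the trace formula \eqref{EQ: SymmetricRepresentationTraceFormula} of Theorem~\ref{TH: SymmetricRepresentationsInAssociatedMetrics}, this immediately yields \eqref{EQ: TraceMaximizationComputationFormula}.

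Next I would deduce \eqref{EQ: ComputationMaxExponent}. The identity $\alpha^{+}_{\mathfrak{n}^{\wedge m}}(\Xi_{m}) = \sup_{q \in \mathcal{Q}} \alpha^{tr}_{\mathfrak{n}^{\wedge m}}(q)$ is exactly \eqref{EQ: MaximizedExponentViaMaximizedTraceExponent} from Theorem~\ref{TH: RegularityOfTraceExponentAndCoincidence}, which holds unconditionally; substituting \eqref{EQ: TraceMaximizationComputationFormula} into the supremum gives the claimed formula. Here I should take care that \eqref{EQ: TraceMaximizationComputationFormula} is a genuine equality and not merely an upper bound — this is fine because the trace numbers $\beta_{j}(A_{\mathfrak{n}}(q))$ are attained as a supremum over $m$-dimensional subspaces of $\mathcal{N}_{\mathfrak{n}}(q)$ by \eqref{EQ: TraceExponentViaTraceNumbers}, and the eigenvectors of $S(q)$ corresponding to $\alpha_{1}(q),\ldots,\alpha_{m}(q)$ (or approximating ones, in the non-discrete case) furnish the maximizing subspace; the regularity encoded in the hypotheses of Theorem~\ref{TH: ComputationSwedgeSpectralBound} is what makes the passage from $S(q)$ back to $A(q)$ legitimate.

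For the last assertion \eqref{EQ: LeonovContextOmegaDViaAlphas} I would apply Theorem~\ref{TH: AveragedOmegaDEstimateViaTraceNumbers}, which is available because $\mathfrak{n}^{\wedge m}$ is uniformly equivalent to $|\cdot|_{\mathbb{H}^{\wedge m}}$ (noted in the Remark following \eqref{EQ: AdaptedMetricOnExteriorProductViaLyapFunc}, as $V$ is bounded). That theorem bounds $\ln\overline{\omega}_{d}(\Xi)$ by $\sup_{q}\bigl[\sum_{j=1}^{m}\beta_{j}(A_{\mathfrak{n}}(q)) + \gamma\beta_{m+1}(A_{\mathfrak{n}}(q))\bigr]$ for $d = m+\gamma$; after the reindexing in the statement ($d = m-1+\gamma$) and using $\beta_{j}(A_{\mathfrak{n}}(q)) = \alpha_{j}(S(q)) + \tfrac{1}{m}\dot V(q)$ together with $\beta_{j}(A(q)) = \alpha_{j}(S(q))$ under \nameref{DESC: SYM}, one obtains \eqref{EQ: LeonovContextOmegaDViaAlphas}. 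The one point that is not pure bookkeeping, and which I expect to be the main obstacle, is matching the additive-symmetrization bookkeeping with the $\dot V(q)$ shift across the two different normalizations of $\mathfrak{n}$ (the exponent $\tfrac{1}{m}V(q)$ on $\mathbb{H}$ versus $V(q)$ on $\mathbb{H}^{\wedge m}$) and verifying that \eqref{EQ: LeonovContextOmegaDViaAlphas} is stated with the intended index shift — I would double-check that the hypotheses of Theorem~\ref{TH: ComputationSwedgeSpectralBound} are required only to identify finitely many leading trace numbers with eigenvalues, and that integrability conditions needed for the averaged form are subsumed here because $V$ is bounded and absolutely continuous along trajectories by \nameref{DESC: LyapFuncV1}.
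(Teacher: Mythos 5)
Your proposal follows the paper's own (very terse) proof almost verbatim: \eqref{EQ: TraceMaximizationComputationFormula} from \eqref{EQ: LeonovContextTraceExponentsViaTraceNumbersSelfAdj} plus Theorem~\ref{TH: ComputationSwedgeSpectralBound}; \eqref{EQ: ComputationMaxExponent} from \eqref{EQ: TraceExponentViaTraceNumbers} and \eqref{EQ: MaximizedExponentViaMaximizedTraceExponent}; and \eqref{EQ: LeonovContextOmegaDViaAlphas} as a special case of \eqref{EQ: OmegaEstimateViaMaxOfBetas}. Same identities, same route.

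One small correction on the obstacle you flag for \eqref{EQ: LeonovContextOmegaDViaAlphas}: you do not need to carry the $\frac{1}{m}\dot V(q)$ shift through at all. Since $\overline{\omega}_{d}(\Xi)$ is independent of the choice of uniformly equivalent metric (stated just above Theorem~\ref{TH: AveragedOmegaDEstimateViaTraceNumbers}), the cleanest path is to apply \eqref{EQ: OmegaEstimateViaMaxOfBetas} directly in the constant metric $\langle\cdot,\cdot\rangle_{\mathbb{H}}$ (i.e.\ $V\equiv 0$), so $A_{\mathfrak{n}}(q)=A(q)$ and, under \nameref{DESC: SYM} plus Theorem~\ref{TH: ComputationSwedgeSpectralBound}, $\beta_{j}(A(q))=\alpha_{j}(S(q))$; no $\dot V$ term survives, matching the right-hand side of \eqref{EQ: LeonovContextOmegaDViaAlphas}. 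Attempting instead to substitute $\beta_{j}(A_{\mathfrak{n}}(q))=\alpha_{j}(S(q))+\tfrac{1}{m}\dot V(q)$ into \eqref{EQ: OmegaEstimateViaMaxOfBetas} leaves a residual $\tfrac{m-1+\gamma}{m}\dot V(q)$, which is extraneous. Your suspicion about the index mismatch in \eqref{EQ: LeonovContextOmegaDViaAlphas} is warranted: applying \eqref{EQ: OmegaEstimateViaMaxOfBetas} with $d=m-1+\gamma$ naturally produces $\gamma\alpha_{m}(S(q))$ rather than $\gamma\alpha_{m+1}(S(q))$, and a $\ln$ on the left-hand side; these are slips in the paper's statement, not in your reasoning.
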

\begin{proof}
	Indeed, \eqref{EQ: TraceMaximizationComputationFormula} follows from \eqref{EQ: LeonovContextTraceExponentsViaTraceNumbersSelfAdj} and Theorem \ref{TH: ComputationSwedgeSpectralBound}. Then, \eqref{EQ: ComputationMaxExponent} follows from \eqref{EQ: TraceExponentViaTraceNumbers} and \eqref{EQ: MaximizedExponentViaMaximizedTraceExponent}, and \eqref{EQ: LeonovContextOmegaDViaAlphas} is a particular case of \eqref{EQ: OmegaEstimateViaMaxOfBetas}.
\end{proof}

\begin{remark}
	Recall that if the numbers $\alpha_{j}(q)$ behave regularly enough over the trajectories of $(\mathcal{Q},\vartheta)$ as in Theorem \ref{TH: RegularityOfTraceExponentAndCoincidence}, we have that $\alpha^{+}_{\mathfrak{n}^{\wedge m}}(q) = \alpha^{tr}_{\mathfrak{n}^{\wedge m}}(q)$, and the sharper estimate \eqref{EQ: OmegaEstimateViaTimeAveragedMaxOfBetas} is valid.
\end{remark}

In the next section, we are going to illustrate the theory by means of delay equations in $\mathbb{R}^{n}$.
\section{Adapted metrics for delay equations in $\mathbb{R}^{n}$}
\label{SEC: DimensionEstimatesViaLiouvDelayEqs}
\subsection{Linearization of delay equations}
\label{SEC: LinearizationDelayEquations}
Let us consider the class of nonlinear nonautonomous delay equations over a semiflow $(\mathcal{Q},\vartheta)$ on a complete metric space $\mathcal{Q}$, which are described over $q \in \mathcal{Q}$ by
\begin{equation}
	\label{EQ: DelayRnNonlinearNonautonomous}
	\dot{x}(t) = \widetilde{A}x_{t} + \widetilde{B}F(\vartheta^{t}(q),Cx_{t}) + \widetilde{W}(\vartheta^{t}(q)).
\end{equation}
Here $\tau>0$ is fixed, $x(\cdot) \colon [-\tau,T] \to \mathbb{R}^{n}$ for some $T>0$, and $x_{t}(\theta) \coloneq x(t+\theta)$ for $\theta \in [-\tau,0]$ denotes the $\tau$-history segment of $x(\cdot)$ at $t \in [0,T]$; $\widetilde{A} \colon C([-\tau,0];\mathbb{R}^{n}) \to \mathbb{R}^{n}$ and $C \colon C([-\tau,0];\mathbb{R}^{n}) \to \mathbb{R}^{r_{2}}$ are bounded linear operators; $\widetilde{B}$ is an $n \times r_{1}$-matrix; $\widetilde{W} \colon \mathcal{Q} \to \mathbb{R}^{n}$ is a bounded continuous function; and $F \colon \mathcal{Q} \times \mathbb{R}^{r_{2}} \to \mathbb{R}^{r_{1}}$ is a $C^{1}$-differentiable in the second argument continuous mapping satisfying for some $\Lambda > 0$ the Lipschitz condition
\begin{equation}
	|F(q,y_{1})-F(q,y_{2})|_{\mathbb{R}^{r_{1}}} \leq \Lambda |y_{1}-y_{2}|_{\mathbb{R}^{r_{2}}} \qquad \text{for all} \quad q \in \mathcal{Q} \quad \text{and} \quad y_{1},y_{2} \in \mathbb{R}^{r_{2}}.
\end{equation}

Let us write \eqref{EQ: DelayRnNonlinearNonautonomous} in the form of an evolution equation. For this, consider the Hilbert space $\mathbb{H} = \mathbb{R}^{n} \times L_{2}(-\tau,0;\mathbb{R}^{n})$ and define an unbounded operator $A$ in $\mathbb{H}$ by
\begin{equation}
	\label{EQ: OperatorADelayEqsDef}
	(x,\phi) \overset{A}{\mapsto} \left(\widetilde{A}\phi,\frac{d}{d\theta}\phi \right),
\end{equation}
where $(x,\phi) \in \mathcal{D}(A) \coloneq \{ (x,\phi) \in \mathbb{H} \ | \ \phi(\cdot) \in W^{1,2}(-\tau,0;\mathbb{R}^{n}) \text{ and } \phi(0) = x \}$, and $\frac{d}{d\theta}$ denotes the derivative in the Sobolev space $W^{1,2}(-\tau,0;\mathbb{R}^{n})$. We call $A$ the \textit{delay operator} associated with $\widetilde{A}$. It can be shown that $A$ generates a $C_{0}$-semigroup $G$ in $\mathbb{H}$.

Let $\mathbb{E} \coloneq C([-\tau,0];\mathbb{R}^{n})$ be embedded into $\mathbb{H}$ as $\phi \mapsto (\phi(0),\phi)$ for any $\phi \in \mathbb{E}$. It is convenient to identify the elements of $\mathbb{E}$ and their images in $\mathbb{H}$ under the embedding. In particular, we use the same notation for the operator induced by $C$ from \eqref{EQ: DelayRnNonlinearNonautonomous}, i.e., we set $C(x,\phi) \coloneq C\phi$ for $(x,\phi) \in \mathbb{E}$.

Now define a bounded linear operator $B \colon \mathbb{R}^{r_{1}} \to \mathbb{H}$ by $B\eta \coloneq (\widetilde{B} \eta,0)$ for any $\eta \in \mathbb{R}^{r_{1}}$. Furthermore, define $W \colon \mathcal{Q} \to \mathbb{H}$ by $W(q) \coloneq (\widetilde{W}(q),0)$ for any $q \in \mathcal{Q}$. Then \eqref{EQ: DelayRnNonlinearNonautonomous} can be treated as an abstract nonautonomous evolution equation in $\mathbb{H}$ over $(\mathcal{Q},\vartheta)$:
\begin{equation}
	\label{EQ: DelayLinearCocAbsract}
	\dot{v}(t) = Av(t) + BF(\vartheta^{t}(q),Cv(t)) + W(\vartheta^{t}(q)).
\end{equation}
By \cite[Theorem 1]{Anikushin2022Semigroups}, for any $v_{0} \in \mathbb{H}$ and $T>0$, there exists a unique generalized solution $v(\cdot) = v(\cdot;q,v_{0}) = (x(\cdot),\phi(\cdot))$ to \eqref{EQ: DelayLinearCocAbsract} on $[0,T]$ satisfying $v(0)=v_{0}$. More precisely, $v(\cdot)$ is a continuous $\mathbb{H}$-valued function such that $\phi(t) = x_{t}$ and
\begin{equation}
	\label{EQ: DelayVariationOfConstantsSingle}
	v(t) = G(t)v_{0} + \int_{0}^{t}G(t-s)\left[BF(\vartheta^{s}(q),Cx_{s}) + W(\vartheta^{s}(q)) \right]ds
\end{equation}
are satisfied for any $t \in [0,T]$. Note that it is possible to interpret the function $[0,T] \ni s \mapsto Cx_{s} \in \mathbb{R}^{r_{2}}$ for $x(\cdot) \in L_{2}(-\tau,T;\mathbb{R}^{n})$ as an element of $L_{2}(0,T;\mathbb{R}^{r})$, and so \eqref{EQ: DelayVariationOfConstantsSingle} is well-defined, see \cite{Anikushin2023Comp,Anikushin2020FreqDelay,Anikushin2022Semigroups}. This formula can be used to show that the mappings
\begin{equation}
	\label{EQ: NonlinCocycleDelayEqsDefinition}
	\psi^{t}(q,v_{0}) \coloneq v(t;q,v_{0}) \qquad \text{for all} \quad t \geq 0, q \in \mathcal{Q}, \ \text{and} \ v_{0} \in \mathbb{H}
\end{equation}
determine a cocycle $\psi$ in $\mathbb{H}$ over the semiflow $(\mathcal{Q},\vartheta)$.

Generalized solutions are related to classical solutions in the space $\mathbb{E} = C([-\tau,0];\mathbb{R}^{n})$ as follows. Recall that $\mathbb{E}$ is embedded into $\mathbb{H}$ as $\phi \mapsto (\phi(0),\phi)$ for $\phi \in \mathbb{E}$, and we identify such $\phi$ with its image $(\phi(0),\phi)$ in $\mathbb{H}$. Then the restriction of $\psi$ to $\mathbb{E}$ is the cocycle in $\mathbb{E}$ generated by classical solutions (see \cite{HaleLunel1993}), i.e., $\psi^{t}(q,v_{0}) = (x(0),x_{t})$ for $t \geq 0$, where $x(\cdot) \colon [-\tau,+\infty) \to \mathbb{R}^{n}$ is a continuous function that is $C^{1}$-differentiable on $[0,\infty)$, satisfies \eqref{EQ: DelayRnNonlinearNonautonomous} for $t \geq 0$, and $x(\theta) = \phi_{0}(\theta)$ for $\theta \in [-\tau,0]$, where $v_{0} = (\phi(0), \phi_{0}) \in \mathbb{E}$. Furthermore, $\psi^{\tau}(q,\mathbb{H}) \subset \mathbb{E}$, and for any $T \geq \tau$, there exists a constant $L_{T}>0$ such that the smoothing estimate
\begin{equation}
	\label{EQ: DelayCocNonlinearSmoothingEst}
	\| \psi^{t}(q,v_{1}) - \psi^{t}(q,v_{2}) \|_{\mathbb{E}} \leq L_{T} |v_{1}-v_{2}|_{\mathbb{H}} \qquad \text{for any} \quad t \in [\tau,T]
\end{equation}
is valid for all $v_{1},v_{2} \in \mathbb{H}$ and $q \in \mathcal{Q}$, see \cite[Theorem 1]{Anikushin2022Semigroups}.

With each cocycle $\psi$ there is the associated skew-product semiflow $\pi$ in $ \mathcal{Q} \times \mathbb{H}$ given by 
\begin{equation}
	\label{EQ: DelaySkewProductSemiflowAssCoc}
	\pi^{t}(q,v) = (\vartheta^{t}(q),\psi^{t}(q,v)) \qquad \text{for} \quad t \geq 0, q \in \mathcal{Q}, \ \text{and} \ v \in \mathbb{H}.
\end{equation}
From the above, it follows that any invariant subset $\mathcal{P} \subset \mathcal{Q} \times \mathbb{H}$, i.e., such that $\pi^{t}(\mathcal{P}) = \mathcal{P}$ for any $t \geq 0$, must satisfy $\mathcal{P} \subset \mathcal{Q} \times \mathbb{E}$.

For $q \in \mathcal{Q}$ and $v_{0} \in \mathbb{E}$ (for simplicity), the linearized in the fiber over $q$ equation along the trajectory of $v_{0}$ with respect to $\psi$ is given by
\begin{equation}
	\label{EQ: LinearizedEquationDelayInRn}
	\dot{\xi}(t) = A\xi(t) + BF'(\vartheta^{t}(q),C\psi^{t}(q,v_{0}))C\xi(t),
\end{equation}
where $F'$ denotes the derivative of $F(q,y)$ with respect to $y$.

Similarly to the above considerations, it can be shown that for any $\wp = (q,v_{0}) \in \mathcal{Q} \times \mathbb{E}$ and $\xi_{0} \in \mathbb{H}$, there exists a unique generalized solution $\xi(t)=\xi(t;q,v_{0},\xi_{0})$ to \eqref{EQ: LinearizedEquationDelayInRn} with $\xi(0) = \xi_{0}$, and the mappings
\begin{equation}
	\label{EQ: DelayLinearizationCocycleDef}
	\Xi^{t}(\wp,\xi_{0}) \coloneq \xi(t;q,v_{0},\xi_{0}) \qquad \text{for} \quad t \geq 0, \wp=(q,v_{0}) \in \mathcal{Q} \times \mathbb{E}, \ \text{and} \ \xi_{0} \in \mathbb{H}
\end{equation}
determine a uniformly continuous linear cocycle $\Xi$ in $\mathbb{H}$ over the semiflow $(\mathcal{Q} \times \mathbb{E}, \pi)$. Analogously to \eqref{EQ: DelayCocNonlinearSmoothingEst}, we have that $\Xi^{\tau}(\wp,\mathbb{H}) \subset \mathbb{E}$, and for any $T \geq \tau$, there exists a constant $L_{T}>0$ such that
\begin{equation}
	\label{EQ: SmoothingDelayCocycle}
	\| \Xi^{t}(\wp,\xi) \|_{\mathbb{E}} \leq L_{T} | \xi |_{\mathbb{H}} \qquad \text{for any} \quad t \in [\tau, T], \wp \in \mathcal{Q} \times \mathbb{E}, \ \text{and} \ \xi \in \mathbb{H}.
\end{equation}

It can be shown using \eqref{EQ: DelayVariationOfConstantsSingle}, see \cite[Theorem 2]{Anikushin2022Semigroups} for the case of semiflows, that for all $q \in \mathcal{Q}$, $v_{0} \in \mathbb{E}$, $T>0$, and for any bounded subset $\mathcal{B}$ of $\mathbb{H}$, we have
\begin{equation}
	\lim_{h \to 0}\frac{|\psi^{t}(q,v_{0} + h \xi) - \psi^{t}(q,v_{0}) - h\Xi^{t}(\wp,\xi) |_{\mathbb{H}}}{h} = 0,
\end{equation}
where $\wp = (q, v_{0})$, and the limit is uniform in $t \in [0,T]$ and $\xi \in \mathcal{B}$. Thus, $\Xi^{t}(\wp,\cdot)$ represents the Fr\'{e}chet differential of $\psi^{t}(q,\cdot) \colon \mathbb{H} \to \mathbb{H}$ at $v_{0} \in \mathbb{E}$, which continuously depends on $\wp \in \mathcal{Q} \times \mathbb{E}$. This allows us to relate the Lyapunov dimension of $\Xi$ and the fractal dimension of compact invariant subsets $\mathcal{P} \subset \mathcal{Q} \times \mathbb{E}$ (more precisely, their fibers), see \cite{ChepyzhovIlyin2004, KuzReit2020}.

Below, we study the uniform Lyapunov exponents and the Lyapunov dimension of the cocycle $\Xi$ over the restricted semiflow $(\mathcal{P},\pi)$, where $\mathcal{P}$ is positively invariant under $\pi$.

\subsection{Additive symmetrization of delay operators}
\label{SEC: DelayEqsSymmetrization}
By \cite[Theorem 1]{Anikushin2022Semigroups}, we have that for $\xi_{0} \in \mathcal{D}(A)$, where $\mathcal{D}(A)$ is defined below \eqref{EQ: OperatorADelayEqsDef}, the corresponding generalized solution $\xi(t)=\xi(t;q,v_{0},\xi_{0})$ to \eqref{EQ: LinearizedEquationDelayInRn} is a classical solution, i.e., $\xi(\cdot) \in C^{1}([0,+\infty);\mathbb{H}) \cap C([0,+\infty);\mathcal{D}(A))$. In terms of \eqref{EQ: DomainAqDefinitionCocycleLiovForm}, this gives that $\mathcal{D}(A(\wp)) \supset \mathcal{D}(A)$ and $\Xi^{t}(\wp,\mathcal{D}(A)) \subset \mathcal{D}(A)$ for all $t \geq 0$ and $\wp \in \mathcal{Q} \times \mathbb{E}$. Furthermore, from \eqref{EQ: LinearizedEquationDelayInRn}, it is seen that for any $\wp=(q,v_{0}) \in \mathcal{Q} \times \mathbb{E}$ and $\xi_{0} \in \mathcal{D}(A)$, we have
\begin{equation}
	\label{EQ: DelaySymmetrizationOperatorAqFormula}
	A(\wp)\xi_{0} = A\xi_{0} + BF'(q,Cv_{0})C\xi_{0}.
\end{equation}
This operator has the same form as $A$ in \eqref{EQ: OperatorADelayEqsDef}. In particular, $A(\wp)$ considered on $\mathcal{D}(A)$ is a closed operator and generates a $C_{0}$-semigroup in $\mathbb{H}$.

As a model for $A(\wp)$, let us consider a bounded linear operator $\widetilde{L} \colon C([-\tau,0];\mathbb{R}^{n}) \to \mathbb{R}^{n}$ and the associated with it delay operator $L$ as in \eqref{EQ: OperatorADelayEqsDef}, i.e.,
\begin{equation}
	\label{EQ: OperatorLSymmetrizationDelayEqsDef}
	(x,\phi) \overset{L}{\mapsto} \left(\widetilde{L}\phi,\frac{d}{d\theta}\phi \right),
\end{equation}
where $(x,\phi) \in \mathcal{D}(L) = \{ (x,\phi) \in \mathbb{H} \ | \ \phi(\cdot) \in W^{1,2}(-\tau,0;\mathbb{R}^{n}) \text{ and } \phi(0) = x \}$.

We suppose that $\widetilde{L}$ has the form
\begin{equation}
	\label{EQ: OperatorLDelaySymMeasureRepresentation}
	\widetilde{L}\phi = L_{0} \phi(0) + L_{-\tau}\phi(-\tau) + \sum_{j=1}^{J}L_{-\tau_{j}}\phi(-\tau_{j}) + \sum_{i=1}^{I}\int_{-\tau}^{0}M_{i}(\theta)\phi(\theta)d\theta
\end{equation}
for any $\phi \in C([-\tau,0];\mathbb{R}^{n})$. Here $L_{0}$, $L_{-\tau}$, and $L_{-\tau_{j}} \not= 0 $ are $n\times n$-matrices with $\tau_{j} \in (0,\tau)$ for each $j \in \{1,\ldots,J\}$ being distinct values, which are increasing in $j$, and $M_{i}(\cdot)$ is a $L_{2}$-summable $(n\times n)$-matrix-valued function for any $i \in \{1,\ldots,I\}$.

It is convenient to set $\tau_{0}\coloneq0$ and $\tau_{J+1}\coloneq\tau$. So, we obtain a partition of $[-\tau,0]$ as
\begin{equation}
	\label{EQ: DelayOperatorPartition}
	-\tau = -\tau_{J+1} < -\tau_{J} < \cdots < -\tau_{1} < -\tau_{0} = 0.
\end{equation}

Let $\rho$ be a positive function on $[-\tau,0]$, which may possibly have discontinuities at $-\tau_{j}$ for $j \in \{1,\ldots,J\}$, such that\footnote{This should be read as $\rho(\cdot) \in C^{1}((-\tau_{j+1},-\tau_{j});\mathbb{R})$ and that $\rho(\cdot)$ has natural extensions to the endpoints $-\tau_{j+1}$ and $-\tau_{j}$, so it becomes $C^{1}$-differentiable on the closed interval.} $\rho(\cdot) \in C^{1}([-\tau_{j+1},-\tau_{j}];\mathbb{R})$ for each $j \in \{0,\ldots,J\}$ and $\inf_{\theta \in [-\tau,0]}\rho(\theta) > 0$. Next, let $\langle \cdot, \cdot \rangle_{\mathbb{R}^{n}}$ be a fixed inner product in $\mathbb{R}^{n}$. We endow $\mathbb{H}$ with the inner product
\begin{equation}
	\label{EQ: DelayEqsSymWeightedInnerProduct}
	\langle (x,\phi),(y,\psi) \rangle_{\rho} \coloneq \langle x, y \rangle_{\mathbb{R}^{n}} + \int_{-\tau}^{0}\rho(\theta)\langle \phi(\theta), \psi(\theta) \rangle_{\mathbb{R}^{n}}d\theta,
\end{equation}
where $(x,\phi), (y,\psi) \in \mathbb{H}$.

Let $\mathcal{T}$ be the partition of $[-\tau,0]$ corresponding to \eqref{EQ: DelayOperatorPartition}. By $W^{1,2}_{\mathcal{T}}(-\tau,0;\mathbb{R}^{n})$ we denote the space of $\psi \in L_{2}(-\tau,0;\mathbb{R}^{n})$ such that $\psi \in W^{1,2}(-\tau_{j+1},-\tau_{j};\mathbb{R}^{n})$ for each $j \in \{0,\ldots,J\}$. For such $\psi$ and any $j \in \{1,\ldots,J\}$, we define
\begin{equation}
	\label{EQ: DelayEquationsInMetricsDeltaJOperatorDef}
	\Delta_{j}(\psi) \coloneq \psi(-\tau_{j}+0) - \psi(-\tau_{j}-0),
\end{equation}
where $ \psi(-\tau_{j}+0)$ and $\psi(-\tau_{j}-0)$ denote the right-hand and left-hand limits at $-\tau_{j}$, respectively. Note that $\rho \in W^{1,2}_{\mathcal{T}}(-\tau,0;\mathbb{R})$, so the same notation is applicable to it. 

There is the differentiation operator $\frac{d}{d\theta}$ taking $W^{1,2}_{\mathcal{T}}(-\tau,0;\mathbb{R}^{n})$ into $L_{2}(-\tau,0;\mathbb{R}^{n})$ by
\begin{equation}
	\label{EQ: DifferentiationOperatorSobolevOverPartition}
	\restr{\left(\frac{d}{d\theta}\psi\right)}{(-\tau_{j+1},-\tau_{j})} \coloneq \frac{d}{d\theta}\restr{\psi}{(-\tau_{j+1},-\tau_{j})} \qquad \text{for} \quad j \in \{0,\ldots,J\},
\end{equation}
where $\frac{d}{d\theta}$ on the right-hand side is the differentiation operator in the usual Sobolev space $W^{1,2}(-\tau_{j+1},-\tau_{j};\mathbb{R}^{n})$. From this, we endow $W^{1,2}_{\mathcal{T}}(-\tau,0;\mathbb{R}^{n})$ with the norm
\begin{equation}
	\| \psi \|^{2}_{W^{1,2}_{\mathcal{T}}} \coloneq \| \psi \|^{2}_{L_{2}(-\tau,0;\mathbb{R}^{n})} + \left\| \frac{d}{d\theta} \psi \right\|^{2}_{L_{2}(-\tau,0;\mathbb{R}^{n})}
\end{equation}
that makes it a Hilbert space. Clearly, then $\frac{d}{d\theta}$ defined in \eqref{EQ: DifferentiationOperatorSobolevOverPartition} is a bounded linear operator. Sometimes it is convenient to write $\psi'$ instead of $\frac{d}{d\theta} \psi$.

Given $j \in \{1,\ldots,J\}$, we define an operator $\mathfrak{D}^{\rho}_{j}$ from $W^{1,2}_{\mathcal{T}}(-\tau,0;\mathbb{R}^{n})$ to $\mathbb{R}^{n}$ by
\begin{equation}
	\label{EQ: DelayAdjointDifferenceOperator}
	\mathfrak{D}^{\rho}_{j}(\psi) \coloneq \Delta_{j}(\rho \psi) = \rho(-\tau_{j}+0)\psi(-\tau_{j}+0) - \rho(-\tau_{j}-0)\psi(-\tau_{j}-0).
\end{equation}
Since $W^{1,2}(-\tau_{j+1},-\tau_{j};\mathbb{R}^{n})$ naturally embeds into $C([-\tau_{j+1},-\tau_{j}];\mathbb{R}^{n})$, the operator $\mathfrak{D}^{\rho}_{j}(\cdot)$ is bounded.

In the following theorem, the adjoint $L^{*}$ of $L$ with respect to \eqref{EQ: DelayEqsSymWeightedInnerProduct} is described.
\begin{theorem}
	\label{TH: SymmetrizationDelayOperator}
	Suppose $L$ is given by \eqref{EQ: OperatorLSymmetrizationDelayEqsDef} and satisfies \eqref{EQ: OperatorLDelaySymMeasureRepresentation}. Then the adjoint $L^{*}$ of $L$ with respect to the inner product $\langle \cdot, \cdot \rangle_{\rho}$ from \eqref{EQ: DelayEqsSymWeightedInnerProduct} is given by\footnote{Here $\rho'$ is the derivative of $\rho$ in the space $W^{1,2}_{\mathcal{T}}(-\tau,0;\mathbb{R})$.}
	\begin{equation}
		\label{EQ: AdjointDelayOperatorAction}
		(y,\psi) \overset{L^{*}}{\mapsto} \left( L^{*}_{0}y + \rho(0) \psi(0) , -\frac{d}{d\theta}\psi - \frac{\rho'(\cdot)}{\rho(\cdot)} \psi + \frac{\sum_{i=1}^{I}M^{*}_{i}(\cdot)}{\rho(\cdot)}y \right)
	\end{equation}
	and defined for $(y,\psi) \in \mathcal{D}(L^{*})$, where
	\begin{equation}
		\mathcal{D}(L^{*}) = \{ (y,\psi) \in \mathbb{H} \ | \ \psi \in W^{1,2}_{\mathcal{T}}(-\tau,0;\mathbb{R}^{n}) \text{ and } \eqref{EQ: DelayAdjointBoundaryConditions} \text{ is satisfied}  \}.
	\end{equation}
	Here, the adjoints of matrices are taken with respect to the inner product $\langle \cdot, \cdot \rangle_{\mathbb{R}^{n}}$ in $\mathbb{R}^{n}$ from \eqref{EQ: DelayEqsSymWeightedInnerProduct}, and the differentiation $\frac{d}{d\theta}$ is understood according to \eqref{EQ: DifferentiationOperatorSobolevOverPartition}.
\end{theorem}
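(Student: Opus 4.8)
The plan is to verify the claimed formula directly from the definition of the Hilbert space adjoint with respect to $\langle\cdot,\cdot\rangle_{\rho}$: a pair $(y,\psi)\in\mathbb{H}$ lies in $\mathcal{D}(L^{*})$ exactly when the functional $(x,\phi)\mapsto\langle L(x,\phi),(y,\psi)\rangle_{\rho}$ is $\langle\cdot,\cdot\rangle_{\rho}$-bounded on $\mathcal{D}(L)$, and then $L^{*}(y,\psi)$ is its representing vector. Using \eqref{EQ: OperatorLSymmetrizationDelayEqsDef}, the representation \eqref{EQ: OperatorLDelaySymMeasureRepresentation} and \eqref{EQ: DelayEqsSymWeightedInnerProduct}, for $(x,\phi)\in\mathcal{D}(L)$ I would start from
\[
\langle L(x,\phi),(y,\psi)\rangle_{\rho}=\langle\widetilde{L}\phi,y\rangle_{\mathbb{R}^{n}}+\int_{-\tau}^{0}\rho(\theta)\langle\phi'(\theta),\psi(\theta)\rangle_{\mathbb{R}^{n}}\,d\theta,
\]
transfer the matrices $L_{0},L_{-\tau},L_{-\tau_{j}},M_{i}(\theta)$ onto $y$ via their adjoints in $\langle\cdot,\cdot\rangle_{\mathbb{R}^{n}}$, and then — this is the crucial step — integrate by parts in the last integral over each subinterval $(-\tau_{j+1},-\tau_{j})$ of the partition \eqref{EQ: DelayOperatorPartition}, which is legitimate as soon as $\psi\in W^{1,2}_{\mathcal{T}}(-\tau,0;\mathbb{R}^{n})$. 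By continuity of $\phi$ across the nodes and the jump notation \eqref{EQ: DelayAdjointDifferenceOperator} (recall $\rho\in W^{1,2}_{\mathcal{T}}$), the interior boundary terms collected at each $-\tau_{j}$ combine into $-\langle\phi(-\tau_{j}),\mathfrak{D}^{\rho}_{j}(\psi)\rangle_{\mathbb{R}^{n}}$, the endpoint terms produce $\langle\phi(0),\rho(0)\psi(0)\rangle_{\mathbb{R}^{n}}$ and $-\langle\phi(-\tau),\rho(-\tau)\psi(-\tau)\rangle_{\mathbb{R}^{n}}$, and the surviving integral, after inserting $\rho(\theta)/\rho(\theta)$, becomes $\int_{-\tau}^{0}\rho(\theta)\langle\phi(\theta),\,-\psi'(\theta)-\frac{\rho'(\theta)}{\rho(\theta)}\psi(\theta)+\frac{1}{\rho(\theta)}\sum_{i=1}^{I}M_{i}^{*}(\theta)y\rangle_{\mathbb{R}^{n}}\,d\theta$, with $\psi',\rho'$ read in the sense of \eqref{EQ: DifferentiationOperatorSobolevOverPartition}.

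For the sufficiency direction I would assume $\psi\in W^{1,2}_{\mathcal{T}}$ together with the boundary conditions \eqref{EQ: DelayAdjointBoundaryConditions}, namely that the coefficients $L_{-\tau}^{*}y-\rho(-\tau)\psi(-\tau)$ and $L_{-\tau_{j}}^{*}y-\mathfrak{D}^{\rho}_{j}(\psi)$, $j\in\{1,\dots,J\}$, all vanish. The boundary contributions at $-\tau$ and at the $-\tau_{j}$ then cancel the corresponding matrix terms, and what is left is
\[
\langle L(x,\phi),(y,\psi)\rangle_{\rho}=\langle\phi(0),L_{0}^{*}y+\rho(0)\psi(0)\rangle_{\mathbb{R}^{n}}+\int_{-\tau}^{0}\rho(\theta)\langle\phi(\theta),\chi(\theta)\rangle_{\mathbb{R}^{n}}\,d\theta,
\]
where $\chi$ is the second component in \eqref{EQ: AdjointDelayOperatorAction}. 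Since $x=\phi(0)$ this equals $\langle(x,\phi),(z,\chi)\rangle_{\rho}$ with $z=L_{0}^{*}y+\rho(0)\psi(0)$, so $(y,\psi)\in\mathcal{D}(L^{*})$ and $L^{*}(y,\psi)$ is given by \eqref{EQ: AdjointDelayOperatorAction}.

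For the necessity direction I would first test the adjoint identity against $\phi\in C^{\infty}_{0}$ supported inside a single subinterval $(-\tau_{j+1},-\tau_{j})$, away from all nodes: every point-evaluation term drops out, and the identity reduces to $\int\langle\phi',\rho\psi\rangle_{\mathbb{R}^{n}}\,d\theta=\int\langle\phi,\sum_{i=1}^{I}M_{i}^{*}y-\rho\chi\rangle_{\mathbb{R}^{n}}\,d\theta$ over that subinterval (here $(z,\chi)$ denotes the representing vector), whence $\rho\psi$ — and, since $\rho\in W^{1,2}$ with $\inf\rho>0$ there, also $\psi$ — lies in $W^{1,2}(-\tau_{j+1},-\tau_{j};\mathbb{R}^{n})$; thus $\psi\in W^{1,2}_{\mathcal{T}}$ and $\chi$ has the asserted form. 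With $\psi$ now piecewise $W^{1,2}$, the identity of the first paragraph applies, and the difference of the two sides of the adjoint identity collapses to the finite combination $\langle\phi(-\tau),L_{-\tau}^{*}y-\rho(-\tau)\psi(-\tau)\rangle_{\mathbb{R}^{n}}+\sum_{j=1}^{J}\langle\phi(-\tau_{j}),L_{-\tau_{j}}^{*}y-\mathfrak{D}^{\rho}_{j}(\psi)\rangle_{\mathbb{R}^{n}}$ (the $\phi(0)$-terms already match through $\phi(0)=x$). Since for any prescribed values at $-\tau$ and the $-\tau_{j}$, and any prescribed $\phi(0)$, one can choose $\phi\in W^{1,2}(-\tau,0;\mathbb{R}^{n})$ realising these traces with arbitrarily small $\|(x,\phi)\|_{\rho}$, each such coefficient must vanish; these vanishing conditions are precisely \eqref{EQ: DelayAdjointBoundaryConditions}, and the $\phi(0)$-coefficient forces $z=L_{0}^{*}y+\rho(0)\psi(0)$. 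Combining the two directions yields \eqref{EQ: AdjointDelayOperatorAction} and the stated description of $\mathcal{D}(L^{*})$. I expect the main obstacle to be the careful bookkeeping of the interior-node contributions in the integration by parts, together with this final ``independence of boundary functionals'' step; both are routine but genuinely require working subinterval by subinterval because of the admissible jumps of $\psi$ and $\rho$ at the $-\tau_{j}$.
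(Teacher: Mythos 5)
Your proposal is correct, and it takes a genuinely different route from the paper in the one direction that is actually delicate. Both proofs agree on the integration-by-parts bookkeeping (the paper's display \eqref{EQ: OperatorLDelayBilinearForm} is exactly your first paragraph) and on the sufficiency of the conditions $\psi\in W^{1,2}_{\mathcal{T}}$ plus \eqref{EQ: DelayAdjointBoundaryConditions}. Where you diverge is in proving that $\mathcal{D}(L^{*})$ contains nothing else: the paper notes that its integration-by-parts identity only characterises membership among pairs $(y,\psi)$ that are already assumed to have $\psi\in W^{1,2}_{\mathcal{T}}$, and then establishes the reverse inclusion by showing the candidate domain $\mathcal{D}$ is complete in the graph norm and invariant under the adjoint $C_{0}$-semigroup, invoking Proposition 1.7 of Engel--Nagel; this occupies the bulk of the proof and requires an explicit solution of the adjoint transport system. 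You instead test the adjoint identity against $C^{\infty}_{0}$ bumps in each open subinterval to deduce $(\rho\psi)'\in L_{2}$ there (distributional regularity), hence $\psi\in W^{1,2}_{\mathcal{T}}$, and only then invoke the integration-by-parts identity and vary the traces $\phi(-\tau),\phi(-\tau_{j}),\phi(0)$ to extract the boundary conditions and the first component of $L^{*}$. This is shorter and avoids any semigroup machinery; what the paper's longer route buys is a self-contained description of the adjoint semigroup, which is not needed for the statement itself. Two small points: in the displayed distributional identity the right-hand side should read $\rho\chi-\sum_{i=1}^{I}M_{i}^{*}y$ rather than $\sum_{i=1}^{I}M_{i}^{*}y-\rho\chi$ (so that $(\rho\psi)'=\sum_{i}M_{i}^{*}y-\rho\chi$ and $\chi$ comes out with the right sign); and the ``arbitrarily small $\|(x,\phi)\|_{\rho}$'' device is not needed here, since for $(y,\psi)\in\mathcal{D}(L^{*})$ the adjoint identity holds exactly on all of $\mathcal{D}(L)$, so one may simply choose one $\phi\in W^{1,2}$ per independent trace --- this also sidesteps the fact that $|\phi(0)|=|x|$ cannot itself be made small without killing the $\phi(0)$-trace.
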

\begin{proof}
	Let us consider $(x,\phi) \in \mathcal{D}(L)$ and $(y,\psi) \in \mathbb{H}$ such that $\psi(\cdot) \in W^{1,2}_{\mathcal{T}}(-\tau,0;\mathbb{R}^{n})$. Integrating by parts yields
	\begin{equation}
		\label{EQ: ComputingAdjointDelayOperator1}
		\begin{split}
			\int_{-\tau}^{0}\rho(\theta)\langle \phi'(\theta),\psi(\theta) \rangle_{\mathbb{R}^{n}} d\theta = &\sum_{j=0}^{J} \langle \phi(-\tau_{j}), \rho(-\tau_{j}-0) \psi(-\tau_{j}-0)\rangle_{\mathbb{R}^{n}} \\ -  &\sum_{j=0}^{J}\langle \phi(-\tau_{j+1}), \rho(-\tau_{j+1}+0) \psi(-\tau_{j+1}+0)\rangle_{\mathbb{R}^{n}} \\ -
			&\sum_{j=0}^{J}\int_{-\tau_{j+1}}^{-\tau_{j}}\rho(\theta) \left\langle \phi(\theta), \psi'(\theta) + \frac{\rho'(\theta)}{\rho(\theta)}\psi(\theta) \right\rangle_{\mathbb{R}^{n}}d\theta.
		\end{split}
	\end{equation}
	From this, taking into account that for $j \in \{0,\ldots,J-1\}$
	\begin{equation}
		\label{EQ: ComputingAdjointDelayDefectRelation}
		\begin{split}
			\rho(-\tau_{j+1}+0) \psi(-\tau_{j+1}+0) = \rho(-\tau_{j+1}-0) \psi(-\tau_{j+1}-0) + \mathfrak{D}^{\rho}_{j+1}(\psi),
		\end{split}
	\end{equation}
	where $\mathfrak{D}^{\rho}_{j}(\psi)$ is given by \eqref{EQ: DelayAdjointDifferenceOperator}, we deduce that
	\begin{equation}
		\label{EQ: ComputingAdjointDelayOperator2}
		\begin{split}
			\sum_{j=0}^{J} \langle \phi(-\tau_{j}), \rho(-\tau_{j}-0) \psi(-\tau_{j}-0)\rangle_{\mathbb{R}^{n}} \\ -\sum_{j=0}^{J}\langle \phi(-\tau_{j+1}), \rho(-\tau_{j+1}+0) \psi(-\tau_{j+1}+0)\rangle_{\mathbb{R}^{n}} = \\ = \langle \phi(0) , \rho(0) \psi(0) \rangle_{\mathbb{R}^{n}} - \langle \phi(-\tau) , \rho(-\tau) \psi(-\tau) \rangle_{\mathbb{R}^{n}} -
			\sum_{j=1}^{J} \langle \phi(-\tau_{j}), \mathfrak{D}^{\rho}_{j}(\psi) \rangle_{\mathbb{R}^{n}}.
		\end{split}		 
	\end{equation}
	
	Moreover, we clearly have
	\begin{equation}
		\label{EQ: ComputingAdjointDelayOperator3}
		\left\langle \sum_{i=1}^{I}\int_{-\tau}^{0}M_{i}(\theta)\phi(\theta)d\theta, y \right\rangle_{\mathbb{R}^{n}} = \int_{-\tau}^{0}\left\langle \phi(\theta), \sum_{i=1}^{I} M^{*}_{i}(\theta)y \right\rangle_{\mathbb{R}^{n}}d\theta.
	\end{equation}
	
	By combining  \eqref{EQ: ComputingAdjointDelayOperator1}, \eqref{EQ: ComputingAdjointDelayOperator2}, \eqref{EQ: ComputingAdjointDelayOperator3}, and \eqref{EQ: OperatorLDelaySymMeasureRepresentation}, we obtain 
	\begin{equation}
		\label{EQ: OperatorLDelayBilinearForm}
		\begin{split}
			\left\langle L(x,\phi), (y,\psi) \right\rangle_{\rho} = \langle \widetilde{L}\phi, y \rangle_{\mathbb{R}^{n}} + \int_{-\tau}^{0}\rho(\theta)\langle \phi'(\theta),\psi(\theta) \rangle_{\mathbb{R}^{n}} d\theta = \\ = \int_{-\tau}^{0}\rho(\theta)\left\langle \phi(\theta), \frac{\sum_{i=1}^{I}M^{*}_{i}(\theta)}{\rho(\theta)}y \right\rangle_{\mathbb{R}^{n}}d\theta + \langle \phi(0), L^{*}_{0}y +\rho(0)\psi(0) \rangle_{\mathbb{R}^{n}} \\ - \int_{-\tau}^{0}\rho(\theta) \left\langle \phi(\theta), \psi'(\theta) + \frac{\rho'(\theta)}{\rho(\theta)}\psi(\theta) \right\rangle_{\mathbb{R}^{n}}d\theta \\ + \langle \phi(-\tau), L^{*}_{-\tau}y - \rho(-\tau)\psi(-\tau)\rangle_{\mathbb{R}^{n}} + \sum_{j=1}^{J}\langle \phi(-\tau_{j}), L^{*}_{-\tau_{j}}y - \mathfrak{D}^{\rho}_{j}(\psi)\rangle_{\mathbb{R}^{n}}.
		\end{split}
	\end{equation}
	By definition, $(y,\psi) \in \mathcal{D}(L^{*})$ if and only if the above functional of $(x,\phi) \in \mathcal{D}(L)$ is bounded in the norm of $\mathbb{H}$. Since the problematic terms are located only in the last row of \eqref{EQ: OperatorLDelayBilinearForm}, it is not difficult to see that the conditions
	\begin{equation}
		\label{EQ: DelayAdjointBoundaryConditions}
		L^{*}_{-\tau}y - \rho(-\tau)\psi(-\tau) = 0 \quad \text{and} \quad L^{*}_{-\tau_{j}}y - \mathfrak{D}^{\rho}_{j}(\psi) = 0 \qquad \text{for} \quad j \in \{1,\ldots, J\}
	\end{equation}
	are necessary and sufficient for the inclusion $(y,\psi) \in \mathcal{D}(L^{*})$. From this and \eqref{EQ: OperatorLDelayBilinearForm}, it is also clear that $L^{*}(y,\psi)$ is given by \eqref{EQ: AdjointDelayOperatorAction}.
	
	Clearly, the space $\mathcal{D}$ of all $(y,\psi)$ as above is dense in $\mathbb{H}$, and we have just shown that $\mathcal{D} \subset \mathcal{D}(L^{*})$. Moreover, it is also clear from \eqref{EQ: AdjointDelayOperatorAction} that $\mathcal{D}$ is complete in the graph norm of $L^{*}$. From this and \cite[Proposition 1.7, Chapter II]{EngelNagel2000}, to show that\footnote{A more direct approach for this is to establish that for a sufficiently large real $\lambda$, the equation $L^{*}v - \lambda v = w$ is uniquely solvable for any $w \in \mathbb{H}$ with $v \in \mathcal{D}$.} $\mathcal{D} = \mathcal{D}(L^{*})$, it is sufficient to establish that $\mathcal{D}$ is invariant with respect to the $C_{0}$-semigroup generated by $L^{*}$.
	
	Now consider the classical solution $(y(t),\psi(t))$, where $t \geq 0$, to the homogeneous Cauchy problem corresponding to $L^{*}$ with initial data $(y(0),\psi(0)) = (y_{0},\psi_{0}) \in \mathcal{D}$. We shall establish that it satisfies $(y(t),\psi(t)) \in \mathcal{D}$ for any $t \geq 0$ and
	\begin{equation}
		\label{EQ: AdjointDelayCompSystemEqs1}
		\begin{split}
			\dot{y}(t) &= L^{*}_{0}y(t) + \rho(0) \psi(t)(0),\\
			\dot{\psi}(t) &= -\frac{d}{d\theta} \psi(t) - \frac{\rho'(\cdot)}{\rho(\cdot)}\psi(t) + \frac{\sum_{i=1}^{I}M^{*}_{i}(\cdot)}{\rho(\cdot)}y(t).
		\end{split}
	\end{equation}
	
	Let $T^{-}_{\mathcal{T}}$ be the $C_{0}$-semigroup of right translations on $L_{2}(-\tau,0;\mathbb{R}^{n})$ according to the partition $\mathcal{T}$, i.e.,
	\begin{equation}
		(T^{-}_{\mathcal{T}}(t)\psi)(\theta) \coloneq \begin{cases}
			\psi(\theta - t) &\text{if} \quad \theta \in (-\tau_{j+1},-\tau_{j}) \quad \text{and} \quad \theta-t \in (-\tau_{j+1},-\tau_{j}),\\
			0 &\text{otherwise},
		\end{cases}
	\end{equation}
	where the first condition is taken over $j \in \{0,\ldots,J\}$. Standard arguments show that its generator $A^{-}_{\mathcal{T}}$ is given by the operator $-\frac{d}{d\theta}$ defined on the domain
	\begin{equation}
		\label{EQ: DelayAdjointCompAtranslatesDef}
		\mathcal{D}(A^{-}_{\mathcal{T}}) \coloneq \{ \psi \in W^{1,2}_{\mathcal{T}}(-\tau,0;\mathbb{R}^{n}) \mid \psi(-\tau_{j}+0) = 0 \text{ for } j \in \{1,\ldots,J+1\}\}.
	\end{equation}
	
	Consider the $L_{2}(-\tau,0;\mathbb{R}^{n})$-valued function of $t \geq 0$ defined by
	\begin{equation}
		\mathcal{I}_{y}(t) \coloneq \int_{0}^{t}T^{-}_{\mathcal{T}}(t-s)\sum_{i=1}^{I}M^{*}_{i}(\cdot)y(s)ds
	\end{equation}
	For $y(\cdot) \in C^{1}([0,\infty); \mathbb{R}^{n})$, the sum $\sum_{i=1}^{I}M^{*}_{i}(\cdot)y(t)$ as an $L_{2}(-\tau,0;\mathbb{R}^{n})$-valued function of $t \in [0,\infty)$ is $C^{1}$-differentiable. Then by \cite[Theorem 6.5, Chapter I]{Krein1971}, $\mathcal{I}_{y}(\cdot)$ is a classical solution to the inhomogeneous problem associated with $A^{-}_{\mathcal{T}}$. Namely,
	\begin{equation}
		\label{EQ: ComputingAdjointIySolution}
		\mathcal{I}_{y}(\cdot) \in C([0,\infty); \mathcal{D}(A^{-}_{\mathcal{T}})) \cap C^{1}([0,\infty); L_{2}(-\tau,0;\mathbb{R}^{n})),
	\end{equation}
	and $\frac{d}{dt}\mathcal{I}_{y}(t) = A^{-}_{\mathcal{T}}\mathcal{I}_{y}(t) + \sum_{i=1}^{I}M^{*}_{i}(\cdot)y(t)$ for any $t \geq 0$. In particular, \eqref{EQ: ComputingAdjointIySolution} and \eqref{EQ: DelayAdjointCompAtranslatesDef} give for any $t \geq 0$ that (recall $\Delta_{j}$ defined in \eqref{EQ: DelayEquationsInMetricsDeltaJOperatorDef})
	\begin{equation}
		\label{EQ: DelayAdjointCompBoundaryCondsInhomogeneous}
		\Delta_{j}(\mathcal{I}_{y}(t)) = - \mathcal{I}_{y}(t)(-\tau_{j}-0) \qquad \text{for} \quad j \in \{1,\ldots,J\},
	\end{equation}
	and $\mathcal{I}_{y}(t)(-\tau) = 0$.
	
	It is convenient to search for the function $\psi(t)$ in \eqref{EQ: AdjointDelayCompSystemEqs1} for $t \geq 0$ by assuming that its form is defined by
	\begin{equation}
		\label{EQ: AdjointDelayComputationEtaIntroduction}
		\eta_{j}(t-\theta - \tau_{j}) + \mathcal{I}_{y}(t)(\theta) = \rho(\theta)\psi(t)(\theta) \qquad \text{for} \quad \theta \in (-\tau_{j+1},-\tau_{j}) 
	\end{equation}
	with some $\eta_{j}(\cdot) \in W^{1,2}_{loc}(0,\infty;\mathbb{R}^{n})$ for each $j \in \{0,\ldots,J\}$. For such $\eta_{j}$, \cite[Lemma 3.4]{BatkaiPiazzera2005} implies that the mapping $[0,\infty) \ni t \mapsto \eta^{-}_{j, t}\coloneq\eta_{j}(t-\cdot) \in L_{2}(-\tau_{j+1}+\tau_{j},0;\mathbb{R}^{n})$ is $C^{1}$-differentiable and satisfies
	\begin{equation}
		\frac{d}{dt}\eta^{-}_{j,t} = -\frac{d}{d\theta}\eta^{-}_{j,t} \qquad \text{for any} \quad t \geq 0.
	\end{equation}
	
	From the above, it is clear that if we have found such $\eta_{j}$ and $y(\cdot) \in C^{1}([0,\infty);\mathbb{R}^{n})$, then
	\begin{equation}
		\psi(\cdot) \in C([0,\infty);W^{1,2}_{\mathcal{T}}(-\tau,0;\mathbb{R}^{n})) \cap C^{1}([0,\infty);L_{2}(-\tau,0;\mathbb{R}^{n})),
	\end{equation}
	and $\psi(t)$ satisfies the second equation in \eqref{EQ: AdjointDelayCompSystemEqs1} for any $t \geq 0$.
	
	Moreover, from \eqref{EQ: AdjointDelayComputationEtaIntroduction} with $\theta = 0$, the first equation in \eqref{EQ: AdjointDelayCompSystemEqs1} reads as
	\begin{equation}
		\label{EQ: DelayAdjCompYequation}
		\dot{y}(t) = L^{*}_{0}y(t) + \eta_{0}(t) + \mathcal{I}_{y}(t)(0). 
	\end{equation}
	
	Now let us express the boundary conditions \eqref{EQ: DelayAdjointBoundaryConditions} in terms of $\eta_{j}(\cdot)$ and $y(\cdot)$. From $\mathcal{I}_{y}(t)(-\tau) = 0$ and \eqref{EQ: AdjointDelayComputationEtaIntroduction}, we obtain that the first identity from \eqref{EQ: DelayAdjointBoundaryConditions} is equivalent to
	\begin{equation}
		\label{EQ: DelayAdjCompBoundConds1}
		L^{*}_{-\tau}y(t) - \eta_{J}(t+\tau-\tau_{J}) = 0.
	\end{equation}
	
	For convenience, we set $\eta(t)(\theta) \coloneq \rho(\theta)\psi(t,\theta) - \mathcal{I}_{y}(t)(\theta)$, i.e., $\eta(t)(\theta) = \eta_{j}(t-\theta+\tau_{j})$ for $\theta \in (-\tau_{j+1},-\tau_{j})$ and $j \in \{0,\ldots,J\}$. Then
	\begin{equation}
		\mathfrak{D}^{\rho}_{j}(\psi(t)) = \Delta_{j}(\rho \psi(t)) = \Delta_{j}(\eta(t)) + \Delta_{j}(\mathcal{I}_{y}(t)).
	\end{equation}
	From this and \eqref{EQ: DelayAdjointCompBoundaryCondsInhomogeneous}, the remaining series of conditions from \eqref{EQ: DelayAdjointBoundaryConditions} reads as
	\begin{equation}
		\label{EQ: DelayAdjCompBoundConds2}
		\eta_{j-1}(t+\tau_{j}-\tau_{j-1}) = L^{*}_{-\tau_{j}}y(t) + \eta_{j}(t) +\mathcal{I}_{y}(t)(-\tau_{j}-0)
	\end{equation}
	for $j \in \{1,\ldots,J\}$.
	
	So, it is required to construct appropriate $\eta_{j}(\cdot)$ and $y(\cdot)$ satisfying \eqref{EQ: DelayAdjCompYequation}, \eqref{EQ: DelayAdjCompBoundConds1}, and \eqref{EQ: DelayAdjCompBoundConds2}.
	
	First, from \eqref{EQ: AdjointDelayComputationEtaIntroduction} with $t = 0$, we obtain $\eta_{j}(-\theta - \tau_{j}) = \rho(\theta) \psi_{0}(\theta)$ for any $\theta \in (-\tau_{j+1},-\tau_{j})$ that determines $\eta_{j}$ on $(0,\tau_{j+1}-\tau_{j})$ as an element of $W^{1,2}(0,\tau_{j+1}-\tau_{j};\mathbb{R}^{n})$. 
	
	From this and the identity $\mathcal{I}_{y}(t)(0) = \mathcal{I}_{y}(\tau_{1})(0)$ for all $t \geq \tau_{1}$, it is seen that \eqref{EQ: DelayAdjCompYequation} transforms into
	\begin{equation}
		\label{EQ: AdjointDelayComputationYEquation2}
		\dot{y}(t) = \begin{cases}
			L^{*}_{0}y(t) + \rho(-t) \psi_{0}(-t) + \mathcal{I}_{y}(t)(0) \qquad &\text{if} \quad t \in [0,\tau_{1}],\\
			L^{*}_{0}y(t) + \eta_{0}(t) + \mathcal{I}_{y}(\tau_{1})(0) \qquad &\text{if} \quad t > \tau_{1}.
		\end{cases}
	\end{equation}
	We claim that, by the fixed point argument, there exists a unique $C^{1}$-differentiable solution $y(\cdot)$ to the above equation on $[0,\tau_{1}]$ such that $y(0)=y_{0}$. This can be shown as follows. Let $\mathcal{R} \colon L_{2}(-\tau,0;\mathbb{R}^{n}) \to L_{2}(-\tau_{1},0;\mathbb{R}^{n})$ be the operator that restricts functions to $(-\tau_{1},0)$ and $\mathcal{E} \colon L_{2}(-\tau_{1},0;\mathbb{R}^{n}) \to L_{2}(-\tau,0;\mathbb{R}^{n})$ be the operator that extends functions by zero outside $(-\tau_{1},0)$. Clearly, $\mathcal{R}T^{-}_{\mathcal{T}}\mathcal{E}$ is the right translation semigroup $T^{-}_{1}$ in $L_{2}(-\tau_{1},0;\mathbb{R}^{n})$ and
	\begin{equation}
		\label{EQ: RestrictionIyToRightInterval}
		\mathcal{R}\mathcal{I}_{y}(t) = \int_{0}^{t}T^{-}_{1}(t-s)\mathcal{R}\sum_{i=1}^{I}M^{*}_{i}(\cdot)y(s)ds.
	\end{equation}
	From this, it is seen that for $t \in [0,\tau_{1}]$ we have
	\begin{equation}
		\label{EQ: DelayDualMeasurement0Operator}
		\mathcal{I}_{y}(t)(0) = \mathcal{R}\mathcal{I}_{y}(t)(0) = \int_{0}^{t}\sum_{i=1}^{I}M^{*}_{i}(s-t)y(s)ds = \int_{-t}^{0}\sum_{i=1}^{I}M^{*}_{i}(s)y(s+t)ds,
	\end{equation}
	and it is possible to interpret the operator $y(\cdot) \mapsto (\mathcal{I}_{y})(\cdot)(0)$ as a bounded linear operator in $L_{2}(0,\varepsilon;\mathbb{R}^{n})$ with the norm independent of $\varepsilon \in [0,\tau_{1}]$. Then, the existence of a unique solution $y(\cdot)$ for \eqref{EQ: AdjointDelayComputationYEquation2} in $C([0,\tau_{1}];\mathbb{R}^{n})$ can be established in the standard way. For such $y(\cdot)$, from \eqref{EQ: DelayDualMeasurement0Operator} we obtain that $\mathcal{I}_{y}(t)(0)$ is a continuous function of $t \in [0,\tau_{1}]$. Then the $C^{1}$-differentiability of the constructed $y(\cdot)$ follows from the integral equation. By substituting such $y(\cdot)$ into \eqref{EQ: DelayDualMeasurement0Operator} again and applying the Lebesgue differentiation theorem, we see that $(\mathcal{I}_{y})(\cdot)(0)$ is an element of $W^{1,2}(0,\tau_{1};\mathbb{R}^{n})$. By the same reasoning, $(\mathcal{I}_{y})(\cdot)(-\tau_{j}-0)$ is also an element of that Sobolev space for any $j \in \{1,\ldots,J\}$.
	
	Now let us show that the constructed $y(\cdot)$ allows us to extend each $\eta_{j}$ to the interval $[\tau_{j+1}-\tau_{j}, \tau_{j+1}-\tau_{j} + \tau_{1}]$. We do this by induction as follows.
	\begin{enumerate}
		\item[1)] From \eqref{EQ: DelayAdjCompBoundConds1} with $t \in [0,\tau_{1}]$, we determine $\eta_{J}$ as an element of $C^{1}([\tau-\tau_{J},\tau - \tau_{J} +\tau_{1}];\mathbb{R}^{n})$. Since $(y_{0},\psi_{0}) \in \mathcal{D}$, we have
		\begin{equation}
			\eta_{J}(\tau-\tau_{J} - 0) = \rho(-\tau)\psi_{0}(-\tau) = L^{*}_{-\tau}y_{0} = \eta_{J}(\tau-\tau_{J}+0).
		\end{equation}
		Thereby, $\eta_{J}$ is determined as an element of $W^{1,2}(0,\tau - \tau_{J} +\tau_{1};\mathbb{R}^{n})$.
		\item[2)] For $j \in \{1,\ldots,J\}$, suppose $\eta_{j}$ is already determined as an element from $W^{1,2}(0,\tau_{j+1}-\tau_{j}+\tau_{1};\mathbb{R}^{n})$. Consider \eqref{EQ: DelayAdjCompBoundConds2} with such $j$ and $t \in [0,\tau_{1}]$. This determines $\eta_{j-1}$ as an element of $W^{1,2}(\tau_{j}-\tau_{j-1},\tau_{j}-\tau_{j-1}+\tau_{1};\mathbb{R}^{n})$. Moreover, since $(y_{0},\psi_{0}) \in \mathcal{D}$, we have $L^{*}_{-\tau_{j}}y_{0} = \mathfrak{D}^{\rho}_{j}(\psi_{0})$. From this and $\mathcal{I}_{y}(0) = 0$, putting $t=0$ in the determining relation gives 
		\begin{equation}
			\begin{split}
				\eta_{j-1}(\tau_{j}-\tau_{j-1}+0) = \mathfrak{D}^{\rho}_{j}(\psi_{0}) + \rho(-\tau_{j}-0)\psi_{0}(-\tau_{j}-0) = \\ = \rho(-\tau_{j}+0)\psi_{0}(-\tau_{j}+0) = \eta_{j-1}(\tau_{j}-\tau_{j-1}-0).
			\end{split}
		\end{equation}
		Consequently, $\eta_{j-1}$ is determined as an element of $W^{1,2}(0,\tau_{j} - \tau_{j-1} +\tau_{1};\mathbb{R}^{n})$.
	\end{enumerate}
	
	As a result, we determined $\eta_{0}$ on $[0,2\tau_{1}]$. From this, we can return to the second equation from \eqref{EQ: AdjointDelayComputationYEquation2} and consider it on $[\tau_{1},2\tau_{1}]$. Note that the right-hand sides agree at $t=\tau_{1}$, so the resulting $y(\cdot)$ will be continuously differentiable on $[0,2\tau_{1}]$. Then we may repeat the previous procedure, extending $\eta_{j}$ and so on. Since $y(\cdot)$ is $C^{1}$-differentiable, the agreement of extensions is more trivial in this case, as it immediately follows from \eqref{EQ: DelayAdjCompBoundConds1} and \eqref{EQ: DelayAdjCompBoundConds2}.
\end{proof}

From Theorem \ref{TH: SymmetrizationDelayOperator}, it is clear that $\mathcal{D}(L) \cap \mathcal{D}(L^{*})$ is dense in $\mathbb{H}$ if and only if $\Delta_{j}(\rho) \not= 0$ for any $j \in \{1,\ldots,J\}$. Indeed, for $(x,\phi) \in \mathcal{D}(L) \cap \mathcal{D}(L^{*})$ we have $\phi \in W^{1,2}(-\tau,0;\mathbb{R}^{n})$ and, consequently, $\mathfrak{D}^{\rho}_{j}(\phi) = \Delta_{j}(\rho) \phi(-\tau_{j})$. If $\Delta_{j}(\rho) = 0$, then the boundary condition $L^{*}_{-\tau_{j}}x - \mathfrak{D}^{\rho}_{j}(\phi) = 0$ reads as $L^{*}_{-\tau_{j}}x = 0$. Thus, one cannot approximate elements $(y,\psi)$ of the space $\mathbb{H}$ with $L^{*}_{-\tau_{j}}y \not= 0$ by the elements from $\mathcal{D}(L) \cap \mathcal{D}(L^{*})$. On the other hand, if $\Delta_{j}(\rho) \not= 0$ for each $j \in \{1,\ldots,J\}$, then there are no such limits, and the boundary conditions restrict $\phi$ only at a finite number of points, which does not affect the possibility of approximation in the $L_{2}$-norm.

It can also be shown that in the degenerate case where $\Delta_{j}(\rho) = 0$ for some $j \in \{1,\ldots,J\}$, we do not have any densely defined symmetric extensions of $L+L^{*}$. In the nondegenerate case, the following theorem establishes that there always exists a unique additive symmetrization.

\begin{theorem}
	\label{TH: DelayOperatorSymmetrizationExistence}
	Let $\mathbb{H}$ be endowed with the inner product from \eqref{EQ: DelayEqsSymWeightedInnerProduct}, and suppose that $\Delta_{j}(\rho) \not= 0$ for each $j \in \{1,\ldots,J\}$. Then the operator $L$ given by \eqref{EQ: OperatorLSymmetrizationDelayEqsDef} and satisfying \eqref{EQ: OperatorLDelaySymMeasureRepresentation} admits a unique additive symmetrization $S_{L}$, which is a bounded self-adjoint operator in $\mathbb{H}$ given by
	\begin{equation}
		\label{EQ: DelayOperatorAddSymmExistence}
		\begin{split}
			S_{L}(x,\phi) = \frac{1}{2} (y,\psi), \text{ where }\\
			y = \left(L_{0}+L^{*}_{0} + \rho(0)I_{n} + \frac{L_{-\tau}L^{*}_{-\tau}}{\rho(-\tau)} +\sum_{j=1}^{J} \frac{L_{-\tau_{j}}L^{*}_{-\tau_{j}}}{\Delta_{j}(\rho)}\right)x + \sum_{i=1}^{I}\int_{-\tau}^{0}M_{i}(\theta)\phi(\theta)d\theta,\\
			\psi(\theta) = -\frac{\rho'(\theta)}{\rho(\theta)} \phi(\theta) + \frac{\sum_{i=1}^{I}M^{*}_{i}(\theta)}{\rho(\theta)}x \qquad \text{for} \quad \theta \in (-\tau,0),
		\end{split}
	\end{equation}
	where $I_{n}$ is the unit $n\times n$-matrix, and the adjoints of matrices are taken with respect to the inner product $\langle \cdot, \cdot \rangle_{\mathbb{R}^{n}}$ in $\mathbb{R}^{n}$ from \eqref{EQ: DelayEqsSymWeightedInnerProduct}.
\end{theorem}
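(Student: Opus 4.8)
The plan is to identify $S_{L}$ with the additive symmetrization of $L$ through the Friedrichs--extension characterization of Theorem~\ref{TH: AdditiveSymmetrizationAndFriedrichsExtension}. By that result it suffices to form the symmetric operator $L_{s} \coloneq \frac{1}{2}(L + L^{*})$ on the common domain $\mathcal{D}(L) \cap \mathcal{D}(L^{*})$ and to take its self-adjoint (Friedrichs) extension. The nondegeneracy hypothesis $\Delta_{j}(\rho) \neq 0$ enters right here: as observed just before the theorem, it is equivalent to density of $\mathcal{D}(L) \cap \mathcal{D}(L^{*})$ in $\mathbb{H}$, so that $L_{s}$ is densely defined, whereas in the degenerate case there is no densely defined symmetric extension of $L + L^{*}$ at all (this is also what makes uniqueness meaningful). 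Thus the substance of the argument is to compute $L_{s}$ explicitly on $\mathcal{D}(L) \cap \mathcal{D}(L^{*})$ and to recognize it as the restriction of the bounded operator written in~\eqref{EQ: DelayOperatorAddSymmExistence}.

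First I would record the shape of $\mathcal{D}(L) \cap \mathcal{D}(L^{*})$ using Theorem~\ref{TH: SymmetrizationDelayOperator}: a pair $(x,\phi)$ belongs to it exactly when $\phi \in W^{1,2}(-\tau,0;\mathbb{R}^{n})$ and $\phi(0) = x$, and, since such $\phi$ has no interior jumps so that $\mathfrak{D}^{\rho}_{j}(\phi) = \Delta_{j}(\rho)\,\phi(-\tau_{j})$, the adjoint boundary conditions~\eqref{EQ: DelayAdjointBoundaryConditions} collapse to
\[
	\phi(-\tau) = \frac{1}{\rho(-\tau)}\,L^{*}_{-\tau}x, \qquad \phi(-\tau_{j}) = \frac{1}{\Delta_{j}(\rho)}\,L^{*}_{-\tau_{j}}x \quad (j = 1,\ldots,J),
\]
which uses $\Delta_{j}(\rho) \neq 0$ once more. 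On such $(x,\phi)$ I would add $L(x,\phi)$ (from~\eqref{EQ: OperatorLSymmetrizationDelayEqsDef} and~\eqref{EQ: OperatorLDelaySymMeasureRepresentation}, so that $\widetilde{L}\phi = L_{0}x + L_{-\tau}\phi(-\tau) + \sum_{j}L_{-\tau_{j}}\phi(-\tau_{j}) + \sum_{i}\int_{-\tau}^{0}M_{i}\phi$) and $L^{*}(x,\phi)$ (from~\eqref{EQ: AdjointDelayOperatorAction}, with $\psi = \phi$, so that its first component is $L^{*}_{0}x + \rho(0)x$): the differentiations $\frac{d}{d\theta}\phi$ and $-\frac{d}{d\theta}\phi$ cancel, the point evaluations $L_{-\tau}\phi(-\tau)$ and $L_{-\tau_{j}}\phi(-\tau_{j})$ turn into $\frac{L_{-\tau}L^{*}_{-\tau}}{\rho(-\tau)}x$ and $\frac{L_{-\tau_{j}}L^{*}_{-\tau_{j}}}{\Delta_{j}(\rho)}x$ by the boundary conditions, and the weight and integral terms reassemble into the function $\psi$ displayed in~\eqref{EQ: DelayOperatorAddSymmExistence}. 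The upshot is $(L + L^{*})(x,\phi) = 2 S_{L}(x,\phi)$ for the $S_{L}$ in the theorem, i.e. $L_{s} = \restr{S_{L}}{\mathcal{D}(L) \cap \mathcal{D}(L^{*})}$.

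It remains to check that $S_{L}$ of~\eqref{EQ: DelayOperatorAddSymmExistence} is a bounded self-adjoint operator on all of $\mathbb{H}$. Boundedness is immediate from its formula, which no longer contains any differentiation or any point evaluation of $\phi$ (all of these were removed via the boundary conditions), only the coordinate $x$, the integrals $\int_{-\tau}^{0}M_{i}\phi$ with $M_{i}\in L_{2}$, and multiplication by the bounded function $\rho'/\rho$ and the $L_{2}$-matrix functions $M^{*}_{i}/\rho$. Symmetry of $S_{L}$ on the dense set $\mathcal{D}(L) \cap \mathcal{D}(L^{*})$ is merely the symmetry of $L_{s}$, i.e. the rule $\langle L u, v\rangle_{\rho} = \langle u, L^{*} v\rangle_{\rho}$ used in both slots; a bounded symmetric operator is self-adjoint, and, being bounded, $S_{L}$ is moreover the closure of $L_{s}$ and hence its Friedrichs extension. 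Theorem~\ref{TH: AdditiveSymmetrizationAndFriedrichsExtension} then delivers both that $S_{L}$ is an additive symmetrization of $L$ and that it is the only one.

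The step I expect to be the main obstacle is the explicit addition $L(x,\phi) + L^{*}(x,\phi)$ on the common domain: one has to keep track of every summand of the measure representation~\eqref{EQ: OperatorLDelaySymMeasureRepresentation} together with the matrix adjoints taken with respect to $\langle\cdot,\cdot\rangle_{\mathbb{R}^{n}}$, and one must exploit carefully that for $\phi \in W^{1,2}(-\tau,0;\mathbb{R}^{n})$ the defect functionals $\mathfrak{D}^{\rho}_{j}$ collapse to $\Delta_{j}(\rho)\,\phi(-\tau_{j})$ --- it is precisely this collapse that simultaneously produces the quadratic-in-$x$ terms $L_{-\tau_{j}}L^{*}_{-\tau_{j}}/\Delta_{j}(\rho)$ and makes the differentiation disappear, so that $S_{L}$ comes out bounded. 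A secondary point worth bearing in mind is that the sign of $\Delta_{j}(\rho)$ controls the semiboundedness of the form $\operatorname{Re}\langle L\,\cdot\,,\cdot\rangle_{\rho}$ on $\mathcal{D}(L)$, hence the finiteness of the trace numbers one ultimately wants to estimate in the applications.
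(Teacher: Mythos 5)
Your computational core is exactly the paper's argument, and it is correct: on $\mathcal{D}(L)\cap\mathcal{D}(L^{*})$ the boundary conditions \eqref{EQ: DelayAdjointBoundaryConditions} turn $\phi(-\tau)$ into $\rho(-\tau)^{-1}L^{*}_{-\tau}x$ and $\phi(-\tau_{j})$ into $\Delta_{j}(\rho)^{-1}L^{*}_{-\tau_{j}}x$, the two derivative terms cancel, and the resulting formula \eqref{EQ: DelayOperatorAddSymmExistence} involves only $x$, the $L_{2}$-integrals $\int M_{i}\phi$, and multiplications by bounded functions, hence defines a bounded symmetric (so self-adjoint) operator on all of $\mathbb{H}$.

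The one step that does not hold up is the invocation of Theorem~\ref{TH: AdditiveSymmetrizationAndFriedrichsExtension} to ``deliver both that $S_{L}$ is an additive symmetrization and that it is the only one.'' That theorem says nothing about existence or uniqueness of additive symmetrizations; it characterizes when the Friedrichs extension is a \emph{proper} additive symmetrization, i.e., when it reproduces the trace numbers $\beta_{k}(L)$. Properness is a strictly stronger assertion that the paper treats separately in Theorem~\ref{TH: TraceNumbersAdmitSymmetrizationDelayOperator} and Remark~\ref{REM: DelayOperatorsNoProperSymm}, where it requires the one-sided condition $\Delta_{j}(\rho)>0$; under the weaker hypothesis $\Delta_{j}(\rho)\neq 0$ assumed here, $S_{L}$ exists and is the unique additive symmetrization, yet may \emph{fail} to be proper. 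The uniqueness you want is more elementary and needs no appeal to Friedrichs extensions at all: since $S_{L}$ is bounded and agrees with $S_{0}(L)=(L+L^{*})/2$ on the dense subspace $\mathcal{D}(L)\cap\mathcal{D}(L^{*})$, it is the closure of $S_{0}(L)$; every self-adjoint extension of $S_{0}(L)$ must extend that closure, and a self-adjoint operator is maximal symmetric and therefore admits no proper self-adjoint extension, whence it equals $S_{L}$. With that replacement your proof is complete and matches the paper's.
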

\begin{proof}
	Combining \eqref{EQ: OperatorLDelaySymMeasureRepresentation} and \eqref{EQ: AdjointDelayOperatorAction} yields that for $(x,\phi) \in \mathcal{D}(L) \cap \mathcal{D}(L^{*})$, the image $(y,\psi) = (L+L^{*})(x,\phi)$ is given by \eqref{EQ: DelayOperatorAddSymmExistence}, since we have $L^{*}_{-\tau} x = \rho(-\tau) \phi(-\tau)$ and $L^{*}_{-\tau_{j}} x = \Delta_{j}(\rho)\phi(-\tau_{j})$ for each $j \in \{1,\ldots,J\}$. Moreover, $\mathcal{D}(L) \cap \mathcal{D}(L^{*})$ is dense in $\mathbb{H}$, and the formula \eqref{EQ: DelayOperatorAddSymmExistence} defines a bounded operator. From this, the conclusion follows immediately.
\end{proof}

Now we are going to investigate conditions for the additive symmetrization $S_{L}$ from Theorem \ref{TH: DelayOperatorSymmetrizationExistence} to be proper. For what follows, we consider $\rho(\theta)$ given by
\begin{equation}
	\label{EQ: DelayWeightFunctionWithDisc}
	\rho(\theta) = e^{\varkappa_{j} \theta} \qquad \text{for} \quad \theta \in (-\tau_{j+1},-\tau_{j}) \quad \text{and} \quad j \in \{0, \ldots, J\}
\end{equation}
with some real parameters $\varkappa_{0},\ldots,\varkappa_{J}$. Clearly, $\Delta_{j}(\rho) = e^{-\varkappa_{j-1} \tau_{j}} - e^{-\varkappa_{j}\tau_{j}}$. Moreover, $\Delta_{j}(\rho) > 0$ if and only if $\varkappa_{j-1} < \varkappa_{j}$.

\begin{theorem}
	\label{TH: TraceNumbersAdmitSymmetrizationDelayOperator}
	Let $\mathbb{H}$ be endowed with the inner product from \eqref{EQ: DelayEqsSymWeightedInnerProduct} with $\rho(\theta)$ as in \eqref{EQ: DelayWeightFunctionWithDisc}, which additionally satisfies $\Delta_{j}(\rho) > 0$ for each $j \in \{1,\ldots,J\}$. Then for the operator $L$ given by \eqref{EQ: OperatorLSymmetrizationDelayEqsDef} and satisfying \eqref{EQ: OperatorLDelaySymMeasureRepresentation}, the trace numbers $\beta_{1}(L), \beta_{2}(L), \ldots$ (see \eqref{EQ: TraceNumbersDef}) satisfy for any $k=1,2,\ldots$ (note that $\mathbb{H}$ is real)
	\begin{equation}
		\label{EQ: TraceNumbersDelayOperator}
		\sum_{j=1}^{k}\beta_{j}(L) = \sup_{\substack{\mathbb{L} \subset \mathcal{D}(L) \cap \mathcal{D}(L^{*})\\\dim\mathbb{L}=k}}\operatorname{Tr}\left(\Pi_{\mathbb{L}} \circ L \circ \Pi_{\mathbb{L}}\right),
	\end{equation}
	where the supremum is taken over linear subspaces $\mathbb{L}$, $\Pi_{\mathbb{L}}$ denotes the orthogonal projector onto $\mathbb{L}$, $L^{*}$ is given by Theorem \ref{TH: SymmetrizationDelayOperator}, and $\operatorname{Tr}$ denotes the trace functional. In particular, the additive symmetrization $S_{L}$ from Theorem \ref{TH: DelayOperatorSymmetrizationExistence} is proper.
\end{theorem}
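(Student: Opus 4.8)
The plan is to realise $S_{L}$ as a quadratic‑form majorant of $L$ on $\mathcal{D}(L)$ whose defect is concentrated off the core $\mathcal{D}(L)\cap\mathcal{D}(L^{*})$, and then to transfer this domination to the trace numbers. Recall that, under the standing hypothesis $\Delta_{j}(\rho)>0$ for all $j$, the subspace $\mathcal{D}(L)\cap\mathcal{D}(L^{*})$ is dense in $\mathbb{H}$ (the discussion preceding Theorem \ref{TH: DelayOperatorSymmetrizationExistence}).

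\textbf{Step 1: a pointwise form inequality.} First I would fix $\xi=(x,\phi)\in\mathcal{D}(L)$, so that $\phi\in W^{1,2}(-\tau,0;\mathbb{R}^{n})$, and compute $\langle L\xi,\xi\rangle_{\rho}$ directly. Writing $\int_{-\tau}^{0}\rho\langle\phi',\phi\rangle_{\mathbb{R}^{n}}\,d\theta=\tfrac12\int_{-\tau}^{0}\rho\,\tfrac{d}{d\theta}|\phi|^{2}\,d\theta$ and integrating by parts over each subinterval of the partition $\mathcal{T}$ — exactly the manipulation carried out in the proof of Theorem \ref{TH: SymmetrizationDelayOperator}, now with both arguments equal — and using $\phi(0)=x$, one obtains a formula for $\langle L\xi,\xi\rangle_{\rho}$ whose only ``problematic'' terms are the boundary contributions $\langle L_{-\tau}\phi(-\tau),x\rangle-\tfrac12\rho(-\tau)|\phi(-\tau)|^{2}$ and $\langle L_{-\tau_{j}}\phi(-\tau_{j}),x\rangle-\tfrac12\Delta_{j}(\rho)|\phi(-\tau_{j})|^{2}$ for $j\in\{1,\dots,J\}$; the remaining terms are $\langle L_{0}x,x\rangle$, $\tfrac12\rho(0)|x|^{2}$, the $M_{i}$-integrals and $-\tfrac12\int_{-\tau}^{0}\rho'|\phi|^{2}$. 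On the other hand, substituting the explicit formula \eqref{EQ: DelayOperatorAddSymmExistence} for $S_{L}$ into $\langle S_{L}\xi,\xi\rangle_{\rho}$ reproduces precisely these same non‑problematic terms but replaces each boundary contribution by $\tfrac{1}{2\rho(-\tau)}|L_{-\tau}^{*}x|^{2}$, respectively $\tfrac{1}{2\Delta_{j}(\rho)}|L_{-\tau_{j}}^{*}x|^{2}$. Since $\langle L_{a}\phi(a),x\rangle=\langle\phi(a),L_{a}^{*}x\rangle_{\mathbb{R}^{n}}$, Young's inequality $|\phi(a)|\,|L_{a}^{*}x|\le\tfrac{c}{2}|\phi(a)|^{2}+\tfrac{1}{2c}|L_{a}^{*}x|^{2}$ with the positive weights $c=\rho(-\tau)$ and $c=\Delta_{j}(\rho)$ — here is exactly where the hypothesis $\Delta_{j}(\rho)>0$, i.e.\ strict monotonicity $\varkappa_{j-1}<\varkappa_{j}$ of the weight exponents, enters — gives $\langle L\xi,\xi\rangle_{\rho}\le\langle S_{L}\xi,\xi\rangle_{\rho}$ for all $\xi\in\mathcal{D}(L)$. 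Moreover, equality holds if and only if $\rho(-\tau)\phi(-\tau)=L_{-\tau}^{*}x$ and $\Delta_{j}(\rho)\phi(-\tau_{j})=L_{-\tau_{j}}^{*}x$ for every $j$; comparing with the description of $\mathcal{D}(L^{*})$ in Theorem \ref{TH: SymmetrizationDelayOperator} (for $\phi\in W^{1,2}$ one has $\mathfrak{D}^{\rho}_{j}(\phi)=\Delta_{j}(\rho)\phi(-\tau_{j})$), this equality case is exactly $\xi\in\mathcal{D}(L)\cap\mathcal{D}(L^{*})$.

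\textbf{Step 2: from forms to trace numbers.} Next I would pass to finite‑dimensional subspaces. For a $k$-dimensional $\mathbb{L}\subset\mathcal{D}(L)$ with a $\langle\cdot,\cdot\rangle_{\rho}$-orthonormal basis $e_{1},\dots,e_{k}$ one has $\operatorname{Tr}(\Pi_{\mathbb{L}}\circ L\circ\Pi_{\mathbb{L}})=\sum_{i=1}^{k}\langle Le_{i},e_{i}\rangle_{\rho}$, so Step 1 yields $\operatorname{Tr}(\Pi_{\mathbb{L}}\circ L\circ\Pi_{\mathbb{L}})\le\operatorname{Tr}(\Pi_{\mathbb{L}}\circ S_{L}\circ\Pi_{\mathbb{L}})\le\sum_{j=1}^{k}\beta_{j}(S_{L})$, the last inequality by the definition of the trace numbers of the bounded self‑adjoint operator $S_{L}$ (see \eqref{EQ: TraceNumbersDef}). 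Taking the supremum over all $k$-dimensional $\mathbb{L}\subset\mathcal{D}(L)$ gives $\sum_{j=1}^{k}\beta_{j}(L)\le\sum_{j=1}^{k}\beta_{j}(S_{L})$. For the reverse inequality I would restrict to subspaces $\mathbb{L}\subset\mathcal{D}(L)\cap\mathcal{D}(L^{*})$: there the two traces coincide by the equality case of Step 1, and since $\mathcal{D}(L)\cap\mathcal{D}(L^{*})$ is dense while $\mathbb{L}\mapsto\operatorname{Tr}(\Pi_{\mathbb{L}}\circ S_{L}\circ\Pi_{\mathbb{L}})$ depends continuously on $\mathbb{L}$ (boundedness of $S_{L}$), a Gram--Schmidt approximation gives $\sup_{\dim\mathbb{L}=k,\ \mathbb{L}\subset\mathcal{D}(L)\cap\mathcal{D}(L^{*})}\operatorname{Tr}(\Pi_{\mathbb{L}}\circ S_{L}\circ\Pi_{\mathbb{L}})=\sum_{j=1}^{k}\beta_{j}(S_{L})$. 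Hence $\sum_{j=1}^{k}\beta_{j}(S_{L})=\sup_{\dim\mathbb{L}=k,\ \mathbb{L}\subset\mathcal{D}(L)\cap\mathcal{D}(L^{*})}\operatorname{Tr}(\Pi_{\mathbb{L}}\circ L\circ\Pi_{\mathbb{L}})\le\sum_{j=1}^{k}\beta_{j}(L)$, and combining the two bounds forces $\sum_{j=1}^{k}\beta_{j}(L)=\sum_{j=1}^{k}\beta_{j}(S_{L})$ for every $k$. This is precisely \eqref{EQ: TraceNumbersDelayOperator}; subtracting consecutive partial sums yields $\beta_{k}(L)=\beta_{k}(S_{L})$ for all $k$, i.e.\ $S_{L}$ is a proper additive symmetrization.

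\textbf{Expected main obstacle.} The substantive part is Step 1: organising the integration‑by‑parts bookkeeping across the jumps of $\rho$ at the points $-\tau_{j}$ so that the boundary terms assemble into exactly the quantities appearing in \eqref{EQ: DelayOperatorAddSymmExistence}, and then recognising that the leftover boundary terms admit a completion of squares with the \emph{correct} sign — which happens if and only if $\Delta_{j}(\rho)>0$. This simultaneously explains why the hypothesis is needed and why the equality case of the resulting form inequality recovers the domain $\mathcal{D}(L^{*})$; once Step 1 is in place, the density/continuity squeeze in Step 2 is routine.
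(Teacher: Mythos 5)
Your proposal is correct, and the essential computational content coincides with the paper's (the same integration by parts in the weighted inner product, the same identification of the boundary contributions as concave quadratics in $\phi(-\tau_j)$ whose maxima encode the adjoint boundary conditions, the same use of $\Delta_j(\rho)>0$ and of the density of $\mathcal{D}(L)\cap\mathcal{D}(L^{*})$). What you do differently, and more cleanly, is to package the maximization as an explicit form inequality $\langle L\xi,\xi\rangle_{\rho}\le\langle S_{L}\xi,\xi\rangle_{\rho}$ on all of $\mathcal{D}(L)$, obtained by completing the squares $\tfrac12\bigl|\rho(-\tau)^{-1/2}L_{-\tau}^{*}x-\rho(-\tau)^{1/2}\phi(-\tau)\bigr|^{2}$ and $\tfrac12\bigl|\Delta_{j}(\rho)^{-1/2}L_{-\tau_j}^{*}x-\Delta_{j}(\rho)^{1/2}\phi(-\tau_j)\bigr|^{2}$, with the equality set recognised as $\mathcal{D}(L)\cap\mathcal{D}(L^{*})$. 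With $S_{L}$ bounded, the passage from the form inequality to the trace-number equality is then the standard density/continuity squeeze and needs no concrete approximating sequence. By contrast, the paper does not phrase anything as $L\le S_{L}$: it instead constructs, for each $e\in\mathcal{D}(L)$, an explicit family $e_{\varepsilon}\in\mathcal{D}(L)\cap\mathcal{D}(L^{*})$ of piecewise-affine modifications of $\phi$ near the jump points that hit the maximizers $y_{j}$, verifies $e_{\varepsilon}\to e$ and $\langle Le,e\rangle_{\rho}\le\lim_{\varepsilon\to0}\langle Le_{\varepsilon},e_{\varepsilon}\rangle_{\rho}$, and then handles general $k$ via Gram--Schmidt of the modified family. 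Your route avoids that hands-on construction at the cost of having to multiply out $\langle S_{L}\xi,\xi\rangle_{\rho}$ against the integrated-by-parts $\langle L\xi,\xi\rangle_{\rho}$ and match all the non-boundary terms; both computations are genuinely equivalent, and the conclusion $\beta_{k}(L)=\beta_{k}(S_{L})$ follows the same way in both. No gap.
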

\begin{proof}
	First, we prove the statement for $k=1$. Let $e = (x,\phi) \in \mathcal{D}(L)$ be such that $|e|_{\mathbb{H}} = 1$. Let $|\cdot|_{\mathbb{R}^{n}}$ be the norm induced by $\langle \cdot, \cdot \rangle_{\mathbb{R}^{n}}$. From \eqref{EQ: DelayWeightFunctionWithDisc}, we obtain
	\begin{equation}
		\begin{split}
			2\int_{-\tau}^{0}\rho(\theta) \left\langle \phi(\theta), \phi'(\theta) \right\rangle_{\mathbb{R}^{n}}d\theta = \sum_{j=0}^{J}2\int_{-\tau_{j+1}}^{-\tau_{j}}e^{\varkappa_{j} \theta} \left\langle \phi(\theta), \phi'(\theta) \right\rangle_{\mathbb{R}^{n}}d\theta = \\ \sum_{j=0}^{J} \left( e^{-\varkappa_{j}\tau_{j}}|\phi(-\tau_{j})|^{2}_{\mathbb{R}^{n}} - e^{-\varkappa_{j}\tau_{j+1}} |\phi(-\tau_{j+1})|^{2}_{\mathbb{R}^{n}} \right) - \sum_{j=0}^{J}\varkappa_{j}\int_{-\tau_{j+1}}^{-\tau_{j}}e^{\varkappa_{j} \theta}|\phi(\theta)|^{2}_{\mathbb{R}^{n}}d\theta \\= |\phi(0)|^{2}_{\mathbb{R}^{n}} - e^{-\varkappa_{J}\tau}|\phi(-\tau)|^{2}_{\mathbb{R}^{n}} - \sum_{j=1}^{J} \Delta_{j}(\rho)|\phi(-\tau_{j})|^{2}_{\mathbb{R}^{n}} \\- \sum_{j=0}^{J}\varkappa_{j}\int_{-\tau_{j+1}}^{-\tau_{j}}e^{\varkappa_{j} \theta}|\phi(\theta)|^{2}_{\mathbb{R}^{n}}d\theta.
		\end{split}
	\end{equation}
	
	Substituting the above, $\phi(0)=x$, and \eqref{EQ: DelayWeightFunctionWithDisc} into \eqref{EQ: OperatorLDelayBilinearForm}, we obtain
	\begin{equation}
		\label{EQ: QuadraticFormDelayOperatorDecomposition}
		\begin{split}
			\langle Le,e \rangle_{\rho} = \left\langle \phi(-\tau), L^{*}_{-\tau}\phi(0) - \frac{1}{2}e^{-\varkappa_{J} \tau}\phi(-\tau)\right\rangle_{\mathbb{R}^{n}} \\+ \sum_{j=1}^{J} \left\langle \phi(-\tau_{j}), L^{*}_{-\tau_{j}}\phi(0) - \frac{1}{2}\Delta_{j}(\rho)\phi(-\tau_{j}) \right\rangle_{\mathbb{R}^{n}} + \cdots,
		\end{split}
	\end{equation}
	where $\cdots$ denotes the terms that are continuous in the norm of $\mathbb{H}$.
	
	Consider the quadratic functional
	\begin{equation}
		\label{EQ: DelaySymmQuadFunc1}
		\mathbb{R}^{n} \ni y \mapsto \left\langle y, L^{*}_{-\tau}\phi(0) - \frac{1}{2}e^{-\varkappa_{J} \tau}y\right\rangle_{\mathbb{R}^{n}}.
	\end{equation}
	It is clear that it admits a unique maximum at $y=y_{J+1}$ satisfying $L^{*}_{-\tau} \phi(0) = e^{-\varkappa_{J}\tau}y_{J+1}$.
	
	Similarly, for $j \in \{1,\ldots,J\}$, from $\Delta_{j}(\rho) > 0$, we conclude that the quadratic functional
	\begin{equation}
		\label{EQ: DelaySymmQuadFunc2}
		\mathbb{R}^{n} \ni y \mapsto \left\langle y, L^{*}_{-\tau_{j}}\phi(0) - \frac{1}{2}\Delta_{j}(\rho)y\right\rangle_{\mathbb{R}^{n}}
	\end{equation}
	admits a unique maximum at $y=y_{j}$ satisfying $L^{*}_{-\tau_{j}}\phi(0) = \Delta_{j}(\rho)y_{j}$.
	
	For any sufficiently small $\varepsilon>0$, we set $\phi_{\varepsilon}(\cdot) \in W^{1,2}(-\tau,0;\mathbb{R}^{n})$ identical to $\phi(\theta)$ for any $\theta \in [-\tau,0]$, avoiding the $\varepsilon$-neighborhoods of $-\tau_{j}$ for each $j \in \{1,\ldots,J+1\}$; $\phi_{\varepsilon}(-\tau_{j}) \coloneq y_{j}$ for each $j \in \{1,\ldots,J+1\}$; and $\phi_{\varepsilon}(\cdot)$ is the simplest piecewise affine function for the remaining $\theta$. Next, we define $e_{\varepsilon}$ as the normalized with respect to $\langle \cdot, \cdot \rangle_{\rho}$ vector $(x,\phi_{\varepsilon})$. Clearly, $e_{\varepsilon} \in \mathcal{D}(L) \cap \mathcal{D}(L^{*})$, and $e_{\varepsilon} \to e = (x,\phi)$ in $\mathbb{H}$ as $\varepsilon \to 0+$. From this and \eqref{EQ: QuadraticFormDelayOperatorDecomposition}, we obtain
	\begin{equation}
		\label{EQ: TraceNumbersSymmetrizationDelayOperatorConstruction}
		\langle Le,e \rangle_{\rho} \leq \lim_{\varepsilon \to 0+} \langle Le_{\varepsilon},e_{\varepsilon} \rangle_{\rho},
	\end{equation}
	or even $\langle Le,e \rangle_{\rho} < \langle Le_{\varepsilon},e_{\varepsilon} \rangle_{\rho}$ for all sufficiently small $\varepsilon>0$, provided that $e \notin \mathcal{D}(L^{*})$. This shows the statement for $k=1$.
	
	For general $k \geq 1$, we take a $k$-dimensional subspace $\mathbb{L} \subset \mathcal{D}(L)$ and vectors $e_{1},\ldots,e_{k} \in \mathbb{L}$ forming an orthonormal basis. For all sufficiently small $\varepsilon > 0$ and $l \in \{1,\ldots,k\}$, let $\widetilde{e}^{\varepsilon}_{l}$ be the vector $e_{\varepsilon}$ constructed above \eqref{EQ: TraceNumbersSymmetrizationDelayOperatorConstruction} for $e \coloneq e_{l}$. Finally, we define $e^{\varepsilon}_{1}, \ldots, e^{\varepsilon}_{k}$ as the vectors obtained from the Gram-Schmidt process (with normalization) applied to $\widetilde{e}^{\varepsilon}_{1}, \ldots, \widetilde{e}^{\varepsilon}_{k}$. From this and \eqref{EQ: TraceNumbersSymmetrizationDelayOperatorConstruction}, we obtain 
	\begin{equation}
		\begin{split}
			\operatorname{Tr}\left(\Pi_{\mathbb{L}} \circ L \circ \Pi_{\mathbb{L}}\right) = \sum_{l=1}^{k}\langle Le_{l},e_{l} \rangle_{\rho} \\ \leq \lim_{\varepsilon \to 0+}\sum_{l=1}^{k}\langle L\widetilde{e}^{\varepsilon}_{l}, \widetilde{e}^{\varepsilon}_{l} \rangle_{\rho} = \lim_{\varepsilon \to 0+}\sum_{l=1}^{k}\langle Le^{\varepsilon}_{l}, e^{\varepsilon}_{l} \rangle_{\rho}.
		\end{split}
	\end{equation}
	Thereby, \eqref{EQ: TraceNumbersDelayOperator} is established. From this and since $S_{L}$ is bounded, the relation $\beta_{k}(L) = \beta_{k}(S_{L})$ is satisfied for any $k=1,2,\ldots$.The proof is finished.
\end{proof}

\begin{remark}
	Let us emphasize that \eqref{EQ: TraceNumbersDelayOperator} is concerned with the surprising fact that the conditions satisfied by the unique maxima of the quadratic functionals \eqref{EQ: DelaySymmQuadFunc1} and \eqref{EQ: DelaySymmQuadFunc2} are related to the boundary conditions \eqref{EQ: DelayAdjointBoundaryConditions} for the adjoint operator $L^{*}$.
\end{remark}

\begin{remark}
	\label{REM: DelayOperatorsNoProperSymm}
	From the proof of Theorem \ref{TH: TraceNumbersAdmitSymmetrizationDelayOperator}, it is clear that if the condition $\Delta_{j}(\rho) > 0$  is violated for some $j \in \{1,\ldots,J\}$, then $\beta_{k}(L) = +\infty$ for any $k=1,2,\ldots$. This happens because the corresponding quadratic functional \eqref{EQ: DelaySymmQuadFunc2} becomes unbounded from above (with the additional requirement $L^{*}_{-\tau_{j}}\phi(0) \not= 0$ in the case $\Delta_{j}(\rho) = 0$). If, additionally, $\Delta_{j}(\rho) \not= 0$ for each $j \in \{1,\ldots,J\}$, this implies that the additive symmetrization $S_{L}$ is not proper, since each $\beta_{k}(S_{L})$ is finite due to the boundedness of $S_{L}$.
\end{remark}

Now consider \eqref{EQ: OperatorLDelaySymMeasureRepresentation} with $I=0$. Then, under the hypotheses of Theorem \ref{TH: TraceNumbersAdmitSymmetrizationDelayOperator}, there exists a unique proper additive symmetrization $S_{L}$ of $L$. From \eqref{EQ: DelayOperatorAddSymmExistence}, it is clear that the spectrum of $2S_{L}$ is constituted by the eigenvalues of the matrix 
\begin{equation}
	\label{EQ: MatrixDelaySymmetrization}
	L_{0}+L^{*}_{0} + e^{\varkappa_{J}\tau}L_{-\tau}L^{*}_{-\tau} +\sum_{j=1}^{J} \frac{L_{-\tau_{j}}L^{*}_{-\tau_{j}}}{e^{-\varkappa_{j-1}\tau_{j}} - e^{-\varkappa_{j}\tau_{j}}} + I_{n},
\end{equation}
corresponding to the spectral subspace $\mathbb{R}^{n} \times \{0\}$, and by the eigenvalues $-\varkappa_{j}$ taken over $j \in \{0,\ldots,J\}$. Each $-\varkappa_{j}$ has infinite multiplicity, and its spectral subspace consists of $(0,\phi) \in \mathbb{H}$ such that $\phi$ vanishes outside of $(-\tau_{j+1},-\tau_{j})$. Taken together, the spectral subspaces of $-\varkappa_{j}$ span the subspace $\{0\} \times L_{2}(-\tau,0;\mathbb{R}^{n})$. 

Given $m \geq 1$, let $\alpha_{1}(S_{L}) \geq \alpha_{2}(S_{L}) \geq \cdots \geq \alpha_{m}(S_{L})$ be the largest $m$ eigenvalues of $S_{L}$ counting multiplicities. Then, it is clear that the sum $\alpha_{1}(S_{L}) + \cdots + \alpha_{m}(S_{L})$ is negative if $m$ is taken sufficiently large and $\varkappa_{0} > 0$.

Combining the above considerations with Theorem \ref{TH: LiouvilleSymmetrizationSummary} gives the following. 
\begin{theorem}
	\label{TH: TheoremLyapExponentsEstimateDelayEqs}
	Let $\mathcal{P} \subset \mathcal{Q} \times \mathbb{E}$ be a closed subset that is positively invariant under $\pi$ from \eqref{EQ: DelaySkewProductSemiflowAssCoc}, and let $V \colon \mathcal{P} \to \mathbb{R}$ be a bounded function satisfying \nameref{DESC: LyapFuncV1} and \nameref{DESC: LyapFuncV2} with respect to the semiflow $(\mathcal{P},\pi)$. Suppose that for any $\wp \in \mathcal{P}$, the operators $A(\wp)$ from \eqref{EQ: DelaySymmetrizationOperatorAqFormula} have the form as $L$ in \eqref{EQ: OperatorLSymmetrizationDelayEqsDef} and \eqref{EQ: OperatorLDelaySymMeasureRepresentation} with $I=0$ and the corresponding $(n\times n)$-matrices $L_{0} = L_{0}(\wp)$, $L_{-\tau} = L_{-\tau}(\wp)$, and $L_{-\tau_{j}} = L_{-\tau_{j}}(\wp)$ for $j \in \{1,\ldots,J\}$. Let $\varkappa_{0} < \varkappa_{1} < \cdots < \varkappa_{J}$ be given, and let $\lambda_{1}(\wp) \geq \cdots \geq \lambda_{n}(\wp)$ be the eigenvalues (counting multiplicities) of the symmetric matrix (here $M^{T}$ denotes the transpose\footnote{Instead of taking transposes, one may also consider the adjoint matrices with respect to any fixed inner product in $\mathbb{R}^{n}$ as in \eqref{EQ: MatrixDelaySymmetrization}.} of a matrix $M$)
	\begin{equation}
		\label{EQ: MatrixL0LtauDelaySymLyapExpsEstimate}
		L_{0}(\wp)+L^{T}_{0}(\wp) + e^{\varkappa_{J}\tau}L_{-\tau}(\wp)L^{T}_{-\tau}(\wp) + \sum_{j=1}^{J} \frac{L_{-\tau_{j}}(\wp)L^{T}_{-\tau_{j}}(\wp)}{e^{-\varkappa_{j-1}\tau_{j}} - e^{-\varkappa_{j}\tau_{j}} } + I_{n}.
	\end{equation}
	Let $0 \leq K(\wp) \leq n$ be the largest number $k$ such that $\lambda_{k}(\wp) \geq -\varkappa_{0}$. For any $m \geq 1$, define the quantity
	\begin{equation}
		\label{EQ: DelaySymmetrizationCocycleApplicationAlphaPlusDef}
		\alpha^{+}(m) \coloneq \sup_{\wp \in \mathcal{P}}\left( \dot{V}(\wp) + \frac{1}{2}\sum_{k=1}^{\min\{m,K(\wp)\}}\lambda_{k}(\wp) - \frac{\varkappa_{0}}{2} \cdot \max\{0,m-K(\wp)\}\right).
	\end{equation}
	
	Then the cocycle $\Xi$ given by \eqref{EQ: DelayLinearizationCocycleDef} over the semiflow $(\mathcal{P},\pi)$ satisfies
	\begin{equation}
		\label{EQ: DelaySymThLyapExpEstimatesAlphaPlus}
		\lambda_{1}(\Xi) + \cdots + \lambda_{m}(\Xi) \leq \alpha^{+}(m).
	\end{equation}
\end{theorem}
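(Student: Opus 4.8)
The plan is to realise \eqref{EQ: DelaySymThLyapExpEstimatesAlphaPlus} as an instance of the generalised Leonov method from Theorem~\ref{TH: LiouvilleSymmetrizationSummary}, applied on the exterior power $\mathbb{H}^{\wedge m}$ with the adapted metric built simultaneously from the Lyapunov function $V$ and the weighted inner product \eqref{EQ: DelayEqsSymWeightedInnerProduct}. Concretely, I would fix $\rho$ as in \eqref{EQ: DelayWeightFunctionWithDisc} with the given parameters $\varkappa_{0}<\varkappa_{1}<\dots<\varkappa_{J}$, regard $\mathbb{H}$ as endowed with $\langle\cdot,\cdot\rangle_{\rho}$ (playing the role of the base inner product in Section~\ref{SEC: LiouvilleTraceFormulaAndSymProc}), and take the metric $\mathfrak{n}=\{\mathfrak{n}_{\wp}\}_{\wp\in\mathcal{P}}$ over $(\mathcal{P},\pi)$ with $\mathfrak{n}_{\wp}(\cdot)\coloneq e^{\frac{1}{m}V(\wp)}|\cdot|_{\rho}$. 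Since $V$ is bounded and $\rho$ is bounded above and below, the induced metric $\mathfrak{n}^{\wedge m}$ on $\mathbb{H}^{\wedge m}$ is uniformly equivalent to the standard one; and since $\mathcal{D}(A)\subset\mathcal{D}(A(\wp))$ is dense with $\Xi^{t}(\wp,\mathcal{D}(A))\subset\mathcal{D}(A)$, the setting of Section~\ref{SEC: LiouvilleTraceFormulaAndSymProc} applies with $\mathcal{N}_{\mathfrak{n}}(\wp)=\mathcal{D}(A)$. By the straightforward differentiation carried out in that section, $\mathfrak{n}$ is admissible and $B_{\mathfrak{n}}$ has the mean representation $A_{\mathfrak{n}}(\wp)=A(\wp)+\frac{1}{m}\dot{V}(\wp)\operatorname{Id}_{\mathbb{H}}$, where $\dot{V}(\wp)$ exists by \nameref{DESC: LyapFuncV1}--\nameref{DESC: LyapFuncV2} relative to $(\mathcal{P},\pi)$.

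Next I would verify \nameref{DESC: SYM}. By hypothesis every $A(\wp)=A(\wp)$ has the form of the operator $L$ in \eqref{EQ: OperatorLSymmetrizationDelayEqsDef}--\eqref{EQ: OperatorLDelaySymMeasureRepresentation} with $I=0$, and the ordering $\varkappa_{0}<\dots<\varkappa_{J}$ forces $\Delta_{j}(\rho)=e^{-\varkappa_{j-1}\tau_{j}}-e^{-\varkappa_{j}\tau_{j}}>0$ for all $j\in\{1,\dots,J\}$; hence Theorem~\ref{TH: DelayOperatorSymmetrizationExistence} produces the bounded self-adjoint operator $S(\wp)\coloneq S_{A(\wp)}$, and Theorem~\ref{TH: TraceNumbersAdmitSymmetrizationDelayOperator} guarantees it is a \emph{proper} additive symmetrization, i.e.\ $\beta_{k}(A(\wp))=\beta_{k}(S(\wp))$ for all $k$. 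From the explicit formula \eqref{EQ: DelayOperatorAddSymmExistence}, using $\rho(0)=1$, $\rho(-\tau)=e^{-\varkappa_{J}\tau}$ and $\rho'/\rho\equiv\varkappa_{j}$ on $(-\tau_{j+1},-\tau_{j})$, the spectrum of $2S(\wp)$ is the union of the eigenvalues $\lambda_{1}(\wp)\ge\dots\ge\lambda_{n}(\wp)$ of the symmetric matrix \eqref{EQ: MatrixL0LtauDelaySymLyapExpsEstimate} on $\mathbb{R}^{n}\times\{0\}$ and the numbers $-\varkappa_{0},\dots,-\varkappa_{J}$, each of infinite multiplicity, on $\{0\}\times L_{2}(-\tau,0;\mathbb{R}^{n})$; equivalently $S(\wp)$ has eigenvalues $\tfrac12\lambda_{k}(\wp)$ and $-\tfrac{\varkappa_{j}}{2}$.

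The computational core is then to identify the $m$ largest eigenvalues $\alpha_{1}(S(\wp))\ge\dots\ge\alpha_{m}(S(\wp))$ of $S(\wp)$. Since $\varkappa_{0}=\min_{j}\varkappa_{j}$, the top of the part supported on $\{0\}\times L_{2}$ is $-\varkappa_{0}/2$, and by definition $K(\wp)$ is exactly the number of indices $k$ with $\tfrac12\lambda_{k}(\wp)\ge-\varkappa_{0}/2$, with $\lambda_{K(\wp)+1}(\wp)<-\varkappa_{0}$ strictly when $K(\wp)<n$. A short case analysis ($m\le K(\wp)$ versus $m>K(\wp)$) then yields
\[
	\sum_{j=1}^{m}\alpha_{j}(S(\wp))=\frac12\sum_{k=1}^{\min\{m,K(\wp)\}}\lambda_{k}(\wp)-\frac{\varkappa_{0}}{2}\,\max\{0,m-K(\wp)\}.
\]
Combining this with \eqref{EQ: LeonovContextTraceExponentsViaTraceNumbersSelfAdj} (equivalently \eqref{EQ: TraceMaximizationComputationFormula}) identifies $\alpha^{tr}_{\mathfrak{n}^{\wedge m}}(\wp)=\dot{V}(\wp)+\sum_{j=1}^{m}\alpha_{j}(S(\wp))$ with the expression under the supremum in \eqref{EQ: DelaySymmetrizationCocycleApplicationAlphaPlusDef}, so \eqref{EQ: ComputationMaxExponent} of Theorem~\ref{TH: LiouvilleSymmetrizationSummary} (applied fibrewise; here the hypotheses of Theorem~\ref{TH: ComputationSwedgeSpectralBound} hold because the relevant top eigenvalues of $S(\wp)$ lie at or above the top $-\varkappa_{0}/2$ of its essential spectrum) gives $\alpha^{+}_{\mathfrak{n}^{\wedge m}}(\Xi_{m})=\sup_{\wp\in\mathcal{P}}\alpha^{tr}_{\mathfrak{n}^{\wedge m}}(\wp)=\alpha^{+}(m)$.

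Finally, \eqref{EQ: UniformLyapExponentsFormulaDef} gives $\lambda_{1}(\Xi)+\dots+\lambda_{m}(\Xi)=\lambda_{1}(\Xi_{m})$; uniform equivalence of $\mathfrak{n}^{\wedge m}$ to the standard metric gives $\lambda_{1}(\Xi_{m})=\lambda_{\mathfrak{n}^{\wedge m}}(\Xi_{m})$; and the Growth Formula together with density of $\mathcal{N}_{\mathfrak{n}^{\wedge m}}(\wp)=\mathcal{D}(A)^{\wedge m}$ yields $\lambda_{\mathfrak{n}^{\wedge m}}(\Xi_{m})=\overline{\lambda}_{\mathfrak{n}^{\wedge m}}(\Xi_{m})\le\alpha^{+}_{\mathfrak{n}^{\wedge m}}(\Xi_{m})$, which closes the chain and proves \eqref{EQ: DelaySymThLyapExpEstimatesAlphaPlus}. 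I expect the main obstacle to be the bookkeeping in the spectral case analysis (ties with the infinite-multiplicity eigenvalues $-\varkappa_{j}/2$) together with a careful check that each $S(\wp)$ meets the hypotheses of Theorem~\ref{TH: ComputationSwedgeSpectralBound}, so that the passage to the supremum over $\mathcal{P}$ in \eqref{EQ: ComputationMaxExponent} is legitimate; no genuinely new estimate is needed beyond the machinery of Sections~\ref{SEC: LiouvilleTraceFormulaAndSymProc} and~\ref{SEC: DelayEqsSymmetrization}.
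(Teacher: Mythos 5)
Your proposal follows exactly the same route as the paper's own (terse) proof: set the metric $\mathfrak{n}_{\wp}(\cdot)=e^{V(\wp)/m}|\cdot|_{\rho}$ with $\rho$ as in \eqref{EQ: DelayWeightFunctionWithDisc}, invoke Theorems~\ref{TH: DelayOperatorSymmetrizationExistence} and~\ref{TH: TraceNumbersAdmitSymmetrizationDelayOperator} for the proper additive symmetrization, read off the eigenvalues of $S(\wp)$ from \eqref{EQ: DelayOperatorAddSymmExistence}, and apply Theorem~\ref{TH: LiouvilleSymmetrizationSummary} to identify $\alpha^{+}_{\mathfrak{n}^{\wedge m}}(\Xi_{m})=\alpha^{+}(m)$ and conclude by uniform equivalence of metrics. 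If anything, you are more careful than the paper at the two places it glosses over — explicitly invoking Theorem~\ref{TH: TraceNumbersAdmitSymmetrizationDelayOperator} to certify \nameref{DESC: SYM}, and verifying the hypotheses of Theorem~\ref{TH: ComputationSwedgeSpectralBound} directly (since the $-\varkappa_{j}/2$ have infinite multiplicity the spectrum of $S(\wp)$ is not discrete in the strict sense the paper's phrasing suggests, though it is a finite set and the hypotheses hold trivially).
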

\begin{proof}
	We apply Theorem \ref{TH: LiouvilleSymmetrizationSummary} with $S(\wp)$ being the additive symmetrization of $A(\wp)$. In terms of the theorem, we have $\alpha^{+}_{\mathfrak{n}^{\wedge m}}(\Xi_{m}) = \alpha^{+}(m)$, where the metric $\mathfrak{n}^{\wedge m}$ on $\mathbb{H}^{\wedge m}$ is given by \eqref{EQ: AdaptedMetricOnExteriorProductViaLyapFunc} over $\mathcal{P}$ when $\mathbb{H}$ is endowed with the metric $\langle \cdot, \cdot \rangle_{\rho}$ from \eqref{EQ: DelayEqsSymWeightedInnerProduct}, where $\langle \cdot, \cdot \rangle_{\mathbb{R}^{n}}$ is the standard Euclidean product. Since $V$ is bounded, $\mathfrak{n}^{\wedge m}$ is equivalent to the standard metric. Then \eqref{EQ: DelaySymThLyapExpEstimatesAlphaPlus} follows from $\alpha^{+}_{\mathfrak{n}^{\wedge m}}(\Xi_{m}) \geq \lambda_{1}(\Xi_{m}) = \lambda_{1}(\Xi) + \cdots + \lambda_{m}(\Xi)$.
\end{proof}

\begin{remark}
	\label{REM: DelayDimEstimatesScaling}
	It is important to note that the estimate in \eqref{EQ: DelaySymThLyapExpEstimatesAlphaPlus} depends (in a nontrivial way) on the spatio-temporal scaling $x(t) \mapsto x(\kappa t)$, $\phi(t,\theta) \mapsto \phi(\kappa t,\kappa \theta)$ in \eqref{EQ: DelayLinearCocAbsract} and \eqref{EQ: LinearizedEquationDelayInRn} with some $\kappa > 0$. Although uniform Lyapunov exponents depend on such a change (they scale by $\kappa$), both variants of the Lyapunov dimension do not, that is clear from their definitions in \eqref{EQ: LyapunovDimensionDefinition} and \eqref{EQ: Kaplan-YorkeFormula}. In applications, this suggests considering a family of estimates for the Lyapunov dimension, which depend on $\kappa > 0$. Then minimizing with respect to $\kappa$ will give a sharper estimate (see above \eqref{EQ: ExampleSuarezSchopfPertLyapDimEst} for an illustrative example).
\end{remark}

Below we are going to apply Theorem \ref{TH: TheoremLyapExponentsEstimateDelayEqs} to provide dimension estimates for cocycles over global attractors in the Mackey--Glass equations and the perturbed Suarez--Schopf delayed oscillator (see \cite{AnikushinRomanov2024EffEst} for applications to the Nicholson blowflies model). For such and other scalar equations, it is convenient to establish the following lemma.
\begin{lemma}
	\label{LEM: ScalarDelayDimEstLemma}
	Let $a, b \in \mathbb{R}$ be such that $a + b \geq 0$ and $b > 0$. For $\tau > 0$, consider the function of $\varkappa > 0$ defined by
	\begin{equation}
		d^{*}(\varkappa) \coloneq \frac{a + b e^{\varkappa \tau}}{\varkappa} + 1.
	\end{equation}
	Then $d^{*}(\varkappa)$ admits a global minimum given by $\tau b e^{p^{*}+1} + 1$ at $\varkappa = \tau^{-1}(p^{*}+1)$, where $p^{*}$ is the unique root $p \geq -1$ of
	\begin{equation}
		\label{EQ: DimEstPRootGeneral}
		pe^{p+1} = \frac{a}{b}.
	\end{equation}
\end{lemma}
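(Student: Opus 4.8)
The plan is to locate the critical points of $d^{*}(\cdot)$ by elementary calculus, reparametrise via $p = \tau\varkappa - 1$, and then deduce from the monotonicity of the elementary function $p \mapsto pe^{p+1}$ that there is exactly one critical point, which is the global minimiser; finally I evaluate $d^{*}$ there.

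First I would differentiate. Writing $d^{*}(\varkappa) = \varkappa^{-1}(a + be^{\varkappa\tau}) + 1$, a direct computation gives
\[
\frac{d}{d\varkappa} d^{*}(\varkappa) = \frac{be^{\varkappa\tau}(\tau\varkappa - 1) - a}{\varkappa^{2}}.
\]
Introducing $p \coloneq \tau\varkappa - 1$ (so that $\varkappa \in (0,\infty)$ corresponds bijectively and increasingly to $p \in (-1,\infty)$, and $e^{\varkappa\tau} = e^{p+1}$), the numerator becomes $b\,g(p) - a$ with $g(p) \coloneq pe^{p+1}$. Since $b > 0$, the sign of $\frac{d}{d\varkappa}d^{*}(\varkappa)$ coincides with the sign of $g(p) - a/b$.

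Next I would analyse $g$ on $[-1,\infty)$. One has $g'(p) = (1+p)e^{p+1} > 0$ for $p > -1$, so $g$ is strictly increasing there, with $g(-1) = -1$ and $g(p) \to +\infty$ as $p \to +\infty$. Because $a + b \geq 0$ and $b > 0$ force $a/b \geq -1 = g(-1)$, the equation $g(p) = a/b$, that is \eqref{EQ: DimEstPRootGeneral}, has a unique solution $p^{*} \geq -1$, with $p^{*} > -1$ precisely when $a + b > 0$. Transporting this back through the monotone change of variables, $\frac{d}{d\varkappa}d^{*}$ vanishes only at $\varkappa^{*} = \tau^{-1}(p^{*}+1)$, is negative on $(0,\varkappa^{*})$ and positive on $(\varkappa^{*},\infty)$; together with $d^{*}(\varkappa) \to +\infty$ as $\varkappa \to +\infty$ (since $be^{\varkappa\tau}/\varkappa \to +\infty$) this identifies $\varkappa^{*}$ as the global minimiser.

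Finally I would compute the minimal value. At $\varkappa^{*}$ the critical equation reads $a = b\,p^{*}e^{p^{*}+1}$, hence
\[
a + b e^{\varkappa^{*}\tau} = a + b e^{p^{*}+1} = b(p^{*}+1)e^{p^{*}+1} = b\,\tau\varkappa^{*}\, e^{p^{*}+1},
\]
and dividing by $\varkappa^{*}$ and adding $1$ gives $d^{*}(\varkappa^{*}) = \tau b e^{p^{*}+1} + 1$, as asserted. There is no serious obstacle here; the only point requiring care is the borderline case $a + b = 0$, where $p^{*} = -1$ and $\varkappa^{*} = 0$ lies on the boundary of the domain: then $\frac{d}{d\varkappa}d^{*} > 0$ throughout $(0,\infty)$, so $d^{*}$ is strictly increasing and the value $\tau b e^{p^{*}+1} + 1 = \tau b + 1$ is attained only as the limit $\lim_{\varkappa \to 0+} d^{*}(\varkappa)$; in that degenerate situation the statement is to be read with ``minimum'' understood as ``infimum'' (in the applications below one always has $a + b > 0$).
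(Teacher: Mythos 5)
Your proof is correct and follows essentially the same route as the paper's: differentiate $d^{*}$, reparametrise the first-order condition via $p = \tau\varkappa - 1$ to arrive at $pe^{p+1} = a/b$, and handle the borderline case $a + b = 0$ separately. You fill in details the paper leaves implicit (monotonicity of $g(p) = pe^{p+1}$, uniqueness of $p^{*}$, the sign argument showing the critical point is a global minimum, and the explicit evaluation of $d^{*}(\varkappa^{*})$), and you correctly observe that when $a + b = 0$ the stated ``minimum'' is really an infimum attained only in the limit $\varkappa \to 0+$, which is also what the paper says.
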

\begin{proof}
	Clearly, for $a + b > 0$, the minimum point $\varkappa$ is positive and satisfies $a + be^{\varkappa \tau} = b \varkappa \tau e^{\varkappa \tau}$ or, equivalently, $(\tau \varkappa - 1) e^{\varkappa \tau} = a/b$. Putting in the latter $\varkappa = \tau^{-1}(p+1)$ for $p \geq - 1$, we obtain \eqref{EQ: DimEstPRootGeneral}. Note that $p^{*}=-1$ corresponds to $\varkappa = 0$ and $a + b = 0$, in which case $d(\varkappa)$ is defined in the limit $\varkappa \to 0+$ by $b\tau + 1$, and this is indeed the global minimum.
\end{proof}
\subsection{Mackey--Glass equations}
\label{SEC: Mackey-GlassDimEst}
In this section, we consider a class of nonlinear scalar delay equations suggested by Mackey and Glass in \cite{MackeyGlass1977} as a model for certain physiological processes. It is described by
\begin{equation}
	\label{EQ: MackeyGlassExample}
	\dot{x}(t) =  - \gamma x(t) + \beta F(x(t-\tau)),
\end{equation}
where $\tau,\beta, \gamma>0$ are real parameters, and the nonlinearity $F$ is given by
\begin{equation}
	F(y) = \frac{y}{1+|y|^{k}} \qquad \text{for any} \quad y \in \mathbb{R},
\end{equation}
where $k>1$ is a real parameter.

Straightforward calculations show that the derivative $F'$ of $F$ satisfies
\begin{equation}
	-\frac{(k-1)^{2}}{4k} \leq F'(y) \leq 1 \qquad \text{for any} \quad y \in \mathbb{R}.
\end{equation}
In particular, since \eqref{EQ: MackeyGlassExample} is autonomous and belongs to the class described by \eqref{EQ: DelayRnNonlinearNonautonomous}, it generates a semiflow $\pi$ in $\mathbb{H} = \mathbb{R} \times L_{2}(-\tau,0;\mathbb{R})$ or $\mathbb{E} = C([-\tau,0];\mathbb{R})$ (see \eqref{EQ: NonlinCocycleDelayEqsDefinition} and below).

Let us establish the existence of invariant regions and the global attractor for $\pi$ in $\mathbb{E}$. Since $\pi^{\tau}(\mathbb{H}) \subset \mathbb{E}$ when $\mathbb{E}$ is embedded into $\mathbb{H}$ as in \eqref{EQ: DelayCocNonlinearSmoothingEst}, the same properties can be translated for the semiflow $\pi$ in $\mathbb{H}$.

Let $\mathcal{B}_{R}(0)$ be the closed ball in $\mathbb{E}$ of radius $R \geq 0$ centered at $0$. By $\mathring{\mathcal{B}}_{R}(0)$ we denote its interior.
\begin{lemma}
	\label{LEM: MackeyGlassAttractorBound}
	The set $\mathcal{B}_{R}(0)$ is positively invariant, i.e., $\pi^{t}(\mathcal{B}_{R}(0)) \subset \mathcal{B}_{R}(0)$ for any $t \geq 0$, provided that $R \geq R_{0}$, where
	\begin{equation}
		\label{EQ: MackeyGlassR0Bound}
		R_{0} = \begin{cases}
			0 \qquad &\text{if} \quad \beta \leq \gamma,\\
			\beta \gamma^{-1} k^{-1}(k - 1)^{\frac{k-1}{k}} \qquad &\text{if} \quad \beta > \gamma.
		\end{cases}
	\end{equation}
\end{lemma}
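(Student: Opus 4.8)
The plan is to establish the stronger pointwise estimate: for every $\phi_{0} \in \mathcal{B}_{R}(0)$ with $R \geq R_{0}$, the corresponding classical solution $x(\cdot)$ of \eqref{EQ: MackeyGlassExample} --- defined and $C^{1}$ on $[0,+\infty)$ by the well-posedness recalled in Section \ref{SEC: LinearizationDelayEquations} (recall that on $\mathbb{E}$ the semiflow $\pi$ is precisely the one generated by classical solutions) --- satisfies $|x(t)| \leq R$ for all $t \geq -\tau$. Since $\pi^{t}(\phi_{0}) = x_{t}$ and $\|x_{t}\|_{\mathbb{E}} = \sup_{\theta \in [-\tau,0]} |x(t+\theta)|$, this is exactly the claim $\pi^{t}(\phi_{0}) \in \mathcal{B}_{R}(0)$ for all $t \geq 0$.

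First I would record two elementary bounds on $F$. From $0 < (1+|y|^{k})^{-1} \leq 1$ we get $|F(y)| \leq |y|$ for all $y \in \mathbb{R}$. Differentiating $s \mapsto s/(1+s^{k})$ on $[0,+\infty)$ shows it attains its maximum at $s = (k-1)^{-1/k}$ with value $F_{\ast} := k^{-1}(k-1)^{(k-1)/k}$, so that $|F(y)| \leq \min\{|y|, F_{\ast}\}$ for every $y$. The precise form of $R_{0}$ in \eqref{EQ: MackeyGlassR0Bound} will enter only through the single inequality $\frac{\beta}{\gamma}\min\{R, F_{\ast}\} \leq R$, which I would check holds whenever $R \geq R_{0}$: if $\beta \leq \gamma$ it holds for every $R \geq 0 = R_{0}$ since $\beta/\gamma \leq 1$ and $\min\{R,F_{\ast}\} \leq R$; if $\beta > \gamma$ then $R_{0} = (\beta/\gamma) F_{\ast} > F_{\ast}$, so $R \geq R_{0}$ forces $\min\{R,F_{\ast}\} = F_{\ast}$ and then $(\beta/\gamma)F_{\ast} = R_{0} \leq R$ (equality case $R=R_0$ included).

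Then I would run an interval-by-interval continuation. Viewing $s \mapsto \beta F(x(s-\tau))$ as a forcing term for the linear part $\dot{y} = -\gamma y$, the variation of constants formula gives for $t \geq 0$
\[
	x(t) = e^{-\gamma t} x(0) + \beta \int_{0}^{t} e^{-\gamma(t-s)} F(x(s-\tau))\, ds .
\]
On $[0,\tau]$ the argument $x(s-\tau)$ ranges over the initial datum, hence $|x(s-\tau)| \leq \|\phi_{0}\|_{\mathbb{E}} \leq R$ and $|F(x(s-\tau))| \leq \min\{R, F_{\ast}\}$, so that
\[
	|x(t)| \leq e^{-\gamma t} R + \min\{R, F_{\ast}\}\, \frac{\beta}{\gamma}(1 - e^{-\gamma t}) \leq e^{-\gamma t} R + R(1 - e^{-\gamma t}) = R
\]
for $t \in [0,\tau]$, using the inequality from the previous step. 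Since also $|x(t)| \leq R$ on $[-\tau,0]$, the identical estimate applies verbatim on $[\tau,2\tau]$, and inductively on each $[k\tau,(k+1)\tau]$, $k \geq 0$; therefore $|x(t)| \leq R$ for all $t \geq -\tau$, as required.

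The computation is routine and I do not expect a genuine obstacle; the only points demanding attention are the sharp value $F_{\ast} = \sup_{\mathbb{R}}|F|$ and arranging the case distinction $\beta \leq \gamma$ / $\beta > \gamma$ so that one inequality covers both regimes together with the boundary case $R = R_{0}$ --- which is precisely where the shape of \eqref{EQ: MackeyGlassR0Bound} originates. An equivalent route is a \emph{Razumikhin}-type first-exit argument: at a hypothetical first time $t_{1}$ with $|x(t_{1})| = R$, say $x(t_{1}) = R > 0$, one has $|x(t_{1}-\tau)| \leq R$ and hence $\dot{x}(t_{1}) = -\gamma R + \beta F(x(t_{1}-\tau)) \leq -\gamma R + \beta \min\{R,F_{\ast}\} \leq 0$, which precludes exit; I prefer the Duhamel formulation above since it avoids discussing one-sided derivatives and the borderline case $\dot{x}(t_{1}) = 0$.
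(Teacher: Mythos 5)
Your proof is correct, and it takes a genuinely different route from the paper's. The paper runs exactly the Razumikhin-type first-exit argument you sketch and set aside at the end: it supposes the trajectory of $\phi_{0}\in\mathring{\mathcal{B}}_{R}(0)$ first touches the sphere at some $t_{0}$, evaluates $\dot{x}(t_{0})$ from the equation, and derives $\dot{x}(t_{0})<0$ to obtain a contradiction; because this gives \emph{strict} decrease only when $R>R_{0}$, the paper works first with the open ball and $R>R_{0}$ and then recovers the closed ball $\mathcal{B}_{R_{0}}(0)$ by taking closures and intersections. Your Duhamel/step-by-continuation argument instead integrates $\dot{x}+\gamma x=\beta F(x(\cdot-\tau))$ over $[k\tau,(k+1)\tau]$ and uses the elementary bound $|F(y)|\leq\min\{|y|,F_{*}\}$ together with the single inequality $(\beta/\gamma)\min\{R,F_{*}\}\leq R$ (which you correctly verify for all $R\geq R_{0}$ in both parameter regimes, including the boundary $R=R_{0}$). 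What each approach buys: the paper's first-exit argument is shorter and more geometric but must maneuver around the borderline equality case via a closure/limit argument; yours is slightly longer but directly handles the closed ball and the critical radius $R=R_{0}$ in one pass, avoids any discussion of one-sided derivatives at a first-exit time, and makes the origin of the formula for $R_{0}$ transparent as the point where $(\beta/\gamma)\min\{R,F_{*}\}=R$ becomes tight. Both are entirely sound.
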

\begin{proof}
	Let $\phi_{0} \in \mathring{\mathcal{B}}_{R}(0)$ for some $R>0$, and suppose that the trajectory $\pi^{t}(\phi_{0})$ leaves $\mathring{\mathcal{B}}_{R}(0)$ for some $t > 0$. Then for the corresponding solution $x(t)=x(t;\phi_{0})$ to \eqref{EQ: MackeyGlassExample}, there exists $t_{0} > 0$ such that $|x(t_{0})| = R$ and $|x(t)| < R$ for any $t \in [-\tau,t_{0})$. Since $-x(t)=x(t;-\phi_{0})$ is also a solution to \eqref{EQ: MackeyGlassExample}, we may assume that $x(t_{0}) = R$. Then
	\begin{equation}
		\label{EQ: MackeyGlassInvariantRegionsLemmaEq}
		\dot{x}(t_{0}) = -\gamma R + \beta \frac{x(t_{0}-\tau)}{1+|x(t_{0}-\tau)|^{k}}.
	\end{equation}
    First, we consider the rough estimate
    \begin{equation}
    	\dot{x}(t_{0}) < -\gamma R + \beta R = (\beta - \gamma) R.
    \end{equation}
    For $\beta \leq \gamma$, this gives $\dot{x}(t_{0}) < 0$ which contradicts our assumptions for any $R > 0$.
    
    Since the maximum of the function $y/(1+y^{k})$ on $[0,\infty)$ is given by  $k^{-1}(k - 1)^{\frac{k-1}{k}}$, from \eqref{EQ: MackeyGlassInvariantRegionsLemmaEq} we obtain
    \begin{equation}
    	\dot{x}(t_{0}) \leq -\gamma R + \beta k^{-1}(k - 1)^{\frac{k-1}{k}}.
    \end{equation}
    Thus, $\dot{x}(t_{0}) < 0$ for $R > \beta \gamma^{-1} k^{-1}(k - 1)^{\frac{k-1}{k}}$ that also leads to a contradiction.
    
    Consequently, $\mathring{\mathcal{B}}_{R}(0)$ is positively invariant for any $R > R_{0}$. Then $\mathcal{B}_{R}(0)$ is positively invariant as the closure of $\mathring{\mathcal{B}}_{R}(0)$, and $\mathcal{B}_{R_{0}}(0)$ is positively invariant as the intersection of $\mathcal{B}_{R}(0)$ over $R > R_{0}$.
\end{proof}

\begin{lemma}
	\label{LEM: MackeyGlassAbsorbingBall}
	The set $\mathcal{B}_{R}(0)$ is $\mathcal{B}$-absorbing for any $R > R_{0}$, where $R_{0}$ is given by \eqref{EQ: MackeyGlassR0Bound}, i.e., for any bounded subset $\mathcal{B} \subset \mathbb{E}$, there exists $T=T(\mathcal{B}) \geq 0$ such that $\pi^{t}(\mathcal{B}) \subset \mathcal{B}_{R}(0)$ for every $t \geq T$.
\end{lemma}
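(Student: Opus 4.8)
The plan is to produce a single scalar majorant for the whole family of solutions emanating from $\mathcal{B}$ and then bound its $\limsup$ by a fluctuation argument. Fix a bounded $\mathcal{B}\subset\mathbb{E}$, put $\rho_{0}:=\sup_{\phi\in\mathcal{B}}\|\phi\|_{\mathbb{E}}<\infty$ and $M_{0}:=\max\{\rho_{0},R_{0}\}$. By Lemma~\ref{LEM: MackeyGlassAttractorBound} applied with radius $M_{0}\ge R_{0}$, every solution $x(\cdot)=x(\cdot;\phi)$ of \eqref{EQ: MackeyGlassExample} with $\phi\in\mathcal{B}$ exists for all $t\ge-\tau$ and satisfies $|x(t)|\le M_{0}$. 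Writing $|F(y)|=g(|y|)$ with $g(a)=a/(1+a^{k})$, recall that $g$ attains its maximum $F_{\max}=k^{-1}(k-1)^{(k-1)/k}$ at $a_{\max}=(k-1)^{-1/k}$, and that $R_{0}=\tfrac{\beta}{\gamma}F_{\max}$ in the case $\beta>\gamma$. Let $\widetilde g(a):=\sup_{0\le b\le a}g(b)$; then $\widetilde g$ is continuous, nondecreasing, $\widetilde g(a)\le F_{\max}$, and crucially $\widetilde g(a)<a$ for every $a>0$ (indeed $\widetilde g(a)=g(a)<a$ for $a\le a_{\max}$ and $\widetilde g(a)=F_{\max}<a_{\max}<a$ for $a>a_{\max}$).

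Since $x(\cdot)\in C^{1}([0,\infty))$ with $\dot x$ bounded, $t\mapsto|x(t)|$ is locally Lipschitz and, for a.e.\ $t\ge0$, $\tfrac{d}{dt}|x(t)|\le-\gamma|x(t)|+\beta g(|x(t-\tau)|)\le-\gamma|x(t)|+\beta\widetilde g(|x(t-\tau)|)$. Let $u(\cdot)$ solve the delay equation $\dot u=-\gamma u+\beta\widetilde g(u(t-\tau))$ with $u\equiv M_{0}$ on $[-\tau,0]$. Because $\widetilde g$ is nondecreasing, the comparison principle for delay equations (established inductively on the intervals $[j\tau,(j+1)\tau]$, $j\ge0$) gives $|x(t;\phi)|\le u(t)$ for all $t\ge0$ and all $\phi\in\mathcal{B}$, so it suffices to bound $\bar U:=\limsup_{t\to\infty}u(t)$. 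By the fluctuation method: if $u$ is eventually monotone then $u(t)\to\bar U$ and, since $u$ is bounded, $\dot u(t)\to0$, forcing $\gamma\bar U=\beta\widetilde g(\bar U)$; otherwise there are local maxima $s_{m}\to\infty$ with $u(s_{m})\to\bar U$, and $\dot u(s_{m})=0$ together with $u(s_{m}-\tau)\le\sup_{s\ge s_{m}-\tau}u(s)\to\bar U$ and the continuity and monotonicity of $\widetilde g$ give $\gamma\bar U\le\beta\widetilde g(\bar U)$. Hence $\gamma\bar U\le\beta\widetilde g(\bar U)\le\beta F_{\max}$ in all cases. If $\beta>\gamma$ this yields $\bar U\le\tfrac{\beta}{\gamma}F_{\max}=R_{0}$; if $\beta\le\gamma$ it yields $\bar U\le\widetilde g(\bar U)$, which forces $\bar U=0=R_{0}$. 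Thus $\limsup_{t\to\infty}|x(t;\phi)|\le\bar U\le R_{0}$ uniformly in $\phi\in\mathcal{B}$. Given $R>R_{0}$, choosing $T_{0}$ with $u(t)\le R$ for $t\ge T_{0}$ and setting $T:=T_{0}+\tau$, we obtain $\|x_{t}(\cdot;\phi)\|_{\mathbb{E}}=\sup_{s\in[t-\tau,t]}|x(s)|\le R$ for all $t\ge T$ and $\phi\in\mathcal{B}$, i.e.\ $\pi^{t}(\mathcal{B})\subset\mathcal{B}_{R}(0)$, which is the assertion.

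The routine parts (the invariance bound of Lemma~\ref{LEM: MackeyGlassAttractorBound} and the interval-by-interval comparison) I will not spell out; the one genuinely delicate point is the borderline case $\beta=\gamma$, where $R_{0}=0$ and the linearization of \eqref{EQ: MackeyGlassExample} at the origin sits exactly on the stability boundary, so a bare Gronwall estimate gives only $\limsup|x|\le\tfrac{\beta}{\gamma}F_{\max}>0$. Decay to $0$ there is recovered solely from the strict sublinearity $\widetilde g(a)<a$ through the fluctuation inequality $\gamma\bar U\le\beta\widetilde g(\bar U)$. For $\beta>\gamma$ a direct Gronwall estimate $|x(t)|\le e^{-\gamma(t-\tau)}M_{0}+R_{0}$ already suffices, and for $\beta<\gamma$ one may instead iterate the bound $M_{n+1}=\tfrac12(1+\tfrac{\beta}{\gamma})M_{n}$ over successive time windows of uniformly bounded length, avoiding the comparison equation altogether; but the comparison argument above treats all three regimes at once.
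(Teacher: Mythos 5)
Your proof is correct, but it goes by a genuinely different route than the paper's. The paper's argument is short and soft: it observes that $\pi^{\tau}$ is a compact map of $\mathbb{E}$, invokes the standard result (Proposition~1.7, Chapter~I in \cite{KuzReit2020}) that for such semiflows point dissipativity implies the existence of a bounded absorbing ball, and then establishes point dissipativity by taking, inside the compact $\omega$-limit set $\omega(\phi_{0})$, a point $\bar\phi$ at which $|\bar\phi(0)|$ is maximal, using the infinite past of $\omega$-limit points to guarantee a local maximum at time $0$, and repeating the derivative contradiction of Lemma~\ref{LEM: MackeyGlassAttractorBound}. You instead build a single scalar comparison delay equation $\dot u = -\gamma u + \beta\widetilde g(u(t-\tau))$ with the monotone envelope $\widetilde g$ replacing $|F'|$, dominate every solution emanating from $\mathcal{B}$ by the method of steps, and then drive $\limsup_{t\to\infty}u(t)$ below $R_{0}$ by a fluctuation argument. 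Both are sound; yours is longer but is fully quantitative, producing an explicit absorption time depending only on $\sup_{\phi\in\mathcal{B}}\|\phi\|_{\mathbb{E}}$, and it avoids compactness and $\omega$-limit-set machinery altogether. The borderline case $\beta=\gamma$, which you flag, is handled in the paper by exactly the same derivative-at-a-maximum trick (the inequality in Lemma~\ref{LEM: MackeyGlassAttractorBound} is strict even when $\beta=\gamma$), so it is not more delicate there; in your version it is the strict sublinearity $\widetilde g(a)<a$ that does this job, which is a nice parallel. Two small polish points: when you invoke local maxima $s_{m}\to\infty$ with $u(s_{m})\to\bar U$, the cleanest citation is Thieme's fluctuation lemma (any bounded $C^{1}$ function admits $s_{m}\to\infty$ with $u(s_{m})\to\limsup u$ and $\dot u(s_{m})\to 0$), which subsumes both branches of your case split without insisting on honest local maxima; and it is worth saying explicitly that $u$ is globally defined and bounded because $\dot u\le -\gamma u+\beta F_{\max}$ with $\gamma>0$, so that $\bar U$ is finite before the fluctuation step is applied.
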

\begin{proof}
	Since $\pi^{\tau}$ is a compact mapping of $\mathbb{E}$, it is sufficient to prove that $\mathcal{B}_{R}(0)$ is point-absorbing and then apply the standard compactness argument as in \cite[Proposition 1.7, Chapter I]{KuzReit2020}. 
	
	Any point $\phi_{0} \in \mathbb{E}$ lies in a sufficiently large closed ball centered at $0$, which is positively invariant due to Lemma \ref{LEM: MackeyGlassAttractorBound}. Consequently, the trajectory of $\phi_{0}$ has compact closure. Let us show that the $\omega$-limit set $\omega(\phi_{0})$ of $\phi_{0}$ lies in $\mathring{\mathcal{B}}_{R}(0)$. 
	
	Let $\bar{R}$ be the maximum of the supremum norm $\| \phi \|_{\infty}$ taken over all $\phi \in \omega(\phi_{0})$. Since any point from $\omega(\phi_{0})$ has an infinite past, there exists $\bar{\phi} \in \omega(\phi_{0})$ such that $|\bar{\phi}(0)| = \bar{R}$. As above, we may assume that $\bar{\phi}(0) = \bar{R}$. Then for the corresponding solution $x(t) = x(t;\bar{\phi})$ to \eqref{EQ: MackeyGlassExample}, we have
	\begin{equation}
		\dot{x}(0) = -\gamma \bar{R} + \beta \frac{x(-\tau)}{1+|x(-\tau)|^{k}}.
	\end{equation} 
    In the case $\bar{R} \geq R > R_{0}$, by the same argument as in the proof of Lemma \ref{LEM: MackeyGlassAttractorBound}, we obtain $\dot{x}(0) < 0$. Since $x(\cdot)$ has a local maximum at $0$, this leads to a contradiction.
\end{proof}

From Lemma \ref{LEM: MackeyGlassAbsorbingBall}, we immediately obtain the following.
\begin{corollary}
	\label{COR: MackeyGlassGlobalAttractor}
	There exists a global $\mathcal{B}$-attractor $\mathcal{A}$ for the semiflow $\pi$ generated by \eqref{EQ: MackeyGlassExample}. It is given by
	\begin{equation}
		\mathcal{A} = \bigcap_{t \geq 0} \pi^{t}(\mathcal{B}_{R}(0))
	\end{equation}
    for any $R \geq R_{0}$, where $R_{0}$ is defined by \eqref{EQ: MackeyGlassR0Bound}.  In particular, for $0 \leq \beta \leq \gamma$, the attractor coincides with the zero equilibrium, and for $\beta > \gamma \geq 0$, it is contained in the ball of radius $\beta \gamma^{-1} k^{-1}(k - 1)^{\frac{k-1}{k}}$.
\end{corollary}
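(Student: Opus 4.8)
The plan is to combine Lemmas \ref{LEM: MackeyGlassAttractorBound} and \ref{LEM: MackeyGlassAbsorbingBall} with the classical existence theorem for global attractors of eventually compact semiflows. Since $\pi^{\tau}$ is a compact mapping of $\mathbb{E}$ (as already noted in the proof of Lemma \ref{LEM: MackeyGlassAbsorbingBall}, via the smoothing estimate \eqref{EQ: DelayCocNonlinearSmoothingEst} and Arzel\`a--Ascoli) and, by Lemma \ref{LEM: MackeyGlassAbsorbingBall}, the ball $\mathcal{B}_{R}(0)$ with any $R > R_{0}$ is bounded and $B$-absorbing, the standard argument (see, e.g., Proposition 1.7, Chapter I in \cite{KuzReit2020}; or \cite{Hale1977, RobinsonBook2011, Chueshov2015}) yields a compact global attractor $\mathcal{A}$. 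Moreover, $\mathcal{B}_{R}(0)$ is positively invariant by Lemma \ref{LEM: MackeyGlassAttractorBound}, so $t \mapsto \pi^{t}(\mathcal{B}_{R}(0))$ is a nonincreasing family of sets (compact for $t \geq \tau$) whose intersection is exactly $\mathcal{A}$; this is precisely the content of the cited theorem once the absorbing set is chosen positively invariant.

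First I would record that $\mathcal{A}$ is contained in every bounded absorbing set, hence in $\bigcap_{R > R_{0}} \mathcal{B}_{R}(0) = \mathcal{B}_{R_{0}}(0)$, where $R_{0}$ is given by \eqref{EQ: MackeyGlassR0Bound}. To extend the representation $\mathcal{A} = \bigcap_{t \geq 0}\pi^{t}(\mathcal{B}_{R}(0))$ from $R > R_{0}$ to $R = R_{0}$, I would use that $\mathcal{B}_{R_{0}}(0)$ is itself positively invariant (Lemma \ref{LEM: MackeyGlassAttractorBound}) and contains $\mathcal{A}$: invariance of $\mathcal{A}$ gives $\mathcal{A} = \pi^{t}(\mathcal{A}) \subseteq \pi^{t}(\mathcal{B}_{R_{0}}(0)) \subseteq \pi^{t}(\mathcal{B}_{R}(0))$ for every $t \geq 0$ and every $R > R_{0}$, whence
\[
\mathcal{A} \subseteq \bigcap_{t \geq 0}\pi^{t}(\mathcal{B}_{R_{0}}(0)) \subseteq \bigcap_{t \geq 0}\pi^{t}(\mathcal{B}_{R}(0)) = \mathcal{A},
\]
so all inclusions are equalities. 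This settles the formula for $\mathcal{A}$ for all $R \geq R_{0}$.

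Finally I would read off the two special cases directly from the value of $R_{0}$ in \eqref{EQ: MackeyGlassR0Bound}. For $0 \leq \beta \leq \gamma$ we have $R_{0} = 0$, so $\mathcal{B}_{R_{0}}(0) = \{0\}$ (the zero function in $\mathbb{E}$), a positively invariant singleton; since $\mathcal{A} \subseteq \mathcal{B}_{R_{0}}(0)$, it follows that $\mathcal{A} = \{0\}$, the zero equilibrium. For $\beta > \gamma \geq 0$ the inclusion $\mathcal{A} \subseteq \mathcal{B}_{R_{0}}(0)$ with $R_{0} = \beta\gamma^{-1}k^{-1}(k - 1)^{\frac{k-1}{k}}$ is exactly the stated bound. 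Everything here is a bookkeeping consequence of the two preceding lemmas and the compactness of $\pi^{\tau}$, so there is no genuine obstacle; the only point needing mild care is the passage to the endpoint $R = R_{0}$ in the intersection formula, which is handled above through the positive invariance of $\mathcal{B}_{R_{0}}(0)$ together with $\mathcal{A} \subseteq \mathcal{B}_{R_{0}}(0)$.
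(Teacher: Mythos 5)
Your argument is correct and is exactly the standard route the paper itself relies on (the paper simply asserts the corollary as an immediate consequence of Lemma \ref{LEM: MackeyGlassAbsorbingBall}): eventual compactness of $\pi^{\tau}$ plus a bounded, positively invariant, $B$-absorbing ball gives a compact global attractor equal to the nested intersection $\bigcap_{t\geq 0}\pi^{t}(\mathcal{B}_{R}(0))$. Your careful treatment of the endpoint $R = R_{0}$, via $\mathcal{A}\subseteq\mathcal{B}_{R_{0}}(0)$ together with positive invariance from Lemma \ref{LEM: MackeyGlassAttractorBound}, is the one detail the paper glosses over, and you handle it correctly.
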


For $\beta < \gamma$, the unique zero equilibrium $\phi^{0}(\cdot) \equiv 0$ has a negative leading real eigenvalue. For $\beta=\gamma$, the leading eigenvalue becomes zero, and $\phi^{0}$ undergoes a pitchfork bifurcation, i.e., the leading eigenvalue becomes positive for $\beta > \gamma$, and a pair of symmetric equilibria $\phi^{+}$ and $\phi^{-}$ given by $\phi^{\pm} \equiv \pm (\beta \gamma^{-1} - 1)^{1/k}$ is born.

Linearization of \eqref{EQ: MackeyGlassExample} along a given solution $y_{0} \colon [-\tau,\infty) \to \mathbb{R}$ gives
\begin{equation}
	\label{EQ: MackeyGlassLinearization}
	\dot{x}(t) = -\gamma x(t) + \beta F'( y_{0}(t-\tau) )x(t-\tau) 
\end{equation}
This is the boundary part of the corresponding equation in the abstract form \eqref{EQ: LinearizedEquationDelayInRn}. In terms of Theorem \ref{TH: TheoremLyapExponentsEstimateDelayEqs}, for $\wp \in C([-\tau,0];\mathbb{R})$ we have (note that $J=0$)
\begin{equation}
	\label{EQ: MackeyGlassOperatorsL0andLtau}
	L_{0}(\wp) = -\gamma \quad \text{and} \quad L_{-\tau}(\wp) = \beta F'(\wp(-\tau)).
\end{equation}
Thus, for a given $\varkappa = \varkappa_{0} = \varkappa_{J} \geq 0$, the $(1 \times 1)$-matrix from \eqref{EQ: MatrixL0LtauDelaySymLyapExpsEstimate} has the eigenvalue $\lambda_{1}(\wp) = 1-2\gamma + \beta^{2} e^{\varkappa \tau} |F'(\wp(-\tau))|^{2}$. For $\beta > \gamma$, let $\Lambda=\Lambda(\gamma,\beta,k)$ be the maximum of $|F'(y)|$ over $|y| \leq R_{0} = \beta \gamma^{-1} k^{-1}(k - 1)^{\frac{k-1}{k}}$.

It is convenient to introduce the Lyapunov dimension $\operatorname{dim}_{\operatorname{L}}\mathcal{A}$ of $\mathcal{A}$ as the Lyapunov dimension $\operatorname{dim}_{\operatorname{L}}\Xi$ of the derivative cocycle $\Xi$ in $\mathbb{H}$ generated by \eqref{EQ: MackeyGlassLinearization} over the semiflow $\pi$ restricted to $\mathcal{A}$, as in \eqref{EQ: DelayLinearizationCocycleDef}. Note that restricting $\pi$ to any positively invariant ball $\mathcal{B}_{R}(0)$ yields the same value of $\operatorname{dim}_{\operatorname{L}}\Xi$ (see Lemma \ref{LEM: WeakConcetrationSubadditive} and \eqref{EQ: LyapDimTopErgCharacterization}), and the presence of such positively invariant subsets, which localize $\mathcal{A}$, gives the robustness of $\operatorname{dim}_{\operatorname{L}}\mathcal{A}$ in terms of Corollary \ref{COR: UpperSemiContUniformLyapExpAndLyapDim}.
\begin{theorem}
	\label{TH: MackeyGlassDimensionEstimate}
	Suppose that $\beta > \gamma \geq 0$. Then the Lyapunov dimension $\operatorname{dim}_{\operatorname{L}}\mathcal{A}$ of the global attractor $\mathcal{A}$ of the semiflow $\pi$ generated by \eqref{EQ: MackeyGlassExample} admits the estimate
	\begin{equation}
		\label{EQ: MackeyGlassLyapDimEstimate}
		\operatorname{dim}_{\operatorname{L}}\mathcal{A} \coloneq \operatorname{dim}_{\operatorname{L}}\Xi \leq (\beta \Lambda)^{2}e^{p^{*}+1} \cdot \tau + 1,
	\end{equation}
	where $p^{*}$ is the unique root $p \geq -1$ of 
	\begin{equation}
		\label{EQ: MackeyGlassLyapDimEstimateEqForP}
		p e^{p + 1} = \frac{1-2\gamma}{\beta^{2} \Lambda^{2}}. 
	\end{equation}
	Here $\Lambda$ can be taken as $\Lambda(\gamma,\beta,k)$ defined above or, more roughly, as $\max\left\{ 1, \frac{(k-1)^{2}}{4k}\right\}$.
\end{theorem}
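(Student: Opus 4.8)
The plan is to invoke Theorem~\ref{TH: TheoremLyapExponentsEstimateDelayEqs} with $\mathcal{P}=\mathcal{A}$ --- which is closed and invariant (hence positively invariant) by Corollary~\ref{COR: MackeyGlassGlobalAttractor} --- and the trivial Lyapunov function $V\equiv0$, so that $\dot{V}\equiv0$; afterwards one optimizes the resulting bound over the weight parameter $\varkappa$ using Lemma~\ref{LEM: ScalarDelayDimEstLemma}.

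First I would read off the data of Theorem~\ref{TH: TheoremLyapExponentsEstimateDelayEqs} for \eqref{EQ: MackeyGlassExample}. The linearized equation \eqref{EQ: MackeyGlassLinearization} puts $A(\wp)$ from \eqref{EQ: DelaySymmetrizationOperatorAqFormula} into the form \eqref{EQ: OperatorLSymmetrizationDelayEqsDef}--\eqref{EQ: OperatorLDelaySymMeasureRepresentation} with $I=0$, $J=0$, $n=1$ and scalar coefficients $L_{0}(\wp)=-\gamma$, $L_{-\tau}(\wp)=\beta F'(\wp(-\tau))$, as recorded in \eqref{EQ: MackeyGlassOperatorsL0andLtau}. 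Consequently, for a single weight $\varkappa=\varkappa_{0}>0$ the $(1\times1)$-matrix \eqref{EQ: MatrixL0LtauDelaySymLyapExpsEstimate} is the number $\lambda_{1}(\wp)=1-2\gamma+\beta^{2}e^{\varkappa\tau}|F'(\wp(-\tau))|^{2}$, and $K(\wp)\in\{0,1\}$ with $K(\wp)=1$ exactly when $\lambda_{1}(\wp)\geq-\varkappa$.

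Next I would pass to uniform bounds over $\mathcal{A}$. By Corollary~\ref{COR: MackeyGlassGlobalAttractor} we have $\mathcal{A}\subset\mathcal{B}_{R_{0}}(0)$, hence $|F'(\wp(-\tau))|\leq\Lambda$ for all relevant $\wp$, where either $\Lambda=\Lambda(\gamma,\beta,k)=\max_{|y|\leq R_{0}}|F'(y)|$, or, using only the global bound $-\frac{(k-1)^{2}}{4k}\leq F'(y)\leq1$, the cruder $\Lambda=\max\{1,\frac{(k-1)^{2}}{4k}\}$. Put $\lambda^{+}:=1-2\gamma+\beta^{2}\Lambda^{2}e^{\varkappa\tau}$, so $\lambda_{1}(\wp)\leq\lambda^{+}$; since $\Lambda\geq|F'(0)|=1$ and $e^{\varkappa\tau}\geq1$, one has $\lambda^{+}\geq1-2\gamma+\beta^{2}>1-2\gamma+\gamma^{2}=(1-\gamma)^{2}\geq0$, in particular $\lambda^{+}\geq-\varkappa$. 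Feeding these into \eqref{EQ: DelaySymmetrizationCocycleApplicationAlphaPlusDef} with $\dot{V}\equiv0$ and treating the two cases $K(\wp)=1$ and $K(\wp)=0$ separately --- in the second case the summand $-\frac{\varkappa}{2}m$ is dominated by $\frac{1}{2}\lambda^{+}-\frac{\varkappa}{2}(m-1)$ precisely because $\lambda^{+}\geq-\varkappa$ --- yields, via \eqref{EQ: DelaySymThLyapExpEstimatesAlphaPlus}, that $\lambda_{1}(\Xi)+\ldots+\lambda_{m}(\Xi)\leq\alpha^{+}(m)\leq\frac{1}{2}\lambda^{+}-\frac{\varkappa}{2}(m-1)=:\sigma^{+}(m)$ for every integer $m\geq1$. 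As $\sigma^{+}(d)=\frac{1}{2}(\lambda^{+}+\varkappa)-\frac{\varkappa}{2}d$ is affine, hence concave, and vanishes at $d^{*}(\varkappa)=1+\lambda^{+}/\varkappa=1+\frac{1-2\gamma+\beta^{2}\Lambda^{2}e^{\varkappa\tau}}{\varkappa}>1$, the observation in Remark~\ref{REM: LyapDimEstApplications} together with \eqref{EQ: KYLDboundsLD} gives $\dim_{\operatorname{L}}\Xi\leq\dim^{KY}_{\operatorname{L}}\Xi\leq d^{*}(\varkappa)$ for every $\varkappa>0$.

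It then remains to minimize $d^{*}(\varkappa)$ over $\varkappa>0$, which is exactly the function of Lemma~\ref{LEM: ScalarDelayDimEstLemma} with $a=1-2\gamma$ and $b=\beta^{2}\Lambda^{2}$; the hypotheses $b>0$ and $a+b=1-2\gamma+\beta^{2}\Lambda^{2}\geq(1-\gamma)^{2}\geq0$ were checked above. The lemma gives $\min_{\varkappa>0}d^{*}(\varkappa)=\tau b e^{p^{*}+1}+1=(\beta\Lambda)^{2}e^{p^{*}+1}\tau+1$, attained at $\varkappa=\tau^{-1}(p^{*}+1)$ with $p^{*}\geq-1$ the unique root of $pe^{p+1}=a/b=\frac{1-2\gamma}{\beta^{2}\Lambda^{2}}$, which is exactly \eqref{EQ: MackeyGlassLyapDimEstimate}--\eqref{EQ: MackeyGlassLyapDimEstimateEqForP}. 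I do not expect a genuine obstacle: the argument is a chain of earlier results, and the only points requiring care are the correct evaluation of $\alpha^{+}(m)$ in the scalar case where $K(\wp)$ jumps between $0$ and $1$, and verifying that the affine majorant $\sigma^{+}$ meets the concavity and zero-crossing hypotheses needed to convert the exponent sums into the Kaplan--Yorke bound.
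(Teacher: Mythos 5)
Your proposal is correct and follows essentially the same route as the paper: read off $L_{0}(\wp)=-\gamma$, $L_{-\tau}(\wp)=\beta F'(\wp(-\tau))$, bound $|F'|\le\Lambda$ on the attractor via Corollary~\ref{COR: MackeyGlassGlobalAttractor}, deduce $\alpha^{+}(m)\le\frac{1}{2}(1-2\gamma+\beta^{2}\Lambda^{2}e^{\varkappa\tau}-\varkappa(m-1))$ from Theorem~\ref{TH: TheoremLyapExponentsEstimateDelayEqs} with $V\equiv0$, then apply Remark~\ref{REM: LyapDimEstApplications} and minimize over $\varkappa$ with Lemma~\ref{LEM: ScalarDelayDimEstLemma} using $a=1-2\gamma$, $b=\beta^{2}\Lambda^{2}$. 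Your explicit case split on $K(\wp)\in\{0,1\}$ together with the verification $\lambda^{+}\ge-\varkappa$ makes explicit a point the paper only implicitly uses in passing from \eqref{EQ: DelaySymmetrizationCocycleApplicationAlphaPlusDef} to \eqref{EQ: AlphaPlusMackeyGlassEstimate}, but the argument is otherwise the same.
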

\begin{proof}
	First, note that under $\beta > \gamma \geq 0$, we must always have $1-2\gamma + \beta^{2} \geq 0$ and, consequently, $1-2\gamma + \beta^{2} \Lambda^{2} \geq 0$ since $\Lambda \geq 1$. In the context of the theorem, from \eqref{EQ: MackeyGlassOperatorsL0andLtau}, it is clear that the value $\alpha^{+}(m)$ given by \eqref{EQ: DelaySymmetrizationCocycleApplicationAlphaPlusDef} for $\mathcal{P} = \mathcal{A}$, $V \equiv 0$, $\varkappa_{0} = \varkappa > 0$, and any integer $m \geq 1$ satisfies
	\begin{equation}
		\label{EQ: AlphaPlusMackeyGlassEstimate}
		\alpha^{+}(m) \leq \frac{1}{2}\left(1-2\gamma + \beta^{2} \Lambda^{2} e^{\varkappa \tau} - \varkappa (m-1) \right) \eqcolon \sigma(m),
	\end{equation}
	owing to Corollary \ref{COR: MackeyGlassGlobalAttractor}. Note that the formula for $\sigma(m)$ defines a decreasing and concave function of the real variable $m \geq 1$. By Remark \ref{REM: LyapDimEstApplications}, if a real number $d^{*} \geq 1$ satisfies $\sigma(d^{*}) = 0$, then we obtain the estimate $\operatorname{dim}_{\operatorname{L}}\mathcal{A} \leq d^{*}$. Since such $d^{*}$ is given by
	\begin{equation}
		d^{*} = \varkappa^{-1} ( 1 - 2 \gamma  + \beta^{2} \Lambda^{2} e^{\varkappa \tau} ) + 1,
	\end{equation}
	it is reasonable to minimize its value over $\varkappa > 0$. Here Lemma \ref{LEM: ScalarDelayDimEstLemma} with $a\coloneq1-2\gamma$ and $b\coloneq\beta^{2}\Lambda^{2}$ gives the desired. 
\end{proof}

According to Remark \ref{REM: DelayDimEstimatesScaling}, we can sharpen \eqref{EQ: MackeyGlassLyapDimEstimate} as follows.
\begin{corollary}
	\label{COR: MackeyGlassSharperDimEst}
	In terms of Theorem \ref{TH: MackeyGlassDimensionEstimate}, we have the sharper estimate
	\begin{equation}
		\label{EQ: MackeyGlassDimEstSharper}
		\operatorname{dim}_{\operatorname{L}}\Xi \leq  (\beta \Lambda)^{2} \inf_{\kappa > 0}  \left(\kappa e^{p^{*}(\kappa)+1}\right) \cdot \tau + 1,
	\end{equation}
	where $p^{*}(\kappa)$ is the unique root $p \geq -1$ of
	\begin{equation}
		pe^{p+1} = \frac{1-2\kappa \gamma}{(\kappa \beta \Lambda)^{2}},
	\end{equation}
	and $\Lambda$ can be taken as in \eqref{EQ: MackeyGlassLyapDimEstimateEqForP}.
\end{corollary}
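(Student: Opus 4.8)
The plan is to deduce Corollary~\ref{COR: MackeyGlassSharperDimEst} from Theorem~\ref{TH: MackeyGlassDimensionEstimate} by exploiting the spatio-temporal rescaling discussed in Remark~\ref{REM: DelayDimEstimatesScaling}. Fix $\kappa>0$ and apply to \eqref{EQ: MackeyGlassExample} the change of variables $y(t)\coloneq x(\kappa t)$ (equivalently, on phase space, $\phi(t,\theta)\mapsto\phi(\kappa t,\kappa\theta)$, the transformation of Remark~\ref{REM: DelayDimEstimatesScaling}). A one-line computation gives
\begin{equation}
	\dot{y}(t) = -\kappa\gamma\, y(t) + \kappa\beta\, F\!\left(y(t-\tau/\kappa)\right),
\end{equation}
so $y(\cdot)$ solves a Mackey--Glass equation of exactly the form \eqref{EQ: MackeyGlassExample} with parameters $\widehat{\gamma}\coloneq\kappa\gamma$, $\widehat{\beta}\coloneq\kappa\beta$, $\widehat{\tau}\coloneq\tau/\kappa$ and the same nonlinearity $F$ (in particular the same exponent $k$). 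This change of variables is the admissible one at the level of the linearization cocycle $\Xi$, not merely of the nonlinear flow, and induces only a bounded isomorphism between the corresponding phase spaces $\mathbb{R}\times L_{2}(-\tau,0;\mathbb{R})$ and $\mathbb{R}\times L_{2}(-\tau/\kappa,0;\mathbb{R})$ intertwining the cocycles.

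The key observation is that the data entering Theorem~\ref{TH: MackeyGlassDimensionEstimate} transform transparently. The sign condition $\widehat{\beta}>\widehat{\gamma}\geq 0$ is equivalent to $\beta>\gamma\geq 0$, hence preserved; the radius $R_{0}$ in \eqref{EQ: MackeyGlassR0Bound} depends only on the scale-invariant quantities $\beta\gamma^{-1}$ and $k$, so $\widehat{R}_{0}=R_{0}$ and therefore $\widehat{\Lambda}=\Lambda$ (for either choice of $\Lambda$ in Theorem~\ref{TH: MackeyGlassDimensionEstimate}). Consequently Theorem~\ref{TH: MackeyGlassDimensionEstimate} applies to the rescaled equation and yields
\begin{equation}
	\operatorname{dim}_{\operatorname{L}}\widehat{\mathcal{A}} \leq (\widehat{\beta}\,\widehat{\Lambda})^{2}e^{\widehat{p}+1}\widehat{\tau} + 1 = (\kappa\beta\Lambda)^{2}e^{p^{*}(\kappa)+1}\frac{\tau}{\kappa} + 1 = (\beta\Lambda)^{2}\,\kappa e^{p^{*}(\kappa)+1}\,\tau + 1,
\end{equation}
where $p^{*}(\kappa)\geq -1$ is the unique root of $pe^{p+1} = (1-2\widehat{\gamma})\widehat{\beta}^{-2}\widehat{\Lambda}^{-2} = (1-2\kappa\gamma)(\kappa\beta\Lambda)^{-2}$; existence and uniqueness follow as in the proof of Theorem~\ref{TH: MackeyGlassDimensionEstimate} (and Lemma~\ref{LEM: ScalarDelayDimEstLemma}), since $p\mapsto pe^{p+1}$ increases strictly from $-1$ to $+\infty$ on $[-1,\infty)$ and the right-hand side exceeds $-1$ because the quadratic $\kappa\mapsto(\kappa\beta\Lambda)^{2}-2\kappa\gamma+1$ has negative discriminant when $\beta>\gamma\geq 0$.

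Finally, since the uniform Lyapunov exponents merely rescale by $\kappa$, the Lyapunov dimension is invariant under this change of variables (Remark~\ref{REM: DelayDimEstimatesScaling}, or directly from \eqref{EQ: Kaplan-YorkeFormula}), so $\operatorname{dim}_{\operatorname{L}}\widehat{\mathcal{A}}=\operatorname{dim}_{\operatorname{L}}\Xi$. As the displayed bound holds for every $\kappa>0$, taking the infimum over $\kappa$ gives \eqref{EQ: MackeyGlassDimEstSharper}; the special value $\kappa=1$ recovers \eqref{EQ: MackeyGlassLyapDimEstimate}, so the estimate is genuinely at least as sharp. I do not anticipate a real obstacle here: the only points demanding attention are checking that $R_{0}$, hence $\Lambda$, and the condition $\beta>\gamma$ are untouched by the rescaling, and confirming that the transformation is the one covered by Remark~\ref{REM: DelayDimEstimatesScaling} so that the invariance of $\operatorname{dim}_{\operatorname{L}}$ may be invoked.
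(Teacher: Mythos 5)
Your argument is correct and follows essentially the same route as the paper's own (very terse) proof: apply the spatio-temporal rescaling of Remark~\ref{REM: DelayDimEstimatesScaling}, observe that the parameter triple transforms as $(\gamma,\beta,\tau)\mapsto(\kappa\gamma,\kappa\beta,\tau/\kappa)$ while $\Lambda$ (and the positivity condition $\beta>\gamma\geq 0$) is unchanged, apply Theorem~\ref{TH: MackeyGlassDimensionEstimate} in the new parameters, invoke scale-invariance of the Lyapunov dimension, and take the infimum over $\kappa$. You fill in two points the paper leaves implicit --- that $R_0$ and hence $\Lambda$ are scale-invariant because $R_0$ depends only on $\beta/\gamma$ and $k$, and that the hypothesis $a+b\geq 0$ of Lemma~\ref{LEM: ScalarDelayDimEstLemma} (equivalently, the existence of the root $p^*(\kappa)\geq -1$) holds for every $\kappa>0$ because the quadratic $(\kappa\beta\Lambda)^2-2\kappa\gamma+1$ has negative discriminant when $\beta\Lambda>\gamma$ --- which is worthwhile care, but does not constitute a different approach.
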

\begin{proof}
	Using the scaling $x(t) \mapsto x(\kappa t)$, $\phi(t,\theta) \mapsto \phi(\kappa t,\kappa \theta)$, we have the following transformations in the parameters of \eqref{EQ: MackeyGlassExample}: $\gamma \mapsto \kappa \gamma$, $\beta \mapsto \kappa \beta$, $\tau \mapsto \kappa^{-1} \tau$, and $\Lambda \to \Lambda$. Then, applying \eqref{EQ: MackeyGlassLyapDimEstimate} with these new parameters and minimizing over $\kappa > 0$ yield \eqref{EQ: MackeyGlassDimEstSharper}.
\end{proof}

It is numerically shown in \cite{MackeyGlass1977} that the model \eqref{EQ: MackeyGlassExample} may exhibit chaotic behavior and, in particular, the attractor $\mathcal{A}$ may have a rich structure. For example, such behavior can be observed for the parameters $\beta = 0.2$, $\gamma = 0.1$, $k=10$, and $\tau > 10$. For these parameters, in terms of \eqref{EQ: MackeyGlassLyapDimEstimate}, we have $\Lambda = \frac{(k-1)^{2}}{4k} = \frac{81}{40}$, $p^{*} \approx 0.8034$, and $(\beta \Lambda)^{2}e^{p^{*}+1} \approx 0.9957$. Thus, \eqref{EQ: MackeyGlassLyapDimEstimate} yields the estimate
\begin{equation}
	\label{EQ: MackeyGlassClassicalParamteresEstimate}
	\operatorname{dim}_{\operatorname{L}}\mathcal{A} \leq 0.9958 \tau  + 1.
\end{equation}
As to \eqref{EQ: MackeyGlassDimEstSharper}, here the infimum is achieved at $\kappa \approx 1.00431$, and the result does not significantly differ from \eqref{EQ: MackeyGlassClassicalParamteresEstimate}, which corresponds to $\kappa = 1$.

Note that the Lyapunov dimension $\operatorname{dim}_{\operatorname{L}}\mathcal{A}$ always bounds the fractal dimension\footnote{Due to the smoothing estimate \eqref{EQ: DelayCocNonlinearSmoothingEst} and the invariance of $\mathcal{A}$, its fractal and Hausdorff dimensions do not depend on the choice of the metric from $\mathbb{E}$ or $\mathbb{H}$.} $\operatorname{dim}_{\operatorname{F}}\mathcal{A}$ of $\mathcal{A}$, see \cite{ChepyzhovIlyin2004}. However, the value of $\operatorname{dim}_{\operatorname{L}}\mathcal{A}$ usually does not reflect any geometric dimensions, such as $\operatorname{dim}_{\operatorname{F}}\mathcal{A}$, but rather their possible expansions under bifurcations and perturbations of the system. Nonetheless, such estimates as \eqref{EQ: MackeyGlassLyapDimEstimate} seem to be asymptotically sharp as $\tau \to \infty$. We are going to discuss this.

For $\beta = 0.2$, $\gamma = 0.1$, $k=10$, and $\tau = 22$, numerically computing eigenvalues at the symmetric equilibria $\phi^{\pm}$ shows that
\begin{equation}
	\sum_{j=1}^{14} \operatorname{Re}\lambda_{j} \geq 0.03 \quad \text{and} \quad \sum_{j=1}^{15} \operatorname{Re}\lambda_{j} < 0,
\end{equation}
where $\lambda_{j}$ are given by the roots $p \in \mathbb{C}$ of
\begin{equation}
	\label{EQ: MackeyGlassLinearizationSymmetricStates}
	-\gamma + \beta F'((\beta\gamma^{-1} - 1)^{1/k}) e^{-\tau p} - p = 0,
\end{equation}
which are arranged by decreasing their real parts. Thus, the local Lyapunov dimension at $\phi^{\pm}$ satisfies $\operatorname{dim}_{\operatorname{L}}\phi^{\pm} \in (14, 15)$. Moreover, $\operatorname{Re}\lambda_{j} > 0$ for $j = \{1,\ldots, 6\}$. Consequently, $\mathcal{A}$ contains the $6$-dimensional unstable manifolds of $\phi^{\pm}$. On the other hand, $\operatorname{dim}_{\operatorname{L}}\phi^{0} \in (3,4)$, and there is a one-dimensional unstable manifold at $\phi^{0}$. This indicates that the symmetric equilibria may have the prospect of generating geometric structures with a dimension proportional to $\tau$ as $\tau \to \infty$. 

We conjecture the following asymptotic behavior of the local Lyapunov dimension $\operatorname{dim}_{\operatorname{L}}\phi^{\pm}$ and the dimension $\operatorname{dim}\mathcal{W}^{u}(\phi^{\pm})$ of unstable manifolds at the symmetric equilibria $\phi^{\pm}$.
\begin{conjecture}
	\label{CONJ: MackeyGlassLocalDims}
	For any $\beta > \gamma > 0$, there are constants $C^{L}_{\gamma,\beta}>0$ and $C^{u}_{\gamma,\beta}>0$ such that
	\begin{equation}
		\operatorname{dim}_{\operatorname{L}}\phi^{\pm} \sim C^{L}_{\gamma, \beta} \cdot \tau \quad \text{and} \quad \operatorname{dim}\mathcal{W}^{u}(\phi^{\pm}) \sim C^{u}_{\gamma,\beta} \cdot \tau \qquad \text{as} \quad \tau \to \infty.
	\end{equation}
    In other words, the numbers $N_{L}=N_{L}(\gamma,\beta,\tau)$ and $N^{u}=N^{u}(\gamma,\beta,\tau)$ of roots $\{\lambda_{j} \}_{j \geq 1}$ of \eqref{EQ: MackeyGlassLinearizationSymmetricStates} (arranged by nonincreasing their reals parts) such that
    \begin{equation}
    	\begin{split}
    		\sum_{j=1}^{N_{L}}\operatorname{Re}\lambda_{j} \geq 0 \quad &\text{and} \quad \sum_{j=1}^{N_{L}+1}\operatorname{Re}\lambda_{j} < 0,\\
    		\operatorname{Re}\lambda_{N^{u}} > 0 \quad &\text{and} \quad \operatorname{Re}\lambda_{N^{u}+1} \leq 0
    	\end{split}
    \end{equation}
    satisfy $N_{L} \sim C^{L}_{\gamma,\beta} \cdot \tau$ and $N^{u} \sim C^{u}_{\gamma,\beta} \cdot \tau$ as $\tau \to \infty$.
\end{conjecture}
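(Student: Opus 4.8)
The statement is about the spectrum of the linearization at the equilibria $\phi^{\pm}$, so the plan is to reduce it to an asymptotic count of the characteristic roots and then carry out that count. Since $\phi^{\pm}$ is an equilibrium, the base reduces to a single point and the cocycle $\Xi$ over it is the $C_0$-semigroup $e^{tA(\phi^{\pm})}$ of the autonomous delay equation \eqref{EQ: MackeyGlassLinearization} with $y_0\equiv y^{\ast}\coloneq(\beta\gamma^{-1}-1)^{1/k}$. By the classical spectral theory of delay semigroups (eventual compactness of $\Xi^{t}(\phi^{\pm},\cdot)$ plus the spectral mapping theorem; see \cite{Hale1977}), the uniform Lyapunov exponents $\lambda_j(\Xi)$ are exactly the real parts, ordered by nonincreasing value and repeated with multiplicity, of the zeros of
\[
h(p)\coloneq p+\gamma-b\,e^{-\tau p},\qquad b\coloneq\beta F'(y^{\ast})=\frac{\gamma}{\beta}\bigl(\gamma-(k-1)(\beta-\gamma)\bigr);
\]
the dimension of $\mathcal{W}^{u}(\phi^{\pm})$ equals the number $N^{u}$ of zeros of $h$ with $\operatorname{Re}p>0$ (by the unstable manifold theorem for delay equations, under hyperbolicity, i.e.\ no zero of $h$ on the imaginary axis, which holds for all but a discrete set of $\tau$); and the Kaplan--Yorke formula \eqref{EQ: Kaplan-YorkeFormula} together with the sandwich \eqref{EQ: KYLDboundsLD} gives $\operatorname{dim}_{\operatorname{L}}\phi^{\pm}\in(N_L,N_L+1]$ with $N_L$ as in the conjecture. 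Hence it suffices to show $N^{u}\sim C^{u}_{\gamma,\beta}\tau$ and $N_L\sim C^{L}_{\gamma,\beta}\tau$ with both constants positive. At the outset I would restrict to the genuinely unstable regime $b^{2}>\gamma^{2}$ (equivalently $\beta>\tfrac{k}{k-2}\gamma$ when $k>2$); outside it $\phi^{\pm}$ is delay-independently stable and $N^{u}=N_L=0$, so the hypothesis of the conjecture should be read with this extra requirement (the quoted values $k=10$, $\beta=2\gamma$ satisfy it).

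The next step is to parametrise the roots of $h$ by the Lambert $W$-function: setting $q=\tau(p+\gamma)$ turns $h(p)=0$ into $q\,e^{q}=\tau b\,e^{\tau\gamma}$, so the roots are exactly $p_k=\tfrac1\tau W_k\!\bigl(\tau b\,e^{\tau\gamma}\bigr)-\gamma$, $k\in\mathbb{Z}$, with $p_{-k-1}=\overline{p_k}$. Using the standard asymptotics $W_k(z)=\ln z+2\pi ik-\ln(\ln z+2\pi ik)+o(1)$ (uniform for $|k|$ or $|z|$ large) and $\ln|z|=\tau\gamma+\ln(\tau|b|)$ one obtains
\[
\operatorname{Re}p_k=\frac1\tau\ln\frac{\tau|b|}{\sqrt{(\tau\gamma)^{2}+\pi^{2}(2k+1)^{2}}}+o(1/\tau),\qquad \operatorname{Im}p_k=\frac{\pi(2k+1)}{\tau}+o(1/\tau).
\]
For $N^{u}$ I would combine this with the elementary a priori bound obtained by taking moduli in $p+\gamma=b\,e^{-\tau p}$: every root with $\operatorname{Re}p\ge0$ lies in the fixed horizontal strip $|\operatorname{Im}p|\le\sqrt{b^{2}-\gamma^{2}}$ and, since $e^{-\tau\operatorname{Re}p}$ is exponentially small when $\operatorname{Re}p$ is bounded away from $0$, within $O(1/\tau)$ of the imaginary axis, with consecutive imaginary parts spaced $\sim 2\pi/\tau$. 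Either an argument-principle (Rouch\'{e}) estimate for $h$ on this strip, comparing $h$ with its dominant part along vertical lines as in the classical localisation of zeros of exponential polynomials, or a direct count of the Hopf crossings $\{\tau:\ h(i\omega)=0\}$, then yields $N^{u}=\tfrac{\tau}{\pi}\sqrt{b^{2}-\gamma^{2}}+O(1)$, so $C^{u}_{\gamma,\beta}=\tfrac1\pi\sqrt{b^{2}-\gamma^{2}}$.

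For $N_L$ one needs a uniform version of the expansion of $\operatorname{Re}p_k$ across the whole range $1\le k\le O(\tau)$ together with a summation. Ordering roots by real part amounts to ordering by $|2k+1|$, so the $N=2M$ largest real parts are $\operatorname{Re}p_0,\dots,\operatorname{Re}p_{M-1}$ together with their conjugates, and replacing the sum by an integral via $s=2\pi k/\tau$ gives
\[
\sum_{j=1}^{N}\operatorname{Re}\lambda_j=2\sum_{k=0}^{M-1}\operatorname{Re}p_k=\frac{1}{2\pi}\int_{0}^{\pi N/\tau}\ln\frac{b^{2}}{\gamma^{2}+s^{2}}\,ds+o(1).
\]
The inner integral is positive for small upper limit and vanishes (besides at $0$) at a unique $s^{\ast}>\sqrt{b^{2}-\gamma^{2}}$ determined by $s^{\ast}\ln\frac{b^{2}}{\gamma^{2}+(s^{\ast})^{2}}+2s^{\ast}=2\gamma\arctan(s^{\ast}/\gamma)$; hence $N_L=\tfrac{\tau}{\pi}s^{\ast}+o(\tau)$ and $C^{L}_{\gamma,\beta}=s^{\ast}/\pi$, which degenerates to the clean value $\tfrac{e}{\pi}|b|$ as $\gamma/|b|\to0$. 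Combined with $\operatorname{dim}_{\operatorname{L}}\phi^{\pm}\in(N_L,N_L+1]$ this gives $\operatorname{dim}_{\operatorname{L}}\phi^{\pm}\sim C^{L}_{\gamma,\beta}\tau$ as well, finishing the plan. The hard part is the uniformity needed for $N_L$: the approximations $\pi(2k+1)\approx 2\pi k$ and the asymptotics of $W_k$ must be controlled simultaneously for $k$ running from $O(1)$, through the transitional regime $|2k+1|\approx\tfrac{\tau}{\pi}\sqrt{b^{2}-\gamma^{2}}$ (where the $\gamma$-term in $\sqrt{\gamma^{2}+s^{2}}$ cannot be dropped), up to $k=O(\tau)$, and one must show that the sum-to-integral error over $O(\tau)$ terms is $o(\tau)$ precisely in the range where the running sum changes sign — otherwise the location of that sign change, and hence $C^{L}_{\gamma,\beta}$, is not pinned down to leading order. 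A minor secondary point is the hyperbolicity assumption making $\mathcal{W}^{u}(\phi^{\pm})$ well defined, which one may enforce by restricting to the full-measure set of $\tau$ with no root of $h$ on the imaginary axis; everything downstream — identifying $N^{u}$ and $N_L$ with $\operatorname{dim}\mathcal{W}^{u}(\phi^{\pm})$ and $\operatorname{dim}_{\operatorname{L}}\phi^{\pm}$ and reading off the constants — is routine within the framework of Sections \ref{SEC: VariationalDescriptionOfLargUnifLyapExp}--\ref{SEC: LiouvilleTraceFormulaAndSymProc}.
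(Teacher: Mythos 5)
This statement is labeled a \emph{Conjecture} in the paper: no proof is given there, only numerical evidence ($C^{L}_{\gamma,\beta}\approx 0.2936$, $C^{u}_{\gamma,\beta}\approx 0.1232$ for $\gamma=0.1$, $\beta=0.2$, $k=10$). So there is nothing in the paper to compare your argument against, and it should be judged on its own terms.

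Your plan is credible and, notably, your closed-form predictions match the paper's numerics: with $b=\beta F'(y^{\ast})=\frac{\gamma}{\beta}\bigl(\gamma-(k-1)(\beta-\gamma)\bigr)=-0.4$, your formula $C^{u}_{\gamma,\beta}=\frac{1}{\pi}\sqrt{b^{2}-\gamma^{2}}$ gives $\approx 0.1233$, and the root $s^{\ast}$ of your integral condition $s\ln\frac{b^{2}}{\gamma^{2}+s^{2}}+2s=2\gamma\arctan(s/\gamma)$ is $\approx 0.923$, so $C^{L}_{\gamma,\beta}=s^{\ast}/\pi\approx 0.294$, both in excellent agreement. You also rightly observe that the conjecture as written is too broad: for $|b|\le\gamma$ one has $N^{u}\equiv 0$ (delay-independent stability), so the conjecture can only hold with $C^{u}_{\gamma,\beta}>0$ under the extra hypothesis $b^{2}>\gamma^{2}$, which is satisfied in the paper's worked example but not for all $\beta>\gamma>0$.

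That said, this is a roadmap rather than a proof, and you flag the decisive gap yourself: to pin down $C^{L}_{\gamma,\beta}$ you need the Lambert-$W$ asymptotics for $\operatorname{Re}p_{k}$ to hold \emph{uniformly} across $1\le k\le O(\tau)$ and a sum-to-integral estimate with error $o(\tau)$ precisely in the transitional range where the partial sums $\sum_{j\le N}\operatorname{Re}\lambda_{j}$ change sign; without that, the location of the sign change (hence $C^{L}$) is not fixed to leading order. A second, smaller issue is that the Rouch\'{e}/Hopf-crossing count for $N^{u}$ gives $N^{u}=\frac{\tau}{\pi}\sqrt{b^{2}-\gamma^{2}}+O(1)$ only off a discrete set of $\tau$ where a root sits on the imaginary axis (you mention restricting to hyperbolic $\tau$, but then the asymptotic must be shown stable under passing through those exceptional values). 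Finally, identifying $\operatorname{dim}_{\operatorname{L}}\phi^{\pm}$ with the Kaplan--Yorke count at the equilibrium via \eqref{EQ: Kaplan-YorkeFormula} and \eqref{EQ: KYLDboundsLD} is fine, but it presupposes $\lambda_{N_L+1}>-\infty$; this holds here since the spectrum is an infinite discrete set, but should be said. In short: the approach is sound and the constants you extract appear to be the right ones, but the uniformity estimates required to convert the heuristic sum-to-integral step into a proof are genuinely nontrivial and are not supplied.
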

For $\gamma = 0.1$ and $\beta = 0.2$, numerical experiments conducted for $\tau \in [10,5000]$ suggest that $C^{L}_{\gamma,\beta} \approx 0.2936$ and $C^{u}_{\gamma,\beta} \approx 0.1232$. This indicates that the estimate \eqref{EQ: MackeyGlassLyapDimEstimate} is asymptotically sharp in the sense of any dimensions (Hausdorff, fractal or Lyapunov) of $\mathcal{A}$.

In \cite{Farmer1982ChaoticAtt}, Farmer numerically computed the fractal and Kaplan--Yorke dimensions of the observable part of the global attractor $\mathcal{A}$ for $\beta = 0.2$, $\gamma = 0.1$, $k=10$, and certain values of $\tau \in [17,30]$. His results provide values in the interval $[2,3.6]$. Combining this with the previous paragraphs indicates that the observable part is significantly smaller than the entire $\mathcal{A}$. This is not a surprising phenomenon.
\subsection{Periodically forced Suarez--Schopf delayed oscillator}
\label{SEC: PertSuarezSchopfDimEstimate}
In this section, we consider the delayed oscillator proposed by Suarez and Schopf in \cite{Suarez1988} as a model for the El Ni\~{n}o--Southern Oscillation. Here we study the model under the influence of periodic forcing, as considered in our paper \cite{AnikushinRom2023SS}. This model is given by
\begin{equation}
	\label{EQ: SuarezSchopfPerturbedSin}
	\dot{x}(t) = x(t) - \alpha x(t-\tau) - x^{3}(t) + A\sin(t),
\end{equation}
where $\tau, A > 0$, and $\alpha \in (0,1)$ are real parameters. 

However, it will be convenient to study the following general class of scalar nonautonomous equations over a semiflow $(\mathcal{Q},\vartheta)$ on a compact metric space $\mathcal{Q}$, which is described over $q \in \mathcal{Q}$ by
\begin{equation}
	\label{EQ: SuarezSchopfPerturbedGeneral}
	\dot{x}(t) = \gamma x(t) - \alpha x(t-\tau) - \beta F(\vartheta^{t}(q),x(t)) + \widetilde{W}(\vartheta^{t}(q)),
\end{equation}
where $\alpha, \beta, \gamma$, and $\tau$ are positive real parameters; $\widetilde{W} \colon \mathcal{Q} \to \mathbb{R}$ is continuous; $F(q,y)$ is $C^{1}$-differentiable in $y$ and satisfies
\begin{equation}
	\label{EQ: GeneralSuarezSchopfNonlinearityDissipativity}
	\frac{F(q,y)}{y} \to +\infty \qquad \text{as} \quad |y| \to \infty \quad \text{ uniformly in} \ q \in \mathcal{Q},
\end{equation}
and the derivative $F'$ of $F$ in $y$ satisfies
\begin{equation}
	\label{EQ: GeneralSuarezSchopfNonlinearityDerivative}
	F'(q,y) \geq 0 \qquad \text{for all} \quad q \in \mathcal{Q} \quad \text{and} \quad y \in \mathbb{R}.
\end{equation} 

Note that \eqref{EQ: SuarezSchopfPerturbedSin} corresponds to the case of $\mathcal{Q} \coloneq \mathbb{R}/2\pi \mathbb{Z}$, and $\vartheta$ is the flow of translates on $\mathcal{Q}$, i.e., $\vartheta^{t}(q) \coloneq q + t$ for all $q \in \mathcal{Q}$ and $t \in \mathbb{R}$.

Similarly to \cite[Lemma 4.1]{Anikushin2022Semigroups} or Lemma \ref{LEM: MackeyGlassAttractorBound}, we have the following.
\begin{lemma}
	\label{LEM: SuarezSchopfPerturbedInvariantSets}
	Let $R>0$ and $\varepsilon>0$ be such that
	\begin{equation}
		\sup_{q \in \mathcal{Q}} \frac{F(q,y)}{y} \geq \frac{\alpha + \gamma  + \varepsilon}{\beta} \qquad \text{for all} \quad |y| \geq R \quad \text{and} \quad \varepsilon R > \sup_{q \in \mathcal{Q}}|\widetilde{W}(q)|.
	\end{equation}
	Then any classical solution to \eqref{EQ: SuarezSchopfPerturbedGeneral} starting in the ball $\mathcal{B}_{R}(0)$ remains in the ball in the future.
\end{lemma}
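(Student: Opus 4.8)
The plan is to run the standard invariant-region (barrier) argument for the scalar equation \eqref{EQ: SuarezSchopfPerturbedGeneral}, exactly in the spirit of the proof of Lemma \ref{LEM: MackeyGlassAttractorBound}, the only new feature being that there is no symmetry here, so both boundaries $x=R$ and $x=-R$ must be treated. Fix $q\in\mathcal{Q}$ and an initial datum $\phi_{0}$ in the \emph{open} ball $\mathring{\mathcal{B}}_{R}(0)\subset\mathbb{E}$, and let $x(\cdot)\colon[-\tau,T_{\max})\to\mathbb{R}$ be the corresponding classical solution, so that $x$ is $C^{1}$ on $[0,T_{\max})$ and $x_{0}=\phi_{0}$. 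Since $\|x_{t}\|_{\infty}=\sup_{s\in[t-\tau,t]}|x(s)|$, it suffices to prove $|x(s)|<R$ for all $s\in[-\tau,T_{\max})$. Assuming the contrary, continuity and $|x(0)|<R$ give a first time $t_{0}\in(0,T_{\max})$ with $|x(t_{0})|=R$ and $|x(s)|<R$ for all $s\in[-\tau,t_{0})$; in particular $|x(t_{0}-\tau)|<R$.

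Next I would treat the case $x(t_{0})=R$. Then $x$ has a left local maximum at $t_{0}$, hence $\dot{x}(t_{0})\ge 0$. Evaluating \eqref{EQ: SuarezSchopfPerturbedGeneral} at $t=t_{0}$, using $-\alpha x(t_{0}-\tau)<\alpha R$, $\widetilde{W}(\vartheta^{t_{0}}(q))\le\sup_{q}|\widetilde{W}(q)|<\varepsilon R$, and the hypothesis in the form $\beta F(\vartheta^{t_{0}}(q),R)\ge(\alpha+\gamma+\varepsilon)R$ (this is what $\sup_{q}F(q,y)/y\ge(\alpha+\gamma+\varepsilon)/\beta$ at $y=R$ yields; in the applications below $F$ does not depend on $q$, so the supremum is attained at every base point), one obtains
\[
\dot{x}(t_{0})<\gamma R+\alpha R-(\alpha+\gamma+\varepsilon)R+\varepsilon R=0,
\]
a contradiction. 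The case $x(t_{0})=-R$ is entirely symmetric: $x$ has a left local minimum, so $\dot{x}(t_{0})\le 0$, while the same bounds together with $\beta F(\vartheta^{t_{0}}(q),-R)\le-(\alpha+\gamma+\varepsilon)R$ force $\dot{x}(t_{0})>0$. Hence no such $t_{0}$ exists, so $|x(s)|<R$ on all of $[-\tau,T_{\max})$; the resulting a priori bound precludes blow-up, so the standard continuation principle gives $T_{\max}=\infty$. Thus $\mathring{\mathcal{B}}_{R}(0)$ is positively invariant.

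Finally I would pass from the open to the closed ball. The hypotheses on $R$ persist verbatim for every $R'\ge R$: the constraint $|y|\ge R'$ is only more restrictive and $\varepsilon R'>\varepsilon R>\sup_{q}|\widetilde{W}(q)|$. Hence $\mathring{\mathcal{B}}_{R'}(0)$ is positively invariant for all $R'\ge R$, and since $\mathcal{B}_{R}(0)=\bigcap_{R'>R}\mathring{\mathcal{B}}_{R'}(0)$ and an intersection of positively invariant sets is positively invariant, $\mathcal{B}_{R}(0)$ itself is positively invariant, which is the assertion.

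The only genuinely delicate point is the first-exit-time step: one needs the solution to be $C^{1}$ so that the one-sided extremum inequality $\dot x(t_{0})\gtreqless 0$ is legitimate, and one needs that at the exit time the delayed argument $t_{0}-\tau$ still lies in the region where $|x|<R$ (so that the delayed term is strictly dominated by $\alpha R$); both are automatic once one works with classical solutions starting in the open ball. Everything else is a routine comparison computation, and the strict slack contributed by $\alpha R$ and by $\varepsilon R>\sup_{q}|\widetilde W(q)|$ is precisely what makes the contradiction close without having to analyze tangential exits.
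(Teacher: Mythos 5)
Your proof is correct and follows precisely the invariant-region argument that the paper indicates by analogy with Lemma~\ref{LEM: MackeyGlassAttractorBound} (first-exit time, strict one-sided-derivative contradiction on both boundaries since there is no symmetry, then pass from open to closed ball). One remark: the hypothesis as printed has $\sup_{q}$, which does not literally give the pointwise bound $\beta F(\vartheta^{t_{0}}(q),\pm R)\gtrless\pm(\alpha+\gamma+\varepsilon)R$ that the exit-time computation requires; this is a slip in the statement (it should hold for every $q$, as is consistent with the uniform divergence in \eqref{EQ: GeneralSuarezSchopfNonlinearityDissipativity}), and you correctly flag it.
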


Thanks to \eqref{EQ: GeneralSuarezSchopfNonlinearityDissipativity}, the statement of Lemma \ref{LEM: SuarezSchopfPerturbedInvariantSets} is satisfied for any sufficiently large $R$. Consequently (see \cite{HaleLunel1993}), \eqref{EQ: SuarezSchopfPerturbedGeneral} generates a cocycle $\psi$ in $\mathbb{E} = C([-\tau,0];\mathbb{R})$ over $(\mathcal{Q},\vartheta)$ such that $\psi^{t}(q,\phi_{0}) = x_{t}$, where $x \colon [-\tau,+\infty) \to \mathbb{R}$ is the classical solution to \eqref{EQ: SuarezSchopfPerturbedGeneral} satisfying $x(\theta) = \phi_{0}(\theta)$ for $\theta \in [-\tau,0]$.

Let $\pi$ be the skew-product semiflow on $\mathcal{Q} \times \mathbb{E}$ associated with $\psi$, i.e., $\pi^{t}(q,\phi) \coloneq (\vartheta^{t}(q), \psi^{t}(q,\phi))$ for all $\phi \in \mathbb{E}$, $q \in \mathcal{Q}$, and $t \geq 0$. Similarly to Corollary \ref{COR: MackeyGlassGlobalAttractor}, we obtain the following complement to Lemma \ref{LEM: SuarezSchopfPerturbedInvariantSets}.
\begin{lemma}
	\label{LEM: SuarezSchopfPertSkewProductSemiflow}
	The skew-product semiflow $\pi$ admits a global $\mathcal{B}$-attractor $\mathcal{A} \subset \mathcal{Q} \times \mathbb{E}$. Its fiber $\mathcal{A}_{q} \coloneq \mathcal{A} \cap ( \{q\} \times \mathbb{E} )$ over $q \in \mathcal{Q}$ lies in the ball $\mathcal{B}_{R}(0)$ for any $R > 0$ satisfying the conditions of Lemma \ref{LEM: SuarezSchopfPerturbedInvariantSets}.
\end{lemma}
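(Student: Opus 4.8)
The plan is to establish Lemma \ref{LEM: SuarezSchopfPertSkewProductSemiflow} in two stages, exactly mirroring the route taken for the Mackey-Glass equation in Corollary \ref{COR: MackeyGlassGlobalAttractor}: first produce a bounded absorbing set for $\pi$ in $\mathcal{Q} \times \mathbb{E}$, then invoke the standard theory of global attractors for asymptotically compact semiflows. The first stage is already essentially handed to us by Lemma \ref{LEM: SuarezSchopfPerturbedInvariantSets}: choosing $R$ large (which is possible by \eqref{EQ: GeneralSuarezSchopfNonlinearityDissipativity}, since the superlinear growth of $F(q,y)/y$ uniform in $q$ forces $\sup_{q}F(q,y)/y \geq (\alpha+\gamma+\varepsilon)/\beta$ for $|y|\geq R$ once $R$ is big, and then also $\varepsilon R > \sup_{q}|\widetilde{W}(q)|$ holds), the closed ball $\mathcal{B}_{R}(0) \subset \mathbb{E}$ has $\psi^{t}(q,\mathcal{B}_{R}(0)\cap(\text{classical solutions})) \subset \mathcal{B}_{R}(0)$; by density of classical solutions and continuity this gives positive invariance of $\mathcal{Q}\times\mathcal{B}_{R}(0)$ under $\pi$.

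Next I would upgrade positive invariance to absorption. As in the proof of Lemma \ref{LEM: MackeyGlassAbsorbingBall}, the key structural fact is that $\psi^{\tau}(q,\cdot)$ is a compact map of $\mathbb{E}$ (this is the smoothing property $\psi^{\tau}(q,\mathbb{H})\subset\mathbb{E}$ together with \eqref{EQ: DelayCocNonlinearSmoothingEst}), so $\pi^{\tau}$ is compact on $\mathcal{Q}\times\mathbb{E}$ because $\mathcal{Q}$ is compact. Hence, by the standard compactness argument (e.g. Proposition 1.7, Chapter I in \cite{KuzReit2020}), it suffices to show point dissipativity, i.e. that every trajectory eventually enters $\mathcal{Q}\times\mathring{\mathcal{B}}_{R}(0)$. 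For a fixed initial point, pick a possibly larger $R' \geq R$ still satisfying the hypotheses of Lemma \ref{LEM: SuarezSchopfPerturbedInvariantSets} so that $\mathcal{Q}\times\mathcal{B}_{R'}(0)$ is positively invariant and contains the point; then the trajectory has precompact closure, and I would analyze its $\omega$-limit set. Taking $\overline{\phi}$ in the $\omega$-limit set realizing the maximal sup-norm value $\overline{R}$ (using that every point in an $\omega$-limit set has a full backward orbit, exactly as in the Mackey-Glass argument), and using the symmetry or sign structure of \eqref{EQ: SuarezSchopfPerturbedGeneral}, the differential equation at the point where $|x|$ attains its maximum forces $\dot{x}<0$ whenever $\overline{R}\geq R$, contradicting the local maximum. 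This pins $\omega$-limit sets inside $\mathring{\mathcal{B}}_{R}(0)$, giving point dissipativity and hence a global attractor $\mathcal{A}$.

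Finally, the fiber localization $\mathcal{A}_{q}\subset\mathcal{B}_{R}(0)$ for every admissible $R$ is immediate: $\mathcal{A}\subset\mathcal{Q}\times\mathcal{B}_{R}(0)$ because $\mathcal{A}$ is the minimal attractor and $\mathcal{Q}\times\mathcal{B}_{R}(0)$ is a positively invariant absorbing set, so $\mathcal{A} = \bigcap_{t\geq0}\pi^{t}(\mathcal{Q}\times\mathcal{B}_{R}(0)) \subset \mathcal{Q}\times\mathcal{B}_{R}(0)$; intersecting the fiber with $\{q\}\times\mathbb{E}$ gives the claim.

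The main obstacle I anticipate is not conceptual but bookkeeping: the nonautonomous term $\widetilde{W}(\vartheta^{t}(q))$ and the absence of the odd symmetry that \eqref{EQ: MackeyGlassExample} enjoyed mean the ``$\dot{x}<0$ at the maximum'' estimate must be carried out carefully on both sides ($x$ large positive and $x$ large negative), and one must be sure the thresholds in Lemma \ref{LEM: SuarezSchopfPerturbedInvariantSets} were chosen to dominate $\sup_q|\widetilde{W}(q)|$ with room to spare so that the strict inequality survives. Since \eqref{EQ: GeneralSuarezSchopfNonlinearityDissipativity} gives uniform-in-$q$ superlinearity, this is routine, but it is the one place where the perturbed model genuinely differs from the autonomous Mackey-Glass case and where a sign error would be easy to make.
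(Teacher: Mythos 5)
Your proposal is correct and takes exactly the route the paper intends: the paper states this lemma with only the pointer that it ``can be shown analogously to Corollary~\ref{COR: MackeyGlassGlobalAttractor},'' and what you wrote is a careful expansion of that analogy, with the right attention paid to the two genuinely new ingredients, namely the nonautonomous forcing $\widetilde{W}$ and the absence of the odd symmetry that made the Mackey--Glass case one-sided. The only cosmetic point worth noting is that the initial density-of-classical-solutions remark is unnecessary here, since Lemma~\ref{LEM: SuarezSchopfPerturbedInvariantSets} already speaks about all trajectories of $\psi$ starting in $\mathbb{E}$, which are classical by construction.
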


Given $\wp = (q,\phi) \in \mathcal{A}$, let $y_{0}(\cdot) \colon [-\tau,+\infty) \to \mathbb{R}$ be the corresponding classical solution to \eqref{EQ: SuarezSchopfPerturbedGeneral} with initial data $y_{0}(\theta) = \phi(\theta)$ for $\theta \in [-\tau,0]$. Then the boundary part of the linearized equations \eqref{EQ: LinearizedEquationDelayInRn} over $\wp$ is given by
\begin{equation}
	\label{EQ: SuarezSchopfPertLinearized}
	\dot{x}(t) = (\gamma - \beta F'(\vartheta^{t}(q),y_{0}(t)) ) x(t) - \alpha x(t-\tau).
\end{equation}
Thus, in terms of Theorem \ref{TH: TheoremLyapExponentsEstimateDelayEqs}, for $\wp = (q,\phi) \in \mathcal{P} =  \mathcal{A}$, we have (note that $J=0$)
\begin{equation}
	\label{EQ: SuarezSchopfOperatorsL0andLtau}
	L_{0}(\wp) = \gamma - \beta F'(q,\phi(0)) \quad \text{and} \quad L_{-\tau}(\wp) = -\alpha.
\end{equation}
Consequently, for a given $\varkappa = \varkappa_{0} = \varkappa_{J} \geq 0$, the $(1 \times 1)$-matrix from \eqref{EQ: MatrixL0LtauDelaySymLyapExpsEstimate} has the eigenvalue $\lambda_{1}(\wp) = 1 + 2 \gamma - 2\beta F'(q,\phi(0)) + \alpha^{2} e^{\varkappa \tau}$.

Let $\Xi$ be the derivative cocycle in $\mathbb{H} = \mathbb{R} \times L_{2}(-\tau,0;\mathbb{R})$ generated by \eqref{EQ: SuarezSchopfPertLinearized} over $(\mathcal{A},\pi)$, as in \eqref{EQ: DelayLinearizationCocycleDef}. We have the following theorem.
\begin{theorem}
	In the above context, for the Lyapunov dimension $\operatorname{dim}_{\operatorname{L}}\Xi$ of the cocycle $\Xi$ in $\mathbb{H}$ over $(\mathcal{A},\pi)$, we have the estimate
	\begin{equation}
		\label{EQ: SuarezSchopfPertLyapDimEstimate}
		\operatorname{dim}_{\operatorname{L}}\Xi \leq \alpha^{2} e^{p^{*} +1} \cdot \tau + 1,
	\end{equation}
	where $p^{*}$ is the unique root $p > 0$ of 
	\begin{equation}
		\label{EQ: SuarezSchopfPertLyapDimEstimateEqForP}
		p e^{p + 1} = \frac{1+2\gamma}{\alpha^{2}}.
	\end{equation}
\end{theorem}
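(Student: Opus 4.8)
The plan is to apply Theorem~\ref{TH: TheoremLyapExponentsEstimateDelayEqs} in exactly the same way as in the proof of Theorem~\ref{TH: MackeyGlassDimensionEstimate}, so that the argument is essentially a transcription of the Mackey--Glass computation with $\beta^{2}\Lambda^{2}$ replaced by $\alpha^{2}$ and $1-2\gamma$ replaced by $1+2\gamma$. First I would take $\mathcal{P}\coloneq\mathcal{A}$, which is a compact (hence closed) positively invariant base for $\pi$ by Lemma~\ref{LEM: SuarezSchopfPertSkewProductSemiflow}; the trivial Lyapunov function $V\equiv0$; and the single parameter $\varkappa\coloneq\varkappa_{0}=\varkappa_{J}>0$ (here $J=0$). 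By \eqref{EQ: SuarezSchopfOperatorsL0andLtau} we have $L_{0}(\wp)=\gamma-\beta F'(q,\phi(0))$ and $L_{-\tau}(\wp)=-\alpha$, so the scalar matrix \eqref{EQ: MatrixL0LtauDelaySymLyapExpsEstimate} reduces to the single eigenvalue $\lambda_{1}(\wp)=1+2\gamma-2\beta F'(q,\phi(0))+\alpha^{2}e^{\varkappa\tau}$. Invoking the sign condition \eqref{EQ: GeneralSuarezSchopfNonlinearityDerivative}, i.e. $F'\ge0$, this is bounded above by $1+2\gamma+\alpha^{2}e^{\varkappa\tau}$ uniformly over $\wp\in\mathcal{A}$, so that for every integer $m\ge1$ the quantity $\alpha^{+}(m)$ from \eqref{EQ: DelaySymmetrizationCocycleApplicationAlphaPlusDef} satisfies $\alpha^{+}(m)\le\tfrac12\bigl(1+2\gamma+\alpha^{2}e^{\varkappa\tau}-\varkappa(m-1)\bigr)\eqcolon\sigma(m)$, and hence $\lambda_{1}(\Xi)+\ldots+\lambda_{m}(\Xi)\le\sigma(m)$ by \eqref{EQ: DelaySymThLyapExpEstimatesAlphaPlus}.

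Next I would note that $m\mapsto\sigma(m)$ extends to an affine — in particular decreasing and concave — function of the real variable $m\ge1$, so by the observation recalled in Remark~\ref{REM: LyapDimEstApplications} any real $d^{*}\ge1$ with $\sigma(d^{*})=0$ gives $\operatorname{dim}_{\operatorname{L}}\Xi\le d^{*}$. Solving $\sigma(d^{*})=0$ yields $d^{*}=d^{*}(\varkappa)=\varkappa^{-1}\bigl(1+2\gamma+\alpha^{2}e^{\varkappa\tau}\bigr)+1$. Finally I would minimize $d^{*}(\varkappa)$ over $\varkappa>0$ by applying Lemma~\ref{LEM: ScalarDelayDimEstLemma} with $a\coloneq1+2\gamma$ and $b\coloneq\alpha^{2}$; since $b>0$ and $a+b>0$, the lemma gives the global minimum $\tau b\,e^{p^{*}+1}+1=\alpha^{2}e^{p^{*}+1}\tau+1$, attained at $\varkappa=\tau^{-1}(p^{*}+1)$, where $p^{*}$ is the unique root $p\ge-1$ of $pe^{p+1}=a/b=(1+2\gamma)/\alpha^{2}$. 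Because $a/b>0$ while $t\mapsto te^{t+1}$ is negative on $(-1,0)$ and vanishes at $0$, this root is in fact positive, which matches the statement; putting the pieces together gives \eqref{EQ: SuarezSchopfPertLyapDimEstimate}.

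I do not expect a genuine obstacle here: the analytic substance is entirely carried by Theorem~\ref{TH: TheoremLyapExponentsEstimateDelayEqs} (the unique additive symmetrization of the linearized delay operator, Theorems~\ref{TH: DelayOperatorSymmetrizationExistence} and \ref{TH: TraceNumbersAdmitSymmetrizationDelayOperator}, together with the Liouville trace formula of Theorem~\ref{TH: LiouvilleSymmetrizationSummary} in the weighted inner product \eqref{EQ: DelayEqsSymWeightedInnerProduct} with $\rho$ as in \eqref{EQ: DelayWeightFunctionWithDisc}) and by the elementary one-variable optimization of Lemma~\ref{LEM: ScalarDelayDimEstLemma}. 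The only points needing a brief verification are that the linearized operator $A(\wp)$ really has the structure demanded by Theorem~\ref{TH: TheoremLyapExponentsEstimateDelayEqs} with $I=0$, $J=0$ and the coefficients read off from \eqref{EQ: SuarezSchopfPertLinearized}; that $\mathcal{A}$ supplies a legitimate compact positively invariant base (Lemma~\ref{LEM: SuarezSchopfPertSkewProductSemiflow}); and that the affine majorant $\sigma$ falls within the scope of Remark~\ref{REM: LyapDimEstApplications} when passing from the integer-$m$ bounds to the fractional Lyapunov dimension estimate. None of these is difficult.
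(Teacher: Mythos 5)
Your proof is correct and follows exactly the same route as the paper: take $\mathcal{P}=\mathcal{A}$, $V\equiv 0$, $J=0$, bound $\lambda_1(\wp)\le 1+2\gamma+\alpha^2 e^{\varkappa\tau}$ via $F'\ge 0$, form the affine majorant $\sigma(m)$, solve $\sigma(d^*)=0$, and minimize over $\varkappa$ via Lemma~\ref{LEM: ScalarDelayDimEstLemma} with $a=1+2\gamma$, $b=\alpha^2$. The only (welcome) addition beyond the paper's text is your brief check that the root $p^*$ is genuinely positive.
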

\begin{proof}
	Consider the value $\alpha^{+}(m)$ from \eqref{EQ: DelaySymmetrizationCocycleApplicationAlphaPlusDef} for $\mathcal{P} = \mathcal{A}$, $V \equiv 0$, $\varkappa_{0} = \varkappa > 0$, and any integer $m \geq 1$. By \eqref{EQ: GeneralSuarezSchopfNonlinearityDerivative}, we have
	\begin{equation}
		\alpha^{+}(m) \leq \frac{1}{2}\left(1 + 2 \gamma  + \alpha^{2} e^{\varkappa \tau} - \varkappa (m-1) \right) \eqcolon \sigma(m).
	\end{equation}
    As in the proof of Theorem \ref{TH: MackeyGlassDimensionEstimate}, we consider
    $d^{*} \geq 1$ such that $\sigma(d^{*}) = 0$, i.e.,
    \begin{equation}
    	d^{*} = \varkappa^{-1}(1+2\gamma + \alpha^{2}e^{\varkappa \tau}) + 1.
    \end{equation}
    Minimizing $d^{*}$ over $\varkappa > 0 $ according to Lemma \ref{LEM: ScalarDelayDimEstLemma} with $a = 1+2\gamma$ and $b = \alpha^{2}$, we obtain the desired.
\end{proof}

Following Remark \ref{REM: DelayDimEstimatesScaling} and introducing the scaling $x(t) \mapsto x(\kappa t)$, $\phi(t,\theta) \mapsto \phi(\kappa t,\kappa \theta)$ into \eqref{EQ: SuarezSchopfPerturbedGeneral}, we may sharpen \eqref{EQ: SuarezSchopfPertLyapDimEstimate} analogously to Corollary \ref{COR: MackeyGlassSharperDimEst} as follows.
\begin{corollary}
	In terms of Theorem \ref{TH: MackeyGlassDimensionEstimate}, we have the sharper estimate
	\begin{equation}
		\label{EQ: SuarezSchopfPertLyapDimEstimateSharper}
		\operatorname{dim}_{\operatorname{L}}\Xi \leq \alpha^{2} \inf_{\kappa > 0}\left(\kappa e^{p^{*}(\kappa) +1}\right) \cdot \tau + 1,
	\end{equation}
	where $p^{*}(\kappa)$ is the unique root $p \geq -1$ of
	\begin{equation}
		pe^{p+1} = \frac{1+2\kappa \gamma}{(\kappa \alpha)^{2}}.
	\end{equation}
\end{corollary}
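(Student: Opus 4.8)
The plan is to follow the proof of Corollary \ref{COR: MackeyGlassSharperDimEst} verbatim, exploiting the scaling freedom recorded in Remark \ref{REM: DelayDimEstimatesScaling}. Fix $\kappa>0$ and set $y(s)\coloneq x(\kappa s)$ in \eqref{EQ: SuarezSchopfPerturbedGeneral}. Since $x(\kappa s-\tau)=x\bigl(\kappa(s-\kappa^{-1}\tau)\bigr)=y(s-\kappa^{-1}\tau)$, a direct computation shows that $y(\cdot)$ solves an equation of the same form \eqref{EQ: SuarezSchopfPerturbedGeneral}, now over the time-reparametrized base semiflow $s\mapsto\vartheta^{\kappa s}(q)$ and with the transformed data $\gamma\mapsto\kappa\gamma$, $\alpha\mapsto\kappa\alpha$, $\beta\mapsto\kappa\beta$, $\tau\mapsto\kappa^{-1}\tau$, $\widetilde{W}\mapsto\kappa\widetilde{W}$, while the nonlinearity $F$ (hence the monotonicity \eqref{EQ: GeneralSuarezSchopfNonlinearityDerivative} and the dissipativity \eqref{EQ: GeneralSuarezSchopfNonlinearityDissipativity}) is left unchanged. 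Thus Lemmas \ref{LEM: SuarezSchopfPerturbedInvariantSets} and \ref{LEM: SuarezSchopfPertSkewProductSemiflow}, and therefore the preceding theorem, apply to the rescaled system.

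Next I would observe that the spatio-temporal change does not affect the Lyapunov dimension. The map $(x,\phi)\mapsto(x,\phi(\kappa\,\cdot))$ is a bounded linear isomorphism between $\mathbb{R}\times L_{2}(-\tau,0;\mathbb{R})$ and $\mathbb{R}\times L_{2}(-\kappa^{-1}\tau,0;\mathbb{R})$ (up to the constant weight $\kappa$ on the $L_{2}$-component) that intertwines the two linearization cocycles up to the reparametrization $t\mapsto\kappa^{-1}t$; consequently all uniform Lyapunov exponents get multiplied by $\kappa^{-1}$, and since the Kaplan-Yorke formula \eqref{EQ: Kaplan-YorkeFormula} (equivalently the threshold $\overline{\omega}_{d}<1$ in \eqref{EQ: LyapunovDimensionDefinition}) is homogeneous of degree zero in the exponents, $\operatorname{dim}_{\operatorname{L}}\Xi$ is unchanged.

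Finally I would apply the estimate \eqref{EQ: SuarezSchopfPertLyapDimEstimate} to the rescaled system. With $\alpha$, $\gamma$, $\tau$ replaced by $\kappa\alpha$, $\kappa\gamma$, $\kappa^{-1}\tau$, Lemma \ref{LEM: ScalarDelayDimEstLemma} is applicable with $a=1+2\kappa\gamma\ge 0$ and $b=(\kappa\alpha)^{2}>0$, and it gives $\operatorname{dim}_{\operatorname{L}}\Xi\le(\kappa^{-1}\tau)(\kappa\alpha)^{2}e^{p^{*}(\kappa)+1}+1=\kappa\alpha^{2}e^{p^{*}(\kappa)+1}\tau+1$, where $p^{*}(\kappa)\ge-1$ is the unique root of $pe^{p+1}=(1+2\kappa\gamma)/(\kappa\alpha)^{2}$. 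As $\kappa>0$ is arbitrary, taking the infimum over $\kappa$ yields \eqref{EQ: SuarezSchopfPertLyapDimEstimateSharper}. There is no real obstacle beyond bookkeeping the parameter transformation; the only point requiring care is the invariance of $\operatorname{dim}_{\operatorname{L}}\Xi$ under the rescaling, which is exactly where the scale-invariance of the Kaplan-Yorke formula enters.
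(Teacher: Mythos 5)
Your proof is correct and follows exactly the route the paper takes: the spatio-temporal rescaling $x(t)\mapsto x(\kappa t)$, $\phi(t,\theta)\mapsto\phi(\kappa t,\kappa\theta)$ of Remark~\ref{REM: DelayDimEstimatesScaling}, the parameter transformation $\gamma\mapsto\kappa\gamma$, $\alpha\mapsto\kappa\alpha$, $\tau\mapsto\kappa^{-1}\tau$ (with $F$ unchanged), the scale-invariance of $\operatorname{dim}_{\operatorname{L}}\Xi$ via the homogeneity of the Kaplan--Yorke formula, and then applying \eqref{EQ: SuarezSchopfPertLyapDimEstimate} to the rescaled system and taking the infimum over $\kappa>0$. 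This is precisely how the paper proves it, by direct analogy with the proof of Corollary~\ref{COR: MackeyGlassSharperDimEst}.
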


An extension of the result from \cite{ChepyzhovIlyin2004} to the context of cocycles would imply that any fiber $\mathcal{A}_{q}$, where $q \in \mathcal{Q}$, admits the same bound as in \eqref{EQ: SuarezSchopfPertLyapDimEstimateSharper} for its fractal dimension.

In \cite{AnikushinRom2023SS}, there was numerically observed a chaotic behavior in the model \eqref{EQ: SuarezSchopfPerturbedSin} for the parameters $\tau = 1.596$, $\alpha = 0.75$, and $A=0.073$. For such parameters, in terms of \eqref{EQ: SuarezSchopfPertLyapDimEstimate} we have $p^{*} \approx 0.843807$ and $\operatorname{dim}_{\operatorname{L}}\Xi \leq 6.675$. Moreover, the infimum in \eqref{EQ: SuarezSchopfPertLyapDimEstimateSharper} is achieved at $\kappa \approx 0.346771$ with $\kappa e^{p^{*}(\kappa)+1} \approx 5.1267$, and it gives the improved estimate
\begin{equation}
	\label{EQ: ExampleSuarezSchopfPertLyapDimEst}
	\operatorname{dim}_{\operatorname{L}}\Xi \leq 5.603.
\end{equation}
However, the analytical-numerical evidence presented in \cite{AnikushinRom2023SS} shows that each fiber $\mathcal{A}_{q}$ is contained in a two-dimensional $C^{1}$-smooth inertial manifold. Here, the numerical part only assures that $\mathcal{A}_{q}$ lies in the ball of radius $1$, and the analytical part is developed in our adjacent works \cite{Anikushin2020FreqDelay, Anikushin2020Geom} (see also \cite{Anikushin2022Semigroups,AnikushinAADyn2021}).

It is also interesting to determine whether estimates such as \eqref{EQ: SuarezSchopfPertLyapDimEstimate} are asymptotically sharp as $\tau \to \infty$ in the considered class. For this, note that for $A = 0$, the system \eqref{EQ: SuarezSchopfPerturbedSin} is autonomous, and there exist three equilibria, namely, $\phi^{0} \equiv 0$ and $\phi^{\pm}\equiv \pm \sqrt{1+\alpha}$. For $\alpha \in (0,1)$, we always have a one-dimensional unstable manifold at $\phi^{0}$ for any $\tau > 0$, i.e., $\operatorname{dim}\mathcal{W}^{u}(\phi^{0}) = 1$. For the symmetric equilibria $\phi^{\pm}$, one may propose a statement similar to Conjecture \ref{CONJ: MackeyGlassLocalDims}. For $\alpha = 0.75$, numerical experiments suggest that $\operatorname{dim}_{\operatorname{L}}\phi^{\pm} \sim C^{L}_{\alpha} \cdot \tau$ with $C^{L}_{\alpha} \approx 0.961$, and $\operatorname{dim}\mathcal{W}^{u}(\phi^{\pm}) \sim C^{u}_{\alpha} \cdot \tau$ with $C^{u}_{\alpha} \approx 0.225$. Furthermore, $\operatorname{dim}_{\operatorname{L}}\phi^{0} \sim C^{L}_{\alpha} \cdot \tau$ with $C^{L}_{\alpha} \approx 1.645$. This indicates that \eqref{EQ: SuarezSchopfPertLyapDimEstimate} as an upper estimate for the Hausdorff, fractal, or Lyapunov dimensions is asymptotically sharp as $\tau \to \infty$ in the considered class.

\appendix
\section{Ergodic Variational Principle for subadditive families}
\label{APP: EVPNonCompact}
In this section, we deal with a dynamical system $(\mathcal{Q},\vartheta)$ on a complete metric space $\mathcal{Q}$ for which the mappings $\vartheta^{t}$ are defined for $t \in \mathbb{T}$ from a time set $\mathbb{T} \in \{ \mathbb{Z}_{+}, \mathbb{Z}, \mathbb{R}, \mathbb{R}_{+} \}$, see Section \ref{SEC: VariationalDescriptionOfLargUnifLyapExp}. Recall $\mathbb{T}_{+} \coloneq \mathbb{T} \cap [0,+\infty)$. Below, when we write $t \to +\infty$ or $t > 0$, it is always understood that $t \in \mathbb{T}_{+}$.

Our aim is to extend \cite[Theorem A.3]{Morris2013}, which we call the \textit{Ergodic Variational Principle} for subadditive families (for brevity, EVP). We relax the requirement of $\mathcal{Q}$ to be compact by the existence of a $2^{\mathcal{Q}}$-attractor $\mathcal{A}$ for $(\mathcal{Q},\vartheta)$, see \eqref{EQ: MinimalAttractorInQ}. This is the usual situation in applications to dissipative dynamical systems, where $\mathcal{A}$ is a global attractor, and $\mathcal{Q}$ is a bounded positively invariant region localizing $\mathcal{A}$. We also provide a proof for continuous-time systems, deducing it from the discrete-time case considered in \cite{Morris2013}.

Such a generalization has useful consequences for theory and applications. Based on this, we motivate and establish the following three principles for uniformly continuous linear cocycles $\Xi$ over $(\mathcal{Q},\vartheta)$.
\begin{description}
	\item[(\textit{Concentration Principle})] Uniform Lyapunov exponents and the Lyapunov dimension are concentrated on the attractor and characterized through ergodic measures, see Corollary \ref{COR: DVolumesUniformAsDVolumesOverMeasure};
	\item[(\textit{Computation Principle})] Uniform Lyapunov exponents can be approximated by computing infinitesimal growth exponents in adapted (possibly noncoercive) metrics, see Theorem \ref{TH: VariationalDescriptionUniformLyapunovExponents};
	\item[(\textit{Robustness Principle})] Consecutive sums of uniform Lyapunov exponents and the Lyapunov dimension depend upper semicontinuously on $\Xi$ under natural conditions, see Theorem \ref{TH: UpperSemiContLargLyapExp} and Corollary \ref{COR: UpperSemiContUniformLyapExpAndLyapDim}.
\end{description}
In fact, robustness is more elementary and does not require any kind of the EVP.

Let us first recall the Kingman subadditive ergodic theorem with a clarification for continuous-time systems. It is formulated for a measurable dynamical system\footnote{That is, \nameref{DESC: DS1} holds, and instead of \nameref{DESC: DS2}, we require the mapping $\mathbb{T} \times \mathcal{Q} \ni (t,q) \mapsto \vartheta^{t}(q) \in \mathcal{Q}$ to be measurable.} $\vartheta$ on a measurable space $\mathcal{Q}$. For a measure $\mu$ on $\mathcal{Q}$, a family $f = \{ f^{t} \}_{t \in \mathbb{T}_{+}}$ of measurable functions $f^{t}$ on $\mathcal{Q}$ with values in $[-\infty,+\infty)$ is called \textit{$\mu$-subadditive} over $(\mathcal{Q},\vartheta)$ if for $\mu$-almost all $q \in \mathcal{Q}$ it satisfies
\begin{equation}
	\label{EQ: SubbAddFamilyDefinition}
	f^{t+s}(q) \leq f^{t}(\vartheta^{s}(q)) + f^{s}(q) \qquad \text{for all} \quad t,s \in \mathbb{T}_{+}.
\end{equation}
Furthermore, if there exists a $\mu$-integrable function $M$ on $\mathcal{Q}$ such that (here $\theta,s \in \mathbb{T}_{+}$)
\begin{equation}
	\label{EQ: MuSummableBoundedness}
	\sup_{0 \leq \theta \leq s \leq 1} f^{s-\theta}(\vartheta^{\theta}(q)) \leq M(q) \qquad \text{for} \quad \text{$\mu$-almost all } q \in \mathcal{Q},
\end{equation}
we say that $f$ is \textit{$\mu$-summably bounded from above} (on finite times). For the case of discrete time, the just introduced condition simply means that $\max\{ f^{1}(q), 0 \}$ is integrable.

\begin{theorem}[Kingman's subadditive ergodic theorem]
	\label{TH: KingmanSETDiscreteTime}
	Suppose a measure $\mu$ is invariant with respect to a measurable dynamical system $(\mathcal{Q},\vartheta)$, and a $\mu$-subadditive over $(\mathcal{Q},\vartheta)$ family $f = \{ f^{t} \}_{t \in \mathbb{T}_{+}}$ is $\mu$-summably bounded from above. Then there exists the limit
	\begin{equation}
		\label{EQ: KingmanPointwiseLimit}
		\lim_{t \to +\infty} \frac{1}{t}f^{t}(q) \coloneq \bar{f}(q) \in [-\infty, +\infty)
	\end{equation}
	for $\mu$-almost all $q \in \mathcal{Q}$, and it defines an invariant function $\bar{f}$, i.e., for any $t \in \mathbb{T}_{+}$, we have $\bar{f}(\vartheta^{t}(q))=\bar{f}(q)$ for $\mu$-almost all $q \in \mathcal{Q}$. Furthermore,
	\begin{equation}
		\label{EQ: KingmanLimitTrans}
		\int_{\mathcal{Q}}\bar{f}d\mu = \inf_{t > 0} \frac{1}{t} \int_{\mathcal{Q}}f^{t}d\mu = \lim_{t \to +\infty}\frac{1}{t} \int_{\mathcal{Q}}f^{t}d\mu \in [-\infty, +\infty).
	\end{equation}
\end{theorem}
\begin{proof}
	This is due to Kingman \cite{Kingman1973}, except that we refined his condition for continuous time, where he requires the similar supremum of the modulus to be summable, see \cite[(1.4.7)]{Kingman1973}. Let us emphasize this and show that under \eqref{EQ: MuSummableBoundedness} we have (here $k \in \mathbb{Z}_{+}$)
	\begin{equation}
		\label{EQ: KingmanRefine0}
		\lim_{k \to \infty} \frac{1}{k}f^{k}(q) = \lim_{t \to +\infty}\frac{1}{t}f^{t}(q)
	\end{equation}
	for $\mu$-almost all $q \in \mathcal{Q}$. With \eqref{EQ: KingmanRefine0} established, the theorem for continuous time immediately follows from that of the discrete case. For the following, we set $T := \vartheta^{1}$.
	
	Supposing the left-hand limit exists, let us show the existence of the right-hand limit and their coincidence for $\mu$-almost all $q \in \mathcal{Q}$. Indeed, consider $t \geq 1$ and an integer $k=k(t)$ such that $k \leq t < k+1$. Let $s = k+1 - t \in [0,1]$. Then $f^{k+1}(q) = f^{t+s}(q) \leq f^{t}(q) + f^{s}(\vartheta^{t}(q))$. For $M$ from \eqref{EQ: MuSummableBoundedness}, we have $f^{s}(\vartheta^{t}(q)) \leq M(\vartheta^{k}(q))$, and, consequently,
	\begin{equation}
		\label{EQ: KingmanRefine1}
		\frac{1}{t}f^{t}(q) \geq \frac{k+1}{t}\cdot\frac{1}{k+1}f^{k+1}(q) - \frac{k}{t} \cdot \frac{1}{k} M(T^{k}(q)).
	\end{equation}
	
	On the other hand, let $s = t - k$ in the above context. Then $f^{t}(q) = f^{k + s}(q) \leq f^{k}(q) + f^{s}(\vartheta^{k}(q))$, and, consequently,
	\begin{equation}
		\label{EQ: KingmanRefine2}
		\frac{1}{t} f^{t}(q)  \leq \frac{k}{t} \cdot \frac{1}{k}f^{k}(q) + \frac{k}{t} \cdot \frac{1}{k} M(T^{k}(q)).
	\end{equation}
	Since $T$ preserves $\mu$ and $M$ is $\mu$-integrable, it is a standard fact that $M(T^{k}(q))/k \to 0$ as $k \to \infty$ for $\mu$-almost all $q$. By taking the limit inferior and limit superior as $t \to \infty$ in \eqref{EQ: KingmanRefine1} and \eqref{EQ: KingmanRefine2}, respectively, we obtain \eqref{EQ: KingmanRefine0}.
\end{proof}

Under the conditions of Theorem \ref{TH: KingmanSETDiscreteTime}, if $\mu$ is ergodic, then the limit $\bar{f}$ is constant $\mu$-almost everywhere. We set $\lambda(f;\mu)$ to be this constant value and call it the \textit{growth exponent} of $f$ with respect to $\mu$.

Now we proceed to continuous dynamical systems $(\mathcal{Q},\vartheta)$. A family $f = \{ f^{t} \}_{t \in \mathbb{T}_{+}}$ of functions $f^{t} \colon \mathcal{Q} \to [-\infty,+\infty)$ is called \textit{subadditive} over $(\mathcal{Q},\vartheta)$ if \eqref{EQ: SubbAddFamilyDefinition} is satisfied for any $q \in \mathcal{Q}$.  Moreover, we say that $f$ is \textit{bounded from above} (on finite times) if 
\begin{equation}
	\label{EQ: SubbAddBoundedness}
	\sup_{t \in [0,1] \cap \mathbb{T}_{+}}\sup_{q \in \mathcal{Q}} f^{t}(q) < +\infty.
\end{equation}
Clearly, the analogous supremum, as in \eqref{EQ: SubbAddBoundedness}, but taken over $t \in [0,T] \cap \mathbb{T}_{+}$ for any $T>0$, is finite, thanks to subadditivity.

We call a subadditive family $f$ \textit{proper} if it is bounded from above and any $f^{t}$ is an upper semicontinuous function. Clearly, such $f$ can be considered in the context of Theorem \ref{TH: KingmanSETDiscreteTime}, so the growth exponent $\lambda(f;\mu)$ of $f$ is well defined with respect to any ergodic Borel probability measure $\mu$.

For what follows, let us fix a proper subadditive family $f$ over $(\mathcal{Q},\vartheta)$.

We define the \textit{uniform growth exponent} $\lambda(f)$ of $f$ by the limit
\begin{equation}
	\label{EQ: UniformGrowthExpSubbF}
	\lambda(f) \coloneq \lim_{t \to +\infty} \frac{1}{t} \sup_{q \in \mathcal{Q}} f^{t}(q) = \inf_{t > 0 } \frac{1}{t} \sup_{q \in \mathcal{Q}} f^{t}(q),
\end{equation}
which is well defined by Lemma \ref{LEM: FeketeLemma}.

We say that a subset $\mathcal{A} \subset \mathcal{Q}$ is a \textit{$2^{\mathcal{Q}}$-attractor} for $(\mathcal{Q},\vartheta)$ if $\mathcal{A}$ is compact, $\mathcal{A}$ attracts $\mathcal{Q}$ in the sense
\begin{equation}
	\label{EQ: SubsetA0AttractsQ}
	\sup_{ q \in \mathcal{Q}}\operatorname{dist}(\vartheta^{t}(q),\mathcal{A}) \to 0 \qquad \text{as} \quad t \to +\infty,
\end{equation}
where $\operatorname{dist}$ denotes the distance in $\mathcal{Q}$ from a point to a subset, and $\mathcal{A}$ is the smallest (by inclusion) set satisfying these two properties.

If there exists a compact set that attracts $\mathcal{Q}$, then there exists a $2^{\mathcal{Q}}$-attractor $\mathcal{A}$, and it is given by
\begin{equation}
	\label{EQ: MinimalAttractorInQ}
	\mathcal{A} \coloneq \bigcap_{t \in \mathbb{T}_{+}} \vartheta^{t}(\mathcal{Q}).
\end{equation}
Note that $\mathcal{A}$ is necessarily invariant, i.e., $\vartheta^{t}(\mathcal{A}) = \mathcal{A}$ for any $t \in \mathbb{T}_{+}$.

We reduce the case of noncompact $\mathcal{Q}$ to a compact one using the following simple lemma.
\begin{lemma}[Weak Concentration Principle]
	\label{LEM: WeakConcetrationSubadditive}
	For $f = \{ f^{t} \}_{t \in \mathbb{T}_{+}}$ as above, suppose there exists a $2^{\mathcal{Q}}$-attractor $\mathcal{A}$ for $(\mathcal{Q},\vartheta)$. Then
	\begin{equation}
		\label{EQ: WeakConcetrationSubadditive}
		\lambda(f) = \lim_{t \to +\infty} \frac{1}{t} \sup_{q \in \mathcal{A}} f^{t}(q) = \inf_{t > 0 } \frac{1}{t} \sup_{q \in \mathcal{A}} f^{t}(q).
	\end{equation}
\end{lemma}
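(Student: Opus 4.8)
The plan is to reduce the statement to two inequalities between $\Lambda(f)$ and the quantity $\Lambda_{\mathcal A}(f)\coloneq\lim_{t\to+\infty}\tfrac1t\sup_{q\in\mathcal A}f^t(q)$. The first thing I would check is that the restricted family $\{f^t|_{\mathcal A}\}_{t\in\mathbb T_+}$ is again a proper subadditive family, now over $(\mathcal A,\vartheta)$: this uses the invariance $\vartheta^s(\mathcal A)=\mathcal A$ so that \eqref{EQ: SubbAddFamilyDefinition} is inherited, together with upper semicontinuity (preserved under restriction) and boundedness on finite times (inherited from $\mathcal Q$). Consequently $t\mapsto\sup_{q\in\mathcal A}f^t(q)$ is subadditive and bounded above on finite times, so by Fekete's lemma $\Lambda_{\mathcal A}(f)$ is well defined and equals $\inf_{t>0}\tfrac1t\sup_{q\in\mathcal A}f^t(q)$. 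The inequality $\Lambda_{\mathcal A}(f)\le\Lambda(f)$ is immediate since $\mathcal A\subset\mathcal Q$, so the lemma follows once we prove $\Lambda(f)\le\Lambda_{\mathcal A}(f)$.

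For this nontrivial direction I would fix an arbitrary real number $\ell>\Lambda_{\mathcal A}(f)$ and pick $T>0$ with $\sup_{q\in\mathcal A}f^T(q)<\ell T$. The role of upper semicontinuity appears here: the sublevel set $U\coloneq\{q\in\mathcal Q\ :\ f^T(q)<\ell T\}$ is open, and it contains the compact set $\mathcal A$, hence is an open neighbourhood of $\mathcal A$. Since $\mathcal A$ is a $2^{\mathcal Q}$-attractor, \eqref{EQ: SubsetA0AttractsQ} yields $t_0\ge 0$ with $\vartheta^{s}(\mathcal Q)\subset U$ for all $s\ge t_0$; thus $f^T(\vartheta^{s}(q))<\ell T$ for every $q\in\mathcal Q$ and every $s\ge t_0$ (taking $T,t_0\in\mathbb T_+$ when $\mathbb T$ is discrete).

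Next I would run the telescoping subadditivity estimate. For $t$ large write $t=t'_0+kT$ with $k$ a nonnegative integer and $t'_0\in[t_0,t_0+T)$, and iterate \eqref{EQ: SubbAddFamilyDefinition} to obtain
\[
f^t(q)\le f^{t'_0}(q)+\sum_{j=0}^{k-1}f^T\!\big(\vartheta^{t'_0+jT}(q)\big).
\]
Because $t'_0+jT\ge t_0$, each summand is $<\ell T$ by the previous paragraph, while $f^{t'_0}(q)\le C\coloneq\sup_{s\in[0,t_0+T]\cap\mathbb T_+}\sup_{q\in\mathcal Q}f^s(q)<\infty$ (finiteness of $C$ follows from \eqref{EQ: SubbAddBoundedness} and subadditivity, as already noted in the text). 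Hence $f^t(q)\le C+k\ell T$ uniformly in $q\in\mathcal Q$. Dividing by $t$, using $kT\le t$ when $\ell\ge 0$ and $kT\ge t-t_0-T$ when $\ell<0$ so that $kT/t\to 1$ from the side giving the correct one-sided bound, and letting $t\to+\infty$ yields $\Lambda(f)\le\ell$. Since $\ell>\Lambda_{\mathcal A}(f)$ was arbitrary this gives $\Lambda(f)\le\Lambda_{\mathcal A}(f)$; the same reasoning with $\ell\to-\infty$ covers the degenerate case $\Lambda_{\mathcal A}(f)=-\infty$.

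The step I expect to be the main obstacle is precisely this combination in the last two paragraphs: one must simultaneously use the \emph{uniform} attraction of $\mathcal A$ (to push all orbits into $U$ after one fixed time $t_0$), upper semicontinuity together with compactness (to upgrade the bound on $\mathcal A$ to a bound on the neighbourhood $U$), and a careful treatment of the leftover initial window of length $<t_0+T$ alongside the sign of $\ell$; individually each ingredient is routine, but the asymptotic-rate estimate must be assembled so that the division by $t$ is legitimate in all sign regimes and in the case $\Lambda_{\mathcal A}(f)=-\infty$.
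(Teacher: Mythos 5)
Your proof is correct and follows essentially the same route as the paper's: use upper semicontinuity of $f^{T}$ together with compactness of $\mathcal A$ to enlarge the bound from $\mathcal A$ to an open neighbourhood $U$, invoke the uniform attraction property to find $t_0$ with $\vartheta^{s}(\mathcal Q)\subset U$ for $s\ge t_0$, and then telescope subadditivity while absorbing the initial window $[0,t_0+T)$ into a finite constant. The only cosmetic difference is that the paper runs the argument with the shifted family $f^{t}_{\nu}=f^{t}+\nu t$ so the bound becomes strictly negative and the conclusion reads $\Lambda(f_{\nu})<0$, whereas you carry a threshold $\ell>\Lambda_{\mathcal A}(f)$ directly and conclude $\Lambda(f)\le\ell$; these are equivalent bookkeeping devices for the same estimate.
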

\begin{proof}
	Let $\lambda$ denote the right-hand side of \eqref{EQ: WeakConcetrationSubadditive}. It is clear that $\lambda(f) \geq \lambda$, so it is required to show the inverse inequality. Fix any $\nu < -\lambda$, and set $f^{t}_{\nu}(q) \coloneq f^{t}(q) + \nu t$ for all $q \in \mathcal{Q}$ and $t \in \mathbb{T}_{+}$. Clearly, $f_{\nu} = \{ f^{t}_{\nu} \}_{t \in \mathbb{T}_{+}}$ is a proper subadditive family. From the definition of $\lambda$, there exist $t_{0} \in \mathbb{T}_{+}$ and $\delta > 0 $ such that
	\begin{equation}
		\label{EQ: SubadditiveWeakConcLemma1}
		f^{t}_{\nu}(q) < -\delta t \qquad \text{for any} \quad t \in \mathbb{T}_{+} \cap [t_{0},+\infty) \quad \text{and} \quad q \in \mathcal{A}.
	\end{equation}
	Since $f^{t_{0}}(\cdot)$ is upper semicontinuous and $\mathcal{A}$ is compact, there exists a neighborhood $\mathcal{U}$ of $\mathcal{A}$ in $\mathcal{Q}$ such that 
	\begin{equation}
		\label{EQ: SubadditiveWeakConcLemma2}
		f^{t_{0}}_{\nu}(q) \leq -\delta t_{0} \qquad \text{for any} \quad q \in \mathcal{U}.
	\end{equation}
	
	Furthermore, since $\mathcal{A}$ is a $2^{\mathcal{Q}}$-attractor, there exists $T \in \mathbb{T}_{+}$ such that
	\begin{equation}
		\label{EQ: SubadditiveWeakConcLemma3}
		\vartheta^{t}(\mathcal{Q}) \subset \mathcal{U} \qquad \text{for any} \quad t \in \mathbb{T}_{+} \cap [T,+\infty).
	\end{equation}
	
	By \eqref{EQ: SubadditiveWeakConcLemma2} and \eqref{EQ: SubadditiveWeakConcLemma3}, for any positive integer $k$, we obtain
	\begin{equation}
		\label{EQ: SubadditiveWeakConcLemma4}
		f^{kt_{0}}_{\nu}(\vartheta^{T}(q)) \leq \sum_{j=0}^{k-1}f^{t_{0}}_{\nu}(\vartheta^{T + j t_{0}}(q)) \leq -\delta k t_{0} \qquad \text{for any} \quad q \in \mathcal{Q}.
	\end{equation}
	
	From the subadditivity and boundedness from above on finite times, we have for any $s \in \mathbb{T}_{+}$ that
	\begin{equation}
		\lambda(f) = \lim_{t \to +\infty} \frac{1}{t} \sup_{q \in \mathcal{Q}}f^{t}(\vartheta^{s}(q)).
	\end{equation}
	By combining this with \eqref{EQ: SubadditiveWeakConcLemma4}, we obtain $\lambda(f_{\nu}) = \lambda(f) + \nu < 0$ for any $\nu < -\lambda$. Taking it to the limit as $\nu \to -\lambda$ yields $\lambda(f) \leq \lambda$.
\end{proof}

For $q \in \mathcal{Q}$, let $\lambda(f;q)$ denote the pointwise limit as in \eqref{EQ: KingmanPointwiseLimit}, provided that it exists. We need the following elementary lemma.
\begin{proposition}
	\label{PROP: ElementaryPropertiesOfSAF}
	For $f=\{f^{t}\}_{t \in \mathbb{T}_{+}}$ as above, suppose $\mathbb{T}_{+} = [0,+\infty)$, define $f_{1} \coloneq \{ f^{k} \}_{k \in \mathbb{Z}_{+}}$, and let $q \in \mathcal{Q}$ be given. Then
	\begin{enumerate}
		\item[1)] $\lambda(f_{1};q) = \lambda(f;q)$, i.e., the limits either exist and coincide or do not exist;
		\item[2)] If $\lambda(f;q)  = \lambda(f)$, then $\lambda(f;\vartheta^{s}(q)) = \lambda(f)$ uniformly in $s$ from bounded intervals.
	\end{enumerate}
\end{proposition}
\begin{proof}
	For item 1), the proof follows from \eqref{EQ: KingmanRefine1} and \eqref{EQ: KingmanRefine2} without appealing to almost every point because $M$ is constant, and the estimates hold everywhere. For item 2), we use the inequality $f^{t+s}(q) \leq f^{t}(\vartheta^{s}(q)) + f^{s}(q)$ and apply similar arguments, noting that the limit superior of $f^{t}(\vartheta^{s}(q))/t$ as $t \to +\infty$ cannot exceed $\lambda(f)$.
\end{proof}

Assuming that $\mathcal{A}$ is a $2^{\mathcal{Q}}$-attractor for $(\mathcal{Q},\vartheta)$, let $\mathfrak{M}^{erg}(\vartheta) = \mathfrak{M}^{erg}(\mathcal{A};\vartheta)$ denote the space of all ergodic invariant Borel probability measures on $\mathcal{A}$.

\begin{theorem}[Ergodic Variational Principle for subadditive families]
	\label{TH: ErgodicVarPrinciple}
	Suppose $f = \{ f^{t} \}_{t \in \mathbb{T}_{+}}$ is a proper subadditive family over $(\mathcal{Q},\vartheta)$ as above. Then
	\begin{equation}
	\label{EQ: ErgVarPrincMeasureIden}
		\begin{split}
			\lambda(f) = \max_{\mu \in \mathfrak{M}^{erg}(\vartheta)}\lambda(f;\mu) = \lim_{t \to +\infty} \max_{ \mu \in \mathfrak{M}^{erg}(\vartheta)} \frac{1}{t}\int_{\mathcal{A}} f^{t}(q)d\mu(q).
		\end{split}
	\end{equation}
    Furthermore,
    \begin{equation}
    \label{EQ: ErgVarPrincPointIden}
   		\lambda(f) = \inf_{t > 0 } \frac{1}{t} \sup_{q \in \mathcal{Q}} f^{t}(q) = \sup_{q \in \mathcal{Q}} \inf_{t > 0} \frac{1}{t}f^{t}(q) = \max_{q \in \mathcal{A}} \inf_{t > 0} \frac{1}{t}f^{t}(q).
    \end{equation}
\end{theorem}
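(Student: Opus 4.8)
The plan is to reduce the statement to the corresponding principle over the \emph{compact} base $\mathcal{A}$, which is essentially Theorem A.3 of \cite{Morris2013}, by first localizing all the relevant quantities onto $\mathcal{A}$ via Lemma \ref{LEM: WeakConcetrationSubadditive}. Observe that the restriction $f|_{\mathcal{A}} = \{ f^{t}|_{\mathcal{A}} \}_{t \in \mathbb{T}_{+}}$ is again a proper subadditive family, this time over the compact dynamical system $(\mathcal{A},\vartheta)$: upper semicontinuity, the subadditivity inequality \eqref{EQ: SubbAddFamilyDefinition} and boundedness from above on finite times \eqref{EQ: SubbAddBoundedness} are all inherited by restriction, using that $\vartheta^{t}(\mathcal{A}) = \mathcal{A}$ for $\mathcal{A}$ given by \eqref{EQ: MinimalAttractorInQ}. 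Since every measure in $\mathfrak{M}^{erg}(\mathcal{A};\vartheta)$ is supported in $\mathcal{A}$, the exponents $\Lambda(f;\mu)$ from \eqref{EQ: GrowthExponentnMuSubbF} depend only on $f|_{\mathcal{A}}$, while Lemma \ref{LEM: WeakConcetrationSubadditive} gives $\Lambda(f) = \Lambda(f|_{\mathcal{A}})$. Hence it is enough to prove \eqref{EQ: ErgVarPrincMeasureIden} and \eqref{EQ: ErgVarPrincPointIden} with $\mathcal{Q}$ replaced by the compact space $\mathcal{A}$.

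Next I would first settle the discrete-time case $\mathbb{T}_{+} = \mathbb{Z}_{+}$: then $(\mathcal{A},\vartheta^{1})$ is a continuous self-map of a compact metric space and $\{ f^{n}|_{\mathcal{A}} \}_{n \in \mathbb{Z}_{+}}$ is a bounded-above, upper semicontinuous, subadditive sequence of $[-\infty,+\infty)$-valued functions, so Theorem A.3 of \cite{Morris2013} applies directly and yields both the measure identities and the pointwise identities over $\mathcal{A}$. The value $-\infty$ is harmless: one invokes the extended-real form of that result, or applies it to the truncations $\max\{f^{n},-cn\}$ and lets $c \to +\infty$. For continuous time I would pass to the time-one map $\vartheta^{1}|_{\mathcal{A}}$; the interpolation estimate furnished by subadditivity together with \eqref{EQ: SubbAddBoundedness} shows that the limits defining $\Lambda(f)$ and $\Lambda(f;\mu)$ over $t\in\mathbb{R}_{+}$ coincide with those over $n\in\mathbb{Z}_{+}$, and averaging a $\vartheta^{1}$-ergodic measure over $[0,1]$ produces a $\vartheta$-invariant measure with identical time-normalized integrals (and conversely), so the maxima over $\mathfrak{M}^{erg}(\mathcal{A};\vartheta)$ and over $\vartheta^{1}$-ergodic measures agree. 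Combining this with $\Lambda(f)=\Lambda(f|_{\mathcal{A}})$ gives \eqref{EQ: ErgVarPrincMeasureIden}.

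For the pointwise identities \eqref{EQ: ErgVarPrincPointIden}, the first equality is just the definition \eqref{EQ: UniformGrowthExpSubbF} of $\Lambda(f)$. For the last one, the function $q \mapsto \inf_{t>0}\tfrac1t f^{t}(q)$ is upper semicontinuous on $\mathcal{A}$, being an infimum of the upper semicontinuous functions $\tfrac1t f^{t}$, hence it attains its supremum on the compact set $\mathcal{A}$, and this maximum equals $\Lambda(f|_{\mathcal{A}}) = \Lambda(f)$ by the compact-base principle established above. Finally, for each $q\in\mathcal{Q}$ and each $t>0$ we have $\inf_{s>0}\tfrac1s f^{s}(q) \le \tfrac1t f^{t}(q) \le \tfrac1t \sup_{q'\in\mathcal{Q}} f^{t}(q')$, so $\sup_{q\in\mathcal{Q}}\inf_{t>0}\tfrac1t f^{t}(q) \le \inf_{t>0}\tfrac1t\sup_{q'\in\mathcal{Q}}f^{t}(q') = \Lambda(f)$; together with the reverse inequality $\sup_{q\in\mathcal{Q}}\inf_{t>0}\tfrac1t f^{t}(q) \ge \max_{q\in\mathcal{A}}\inf_{t>0}\tfrac1t f^{t}(q) = \Lambda(f)$ this closes the middle equality.

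The routine parts are the localization to $\mathcal{A}$ and the upper-semicontinuity bookkeeping. The delicate point — and the step I expect to be the main obstacle in turning this plan into a full proof — is the transfer of the compact-base principle from discrete to continuous time: one must check carefully that the real-time limits reduce to integer-time limits via subadditivity and \eqref{EQ: SubbAddBoundedness}, and that the passage between $\vartheta^{1}$-ergodic measures and $\vartheta$-flow-invariant ergodic measures preserves all the time-normalized integrals; within the discrete case itself, the genuinely nontrivial input is the pointwise identity $\Lambda(f|_{\mathcal{A}}) = \max_{q\in\mathcal{A}}\inf_{n}\tfrac1n f^{n}(q)$, i.e. the existence of a point whose orbit realizes the uniform growth exponent for \emph{every} time, which is exactly what \cite{Morris2013} supplies through the subadditive ergodic theorem and a compactness argument.
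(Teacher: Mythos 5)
Your plan coincides with the paper's proof, which consists precisely of the localization to the compact attractor $\mathcal{A}$ via Lemma~\ref{LEM: WeakConcetrationSubadditive} followed by an appeal to Theorem~A.3 of Morris~\cite{Morris2013} for the compact discrete-time case, with the continuous-time case claimed to follow by the same argument. The only cosmetic difference is that the paper asserts Morris's proof adapts directly to continuous time, whereas you propose a reduction to the time-one map $\vartheta^{1}$; both are standard, and you correctly flag the discrete-to-continuous transfer (matching $\inf_{t>0}$ with $\inf_{n}$ in \eqref{EQ: ErgVarPrincPointIden} and the passage between $\vartheta^{1}$-ergodic and $\vartheta$-ergodic measures) as the step whose details require the most care.
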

\begin{proof}
	By virtue of Lemma \ref{LEM: WeakConcetrationSubadditive}, we may assume that $\mathcal{Q} = \mathcal{A}$. For systems with discrete time, the statement is exactly \cite[Theorem A.3]{Morris2013}. We explain how the statement for continuous time can be deduced from that of the discrete case.
	
	As to \eqref{EQ: ErgVarPrincPointIden}, its only nontrivial part is \cite[Proposition A.7]{Morris2013}, whose proof works in continuous time with only cosmetic changes at the beginning\footnote{In terms of \cite{Morris2013}, we change $n(x) \geq 1$ to $n(x) > 0$ and $r_{1} \colon X \to \mathbb{N}$ to $r_{1} \colon X \to (0,+\infty)$ and take $m,M > 0$ such that $m \leq r_{1} \leq M-1$. Then the proof works in both discrete and continuous time.}. Therefore, it only requires to show \eqref{EQ: ErgVarPrincMeasureIden}.
	
	For this, we apply the discrete-time case to the mapping $T \coloneq \vartheta^{1}$ and the family $f_{1} \coloneq \{ f^{k} \}_{k \in \mathbb{Z}_{+}}$. Let $\mu \in \mathfrak{M}^{erg}(T)$ be any maximizing measure for $f_{1}$, i.e., such that $\lambda(f_{1}) = \lambda(f_{1};\mu)$. Define a measure $\bar{\mu}$ by (here $\chi_{\mathcal{C}}$ is the characteristic function of $\mathcal{C}$)
	\begin{equation}
		\bar{\mu}(\mathcal{C}) = \int_{\mathcal{Q}}\int_{0}^{1} \chi_{\mathcal{C}}(\vartheta^{t}(q))dt d\mu(q)
	\end{equation}
	for any Borel measurable subset $\mathcal{C}$. This is the suspension (or the mapping torus) measure, which is known to be invariant and ergodic with respect to $\vartheta$. We claim that $\bar{\mu}$ is maximizing for $\vartheta$, i.e., $\lambda(f) = \lambda(f;\bar{\mu})$. 
	
	By the definition of $\mu$ and since $\lambda(f_{1}) = \lambda(f)$, we obtain (here $k \in \mathbb{Z}_{+}$)
	\begin{equation}
		\label{EQ: EVPLiftedMeasureMaximal1}
		\lim_{t \to +\infty} \frac{1}{k} f^{k}(q) = \inf_{k > 0} \frac{1}{k} \int_{\mathcal{X}}f^{k} d\mu = \lambda(f_{1}) = \lambda(f)
	\end{equation}
	for $\mu$-almost all $q \in \mathcal{Q}$.
	
	Consider $g^{t}(q) \coloneq \int_{0}^{1}f^{t}(\vartheta^{s}(q))ds$. Clearly, $g$ is a proper subadditive over $(\mathcal{Q},\vartheta)$ family. By combining \eqref{EQ: EVPLiftedMeasureMaximal1} with Proposition \ref{PROP: ElementaryPropertiesOfSAF} applied to $f$, we obtain
	\begin{equation}
		 \lambda(f) = \lim_{k \to \infty}\frac{1}{k}\int_{0}^{1}f^{k}(\vartheta^{s}(q))ds = \lim_{k \to \infty}\frac{1}{k}g^{k}(q)
	\end{equation}
	for $\mu$-almost all $q \in \mathcal{Q}$. This being combined with \eqref{EQ: KingmanLimitTrans} applied to $g_{1} = \{g^{k}\}_{k \in \mathbb{Z}_{+}}$, $T$, and $\mu$, and then to $f$, $\vartheta$, and $\bar{\mu}$ yields
	\begin{equation}
		\lambda(f) = \inf_{k > 0} \frac{1}{k}\int_{\mathcal{X}}g^{k}d\mu = \inf_{k > 0} \frac{1}{k}\int_{\mathcal{X}}f^{k}d\bar{\mu} = \inf_{t > 0} \frac{1}{t}\int_{\mathcal{X}}f^{t}d\bar{\mu} = \lambda(f;\bar{\mu}),
	\end{equation}
	 as claimed. Since the second equality in \eqref{EQ: ErgVarPrincMeasureIden} is obvious now, the proof is finished.
\end{proof}

To the best of our knowledge, the first variant of the EVP was formulated by Thieullen in \cite[Lemma 2.3.5]{Thieullen1992}, who referred to some ideas of Ledrappier \cite{Ledrappier1981}. In \cite{Thieullen1992}, it was only the first equality from \eqref{EQ: ErgVarPrincMeasureIden}, and it also lacks the possibility for $f^{t}$ to take values in the extended reals $[-\infty,+\infty)$. The latter is important for applications to noninvertible cocycles. We refer to the paper by Morris \cite{Morris2013} for a more detailed historical discussion of the EVP.

Let $\Xi$ be a uniformly continuous linear cocycle in a separable Hilbert space $\mathbb{H}$ over the dynamical system $(\mathcal{Q},\vartheta)$. Given $d \geq 0$, we define $f^{t}(q) \coloneq \ln \omega_{d}(\Xi^{t}(q,\cdot))$ for all $t \in \mathbb{T}_{+}$ and $q \in \mathcal{Q}$. By Lemma \ref{COR: HornInequality} and the uniform continuity of $\Xi$, we obtain that $f = \{f^{t}\}_{t \in \mathbb{T}_{+}}$ is a proper subadditive family. Then, Theorem \ref{TH: ErgodicVarPrinciple} applied to such $f$ yields the following result.
\begin{corollary}(Concentration Principle)
	\label{COR: DVolumesUniformAsDVolumesOverMeasure}
	In the above context, assume that there exists a $2^{\mathcal{Q}}$-attractor $\mathcal{A}$ for $(\mathcal{Q},\vartheta)$. Then for any $d \geq 0$, there exists an ergodic measure $\mu \in \mathfrak{M}^{erg}(\vartheta)$ such that for $\mu$-almost all $q \in \mathcal{A}$ we have
	\begin{equation}
		\label{EQ: ErgodicDVolumes}
		\omega_{d}(\Xi) = \lim_{t \to +\infty} \left[\omega_{d}(\Xi^{t}(q,\cdot))\right]^{1/t} \eqcolon \omega_{d}(\Xi;\mu).
	\end{equation}
\end{corollary}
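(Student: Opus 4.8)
The plan is to apply the Ergodic Variational Principle (Theorem~\ref{TH: ErgodicVarPrinciple}) to the family $f = \{f^{t}\}_{t \in \mathbb{T}_{+}}$ given by $f^{t}(q) \coloneq \ln \omega_{d}(\Xi^{t}(q,\cdot))$, with the convention $\ln 0 \coloneq -\infty$, so that everything reduces to two things: checking that $f$ is a proper subadditive family over $(\mathcal{Q},\vartheta)$, and identifying its uniform growth exponent $\Lambda(f)$ with the left-hand side of \eqref{EQ: ErgodicDVolumes}, which (cf.\ \eqref{EQ: AveragedFunctionOfSingularValues}) is $\ln\overline{\omega}_{d}(\Xi)$.

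First I would verify the three defining properties of a proper subadditive family. \emph{Subadditivity:} by the cocycle property \nameref{DESC: CO1} we have $\Xi^{t+s}(q,\cdot) = \Xi^{t}(\vartheta^{s}(q),\cdot) \circ \Xi^{s}(q,\cdot)$, so the Horn inequality (Corollary~\ref{COR: HornInequality}) gives $\omega_{d}(\Xi^{t+s}(q,\cdot)) \leq \omega_{d}(\Xi^{s}(q,\cdot)) \cdot \omega_{d}(\Xi^{t}(\vartheta^{s}(q),\cdot))$, and taking logarithms yields \eqref{EQ: SubbAddFamilyDefinition}. \emph{Upper semicontinuity:} by \nameref{DESC: UC1} the map $q \mapsto \Xi^{t}(q,\cdot)$ is norm-continuous, the singular values $\sigma_{k}$ are $1$-Lipschitz in the operator norm by their min–max definition, hence $\omega_{d}(\cdot) = \prod_{j=1}^{m}\sigma_{j}(\cdot)\,\sigma_{m+1}^{\gamma}(\cdot)$ is continuous into $[0,\infty)$, and composing with $\ln$ gives a continuous, in particular upper semicontinuous, function into $[-\infty,\infty)$. \emph{Boundedness on finite times:} $\omega_{d}(L) \leq \sigma_{1}(L)^{d} = \|L\|^{d}$, so \nameref{DESC: UC2} gives $\sup_{t \in [0,1] \cap \mathbb{T}_{+}} \sup_{q} f^{t}(q) < +\infty$, which is \eqref{EQ: SubbAddBoundedness}.

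Then I would invoke Theorem~\ref{TH: ErgodicVarPrinciple}. It provides an ergodic measure $\mu \in \mathfrak{M}^{erg}(\mathcal{A};\vartheta)$ with $\Lambda(f) = \Lambda(f;\mu)$, and by \eqref{EQ: GrowthExponentnMuSubbF} (Kingman's subadditive ergodic theorem) the equality $\Lambda(f;\mu) = \lim_{t \to +\infty} \tfrac{1}{t} f^{t}(q)$ holds for $\mu$-almost every $q \in \mathcal{A}$. Since $\ln$ is increasing and continuous, $\sup_{q} f^{t}(q) = \ln \sup_{q} \omega_{d}(\Xi^{t}(q,\cdot))$, so by \eqref{EQ: AveragedFunctionOfSingularValues}
\[
	\Lambda(f) = \lim_{t \to +\infty} \frac{1}{t} \sup_{q \in \mathcal{Q}} f^{t}(q) = \ln \overline{\omega}_{d}(\Xi).
\]
Chaining the three displayed identities gives \eqref{EQ: ErgodicDVolumes} for $\mu$-almost all $q \in \mathcal{A}$, as claimed.

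The argument is essentially bookkeeping once Theorem~\ref{TH: ErgodicVarPrinciple} is at hand; the only point that genuinely requires care is the verification that $f$ is a proper subadditive family, in particular that upper semicontinuity is preserved on the extended real line when $\Xi$ is noninvertible and some $\sigma_{j}$ vanish, and that the finite-time bound \eqref{EQ: SubbAddBoundedness} is actually supplied by \nameref{DESC: UC2} rather than assumed. I would also remark that the restriction of the maximizing measure to the attractor $\mathcal{A}$ is already built into Theorem~\ref{TH: ErgodicVarPrinciple} via Lemma~\ref{LEM: WeakConcetrationSubadditive}, so no further localization step is needed.
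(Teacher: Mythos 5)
Your proposal is correct and follows exactly the route the paper takes: define $f^t(q)=\ln\omega_d(\Xi^t(q,\cdot))$, verify it is a proper subadditive family via the Horn inequality and the uniform continuity of $\Xi$, then invoke Theorem~\ref{TH: ErgodicVarPrinciple}. The paper states this tersely in the paragraph preceding the corollary; you merely spell out the verification of upper semicontinuity (Lipschitz continuity of singular values in operator norm plus \nameref{DESC: UC1}) and finite-time boundedness (the bound $\omega_d(L)\le\|L\|^d$ plus \nameref{DESC: UC2}), which is welcome bookkeeping but not a different argument.
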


One may also introduce the local Lyapunov dimension $\dim_{\operatorname{L}}(\Xi;q)$ of $\Xi$ at $q \in \mathcal{Q}$ and the Lyapunov dimension $\dim_{\operatorname{L}}(\Xi;\mu)$ of $\Xi$ over an ergodic invariant measure $\mu$ and deduce from Theorem \ref{TH: ErgodicVarPrinciple} that
\begin{equation}
	\label{EQ: LyapDimTopErgCharacterization}
	\dim_{\operatorname{L}}\Xi = \max_{q \in \mathcal{A}}\dim_{\operatorname{L}}(\Xi;q) = \max_{\mu \in \mathfrak{M}^{erg}(\mathcal{A};\vartheta)}\dim_{\operatorname{L}}(\Xi;\mu),
\end{equation}
where the first maximum is achieved on $\mu$-almost all points $q \in \mathcal{A}$ for any $\mu$ on which the second maximum is achieved\footnote{This is any $\mu$ from Corollary \ref{COR: DVolumesUniformAsDVolumesOverMeasure} with $d = \dim_{\operatorname{L}}\Xi$.}. A particular result of this kind first appeared in the work of Ledrappier \cite{Ledrappier1981} for derivative cocycles of differentiable mappings on manifolds. In the topological context concerned with the first identity in \eqref{EQ: LyapDimTopErgCharacterization}, such questions were studied by Eden \cite{EdenLocalEstimates1990} for cocycles in infinite dimensions. We refer to the end of Section \ref{SEC: VariationalDescUnifLyapExpCompCoc} for further related discussion.
\section{Upper semicontinuity of the uniform growth exponent for subadditive families}
\label{SEC: UpperSemiContLyapDim}

Let $\mathbb{T}_{+} \in \{ \mathbb{Z}_{+}, \mathbb{R}_{+} \}$. A number family $a=\{a^{t}\}_{t \in \mathbb{T}_{+}}$, where $a^{t} \in [-\infty,+\infty)$, is called \textit{subadditive} if $a^{t+s} \leq a^{t} + a^{s}$ for any $t,s \in \mathbb{T}_{+}$. It is said to be \textit{bounded from above} (on finite times) if $\sup_{t \in [0,1] \cap \mathbb{T}_{+}}a^{t} < +\infty$.
 
The following lemma is well-known.
\begin{lemma}[Fekete's lemma]
	\label{LEM: FeketeLemma}
	Let $a=\{a^{t}\}_{t \in \mathbb{T}_{+}}$ be a subadditive number family which is bounded from above. Then
	\begin{equation}
		\label{EQ: FeketeLemmaInfDescr}
		\lambda(a) \coloneq \lim_{t \to +\infty} \frac{1}{t} a^{t} = \inf_{t > 0} \frac{1}{t}a^{t}.
	\end{equation}
	Furthermore, if $\mathbb{T}_{+} = [0,\infty)$ and $a_{1} \coloneq \{ a^{k} \}_{k \in \mathbb{Z}_{+}}$, then $\lambda(a_{1}) = \lambda(a)$.
\end{lemma}

It is important to note that the description through the infimum in \eqref{EQ: FeketeLemmaInfDescr} immediately gives the upper semicontinuity\footnote{Similarly, the limit description \eqref{EQ: KingmanLimitTrans} gives the upper semicontinuity of $\lambda(f;\mu)$, which in turn can be applied to Lyapunov exponents and Lyapunov dimension over an ergodic measure $\mu$.} of $\lambda(a)$. For what follows, $\Gamma$ denotes a metric space.
\begin{corollary}
	\label{COR: SubAddNumberFamilyUpperSemiCont}
	Suppose there are bounded from above subadditive number families $a_{\gamma}=\{ a^{t}_{\gamma} \}_{t \in \mathbb{T}_{+}}$ given for any $\gamma \in \Gamma$. Assume for some $\gamma_{0} \in \Gamma$ that
	\begin{equation}
		\label{EQ: SubAddNumberFamilyConvergence}
		\limsup_{\gamma \to \gamma_{0}} a^{t}_{\gamma} \leq a^{t}_{\gamma_{0}} \qquad \text{for any} \quad t > 0.
	\end{equation}
	Then
	\begin{equation}
		\label{EQ: SubAddNumberFamilyUpperSemiCont}
		\limsup_{\gamma \to \gamma_{0}} \lambda(a_{\gamma}) \leq \lambda(a_{\gamma_{0}}).
	\end{equation}
\end{corollary}
\begin{proof}
	Indeed, by \eqref{EQ: FeketeLemmaInfDescr} and \eqref{EQ: SubAddNumberFamilyConvergence}, for any $t_{0} > 0$ we have
	\begin{equation}
		\limsup_{\gamma \to \gamma_{0}} \lambda(a_{\gamma}) \leq \limsup_{\gamma \to \gamma_{0}} \frac{1}{t_{0}}a^{t_{0}}_{\gamma} \leq \frac{1}{t_{0}}a^{t_{0}}_{\gamma_{0}}.
	\end{equation}
	Taking the infimum over $t_{0} > 0$ yields \eqref{EQ: SubAddNumberFamilyUpperSemiCont}.
\end{proof}

Now suppose that for any $\gamma \in \Gamma$, there is a dynamical system $(\mathcal{Q},\vartheta_{\gamma})$ with the same phase space $\mathcal{Q}$ and the same time set $\mathbb{T}$ for each $\gamma$; a $2^{\mathcal{Q}}$-attractor $\mathcal{A}_{\gamma}$ for $(\mathcal{Q},\vartheta_{\gamma})$, see \eqref{EQ: SubsetA0AttractsQ}; and a proper subadditive family $f_{\gamma}=\{ f^{t}_{\gamma} \}_{t \in \mathbb{T}_{+}}$ over $(\mathcal{Q},\vartheta_{\gamma})$, see below \eqref{EQ: SubbAddBoundedness}. 

\begin{corollary}
	\label{COR: UpperSemiContSubadditive}
	In the above context, suppose that for some $\gamma_{0} \in \Gamma$, we have
	\begin{equation}
		\label{EQ: SubAddNumberFamilyFromFunctionsConvergence}
		\limsup_{\gamma \to \gamma_{0}}\sup_{q \in \mathcal{A}_{\gamma}}f^{t}_{\gamma}(q) \leq \sup_{q \in \mathcal{Q}}f^{t}_{\gamma_{0}}(q) \qquad \text{for any} \quad t > 0.
	\end{equation}
 	Then
	\begin{equation}
		\label{EQ: SubadditiveGrowthExpUpperSemiCont}
		\limsup_{\gamma \to \gamma_{0}} \lambda(f_{\gamma}) \leq \lambda(f_{\gamma_{0}}).
	\end{equation}
\end{corollary}
\begin{proof}
	By Lemma \ref{LEM: WeakConcetrationSubadditive} and \eqref{EQ: SubAddNumberFamilyFromFunctionsConvergence}, for any $t_{0} > 0$ we have
	\begin{equation}
		\limsup_{\gamma \to \gamma_{0}} \lambda(f_{\gamma}) = \limsup_{\gamma \to \gamma_{0}} \inf_{t > 0} \frac{1}{t} \sup_{q \in \mathcal{A}_{\gamma}}f^{t}_{\gamma}(q) \leq \limsup_{\gamma \to \gamma_{0}} \frac{1}{t_{0}} \sup_{q \in \mathcal{A}_{\gamma}}f^{t_{0}}_{\gamma}(q) \leq \frac{1}{t_{0}} \sup_{q \in \mathcal{Q}}f^{t_{0}}_{\gamma_{0}}(q).
	\end{equation}
	By taking the infimum over $t_{0} > 0$ and using Lemma \ref{LEM: WeakConcetrationSubadditive}, we obtain \eqref{EQ: SubadditiveGrowthExpUpperSemiCont}.
\end{proof} 

Given a Banach space $\mathbb{E}$, suppose that for and each $\gamma \in \Gamma$, there is a uniformly continuous linear cocycle $\Xi_{(\gamma)}$ in $\mathbb{E}$ over $(\mathcal{Q},\vartheta_{\gamma})$ as above. We say that $\Xi_{(\gamma)}$ \textit{eventually converges} to $\Xi_{(\gamma_{0})}$ on a subset $\mathcal{B}_{0} \subset \mathcal{Q}$ as $\gamma \to \gamma_{0} \in \Gamma$, if for some positive $\tau \in \mathbb{T}_{+}$ we have\footnote{For Theorem \ref{TH: UpperSemiContLargLyapExp}, it is sufficient to require the convergence of norms rather than convergence in norm as in \eqref{EQ: EventualConvergenceCocycles}. However, this relaxation appears to be impractical.}
\begin{equation}
	\label{EQ: EventualConvergenceCocycles}
	\lim_{\gamma \to \gamma_{0}}\sup_{q \in \mathcal{B}_{0}}\| \Xi^{k\tau}_{(\gamma)}(q,\cdot) - \Xi^{k\tau}_{(\gamma_{0})}(q,\cdot)\|_{\mathcal{L}(\mathbb{E})} \to 0 \qquad \text{for any} \quad k=1,2,\ldots.
\end{equation}
\begin{remark}
	For the eventual convergence, one may formulate sufficient conditions posed only in terms of the convergence of $\Xi^{\tau}_{\gamma}$ to $\Xi^{\tau}_{\gamma_{0}}$ and $\vartheta^{\tau}_{\gamma}$ to $\vartheta^{\tau}_{\gamma_{0}}$ for \textit{some} $\tau > 0$ uniformly on $\mathcal{B}_{0}$. For example, if $\mathcal{B}_{0}$ is positively invariant with respect to $\vartheta^{\tau}_{\gamma}$ for all $\gamma$, and $\Xi^{\tau}_{\gamma_{0}}(q,\cdot) \in \mathcal{L}(\mathbb{E})$ and $\vartheta^{\tau}_{\gamma_{0}}(q)$ are uniformly continuous in $q \in \mathcal{B}_{0}$ (in particular, if $\mathcal{B}_{0}$ is compact), this will imply \eqref{EQ: EventualConvergenceCocycles}.
\end{remark}

Recall the largest uniform Lyapunov exponent $\lambda_{1}(\Xi_{(\gamma)})$ of $\Xi_{(\gamma)}$ defined below \eqref{EQ: LargetLyapunovExponentWRTMetric}.

\begin{theorem}
	\label{TH: UpperSemiContLargLyapExp}
	In the above context, suppose there exists a subset $\mathcal{B}_{0} \subset \mathcal{Q}$ which contains $\mathcal{A}_{\gamma}$ for any $\gamma \in \Gamma$, and let $\Xi_{(\gamma)}$ eventually converge to $\Xi_{(\gamma_{0})}$ on $\mathcal{B}_{0}$ as $\gamma \to \gamma_{0}$. Then
	\begin{equation}
		\label{EQ: LargestUniformLyapExpSemiContIneq}
		\limsup_{\gamma \to \gamma_{0}} \lambda_{1}(\Xi_{(\gamma)}) \leq \lambda_{1}(\Xi_{(\gamma_{0})}).
	\end{equation}
\end{theorem}
\begin{proof}
	By rescaling the time if necessary, we may assume that $\tau = 1$. Furthermore, we may consider $\mathbb{T}_{+} = \mathbb{Z}_{+}$, thanks to Lemma \ref{LEM: FeketeLemma}. For any $\gamma \in \Gamma$, define a proper subadditive family $f_{\gamma}$ by $f^{t}_{\gamma}(q) \coloneq \ln\|\Xi^{t}_{(\gamma)}(q,\cdot)\|_{\mathcal{L}(\mathbb{E})}$ for all $t \in \mathbb{T}_{+}$ and $q \in \mathcal{Q}$. Let us show that such families $f_{\gamma}$ satisfy all the assumptions of Corollary \ref{COR: UpperSemiContSubadditive}. First, by \eqref{EQ: EventualConvergenceCocycles}, for all $k =1,2,\ldots$ and $\varepsilon > 0$, there exists a neighborhood $\mathcal{U}$ of $\gamma_{0}$ such that
	\begin{equation}
		\sup_{q \in \mathcal{B}_{0}}\| \Xi^{k}_{\gamma}(q,\cdot) \|_{\mathcal{L}(\mathbb{E})} \leq \sup_{q \in \mathcal{B}_{0}}\|\Xi^{k}_{\gamma_{0}}(q,\cdot)\|_{\mathcal{L}(\mathbb{E})} + \varepsilon \qquad \text{for any} \quad \gamma \in \mathcal{U}.
	\end{equation}
	By taking logarithms and then the limit superior as $\gamma \to \gamma_{0}$, we obtain
	\begin{equation}
		\limsup_{\gamma \to \gamma_{0}}\sup_{q \in \mathcal{A}_{\gamma}}f^{k}_{\gamma}(q) \leq \sup_{q \in \mathcal{Q}}\ln(\|\Xi^{k}_{\gamma_{0}}(q,\cdot)\|_{\mathcal{L}(\mathbb{E})} + \varepsilon)
	\end{equation}
	for any $\varepsilon>0$. Passing to the limit as $\varepsilon \to 0+$ yields \eqref{EQ: SubAddNumberFamilyFromFunctionsConvergence}.
\end{proof}

Recall here the quantities $\omega_{d}(\Xi_{(\gamma)})$, $\dim_{\operatorname{L}}\Xi_{(\gamma)}$, and $\dim^{KY}_{\operatorname{L}}\Xi_{(\gamma)}$ defined in \eqref{EQ: AveragedFunctionOfSingularValues}, \eqref{EQ: LyapunovDimensionDefinition}, and \eqref{EQ: Kaplan-YorkeFormula}, respectively. Similarly, we have the following corollary.
\begin{corollary}(Robustness Principle)
	\label{COR: UpperSemiContUniformLyapExpAndLyapDim}
	Under the conditions of Theorem \ref{TH: UpperSemiContLargLyapExp}, suppose that $\mathbb{E} = \mathbb{H}$ is a Hilbert space. Then for any $d \geq 0$, we have
	\begin{equation}
		\label{EQ: UpperSemiContSumUniformLE}
		\limsup_{\gamma \to \gamma_{0}}\omega_{d}(\Xi_{(\gamma)}) \leq \omega_{d}(\Xi_{(\gamma_{0})}).
	\end{equation}
    Furthermore,
    \begin{equation}
		\label{EQ: UpperSemiContLyapDimens}
		\begin{split}
			&\limsup\limits_{\gamma \to \gamma_{0}}\dim^{KY}_{\operatorname{L}}\Xi_{(\gamma)} \leq \dim^{KY}_{\operatorname{L}}\Xi_{(\gamma_{0})},\\
			&\limsup\limits_{\gamma \to \gamma_{0}}\dim_{\operatorname{L}}\Xi_{(\gamma)} \leq \dim_{\operatorname{L}}\Xi_{(\gamma_{0})}.
		\end{split}
    \end{equation}
\end{corollary}
\begin{proof}
	For \eqref{EQ: UpperSemiContSumUniformLE}, we apply Corollary \ref{COR: UpperSemiContSubadditive} to $f_{\gamma}$ given by $f^{t}_{\gamma}(q) \coloneq \ln \omega_{d}(\Xi^{t}_{(\gamma)}(q,\cdot))$. By Lemma \ref{COR: HornInequality} and the properties of $\Xi_{\gamma}$, this is a proper subadditive family. From the eventual convergence of cocycles, \eqref{EQ: SingularValueDef}, and \eqref{EQ: FuncSingularValueDef}, we have for any $k=1,2,\ldots$ that
	\begin{equation}
		\label{EQ: UpperSemiContConvergenceOmega}
		\lim_{\gamma \to \gamma_{0}}\sup_{q \in \mathcal{B}_{0}}|\omega_{d}(\Xi^{\tau k}_{\gamma}(q,\cdot)) - \omega_{d}(\Xi^{\tau k}_{\gamma_{0}}(q,\cdot))| = 0.
	\end{equation}
	By applying similar arguments as in the proof of Theorem \ref{TH: UpperSemiContLargLyapExp}, we obtain \eqref{EQ: UpperSemiContSumUniformLE}.
	
	For the first inequality in \eqref{EQ: UpperSemiContLyapDimens}, omitting the trivial case $\dim^{KY}_{\operatorname{L}}\Xi_{(\gamma_{0})} = \infty$, we act as follows. First, for any cocycle $\Xi$ and for any $d = m + \theta$ with $m=0,1,\ldots$ and $\theta \in (0,1]$, we define $\Xi_{d}$ as the cocycle in $\mathbb{H}^{\wedge (m+1)}$ with the mappings $\Xi^{t}_{d}(q,\cdot) \coloneq e^{(\theta-1)\lambda_{m+1}(\Xi)t} \cdot \Xi^{t}_{m+1}(q,\cdot)$, where $q \in \mathcal{Q}$, $t \geq 0$, and $\Xi_{m+1}$ is the $(m+1)$-fold multiplicative compound of $\Xi$. Clearly, $\lambda_{1}(\Xi_{d}) = \sum_{j=1}^{m}\lambda_{j}(\Xi) + \theta \lambda_{m+1}(\Xi)$. From this and \eqref{EQ: Kaplan-YorkeFormula}, we have 
	\begin{equation}
		\dim^{KY}_{\operatorname{L}}\Xi = \inf\{ d > 0 \ | \ \lambda_{1}(\Xi_{d}) < 0 \}.
	\end{equation}
    By applying Theorem \ref{TH: UpperSemiContLargLyapExp} to such extensions of $\Xi_{(\gamma)}$, for which the eventual convergence follows from \eqref{EQ: UpperSemiContConvergenceOmega} and \eqref{EQ: ExteriorNormSingularValuesIdentity}, we obtain that
    \begin{equation}
    	\limsup_{\gamma \to \gamma_{0}}\dim^{KY}_{\operatorname{L}}\Xi_{(\gamma)} < d \qquad \text{for any} \quad d > \dim^{KY}_{\operatorname{L}}\Xi_{(\gamma_{0})}.
    \end{equation}
    Taking it to the limit as $d$ tends to $\dim^{KY}_{\operatorname{L}}\Xi_{(\gamma_{0})}$ yields the desired inequality. 
    
    For the second inequality in \eqref{EQ: UpperSemiContLyapDimens}, we again omit the trivial case $\dim_{\operatorname{L}}\Xi_{(\gamma_{0})} = \infty$. By definition, for any $d > \dim_{\operatorname{L}}\Xi_{(\gamma_{0})}$ we have $\omega_{d}(\Xi_{(\gamma_{0})}) < 1$. Consequently, \eqref{EQ: UpperSemiContSumUniformLE} gives
    \begin{equation}
    	\limsup_{\gamma \to \gamma_{0}} \omega_{d}(\Xi_{(\gamma)}) \leq \omega_{d}(\Xi_{(\gamma_{0})}) < 1 \quad \text{and} \quad
    	\limsup\limits_{\gamma \to \gamma_{0}}\dim_{\operatorname{L}}\Xi_{(\gamma)} < d.
    \end{equation}
    Taking it to the limit as $d$ tends to $\dim_{\operatorname{L}}\Xi_{(\gamma_{0})}$ yields the desired result.
\end{proof}

Particular cases of the above result appear in the works of Eden \cite[Proposition 3.8]{EdenLocalEstimates1990} and Kawan, Matveev, and Pogromsky \cite[Corollary 8]{KawanPogromsky2021}, but these are proved using redundant properties. Specifically, in \cite{EdenLocalEstimates1990}, the proof relies on the existence of a maximizing point, while in \cite{KawanPogromsky2021}, a variational description via adapted metrics is employed. Our proof demonstrates that it is the elementary subadditivity --- which, in the compact case, concerns only number families --- that yields the upper semicontinuity.

Moreover, some works exhibit misunderstandings regarding the presence of upper semicontinuity. In these studies, the inequality $\lambda_{1}(\Xi) + \lambda_{2}(\Xi) < 0$ and its preservation under small perturbations of $\Xi$ (robustness), where \(\Xi\) is the derivative cocycle of $\vartheta$ in $\mathcal{Q}$, is of special interest due to the generalized Bendixson criterion of Smith \cite{Smith1986HD} and its extension to infinite dimensions proposed by Li and Muldowney \cite{LiMuldowney1995LowBounds}. If the conditions for such a criterion are robust, then variants of Pugh's closing lemma can be applied to establish the convergence of all trajectories to equilibria. A notable example in this direction is \cite[Proposition 3.3]{LiMuldowney1996SIAMGlobStab}, which establishes a form of upper semicontinuity of the averaged exponent $\bar{\alpha}_{\mathfrak{n}}(\Xi)$ (see Remark \ref{REM: AveragedExponents}) computed with respect to a particular metric $\mathfrak{n}$ on $\mathbb{H}^{\wedge 2}$. Although this result is formally different, it is ultimately used to ensure that the contraction condition $\lambda_{1}(\Xi) + \lambda_{2}(\Xi) < 0$ is preserved under small perturbations. From Corollary \ref{COR: UpperSemiContUniformLyapExpAndLyapDim}, we observe that $\lambda_{1}(\Xi) + \lambda_{2}(\Xi) < 0$ is itself a robust condition and does not require any additional assumptions.

Another misunderstanding is evident in the discussion at the end of \cite{LiMuldowney1995LowBounds}, which includes a statement regarding the insufficiency (for robustness) of a similar inequality, $\lambda_{1}(\Xi;q) + \lambda_{2}(\Xi;q) < 0$ for any $q \in \mathcal{Q}$, in terms of local Lyapunov exponents\footnote{Here, \(\exp(\lambda_{1}(\Xi;q))\) is defined as the limit superior in \eqref{EQ: ErgodicDVolumes} for $d=1$, and $\exp(\lambda_{1}(\Xi;q) + \lambda_{2}(\Xi;q))$ is obtained from the same operation for $d=2$.} $\lambda_{1}(\Xi;q)$ and $\lambda_{2}(\Xi;q)$. A more precise formulation appears in the subsequent work by the same authors. Specifically, in \cite[Remark on p.~1076]{LiMuldowney1996SIAMGlobStab}, they state\footnote{For convenience, we have omitted some notations from the original text.}: ``Since the global attractor is not necessarily preserved under a local $C^1$ perturbation at a nonwandering point, the criterion may not be robust under such perturbations''. This clearly indicates the presence of a misconception.

From Corollary \ref{COR: DVolumesUniformAsDVolumesOverMeasure}, it follows that $\lambda_{1}(\Xi) + \lambda_{2}(\Xi) = \lambda_{1}(\Xi; q) + \lambda_{2}(\Xi; q)$ for some $q \in \mathcal{A}$. Consequently, the local Lyapunov exponents provide the same inequality. It is also evident that the condition $\lambda_{1}(\Xi) + \lambda_{2}(\Xi) < 0$ represents the strongest inequality attainable through the analysis of linearization cocycles without additional assumptions. Therefore, it is unsurprising that known applications of the generalized Bendixson criterion focus implicitly on verifying such conditions by computing maximized or averaged exponents in adapted metrics. For further discussion, we refer to \cite{AnikushinRomanov2023FreqConds}.
\section{Additive symmetrization of operators}
\label{SEC: SymmetrizationOfOperators}
Let $A$ be a linear operator in a Hilbert space $\mathbb{H}$ with domain $\mathcal{D}(A)$. We introduce the \textit{trace numbers} $\beta_{1}(A) \geq \beta_{2}(A) \geq \ldots$ of $A$ by induction from the relations for any $k=1,2,\ldots$
\begin{equation}
	\label{EQ: TraceNumbersDef}
	\beta_{1}(A) + \cdots + \beta_{k}(A) = \sup_{\substack{\mathbb{L} \subset \mathcal{D}(A), \\ \dim\mathbb{L}=k}} \operatorname{Re}\operatorname{Tr}\left(\Pi_{\mathbb{L}} \circ A \circ \Pi_{\mathbb{L}}\right),
\end{equation}
where the supremum is taken over all $k$-dimensional subspaces $\mathbb{L}$, $\Pi_{\mathbb{L}}$ is the orthogonal projector onto $\mathbb{L}$, and $\operatorname{Tr}$ denotes the trace functional. If $\dim \mathbb{H} <\infty$, it is convenient to set $\beta_{k}(A) \coloneq -\infty$ for any $k > \dim \mathbb{H}$.

By virtue of the Liouville trace formula \eqref{EQ: GeneralizedTraceFormula}, trace numbers represent infinitesimal analogs of singular values. In particular, they provide upper bounds for Lyapunov exponents by applying certain maximization or averaging procedures, as discussed in Section \ref{SEC: ComputationOfInfExponents}.

Let $e_{1}, \ldots, e_{k} \in \mathcal{D}(A)$ form an orthonormal basis for a $k$-dimensional subspace $\mathbb{L}$. Then
\begin{equation}
	\label{EQ: TraceComputationTroughBasis}
	\operatorname{Re}\operatorname{Tr}\left(\Pi_{\mathbb{L}} \circ A \circ \Pi_{\mathbb{L}}\right) = \operatorname{Re}\sum_{j=1}^{k}\left\langle Ae_{j},e_{j}\right\rangle_{\mathbb{H}}.
\end{equation}
From this, it can be seen that $\beta_{k}(A)$ is nonincreasing in $k$.

A self-adjoint operator $S$ in $\mathbb{H}$ is called a \textit{symmetrization} of $A$ if $\beta_{k}(A) = \beta_{k}(S)$ for any $k=1,2,\ldots$. We address the following problem.
\begin{problem}
	For a given operator $A$, determine whether it admits effective symmetrization.
\end{problem}

This problem is related to the computation of the trace numbers of $A$. Below, we will establish Theorem \ref{TH: ComputationSwedgeSpectralBound}, which describes the trace numbers of a self-adjoint operator $S$ in terms of eigenvalues above the essential spectral bound and the essential spectral bound itself (we call these \textit{characteristic numbers} of $S$).

Clearly, we can always construct a self-adjoint operator having all $\beta_{k}(A)$ as eigenvalues, provided that each $\beta_{k}(A)$ is finite. However, this symmetrization is not effective. In Theorem \ref{TH: AdditiveSymmetrizationAndFriedrichsExtension}, we provide a partial answer to the problem in terms of Friedrichs extensions.

The standard approach for constructing an effective symmetrization of $A$ is as follows. Let $A^{*}$ be the adjoint of $A$ defined on domain $\mathcal{D}(A^{*})$. Suppose $\mathcal{D}(A) \cap \mathcal{D}(A^{*})$ is dense in $\mathbb{H}$, and let the symmetric operator $S_{0}(A) \coloneq (A+A^{*})/2$, which is defined on the intersection of the domains, admit an extension to a self-adjoint operator $S_{A}$ on $\mathbb{H}$. In this case, $S_{A}$ is called an \textit{additive symmetrization} of $A$. For $\mathbb{L} \subset \mathcal{D}(A) \cap \mathcal{D}(A^{*})$, we can proceed in \eqref{EQ: TraceComputationTroughBasis} as
\begin{equation}
	\label{EQ: AdditiveSymmetrizationBasicRelation}
	\begin{split}
		\operatorname{Re}\operatorname{Tr}\left(\Pi_{\mathbb{L}} \circ A \circ \Pi_{\mathbb{L}}\right) = \operatorname{Re}\sum_{j=1}^{k}\left\langle Ae_{j},e_{j}\right\rangle_{\mathbb{H}} =\\= \sum_{j=1}^{k}\left\langle S_{0}(A)e_{j},e_{j}\right\rangle_{\mathbb{H}} = \operatorname{Tr}\left(\Pi_{\mathbb{L}} \circ S_{A} \circ \Pi_{\mathbb{L}}\right).
	\end{split}
\end{equation}
We say that $S_{A}$ is (or provides) a \textit{proper} additive symmetrization of $A$ if $S_{A}$ is a symmetrization of $A$, i.e., $\beta_{k}(A) = \beta_{k}(S_{A})$ for any $k=1,2,\ldots$. It is crucial that \eqref{EQ: AdditiveSymmetrizationBasicRelation} is not sufficient\footnote{In \cite[Theorem 7]{Anikushin2022Semigroups}, the conditions for a closed operator to admit a proper additive symmetrization are given, but the statement and proof are incorrect. Here, the mistake was caused by a misunderstanding of the mentioned insufficiency. Precisely, the proof breaks at the phrase ``as a consequence of the symmetry'' showing the generally wrong identity \cite[(3.28)]{Anikushin2022Semigroups}.} for an additive symmetrization $S_{A}$ to be proper.

In view of the above, the problem of interest is as follows: suppose we have a class of operators $A$ and a class of metrics (inner products) in $\mathbb{H}$. We then want to determine the metrics for which $A$ admits a proper additive symmetrization and compute (or at least estimate from above) the corresponding trace numbers.

In Section \ref{SEC: DelayEqsSymmetrization}, we investigate the problem of symmetrization for delay operators $A$ in adapted metrics. In particular, we show that $S_{0}(A)$ may not be densely defined, in which case it also does not admit densely defined symmetric extensions, and $\beta_{1}(A) = \infty$. This corresponds to the degeneracy of the metric in the considered class. In the nondegenerate case, there exists a unique additive symmetrization $S_{A}$ (given by a bounded self-adjoint operator) that is not necessarily proper, see Theorem \ref{TH: DelayOperatorSymmetrizationExistence} and Remark \ref{REM: DelayOperatorsNoProperSymm}. Furthermore, $S_{A}$ is proper if and only if for any $k=1,2,\ldots$
\begin{equation}
	\label{EQ: TraceNumbersIdentityMaxOnAdjointDomain}
	\sup_{\substack{\mathbb{L} \subset \mathcal{D}(A), \\ \dim\mathbb{L}=k}} \operatorname{Re}\operatorname{Tr}\left(\Pi_{\mathbb{L}} \circ A \circ \Pi_{\mathbb{L}}\right) = \sup_{\substack{\mathbb{L} \subset \mathcal{D}(A) \cap \mathcal{D}(A^{*}), \\ \dim\mathbb{L}=k}} \operatorname{Re}\operatorname{Tr}\left(\Pi_{\mathbb{L}} \circ A \circ \Pi_{\mathbb{L}}\right),
\end{equation}
and necessary and sufficient conditions for this are expressed in Theorem \ref{TH: TraceNumbersAdmitSymmetrizationDelayOperator} and Remark \ref{REM: DelayOperatorsNoProperSymm}. Note that \eqref{EQ: TraceNumbersIdentityMaxOnAdjointDomain} is an interesting property that shows that the addition of adjoint boundary conditions does not change the variational problem.

Recall that $S_{0}(A)$, as a symmetric operator, is closable. If its closure is self-adjoint, the operator is called \textit{essentially self-adjoint}. In fact, the above discussed property reflects the essential self-adjointness of $S_{0}(A)$.
\begin{proposition}
	\label{PROP: AdditiveSymmEssAdj}
	Suppose $S_{0}(A)$ is essentially self-adjoint. Then the closure $S_{A}$ of $S_{0}(A)$ provides a proper additive symmetrization of $A$ if and only if \eqref{EQ: TraceNumbersIdentityMaxOnAdjointDomain} is satisfied.
\end{proposition}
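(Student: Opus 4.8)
The plan is to reduce the proposition to one auxiliary fact: the trace numbers of a self-adjoint operator are computed by any of its cores. Write $\mathcal{C}\coloneq\mathcal{D}(A)\cap\mathcal{D}(A^{*})=\mathcal{D}(S_{0}(A))$; the hypothesis says precisely that $\mathcal{C}$ is a core for the self-adjoint closure $S_{A}$ of $S_{0}(A)$. The auxiliary fact I would prove is that for any self-adjoint $S$ with core $\mathcal{C}$ and any $k=1,2,\dots$,
\[
\sum_{j=1}^{k}\beta_{j}(S)=\sup_{\substack{\mathbb{L}\subset\mathcal{C},\ \dim\mathbb{L}=k}}\operatorname{Re}\operatorname{Tr}\bigl(\Pi_{\mathbb{L}}\circ S\circ\Pi_{\mathbb{L}}\bigr).
\]
Here ``$\ge$'' is trivial from \eqref{EQ: TraceNumbersDef}; for ``$\le$'' I would take a $k$-dimensional $\mathbb{L}\subset\mathcal{D}(S)$ with orthonormal basis $e_{1},\dots,e_{k}$, use that $\mathcal{C}$ is a core to pick $e_{j}^{(n)}\in\mathcal{C}$ with $e_{j}^{(n)}\to e_{j}$ in the graph norm of $S$, apply the Gram--Schmidt process (which, once the tuple is linearly independent, which holds for $n$ large, depends continuously on its input, since the coefficients are rational functions of the pairwise inner products with denominators bounded away from $0$) to obtain orthonormal $\widehat{e}_{j}^{(n)}\in\mathcal{C}$ still converging to $e_{j}$ in the graph norm, and conclude by joint continuity of the inner product together with \eqref{EQ: TraceComputationTroughBasis} that $\operatorname{Re}\sum_{j}(S\widehat{e}_{j}^{(n)},\widehat{e}_{j}^{(n)})_{\mathbb{H}}\to\operatorname{Re}\sum_{j}(Se_{j},e_{j})_{\mathbb{H}}=\operatorname{Re}\operatorname{Tr}(\Pi_{\mathbb{L}}\circ S\circ\Pi_{\mathbb{L}})$; taking suprema gives the fact.

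Granting this, the rest is bookkeeping with the identities already in the text. For $\mathbb{L}\subset\mathcal{C}$ relation \eqref{EQ: AdditiveSymmetrizationBasicRelation} gives $\operatorname{Re}\operatorname{Tr}(\Pi_{\mathbb{L}}\circ A\circ\Pi_{\mathbb{L}})=\operatorname{Re}\operatorname{Tr}(\Pi_{\mathbb{L}}\circ S_{A}\circ\Pi_{\mathbb{L}})$, so the right-hand side of \eqref{EQ: TraceNumbersIdentityMaxOnAdjointDomain} equals $\sup_{\mathbb{L}\subset\mathcal{C},\ \dim\mathbb{L}=k}\operatorname{Re}\operatorname{Tr}(\Pi_{\mathbb{L}}\circ S_{A}\circ\Pi_{\mathbb{L}})$, which by the auxiliary fact (with $\mathcal{C}$ a core for $S_{A}$) equals $\sum_{j=1}^{k}\beta_{j}(S_{A})$; the left-hand side of \eqref{EQ: TraceNumbersIdentityMaxOnAdjointDomain} equals $\sum_{j=1}^{k}\beta_{j}(A)$ by definition \eqref{EQ: TraceNumbersDef}. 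Hence \eqref{EQ: TraceNumbersIdentityMaxOnAdjointDomain} holds for all $k$ iff $\sum_{j=1}^{k}\beta_{j}(A)=\sum_{j=1}^{k}\beta_{j}(S_{A})$ for all $k$, iff $\beta_{k}(A)=\beta_{k}(S_{A})$ for all $k$, i.e.\ iff $S_{A}$ is proper. Passing between partial sums and single trace numbers is routine when these are finite, the case of interest; if $\beta_{1}(A)=+\infty$ while $S_{A}$ is, say, bounded, then the right-hand side of \eqref{EQ: TraceNumbersIdentityMaxOnAdjointDomain} is finite while the left-hand side is $+\infty$, so both properness and \eqref{EQ: TraceNumbersIdentityMaxOnAdjointDomain} fail, consistently.

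The main obstacle is the auxiliary fact, and establishing it is where essential self-adjointness of $S_{0}(A)$ is indispensable: it is exactly what makes $\mathcal{C}$ graph-norm dense in $\mathcal{D}(S_{A})$, which the Gram--Schmidt approximation requires; without it the $\mathcal{C}$-restricted supremum of $\operatorname{Re}\operatorname{Tr}(\Pi_{\mathbb{L}}\circ S_{A}\circ\Pi_{\mathbb{L}})$ may fall strictly below $\sum_{j}\beta_{j}(S_{A})$. This is also the precise reason \eqref{EQ: AdditiveSymmetrizationBasicRelation} alone never suffices: it identifies the two trace functionals only on subspaces contained in $\mathcal{C}$, whereas $\beta_{k}(A)$ is a supremum over the genuinely larger family of $k$-dimensional subspaces of $\mathcal{D}(A)$, which may increase it — the gap underlying the erroneous Theorem 7 of \cite{Anikushin2022Semigroups}. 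The only analytic input, continuity of Gram--Schmidt in the graph norm, is elementary.
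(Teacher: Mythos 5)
Your proof is correct and follows essentially the same route as the paper: the paper's two-sentence argument reduces to the single fact $\beta_{k}(S_{0}(A))=\beta_{k}(S_{A})$ for all $k$ (stated there as immediate because $S_{A}$ is the closure of $S_{0}(A)$ and the quadratic form is continuous in the relevant norm), after which the conclusion follows from \eqref{EQ: AdditiveSymmetrizationBasicRelation} and \eqref{EQ: TraceNumbersDef}. Your auxiliary fact, proved via Gram--Schmidt approximation in the graph norm, is precisely a detailed and self-contained justification of that key step, and the remaining bookkeeping matches the paper's.
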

\begin{proof}
	Indeed, since $S_{A}$ is the closure of $S_{0}(A)$, we obtain $\beta_{k}(S_{0}(A)) = \beta_{k}(S_{A})$ for any $k=1,2,\ldots$. From this, the conclusion follows.
\end{proof}

A more general criterion can be obtained in terms of the Friedrichs extension of $S_{0}(A)$. For this, we need to introduce some standard terminology, see \cite[Section 1.2.1]{FrankLaptevWeidl2022} for the related theory.

Let $B$ be a symmetric bilinear form (for brevity, form) in $\mathbb{H}$ defined on a dense domain $\mathcal{D}(B)$. It is called \textit{bounded from above} if there exists $M>0$ such that
\begin{equation}
	\label{EQ: BoundedFromAboveFormDefinition}
	B(v,v) \leq M |v|^{2}_{\mathbb{H}} \qquad \text{for any} \quad v \in \mathcal{D}(B).
\end{equation}
Then $\mathcal{D}(B)$ can be endowed with the \textit{energetic norm}
\begin{equation}
	|v|^{2}_{B} \coloneq -B(v,v) + (M+1) |v|^{2}_{\mathbb{H}} \qquad \text{for any} \quad v \in \mathcal{D}(B).
\end{equation}
Note that $\mathcal{D}(B) \ni v \mapsto B(v,v)$ is continuous in the norm $|\cdot|_{B}$. Moreover, different choices of $M$ in \eqref{EQ: BoundedFromAboveFormDefinition} result in equivalent norms.

We say that $B$ is \textit{closed} if $\mathcal{D}(B)$ endowed with the norm $|\cdot|_{B}$ is a Hilbert space. If this is not the case, but there exists a closed form $\bar{B}$ defined on the domain $\mathcal{D}(\bar{B})$ which is the completion of $\mathcal{D}(B)$ with respect to $|\cdot|_{B}$, then $B$ is called \textit{closable}, and $\bar{B}$ is called the \textit{closure} of $B$.

Introduce the trace numbers $\beta_{1}(B) \geq \beta_{2}(B) \geq \ldots$ of $B$ by induction from the relations for any $k=1,2,\ldots$
\begin{equation}
	\beta_{1}(B) + \cdots + \beta_{k}(B) = \sup_{e_{1}, \ldots, e_{j} \in \mathcal{D}(B)}\sum_{j=1}^{k}B(e_{j},e_{j})_{\mathbb{H}},
\end{equation}
where the supremum is taken over all orthonormal families $e_{1},\ldots,e_{k} \in \mathcal{D}(B)$. If $B$ is closable, the continuity in the energetic norm yields $\beta_{k}(B) = \beta_{k}(\bar{B})$ for any $k=1,2,\ldots$.

With any symmetric operator $S$ in $\mathbb{H}$ defined on a dense domain $\mathcal{D}(S)$, we can associate the form $B_{S}(v,w) \coloneq \langle Sv, w \rangle_{\mathbb{H}}$ defined for all $v,w \in \mathcal{D}(S)$. We call $S$ \textit{bounded from above} if $B_{S}$ is bounded from above, or, equivalently, $\beta_{1}(S) < \infty$. An important fact is that for any bounded from above symmetric operator, the form $B_{S}$ is closable. In this case, the domain $\mathcal{Q}(S) \coloneq \mathcal{D}(\bar{B}_{S})$ is called the \textit{form-domain} of $S$. Moreover, there exists the \textit{Friedrichs extension} $S_{F}$ of $S$ that is a unique self-adjoint operator whose domain $\mathcal{D}(S_{F})$ satisfies
\begin{equation}
	\mathcal{D}(S) \subset \mathcal{D}(S_{F}) \subset \mathcal{Q}(S).
\end{equation}
In addition, we have $\bar{B}_{S_{F}} = \bar{B}_{S}$. Clearly, this gives that $\beta_{k}(S) = \beta_{k}(S_{F}) = \beta_{k}(\bar{B}_{S})$ for any $k=1,2,\ldots$.

Finally, with $A$ as above, we associate the form $B_{A}$ on $\mathcal{D}(A)$ defined by
\begin{equation}
	B_{A}(v,w) \coloneq \frac{1}{2} \left( \langle Av, w \rangle_{\mathbb{H}} + \langle v, Aw \rangle_{\mathbb{H}} \right) \qquad \text{for all} \quad v,w \in \mathcal{D}(A).
\end{equation}
Note that $B_{A}(v,v) = \operatorname{Re}\langle Av, v\rangle_{\mathbb{H}}$. Consequently, $B_{A}$ is bounded from above if and only if $\beta_{1}(A) < \infty$.

One can strengthen Proposition \ref{PROP: AdditiveSymmEssAdj} in terms of Friedrichs extensions as follows.
\begin{theorem}
	\label{TH: AdditiveSymmetrizationAndFriedrichsExtension}
	Suppose $\mathcal{D}(A) \cap \mathcal{D}(A^{*})$ is dense in $\mathbb{H}$, and let $\beta_{1}(S_{0}(A)) < \infty$. Then the Friedrichs extension $S_{A}$ of $S_{0}(A)$ provides a proper additive symmetrization of $A$ if and only if \eqref{EQ: TraceNumbersIdentityMaxOnAdjointDomain} is satisfied.
\end{theorem}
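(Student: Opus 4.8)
The plan is to read off both sides of \eqref{EQ: TraceNumbersIdentityMaxOnAdjointDomain} as partial sums of trace numbers and then compare with the very definition of properness. First I would fix the standing structure. Since $\mathcal{D}(A)\cap\mathcal{D}(A^{*})$ is dense and $\beta_{1}(S_{0}(A))<\infty$, the operator $S_{0}(A)=(A+A^{*})/2$ is a densely defined symmetric operator bounded from above; hence its form $B_{S_{0}(A)}$ is closable and admits the Friedrichs extension $S_{A}$. Moreover, as recalled just before the statement, $\beta_{k}(S_{0}(A))=\beta_{k}(S_{A})=\beta_{k}(\overline{B}_{S_{0}(A)})$ for every $k$; in particular each $\beta_{k}(S_{A})$ is finite, being bounded by $\beta_{1}(S_{0}(A))$.

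Next I would identify the right-hand side of \eqref{EQ: TraceNumbersIdentityMaxOnAdjointDomain}. For any $k$-dimensional $\mathbb{L}\subset\mathcal{D}(A)\cap\mathcal{D}(A^{*})$, picking an orthonormal basis $e_{1},\dots,e_{k}$ of $\mathbb{L}$ and using \eqref{EQ: AdditiveSymmetrizationBasicRelation} (together with \eqref{EQ: TraceComputationTroughBasis}), one has $\operatorname{Re}\operatorname{Tr}(\Pi_{\mathbb{L}}\circ A\circ\Pi_{\mathbb{L}})=\operatorname{Re}\sum_{j=1}^{k}\langle S_{0}(A)e_{j},e_{j}\rangle=\operatorname{Re}\operatorname{Tr}(\Pi_{\mathbb{L}}\circ S_{0}(A)\circ\Pi_{\mathbb{L}})$. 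Taking the supremum over all such $\mathbb{L}$ and recalling that $\mathcal{D}(S_{0}(A))=\mathcal{D}(A)\cap\mathcal{D}(A^{*})$, the definition \eqref{EQ: TraceNumbersDef} of the trace numbers of the operator $S_{0}(A)$ shows that the right-hand side of \eqref{EQ: TraceNumbersIdentityMaxOnAdjointDomain} equals $\beta_{1}(S_{0}(A))+\dots+\beta_{k}(S_{0}(A))$, which by the previous paragraph equals $\beta_{1}(S_{A})+\dots+\beta_{k}(S_{A})$. On the other hand, \eqref{EQ: TraceNumbersDef} applied to $A$ itself says the left-hand side of \eqref{EQ: TraceNumbersIdentityMaxOnAdjointDomain} equals $\beta_{1}(A)+\dots+\beta_{k}(A)$.

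It then remains to assemble the equivalence. If $S_{A}$ is proper, then $\beta_{k}(A)=\beta_{k}(S_{A})$ for all $k$, so the two families of partial sums agree and \eqref{EQ: TraceNumbersIdentityMaxOnAdjointDomain} holds for every $k$. Conversely, if \eqref{EQ: TraceNumbersIdentityMaxOnAdjointDomain} holds for every $k$, then $\sum_{j=1}^{k}\beta_{j}(A)=\sum_{j=1}^{k}\beta_{j}(S_{A})$ for all $k$; since the right-hand sums are finite, every $\beta_{j}(A)$ is finite and subtracting consecutive partial sums gives $\beta_{k}(A)=\beta_{k}(S_{A})$ for all $k$, i.e. $S_{A}$ is proper. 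I do not expect a serious obstacle: the single nontrivial ingredient is the form-theoretic identity $\beta_{k}(S_{0}(A))=\beta_{k}(S_{A})$ for the Friedrichs extension, which is standard and recalled before the statement; the remaining points requiring care are only the verification that $S_{0}(A)$ is indeed symmetric on $\mathcal{D}(A)\cap\mathcal{D}(A^{*})$ (so that the Friedrichs machinery applies — this uses $\langle v, Aw\rangle=\langle A^{*}v,w\rangle$ and $\langle v, A^{*}w\rangle=\langle Av,w\rangle$ for $v,w$ in the intersection) and the finiteness bookkeeping in the converse direction. In particular this subsumes Proposition \ref{PROP: AdditiveSymmEssAdj}, since when $S_{0}(A)$ is essentially self-adjoint its closure coincides with its Friedrichs extension.
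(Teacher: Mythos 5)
Your proof is correct and matches the paper's argument; the paper simply compresses the whole thing into the one observation that $\beta_k(S_0(A))=\beta_k(S_A)$ and the identification of both sides of \eqref{EQ: TraceNumbersIdentityMaxOnAdjointDomain} with partial sums of trace numbers (which you spell out carefully, including the telescoping/finiteness bookkeeping).
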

\begin{proof}
	Under the assumptions, the Friedrichs extension $S_{A}$ is well-defined. Since it satisfies $\beta_{k}(S_{0}(A)) = \beta_{k}(S_{A})$ for any $k=1,2,\ldots$, we immediately have the conclusion.
\end{proof}

A more naive approach would lead to the following result.
\begin{proposition}
	Suppose that  $\beta_{1}(A) < \infty$, and let $\mathcal{D}(A) \cap \mathcal{D}(A^{*})$ be dense in $\mathcal{D}(A)$ in the energetic norm $|\cdot|_{B_{A}}$. Then the Friedrichs extension $S_{A}$ of $S_{0}$ provides a proper additive symmetrization of $A$.
\end{proposition}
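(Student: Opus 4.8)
The plan is to reduce the statement to the criterion of Theorem~\ref{TH: AdditiveSymmetrizationAndFriedrichsExtension}. First I would check that its hypotheses are in force. Since for $v\in\mathcal{D}(A)\cap\mathcal{D}(A^{*})$ one has $\operatorname{Re}\langle S_{0}(A)v,v\rangle=\operatorname{Re}\langle Av,v\rangle$, the assumption $\beta_{1}(A)<\infty$ gives $\beta_{1}(S_{0}(A))\le\beta_{1}(A)<\infty$. The energetic norm of $B_{A}$ satisfies $|v|_{B_{A}}^{2}=-B_{A}(v,v)+(M+1)|v|_{\mathbb{H}}^{2}\ge|v|_{\mathbb{H}}^{2}$, so it dominates $|\cdot|_{\mathbb{H}}$; hence the hypothesis that $\mathcal{D}(A)\cap\mathcal{D}(A^{*})$ is $|\cdot|_{B_{A}}$-dense in $\mathcal{D}(A)$ forces it to be $|\cdot|_{\mathbb{H}}$-dense in $\mathcal{D}(A)$, and therefore $|\cdot|_{\mathbb{H}}$-dense in $\mathbb{H}$ (recall $\mathcal{D}(A)$ is dense, as $A^{*}$ is assumed to exist). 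Thus the Friedrichs extension $S_{A}$ of $S_{0}(A)$ is well defined, it is in particular a self-adjoint extension of $S_{0}(A)$ on the intersection domain, and by Theorem~\ref{TH: AdditiveSymmetrizationAndFriedrichsExtension} it only remains to verify the trace-number identity \eqref{EQ: TraceNumbersIdentityMaxOnAdjointDomain} for every $k$.

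The inequality $\ge$ in \eqref{EQ: TraceNumbersIdentityMaxOnAdjointDomain} is automatic, since the right-hand supremum runs over a subfamily of subspaces. For $\le$, fix a $k$-dimensional $\mathbb{L}\subset\mathcal{D}(A)$ with a $\langle\cdot,\cdot\rangle_{\mathbb{H}}$-orthonormal basis $e_{1},\dots,e_{k}$. Using the energetic density, I would pick $f_{j}^{(\varepsilon)}\in\mathcal{D}(A)\cap\mathcal{D}(A^{*})$ with $|e_{j}-f_{j}^{(\varepsilon)}|_{B_{A}}<\varepsilon$; since $|\cdot|_{B_{A}}\ge|\cdot|_{\mathbb{H}}$, these also converge to $e_{j}$ in $\mathbb{H}$, so for small $\varepsilon$ they are linearly independent, and applying Gram--Schmidt orthonormalization (with respect to $\langle\cdot,\cdot\rangle_{\mathbb{H}}$) produces $g_{1}^{(\varepsilon)},\dots,g_{k}^{(\varepsilon)}\in\mathcal{D}(A)\cap\mathcal{D}(A^{*})$ spanning a $k$-dimensional subspace $\mathbb{L}_{\varepsilon}$ of that intersection. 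Because the Gram--Schmidt coefficients depend continuously on the $\mathbb{H}$-Gram matrix of the $f_{j}^{(\varepsilon)}$ (which tends to the identity) and are applied to vectors converging in $|\cdot|_{B_{A}}$, one gets $g_{j}^{(\varepsilon)}\to e_{j}$ in the energetic norm as $\varepsilon\to0$. Since $v\mapsto B_{A}(v,v)=\operatorname{Re}\langle Av,v\rangle$ is $|\cdot|_{B_{A}}$-continuous, \eqref{EQ: TraceComputationTroughBasis} gives
\[
\operatorname{Re}\operatorname{Tr}\!\left(\Pi_{\mathbb{L}_{\varepsilon}}\circ A\circ\Pi_{\mathbb{L}_{\varepsilon}}\right)=\sum_{j=1}^{k}B_{A}\!\left(g_{j}^{(\varepsilon)},g_{j}^{(\varepsilon)}\right)\longrightarrow\sum_{j=1}^{k}B_{A}(e_{j},e_{j})=\operatorname{Re}\operatorname{Tr}\!\left(\Pi_{\mathbb{L}}\circ A\circ\Pi_{\mathbb{L}}\right)\quad\text{as }\varepsilon\to0 .
\]
Hence the right-hand supremum in \eqref{EQ: TraceNumbersIdentityMaxOnAdjointDomain} is $\ge\operatorname{Re}\operatorname{Tr}(\Pi_{\mathbb{L}}\circ A\circ\Pi_{\mathbb{L}})$; taking the supremum over $\mathbb{L}$ yields $\le$, and therefore equality.

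The conceptual point that makes this work is that the energetic norm $|\cdot|_{B_{A}}$ is exactly the topology in which $v\mapsto\operatorname{Re}\langle Av,v\rangle$ is continuous, so enlarging or shrinking the domain by an $|\cdot|_{B_{A}}$-dense subspace does not affect the trace numbers. The only genuinely technical step is the continuity of the Gram--Schmidt map from the energetic norm into itself near an orthonormal system; this is a routine perturbation estimate once one notes that the normalizing denominators stay bounded away from zero for small $\varepsilon$, so I expect it to be the main (but minor) obstacle. An alternative, slightly more self-contained route would bypass Theorem~\ref{TH: AdditiveSymmetrizationAndFriedrichsExtension}: observe $\beta_{k}(A)=\beta_{k}(B_{A})$ directly from the definitions, then $\beta_{k}(B_{A})=\beta_{k}(B_{S_{0}(A)})$ by the same density-plus-continuity argument, and finally $\beta_{k}(B_{S_{0}(A)})=\beta_{k}(\overline{B}_{S_{0}(A)})=\beta_{k}(S_{A})$ since $S_{A}$ is the Friedrichs extension of $S_{0}(A)$ and $\overline{B}_{S_{F}}=\overline{B}_{S}$; this uses the closability of $B_{S_{0}(A)}$, which is available because $S_{0}(A)$ is a symmetric operator bounded from above. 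Either way the single nontrivial ingredient is the approximation argument above.
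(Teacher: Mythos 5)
Your proposal is correct, and the ``alternative route'' you sketch at the end is essentially the paper's proof. Let me compare the two main approaches.

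Your primary route detours through Theorem~\ref{TH: AdditiveSymmetrizationAndFriedrichsExtension}: you reduce the claim to the trace-number identity \eqref{EQ: TraceNumbersIdentityMaxOnAdjointDomain} and then verify that identity with an explicit Gram--Schmidt approximation argument. The argument works --- the key observation, that $|\cdot|_{B_A}$-convergence controls $B_A(\cdot,\cdot)$ and that the Gram--Schmidt coefficients are continuous in the $\mathbb{H}$-Gram matrix near the identity, is correct, and it does re-derive the standard fact that the supremum of a continuous quadratic form over a dense subspace equals the supremum over the whole space. The paper, by contrast, avoids re-proving this from scratch. It directly exhibits the sandwich
\begin{equation*}
\beta_k(A) \ \geq\ \beta_k(S_0(A)) \ =\ \beta_k(S_A) \ =\ \beta_k(\overline{B}_{S_0(A)}) \ \geq\ \beta_k(A),
\end{equation*}
where the first inequality is just a supremum over a smaller class, the two middle equalities are the standard facts about the Friedrichs extension cited above Theorem~\ref{TH: AdditiveSymmetrizationAndFriedrichsExtension} (namely $\beta_k(S)=\beta_k(S_F)=\beta_k(\overline{B}_S)$ and $\beta_k(B)=\beta_k(\overline{B})$ for closable $B$), and the crucial last inequality comes from a one-line observation: the energetic density forces $\mathcal{D}(A) \subset \mathcal{Q}(S_0)$ and forces $\overline{B}_{S_0(A)}$ to restrict to $B_A$ on $\mathcal{D}(A)$, so the supremum defining $\beta_k(\overline{B}_{S_0(A)})$ is taken over a class \emph{larger} than the one defining $\beta_k(A)$. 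By extending the domain rather than approximating inside a smaller one, the paper sidesteps the need for the Gram--Schmidt continuity estimate that you (correctly) flag as the main technical obstacle. What your route buys is self-containedness --- you do not rely on the cited properties of closures of forms --- at the cost of length; what the paper's route buys is brevity by leaning on that machinery. Either way your proposal is a valid proof, and your alternative route in the final paragraph is effectively the paper's argument restated.
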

\begin{proof}
	Since $\mathcal{D}(A) \cap \mathcal{D}(A^{*})$ is dense in $\mathbb{H}$, the symmetric operator $S_{0}(A)$ is densely defined. From $\beta_{1}(S_{0}(A)) \leq \beta_{1}(A) < \infty$, we obtain that $S_{0}(A)$ is bounded from above. Thus, there exists the Friedrichs extension $S_{A}$ of $S_{0}(A)$.
	
	By the density in the norm $|\cdot|_{B_{A}}$, it is clear that $\mathcal{Q}(S_{0}) \supset \mathcal{D}(A)$, and $\bar{B}_{S_{0}(A)}$ coincides with $B_{A}$ on $\mathcal{D}(A)$. Consequently, for any $k=1,2,\ldots$, we have
	\begin{equation}
		\beta_{k}(A) \geq \beta_{k}(S_{0}(A)) = \beta_{k}(S_{A}) = \beta_{k}(\bar{B}_{S_{0}(A)}) \geq \beta_{k}(A)
	\end{equation}
	that shows the statement. 
\end{proof}

However, the density in the energetic norm does not hold in the problems of our interest. Indeed, by taking a look at the proof of Theorem \ref{TH: TraceNumbersAdmitSymmetrizationDelayOperator}, one can see that all the significant (unbounded in $\mathbb{H}$) terms of $B_{A}$ shown in \eqref{EQ: QuadraticFormDelayOperatorDecomposition} become nonnegative on $\mathcal{D}(A) \cap \mathcal{D}(A^{*})$, although $B_{A}$ may take negative values on $\mathcal{D}(A)$. Thus, there is no density in the energetic norm. On the other hand, \eqref{EQ: TraceNumbersIdentityMaxOnAdjointDomain} is satisfied in this case.

Note that the violation of \eqref{EQ: TraceNumbersIdentityMaxOnAdjointDomain} in the studied case of delay operators is adjacent to $\beta_{k}(A) = +\infty$ for any $k$. From this, the following problem is interesting.
\begin{problem}
	Does there exist an operator $A$, being the generator of a $C_{0}$-semigroup in a Hilbert space $\mathbb{H}$, for which $\beta_{1}(A)$ is finite, but $A$ does not admit a proper additive symmetrization?
\end{problem}

\begin{remark}
	Often, instead of computing the trace numbers, one just estimates them directly from \eqref{EQ: TraceNumbersDef} by utilizing some specificity of the problem. For example, in the case of 2D Navier-Stokes equations, this is usually done with the aid of Lieb-Thirring inequalities \cite{ChepyzhovIlyin2004}. However, it is possible to justify the presence of a unique proper additive symmetrization $S_{A}$ in this case, as we deal with a dominated perturbation $A$ of the Stokes operator for which $A^{*}$ is defined on the same domain, so $\beta_{k}(A)=\beta_{k}(S_{A})$ is trivially satisfied for any $k$.
\end{remark}

In the remaining part of the section, we aim to express the trace numbers of a self-adjoint operator in terms of its spectrum.

For a self-adjoint operator $S$, by $s_{e}(S) \in [-\infty;+\infty]$ we denote the \textit{essential spectral bound} of $S$, which is given by the supremum of the essential spectrum (see \cite[Section 1.1.7]{FrankLaptevWeidl2022}). If $S$ is bounded from above, then $s_{e}(S) < +\infty$, and there are at most countably many, say $N \geq 0$ or $N = \infty$, eigenvalues $\lambda > s_{e}(S)$, and any such eigenvalue has finite multiplicity. Let $\lambda_{1}(S) \geq \lambda_{2}(S) \geq \ldots$ denote the arranged eigenvalues counted with multiplicities. Define the \textit{characteristics numbers} $\alpha_{1}(S) \geq \alpha_{2}(S) \geq \ldots$ of $S$ by
\begin{equation}
	\label{EQ: CharacteristicNumbersS}
	\alpha_{j}(S) \coloneq \begin{cases}
		\lambda_{j}(S) \qquad &\text{if} \quad j \leq N,\\
		s_{e}(S) \qquad &\text{if} \quad j > N.
	\end{cases}
\end{equation}
\begin{remark}
	If $S$ is not bounded from above, all characteristic numbers are set to be $+\infty$ by definition.
	
	If $S$ is bounded from above, then it generates a $C_{0}$-semigroup, and the characteristics numbers are precisely the Lyapunov exponents of the semigroup, see \cite[Theorem A.2.5]{Thieullen1992}. \qed
\end{remark}

\begin{lemma}
	\label{LEM: ComparisonLemma}
	For a self-adjoint operator $S$, let $\Pi$ be an orthogonal projector with $m$-dimensional range lying in $\mathcal{D}(S)$. Then, for the truncated operator $S_{\Pi} \coloneq \Pi S \Pi$, we have $\lambda_{j}(S_{\Pi}) \leq \alpha_{j}(S)$ for each $j \in \{1,\ldots, m\}$.
\end{lemma}
\begin{proof}
	We assume that $S$ is bounded from above, otherwise there is nothing to prove.
	
	By the Courant--Fischer--Weyl min–max principle \cite[Theorem 1.27]{FrankLaptevWeidl2022}, we have
	\begin{equation}
		\label{EQ: MinmaxWeilLemma}
		\alpha_{j}(S) = \inf_{v_{1},\ldots, v_{j-1} \in \mathbb{H}} \sup_{\substack{v \bot v_{1},\ldots, v_{j-1},\\ 0\not=v \in \mathcal{D}(S)} } \frac{\langle S v, v\rangle_{\mathbb{H}}}{|v|^{2}_{\mathbb{H}}}
	\end{equation}
	for any $j = 1, 2, \ldots$.
	
	Let $e_{1},\ldots, e_{m} \in \mathcal{D}(S)$ be the eigenvectors of $S_{\Pi}$ such that $S_{\Pi} e_{j} = \lambda_{j}(S_{\Pi}) e_{j}$ for each $j \in \{1,\ldots,m\}$. Then, for any $v_{1},\ldots,v_{j-1} \in \mathbb{H}$, there exists a nonzero $v \in \operatorname{Span}\{ e_{1},\ldots,e_{j} \}$ which is orthogonal to $v_{1},\ldots,v_{j-1}$. In particular, $v \in \mathcal{D}(S)$ and
	\begin{equation}
		(Sv,v)_{\mathbb{H}} = (S\Pi v, \Pi v)_{\mathbb{H}} = (S_{\Pi}v,v)_{\mathbb{H}} \geq \lambda_{j}(S_{\Pi}) |v|^{2}_{\mathbb{H}}.
	\end{equation} 
	This and \eqref{EQ: MinmaxWeilLemma} yield the desired inequality $\lambda_{j}(S_{\Pi}) \leq \alpha_{j}(S)$ for each $j \in \{1,\ldots, m\}$. 
\end{proof}

Finally, we establish the following key result.
\begin{theorem}
	\label{TH: ComputationSwedgeSpectralBound}
	For a self-adjoint operator $S$ in a separable Hilbert space $\mathbb{H}$, the trace numbers (see \eqref{EQ: TraceNumbersDef}) coincide with the characteristic numbers (see \eqref{EQ: CharacteristicNumbersS}), that is, $\beta_{k}(S) = \alpha_{k}(S)$ for any $k=1,2,\ldots$.
\end{theorem}
\begin{proof}
	We assume that $S$ is bounded from above; otherwise, all the numbers are just $+\infty$.
	
	By taking subspaces $\mathbb{L}$ spanned by the eigenvectors or approximate eigenvectors corresponding to consecutive characteristic numbers, from \eqref{EQ: TraceNumbersDef} we see that 
	$\sum_{j=1}^{m}\beta_{j}(S) \geq \sum_{j=1}^{m}\alpha_{j}(S)$ for any $m=1,2,\ldots$.
	
	Conversely, let $\mathbb{L} \subset \mathcal{D}(S)$ be a fixed $m$-dimensional subspace with an orthonormal basis $e_{1}, \ldots, e_{m}$. Let $\Pi$ be the orthogonal projector onto $\mathbb{L}$. Applying Lemma \ref{LEM: ComparisonLemma} to $S$ and such $\Pi$ gives
	\begin{equation}
		\sum_{j=1}^{m}(S e_{j}, e_{j})_{\mathbb{H}} = \sum_{j=1}^{m}(S_{\Pi} e_{j}, e_{j})_{\mathbb{H}} = \sum_{j=1}^{m}\lambda_{j}(S_{\Pi}) \leq \sum_{j=1}^{m} \alpha_{j}(S).
	\end{equation}
	Since this holds for any $\mathbb{L}$, we obtain $\sum_{j=1}^{m}\beta_{j}(S) \leq \sum_{j=1}^{m}\alpha_{j}(S)$ for any $m=1,2,\ldots$. 
	
	Finally, from $\sum_{j=1}^{m}\beta_{j}(S) = \sum_{j=1}^{m}\alpha_{j}(S)$ by induction on $m$, one establishes the coincidence of $\beta_{k}(S)$ and $\alpha_{k}(S)$ for any $k=1,2,\ldots$. 
\end{proof}

\begin{remark}
	\label{REM: TraceNumbersThParticularCases}
	There are particular cases of Theorem \ref{TH: ComputationSwedgeSpectralBound}, such as \cite[Lemmas 2.1 and 2.2, Chapter 6]{Temam1997} or \cite[Proposition 1.33]{FrankLaptevWeidl2022}, although the statement in full generality can be easily proven using the methods presented in these monographs. Moreover, the validity of Theorem \ref{TH: ComputationSwedgeSpectralBound} is also noted in \cite[Remark 2.13]{SavostianovZelik2016}, which refers to \cite{Temam1997} for the case of bounded operators. Therefore, it is somewhat surprising that a complete proof in full generality has not been published previously.
\end{remark}

\begin{remark}
	\label{REM: TraceNumbersCriticalSubSpace}
	Let $S$ be as above. For any $m=1,2,\ldots$, it can be shown (via the spectral theorem) that the algebraic $m$-fold additive compound $S^{[\wedge m]}$ of $S$, which is defined by \eqref{EQ: AlgebraicAdditiveCompoundDef}, is essentially self-adjoint, i.e., it admits a unique self-adjoint extension. Let us denote it by the same symbol $S^{[\wedge m]}$. Then it can be shown that
	\begin{equation}
		\sum_{j=1}^{m}\alpha_{j}(S) = \sup \operatorname{spec}S^{[\wedge m]}.
	\end{equation}
	From the proof of Theorem \ref{TH: ComputationSwedgeSpectralBound}, it is seen that if $\alpha_{1}(S), \ldots, \alpha_{m}(S)$ are eigenvalues of $S$, then any corresponding to them eigenvectors $e_{1},\ldots,e_{m}$ span a subspace $\mathbb{L}$ on which the supremum in \eqref{EQ: TraceNumbersDef} with $A = S$ is achieved. Furthermore, $\sum_{j=1}^{m}\alpha_{j}(S)$ is an eigenvalue of $S^{[\wedge m]}$ corresponding to the eigenvector $e_{1}\wedge \cdots \wedge e_{m}$.
\end{remark}

\section*{Funding}
The reported study was funded by the Russian Science Foundation (Project 25-11-00147).

\section*{Data availability}
Data sharing not applicable to this article as no datasets were generated or analyzed during the current study.

\section*{Conflict of interest}
The author has no conflicts of interest to declare that are relevant
to the content of this article.







\end{document}